\titleformat{\subsection}[runin]{\normalfont\bfseries}{\thesubsection.}{.5em}{}[.]\titlespacing{\subsection}{0pt}{2ex plus .1ex minus .2ex}{.8em}
\titleformat{\subsubsection}[runin]{\normalfont\itshape}{\thesubsubsection.}{.3em}{}[.]\titlespacing{\subsubsection}{0pt}{1ex plus .1ex minus .2ex}{.5em}
\newcommand{\f}[1]{\bm{\mathrm{#1}}} 
\newcommand{\bb}{\mathbb} 
\newcommand{\rr}{\mathrm} 
\renewcommand{\cal}{\mathcal} 
\newcommand{\fra}{\mathfrak} 
\newcommand{\ol}[1]{\overline{#1} \!\,} 
\newcommand{\wh}{\widehat}
\newcommand{\wt}{\widetilde}
\newcommand{\ii}{\mathrm{i}}
\newcommand{\dd}{\mathrm{d}}
\newcommand{\col}{\mathrel{\vcenter{\baselineskip0.75ex \lineskiplimit0pt \hbox{.}\hbox{.}}}}
\newcommand*{\deq}{\mathrel{\vcenter{\baselineskip0.65ex \lineskiplimit0pt \hbox{.}\hbox{.}}}=}
\renewcommand{\le}{\leqslant}
\renewcommand{\leq}{\leqslant}
\renewcommand{\ge}{\geqslant}
\renewcommand{\geq}{\geqslant}
\newcommand{\ad}[1]{#1^*}
\newcommand{\ind}[1]{\f 1 (#1)}
\newcommand{\indb}[1]{\f 1 \pb{#1}}
\renewcommand{\epsilon}{\varepsilon}
\renewcommand{\P}{\mathbb{P}}
\newcommand{\E}{\mathbb{E}}
\newcommand{\R}{\mathbb{R}}
\newcommand{\C}{\mathbb{C}}
\newcommand{\N}{\mathbb{N}}
\newcommand{\Z}{\mathbb{Z}}
\newcommand{\p}[1]{({#1})}
\newcommand{\pb}[1]{\bigl({#1}\bigr)}
\newcommand{\pB}[1]{\Bigl({#1}\Bigr)}
\newcommand{\pbb}[1]{\biggl({#1}\biggr)}
\newcommand{\pBB}[1]{\Biggl({#1}\Biggr)}
\newcommand{\qB}[1]{\Bigl[{#1}\Bigr]}
\newcommand{\qbb}[1]{\biggl[{#1}\biggr]}
\newcommand{\qBB}[1]{\Biggl[{#1}\Biggr]}
\newcommand{\h}[1]{\{{#1}\}}
\newcommand{\hb}[1]{\bigl\{{#1}\bigr\}}
\newcommand{\hbb}[1]{\biggl\{{#1}\biggr\}}
\newcommand{\abs}[1]{\lvert #1 \rvert}
\newcommand{\absb}[1]{\bigl\lvert #1 \bigr\rvert}
\newcommand{\absB}[1]{\Bigl\lvert #1 \Bigr\rvert}
\newcommand{\absbb}[1]{\biggl\lvert #1 \biggr\rvert}
\newcommand{\norm}[1]{\lVert #1 \rVert}
\newcommand{\normb}[1]{\bigl\lVert #1 \bigr\rVert}
\newcommand{\normbb}[1]{\biggl\lVert #1 \biggr\rVert}
\DeclareMathOperator{\tr}{Tr}
\DeclareMathOperator{\re}{Re}
\DeclareMathOperator{\im}{Im}
\theoremstyle{plain} 
\newtheorem{theorem}{Theorem}[section]
\newtheorem*{theorem*}{Theorem}
\newtheorem{lemma}[theorem]{Lemma}
\newtheorem*{lemma*}{Lemma}
\newtheorem*{corollary*}{Corollary}
\newtheorem{proposition}[theorem]{Proposition}
\newtheorem*{proposition*}{Proposition}
\theoremstyle{definition} 
\newtheorem{definition}[theorem]{Definition}
\newtheorem*{definition*}{Definition}
\newtheorem{example}[theorem]{Example}
\newtheorem*{example*}{Example}
\newtheorem{remark}[theorem]{Remark}
\newtheorem*{remark*}{Remark}
\newtheorem*{remarks*}{Remarks}
\newtheorem*{convention*}{Convention}
\newcommand{\beqa}{\begin{eqnarray}}
\newcommand{\eeqa}{\end{eqnarray}}
\newcommand{\e}{\varepsilon}
\newcommand{\ba}{{\bf{a}}}
\newcommand{\al}{\alpha}
\newcommand{\be}{\begin{equation}}
\newcommand{\ee}{\end{equation}}
\newcommand{\cG}{{\mathcal G}}
\newcommand{\ov}{\overline}
\numberwithin{equation}{section}
\numberwithin{theorem}{section}
\numberwithin{figure}{section}
\title{Averaging Fluctuations in Resolvents of Random Band Matrices}
\author{
L\'aszl\'o Erd\H os${}^1$\thanks{Partially supported
by SFB-TR 12 Grant of the German Research Council} \quad
Antti Knowles${}^2$\thanks{Partially supported by NSF grant DMS-0757425} \quad
Horng-Tzer Yau${}^2$\thanks{Partially supported
by NSF grants DMS-0757425, 0804279}  
 \\\\\\
Institute of Mathematics, University of Munich, \\
Theresienstrasse 39, D-80333 Munich, Germany \\ lerdos@math.lmu.de ${}^1$ \\ \\
Department of Mathematics, Harvard University\\
Cambridge MA 02138, USA \\ knowles@math.harvard.edu, \quad htyau@math.harvard.edu ${}^2$  \\  \\
}
\begin{document}

\maketitle

\begin{abstract} 
We consider a general class of 
 random matrices whose entries are centred random variables, independent up to a symmetry constraint.
We establish precise high-probability bounds on the averages of arbitrary monomials in the resolvent matrix entries. Our results generalize the previous results of \cite{EYY2, EYY3, EKYY1} which constituted a key step in the proof of the local semicircle law with optimal error bound in mean-field random matrix models.
Our bounds apply to random band matrices, and improve previous estimates from order 2 to order 4 in the cases relevant for applications. In particular, they lead to a proof of the diffusion approximation for the magnitude of the resolvent of random band matrices. This, in turn, implies new delocalization bounds on the eigenvectors.
The applications are presented in a separate paper \cite{EKYY3}.
\end{abstract}

\vspace{1cm}
{\bf AMS Subject Classification:} 15B52, 82B44, 82C44

\medskip

{\it Keywords:}  random band matrix, delocalization,
sums of correlated random variables.

\newpage

\section{Introduction}

Let $H= (h_{ij})$ be a complex Hermitian or real symmetric $N\times N$ random matrix with
centred matrix entries that are independent
up to the symmetry constraint. We assume that the variances
$s_{ij}\deq \E |h_{ij}|^2$ are normalized so that $\sum_j s_{ij}=1$ for each $i$,
and let $\norm{s}_\infty \deq \max_{ij} s_{ij}$ denote the maximal variance.
Let $G_{ab}(z) \deq (H-z)^{-1}_{ab}$ denote the resolvent matrix entries evaluated at a spectral parameter $z=E+i\eta$ whose imaginary part $\eta$ is positive and small.
It was established in \cite{EYY1, EKYY4} that 
\be
\Lambda \;\deq\; \max_{a \neq b}|G_{ab}| \;\lesssim\; \sqrt{\frac{\norm{s}_{\infty}}{\eta}}
\label{Gmax}
\ee
with high probability for large $N$, up to factors of $N^\epsilon$.

The matrix entries  $G_{ab} \equiv G_{ab}(z)$ depend strongly on the entries of the $a$-th and $b$-th columns of $H$, but weakly on the other columns. Focusing on the dependence on $a$ only, this can be seen from the simple expansion formula 
\be
   G_{ab} \;=\; - G_{aa}\sum_{i\ne a} h_{ai} G_{ib}^{(a)}\,,
\label{GAB}
\ee
where $G^{(a)}$ denotes the resolvent of the $(N-1)\times (N-1)$ minor
of $H$ obtained by removing the $a$-th row and column
(see Lemma~\ref{lemma: res id} below for the general statement).
 Since $G^{(a)}$ is independent of the family $(h_{ai})_{i = 1}^N$,
the formula \eqref{GAB} expresses $G_{ab}$ as a sum of independent centred
random variables (neglecting the prefactor $G_{aa}$ which still depends on $(h_{ai})_{i = 1}^N$).
Therefore the size of $G_{ab}$ is governed by a fluctuation averaging mechanism,
similar to the central limit theorem. This is the main reason why the bound \eqref{Gmax} is substantially better than the naive estimate $|G_{ab}|\le \eta^{-1}$.

In this paper we investigate a more subtle phenomenon. To take a simple example, we are interested in
averages of resolvent matrix entries of the form
\be\label{ave1}
   \frac{1}{N}\sum_{a}G_{ab}
\ee
or, more generally, its weighted version
\be\label{ave2}
   \sum_{a} s_{\mu a} G_{ab}\,,
\ee
where $\mu$ and $b$ are fixed. We aim to show that, with high probability,
 these averages are of order $\Lambda^2$ -- much smaller  than the naive bound $\Lambda$ which results from an application of \eqref{Gmax} to each summand (we shall always work in the regime where $\Lambda\ll1$). The mechanism behind this improved bound is that for $a\ne a'$ the
matrix entries $G_{ab}$ and $G_{a'b}$ are only weakly correlated. To see this, note that,
since $h_{ai}$ in \eqref{GAB} and $h_{a'i}$ 
in the analogous formula
$$
   G_{a'b} \;=\; - G_{a'a'}\sum_{i'\ne a'} h_{a'i'} G_{i'b}^{(a')}\,,
$$
are independent, the correlation between $G_{ab}$ and $G_{a'b}$
primarily comes from  correlations between $h_{ai}$ and $ G_{i'b}^{(a')}$
and between $h_{a'i'}$ and $ G_{ib}^{(a)}$. (As above, here we neglect the less important prefactors $G_{aa}$ and $G_{a'a'}$.) Now $ G_{i'b}^{(a')}$
depends only weakly on $h_{ai}$ unless some lower indices coincide: $i=i'$ or $i=b$ or $i'=a$. Such coincidences are atypical, however, and consequently give rise to lower-order terms.
Once the smallness of the correlation between  $G_{ab}$ and $G_{a'b}$ is established,
the variance of the averages \eqref{ave1} or \eqref{ave2} can be estimated.
The smallness of the higher-order correlations between different resolvent matrix entries 
allows one to compute high moments and turn the variance bound into a high-probability bound. 
However, keeping track of all weak correlations among a large product of expressions of
 the form \eqref{ave1} with different $a$'s
is  rather involved, and we shall need to develop a graphical representation to do this effectively.

This idea of exploiting the weak dependence among different resolvent entries
  of random matrices first appeared in 
\cite{EYY2} and was subsequently used in \cite{EYY3, EKYY1, PY}. Such estimates
provide optimal error bounds in 
 the {\it local semicircle law} --
 a basic ingredient in establishing
 the universality of local statistics of for Wigner matrices.

Our main result in this paper 
estimates with high probability (weighted) averages of general monomials in the resolvent matrix
entries and their complex conjugates, where the averaging is performed on a subset of the indices. 
A more complicated example is
\be\label{aver3}
   \sum_{a,b}s_{\mu a}s_{\nu b} \pB{ |G_{ab}|^2 |G_{a \rho}|^2 - \E_{ab}
   |G_{ab}|^2 |G_{a \rho}|^2}\,,
\ee
where $\mu$, $\nu$, and $\rho$ are fixed. Here we subtract from each summand its partial expectation $\E_{ab}$ with respect to
the random variables in the $a$-th and $b$-th columns of $H$.
(Note that we could also have subtracted $\E_a G_{ab}$ in \eqref{ave1} and \eqref{ave2}
as well, but this expectation turns out to be negligible, unlike the expectations
of the manifestly positive quantity $|G_{ab}|^2 |G_{a \rho}|^2$ in \eqref{aver3}). 

The expression \eqref{aver3} can trivially be
estimated by $\Lambda^4$ with high probability using the estimate \eqref{Gmax}
on each summand (neglecting that diagonal resolvent matrix entries $G_{aa}$
require a different estimate). However, we can in fact do better: the averaging over two indices gives rise to a cancellation of fluctuations, due to the weak correlations among the summands.
Since each averaging independently yields an extra factor $\Lambda$ as in \eqref{ave1} and \eqref{ave2}, it seems plausible that the naive estimate of order $\Lambda^4$ on \eqref{aver3} can be improved to $\Lambda^6$. 
This in fact turns out to be correct in the example \eqref{aver3}, but in general the principle that each averaging yields one extra $\Lambda$ factor is not optimal.
Depending on the structure of the monomial, the gain may be more than a single factor $\Lambda$ per averaged index.
For example, averaging in the index $a$ in the quantities
\be
  \text{(I)} \;\deq\; \sum_a  s_{\mu a} \pb{ G_{ba}G_{ab}^* - \E_a G_{ba}G_{ab}^*}
  \qquad \text{and} \qquad  \text{(II)} \;\deq\; \sum_a  s_{\mu a}\pb{G_{ba}G_{ab} - \E_a G_{ba}G_{ab}}
\label{34}
\ee
has different effects. The naive estimate using \eqref{Gmax} yields $\Lambda^2$
for both quantities, but (I) is in fact  of order $\Lambda^4$ while
the (II) is only of order $\Lambda^3$ (all estimates are understood
with high probability).

The reason behind the gain of a factor $\Lambda^2$ over the naive size in case of (I) is quite subtle.
  We already mentioned that the dependence
of $G_{ab}$ on the random variables in the $c$-th column is weak if $c \ne a,b$. 
This is manifested in the identity
\be\label{Gabc}
  G_{ab} \;=\; G_{ab}^{(c)} + \frac{G_{ac}G_{cb}}{G_{cc}}\,.
\ee
(This identity first appeared in \cite{EYY1}; see Lemma \ref{lemma: res id}
below for a precise statement and related formulas.)
Since $G_{ab}^{(c)}$ is independent of the $c$-th column, the $c$-dependence
of $G_{ab}$ is contained in the second term of \eqref{Gabc}.
This term  is naively of order $\Lambda^2$, i.e.\ smaller than the main term
(accepting that $G_{cc}$ in the denominator is harmless; in fact it turns out to be
bounded from above and below by universal positive constants).
 Computing the variance of (I) results in a double sum $\sum_a \sum_c$. We shall see 
that, since the first term of \eqref{Gabc} is independent of $c$, the leading order 
contribution to the variance 
in fact comes from the second term. This yields an improvement of one $\Lambda$ over the naive
bound $\Lambda^2$. These ideas lead to a bound of order $\Lambda^3$ for both (I) and (II).
The idea of using averaging to improve a trivial bound on resolvent
entries by  an extra factor $\Lambda$ 
 was  central in \cite{EYY2}. In that paper this idea was applied
to a specific quantity analogous to
\begin{equation}\label{diagaver}
 \frac 1 N \sum_a  (1 - \E_a) \frac { 1} { G_{aa}}\,.
\end{equation}

When we compute a high  moment of
the quantities in \eqref{34}, we successively use  formulas \eqref{Gabc}
and \eqref{GAB} and take partial expectation in the expanded indices.
The result is the average of a high-order monomial of resolvent matrix entries.
Whether this averaging reduces the naive size depends
on the precise structure of the monomial. For example,
\be
 \sum_c s_{\mu c} G_{bc}G_{cb}^* \;=\;  \sum_c s_{\mu c} |G_{bc}|^2 = O(\Lambda^2)
\label{GGst}
\ee
and this estimate is optimal, while
\be
\sum_c s_{\mu c} G_{bc} G_{cb} \;=\; O(\Lambda^3).
\label{GGst1}
\ee
It turns out that average of the high-order monomial obtained from 
computing a high moment of (I) in \eqref{34} 
contains several summations of the type \eqref{GGst1}, while
the analogous formula for (II) contains only summations of the type \eqref{GGst}
(at least to leading order).
Whether the additional gain is present or not depends on the precise structure of the 
original monomial, in particular on how many times the averaging index appears in an entry of $G$ or $G^*$. In this regard the expressions (I) and (II) differ, which is the reason why their sizes differ.
Our main result (Theorem~\ref{theorem: Z lemma}) expresses the precise 
relation between the maximal gain and the structure of the monomial. 
As it turns out, this dependence is quite subtle. The main purpose of this paper is to give a systematic rule, applicable to arbitrary monomials
 in the resolvent entries, which determines the gain from all indices over which an average is taken.
 In particular, averaging over certain indices yields an improvement of order $\Lambda^2$; this is a novel phenomenon. This observation is crucial
 in the application of our results to the problem of quantum diffusion in random band matrices \cite{EKYY3}.

Finally, we shortly explain the improvement from the naive size $\Lambda^2$ to $\Lambda^3$ for the left-hand side of \eqref{GGst1}.
It follows from the estimate of order $\Lambda^3$ on (II) in \eqref{34} and from the fact that $\E_c  G_{ac}G_{cb} = O(\Lambda^3)$ for any $a,b$.
That the expectation $\E_c  G_{ac}G_{cb}$ itself is smaller than its naive size $\Lambda^2$ may be seen by expanding $G_{ac}G_{cb}$ in the index $c$ using formulas of the type \eqref{GAB}.
It turns out that $\E_c  G_{ac}G_{cb}$, viewed as a vector  indexed by $c$
and keeping $a$ and $b$ fixed, satisfies a stable self-consistent
vector equation (see \eqref{vself}). The analysis of this equation leads to the improved bound on $\E_c G_{ac}G_{cb}$ of order $\Lambda^3$.

Bounds on averages of resolvents of random matrices have played an essential role
in establishing 
the  local semicircle law with an optimal error bound.  We recall that 
in the simplest case of Wigner matrices, where $s_{ij}=N^{-1}$, the trace
of the resolvent
$$
    m_N(z) \;\deq\; \frac{1}{N}\tr G(z) \;=\; \frac{1}{N}\sum_a G_{aa}(z)
$$
is well approximated by the Stieltjes transform of the celebrated Wigner semicircle law
$$
   m(z) \;\deq\; \frac{1}{2\pi}\int_{-2}^2 \frac{\sqrt{4-x^2}}{x-z} \, \dd x \,.
$$
The optimal bound is
\begin{equation} \label{old zlemma}
   \abs{m(z)-m_N(z)} \;\lesssim\; \frac{1}{N\eta}
\end{equation}
with high probability (see \cite{EYY3} for the precise statement and the
history of this result). One of the main steps in proving this optimal
bound is to exploit that $G_{aa}$ and $G_{a'a'}$ are only  weakly correlated
for $a\ne a'$. Hence the average of $G_{aa}$ in $a$ in the definition of $m_N(z)$ 
fluctuates on a smaller scale than the fluctuations of $G_{aa}$. 
Various forms of this fluctuation averaging  were formulated in
\cite {EYY2, EYY3, EKYY1}. 
They were the key inputs to prove \eqref{old zlemma} and
its analogue for the sample covariance matrices in \cite{PY}.
In Proposition~\ref{prop: warm-up},  we present a simple special case of 
our main result,  Theorem~\ref{theorem: Z lemma}. This proposition
yields generalizations of estimates analogous to the previous fluctuation averaging bounds
with a more streamlined proof. A somewhat different simplification was given in \cite{PY}.

On the one hand, Theorem~\ref{theorem: Z lemma} is more general than its predecessors
since it is applicable to arbitrary monomials in $G$ and $G^*$, and also holds for universal
 Wigner matrices with nonconstant variances.
On the other hand, and more importantly, Theorem~\ref{theorem: Z lemma} gives
a stronger bound because  it exploits the additional cancellation effect
explained in connection with the different bounds on the two quantities
in \eqref{34}. This extra cancellation mechanism was not present
in \cite{EYY2, EYY3, EKYY1, PY}.

In a separate paper \cite{EKYY3} 
we apply the stronger bound
\be\label{y1}
    \sum_a  s_{\mu a} \pb{ |G_{ab}|^2 - \E_a |G_{ab}|^2} \;=\; O(\Lambda^4)
\ee
to derive a lower bound on the localization length of random band matrices. 
Extensions of the methods of \cite{EYY2, EYY3, EKYY1, PY} would have yielded only
\be
    \sum_a  s_{\mu a} \pb{ |G_{ab}|^2 - \E_a |G_{ab}|^2} \;=\; O(\Lambda^3)\,.
\label{weaker}
\ee
Had we had only \eqref{weaker} available in \cite{EKYY3}, the resulting
estimate on the localization length would not have improved the previously known results \cite{EK1, EK2} on eigenvector delocalization.

We conclude this section with a roadmap of the paper. In Section \ref{sec: setup} we define our main objects and introduce notation used throughout the paper. Our main result is Theorem~\ref{theorem: Z lemma} 
in Section~\ref{sec:multZ}. Before stating it in full generality, we first present a  special case, Proposition~\ref{prop: examples}, in Section~\ref{section: examples}. In order to motivate the concepts underlying Theorem~\ref{theorem: Z lemma},
we not only state this special case but also give a sketch of its proof, in Section~\ref{sect: sketch of proof of examples}. This is done before the main theorem is stated. A reader who prefers an inductive presentation should follow our sections in sequential
order. A reader who wants to jump quickly to the main result may skip
Section~\ref{sect: sketch of proof of examples}. However, some concepts
introduced in Section~\ref{sect: sketch of proof of examples} are needed
later in the proof (but not in the statement) of Theorem~\ref{theorem: Z lemma}.
The full proof of Theorem \ref{theorem: Z lemma} is presented in Sections~\ref{section: Z3}--\ref{section: simplifications},
following Section~\ref{section: Z1} where we give an outline of the proof and explain how Sections~\ref{section: Z3}--\ref{section: simplifications} are related.

\section{Setup} \label{sec: setup}
Let $(h_{ij} \col i \leq j)$ be a family of independent, complex-valued random variables $h_{ij} \equiv h_{ij}^{(N)}$ satisfying
$\E h_{ij} = 0$  and $h_{ii} \in \R$ for all $i$.
For $i > j$ we define $h_{ij} \deq \ol h_{ji}$, and denote by $H \equiv H_N = (h_{ij})_{i,j = 1}^N$ the $N \times N$ matrix with entries $h_{ij}$. By definition, $H$ is Hermitian: $H = H^*$.
We abbreviate
\begin{equation} \label{variance of h}
s_{ij} \;\deq\; \E \abs{h_{ij}}^2\,, \qquad M \;\equiv\; M_N \;\deq\; \frac{1}{\max_{i, j} s_{ij}}\,.
\end{equation}
In particular, we have the bound
\begin{equation} \label{s leq W}
s_{ij} \;\leq\; M^{-1}
\end{equation}
for all $i$ and $j$. We introduce the $N \times N$ symmetric matrix $S \equiv S_N = (s_{ij})_{i,j = 1}^N$. We assume that $S$ is (doubly) stochastic:
\begin{equation} \label{S is stochastic}
\sum_j s_{ij} \;=\; 1
\end{equation}
for all $i$. We shall always assume the bounds
\begin{equation} \label{lower bound on W}
N^\delta \;\leq\; M \;\leq\; N
\end{equation}
for some fixed $\delta > 0$.

\begin{example}[Band matrix] \label{example: band matrix}
Fix $d \in \N$. Let $f$ be a bounded and symmetric (i.e.\ $f(x) = f(-x)$) probability density on $\R^d$.
Let $L$ and $W$ be integers satisfying
\begin{equation*}
L^{\delta'} \;\leq\; W \;\leq\; L
\end{equation*}
for some fixed $\delta' > 0$. Define the $d$-dimensional discrete torus
\begin{equation*}
\bb T^d_L \;=\; [-L/2, L/2)^d \cap \Z^d\,.
\end{equation*}
Thus, $\bb T^d_L$ has $N = L^d$ lattice points; and we may identify $\bb T_L^d$ with $\{1, \dots, N\}$. We define the canonical representative of $i \in \Z^d$ through
\begin{equation*}
[i]_L \;\deq\; (i + L \Z^d) \cap \bb T^d_L\,.
\end{equation*}
Then $H$ is a \emph{$d$-dimensional band matrix} with band width $W$ and profile function $f$ if
\begin{equation*}
s_{ij} \;=\; \frac{1}{Z_{L}} \, f \pbb{\frac{[i - j]_L}{W}}\,.
\end{equation*}
It is not hard to see that $M = \pb{W^d + O(W^{d - 1})} / \norm{f}_\infty$ as $L \to \infty$.
The rows and columns of $H$ are thus indexed by the lattice points  in $\bb T^d_L$,
i.e.\ they are equipped with the geometry of $\Z^d$. For $d=1$, assuming that $f$ is compactly supported, the matrix entry $h_{ij}$ vanishes if
$|i-j|$ is larger than $CW$, i.e.\ $H$ is a band matrix in the traditional sense.
\end{example}

It is often convenient to use the normalized entries
\begin{equation*}
\zeta_{ij} \;\deq\; (s_{ij})^{-1/2} h_{ij}\,,
\end{equation*}
which satisfy $\E \zeta_{ij} = 0$ and $\E \abs{\zeta_{ij}}^2 = 1$. (If $s_{ij} = 0$ we set for convenience $\zeta_{ij}$ to be a normalized Gaussian, so that these relations remain valid. Of course in this case the law of $\zeta_{ij}$ is immaterial.) We assume that the random variables $\zeta_{ij}$ have finite moments, uniformly in $N$, $i$, and $j$, in the sense that for all $p \in \N$ there is a constant $\mu_p$ such that
\begin{equation} \label{finite moments}
\E \abs{\zeta_{ij}}^p \;\leq\; \mu_p
\end{equation}
for all $N$, $i$, and $j$. We make this assumption to streamline notation in the statements of results such as Theorem \ref{theorem: Z lemma} and the proofs. In fact, our results hold, with the same proof, provided \eqref{finite moments} is valid for some large but fixed $p$. See Remark \ref{remark: finite moments} below for a more precise statement.

Throughout the following we use a spectral parameter $z \in \C$ satisfying $\im z > 0$. We shall use the notation
\begin{equation*}
z \;=\; E + \ii \eta
\end{equation*}
without further comment.
The Stieltjes transform of Wigner's semicircle law is defined by
\begin{equation} \label{definition of msc}
m \;\equiv\; m(z) \;\deq\; \frac{1}{2 \pi} \int_{-2}^2 \frac{\sqrt{4 - \xi^2}}{\xi - z} \, \dd \xi\,.
\end{equation}
To avoid confusion, we remark that the Stieltjes transform $m$ was denoted by $m_{sc}$ in the papers \cite{ESY1, ESY2, ESY3, ESY4,ESY5,ESY6, ESY7, ESYY, EYY1, EYY2, EYY3, EKYY1, EKYY2}, in which $m$ had a different meaning from \eqref{definition of msc}. It is well known that the Stieltjes transform $m$ satisfies the identity
\begin{equation} \label{identity for msc}
m(z) + \frac{1}{m(z)} + z \;=\; 0\,.
\end{equation}
We define the \emph{resolvent} of $H$ through
\begin{equation*}
G(z) \;\deq\; (H - z)^{-1}\,,
\end{equation*}
and denote its entries by $G_{ij}(z)$. 
We also write $G^*(z) \deq (G(z))^* = (H - \bar z)^{-1}$. We often drop 
the argument $z$ and write $G \equiv G(z)$ as well as $G^* \equiv G^*(z)$.

\begin{definition}[Minors] \label{def: minors}
For $ T \subset \{1, \dots, N\}$ we define $H^{( T)}$ by
\begin{equation*}
(H^{( T)})_{ij} \;\deq\; \ind{i \notin  T} \ind{j \notin  T} h_{ij}\,.
\end{equation*}
Moreover, we define the resolvent of $H^{( T)}$ through
\begin{equation*}
G^{( T)}_{ij}(z) \;\deq\;  (H^{( T)} - z)^{-1}_{ij}\,.
\end{equation*}
We also set
\begin{equation*}
\sum_i^{( T)} \;\deq\; \sum_{i \col i \notin  T}\,.
\end{equation*}
When $ T = \{a\}$, we abbreviate $(\{a\})$ by $(a)$ in the above definitions; similarly, we write $(ab)$ instead of $(\{a,b\})$.
\end{definition}

\begin{definition}[Partial expectation and independence] \label{definition: P Q}
Let $X \equiv X(H)$ be a random variable. For $i \in \{1, \dots, N\}$ define the operations $P_i$ and $Q_i$ through
\begin{equation*}
P_i X \;\deq\; \E(X | H^{(i)}) \,, \qquad Q_i X \;\deq\; X - P_i X\,.
\end{equation*}
We call $P_i$ \emph{partial expectation} in the index $i$.
Moreover, we say that $X$ is \emph{independent of $T \subset \{1, \dots, N\}$} if $X = P_i X$ for all $i \in T$.
\end{definition}

The following definition introduces a notion of a high-probability bound that is suited for our purposes.
\begin{definition}[Stochastic domination]\label{def:stocdom}
Let $X = \pb{X^{(N)}(u) \col N \in \N, u \in U^{(N)}}$ be a family of random variables, where $U^{(N)}$ is a possibly $N$-dependent parameter set. Let $\Psi = \pb{\Psi^{(N)}(u) \col N \in \N, u \in U^{(N)}}$ be a deterministic family satisfying $\Psi^{(N)}(u) \geq 0$. We say that $X$ is \emph{stochastically dominated by $\Psi$, uniformly in $u$,} if for all (small) $\epsilon > 0$ and (large) $D > 0$ we have
\begin{equation*}
\sup_{u \in U^{(N)}} \P \qB{\absb{X^{(N)}(u)} > N^\epsilon \Psi^{(N)}(u)} \;\leq\; N^{-D}
\end{equation*}
for large enough $N\ge N_0(\e, D)$. Unless stated otherwise, 
throughout this paper the stochastic 
domination will always be uniform in all parameters apart from the parameter $\delta$ in \eqref{lower bound on W} and the sequence of constants $\mu_p$ in \eqref{finite moments}; thus, $N_0(\e, D)$ also depends on $\delta$ and $\mu_p$.
If $X$ is stochastically dominated by $\Psi$, uniformly in $u$, we use the equivalent notations
\begin{equation*}
X \;\prec\; \Psi \qquad \text{and} \qquad X \;=\; O_\prec(\Psi)\,.
\end{equation*}
\end{definition}

For example, using Chebyshev's inequality and \eqref{finite moments} one easily finds that 
\begin{equation}\label{hsmallerW}
h_{ij} \;\prec\; (s_{ij})^{1/2} \;\prec\; M^{-1/2}\,,
\end{equation}
so that we may also write $h_{ij} = O_\prec((s_{ij})^{1/2})$.
The relation $\prec$ satisfies the familiar algebraic rules of order relations. For instance if $A_1 \prec \Psi_1$ and $A_2 \prec \Psi_2$ then $A_1 + A_2 \prec \Psi_1 + \Psi_2$ and $A_1 A_2 \prec \Psi_1 \Psi_2$. Moreover, if $A \prec \Psi$ and there is a constant $C > 0$ such that $\Psi \geq N^{-C}$ and $\abs{A} \leq N^C$ almost surely, then $P_i A \prec \Psi$ and $Q_i A \prec \Psi$. More general statements in this spirit are given in Lemma \ref{lemma: basic properties of prec} below.

Let $\gamma > 0$ be a fixed small positive constant and let $(\f S^{(N)})$ be a sequence of domains satisfying
\begin{equation*}
\f S^{(N)} \;\subset\; \hb{z \in \C \col -10 \leq E \leq 10\,,\; M^{-1 + \gamma} \leq \eta \leq 10}\,.
\end{equation*}
As usual, we shall systematically omit the index $N$ on $\f S$.

\begin{definition}
A positive $N$-dependent deterministic function $\Psi \equiv \Psi^{(N)}$ on $\f S$ is called a \emph{control parameter}.
The control parameter $\Psi$ is \emph{admissible} if there is a constant $c > 0$ such that
\begin{equation} \label{admissible Psi}
M^{-1/2} \;\leq\; \Psi(z) \;\leq\; M^{-c}
\end{equation}
for all $N$ and $z \in \f S$.
\end{definition}

In this paper we always consider families $X^{(N)}(u) = X^{(N)}_i(z)$ indexed by $u = (z,i)$, where $z \in \f S$ and $i$ takes on values in some finite (possibly $N$-dependent or empty) index set.

We slightly modify the definition \eqref{Gmax} to include a control
on the diagonal entries of $G$ in addition to the off-diagonal entries. For the rest of the paper, we define the ($z$-dependent) random variable
\begin{equation*}
\Lambda(z) \;\deq\; \max_{x,y} \absb{G_{xy}(z) - \delta_{xy} m(z)}\,.
\end{equation*}
The variable $\Lambda$ will play the role of a \emph{random} control parameter.
If $\Psi$ is an admissible control parameter, the lower bound on $\Psi$ in \eqref{admissible Psi} together with 
\eqref{hsmallerW} imply that
\begin{equation} \label{h prec Psi}
h_{ij} \;\prec\; \Psi\,.
\end{equation}

\section{Simple examples and ingredients of the proof} \label{section: examples}

In this section we give an informal overview of fluctuation averaging, by stating and sketching the proofs of a few simple, yet representative, cases. Our starting point will always be an admissible control parameter $\Psi$ that controls $\Lambda$, i.e.\ $\Lambda \prec \Psi$. In addition to $\Psi$, we introduce the secondary control parameter
\begin{equation} \label{definition of Phi}
\Phi \;\equiv\; \Phi_\Psi \;\deq\; \min \hb{ \varrho \pb{\Psi + M^{-1/2} \Psi^{-1}}\,,\, 1}\,,
\end{equation}
where we defined the coefficient\footnote{Here we use the notation
$ \norm{A}_{\ell^\infty \to \ell^\infty} =\max_{i} \sum_j |A_{ij}|$ for the operator norm on $\ell^\infty(\C^N)$.}
\begin{equation} \label{def of rho}
\varrho  \;\deq\;
\normb{(1 - m^2 S)^{-1}}_{\ell^\infty \to \ell^\infty} \,.
\end{equation}
Thus, $\Phi$ is defined in terms of the primary control parameter $\Psi$, although we usually do not indicate this explicitly.

\begin{remark}
We use the somewhat complicated definitions \eqref{definition of Phi} and \eqref{def of rho} because they emerge naturally from our argument, and do not require us to impose any further conditions on the matrix $H$ or the spectral parameter $z$. The parameter $\Phi$ will describe the gain associated with a charged vertex or a chain vertex; see Definitions \ref{def: charged} and \ref{def: chains} below.

In the motivating example of band matrices (Example \ref{example: band matrix}), the parameter $\Phi$ may be considerably simplified. Indeed, in that case there is a positive constant $C$ such that
\begin{equation}
\varrho \;\leq\; \frac{C \log N}{(\im m)^2}\,,
\end{equation}
as proved in Proposition \ref{prop: rho for band} below. For most applications, we are interested in the bulk spectrum of the band matrix, i.e.\ $E \in [-2 + \kappa,2 - \kappa]$ for some fixed $\kappa > 0$. 
In that case the relation $\im m(z) \asymp \sqrt{\eta + 2 - \abs{E}}$
(proved e.g.\ in \cite[Lemma 4.2]{EYY2}) 
yields $\im m \geq c$ for some positive constant $c$ depending on $\kappa$. We conclude that $1 \leq \varrho \leq C \log N$; the logarithmic factor in the upper bound is irrelevant, since $\Phi$ will always be used as a deterministic control parameter in Definition \ref{def:stocdom}. In summary: for the bulk spectrum of a band matrix, we may replace $\Phi$ with $\Psi + M^{-1/2} \Psi^{-1}$.

Moreover, in typical applications the imaginary part $\eta$ of the spectral parameter $z$ is small enough that $\Psi \geq M^{-1/4}$. In this case $\Phi$ and $\Psi$ are comparable (in the bulk spectrum), and hence interchangeable as control parameters in Definition \ref{def:stocdom}.
\end{remark}

\begin{remark} \label{rem: lower bound on Phi}
We have the lower bound
\begin{equation} \label{lower bound on Phi}
1/2 \;\leq\; |1 - m^2|^{-1} \;\leq\; \varrho\,,
\end{equation}
where the first inequality follows from \eqref{m is bounded} below, and the second from the identity $(1 - m^2 S)^{-1} \f e = (1 - m^2)^{-1} \f e$ with the vector $\f e \deq (1, \dots, 1)$.
We therefore have the bounds $\Psi \leq 2\Phi \leq 2$.
\end{remark}

In this section we sketch the proof of the following result.

\begin{proposition}[Simple examples] \label{prop: examples}
Suppose that $\Lambda \prec \Psi$ for some admissible control parameter $\Psi$. Then we have
\begin{equation}\label{GGnoQ}
\frac{1}{N} \sum_{a}^{(\mu)} G_{\mu a} G_{a \mu} \;\prec\; \Psi^2 \Phi \,, \qquad
\frac{1}{N} \sum_{a}^{(\mu)} G_{\mu a} G_{a \mu}^* \;\prec\; \Psi^2
\end{equation}
as well as
\begin{equation}\label{GGwithQ}
\frac{1}{N} \sum_{a}^{(\mu)} Q_a (G_{\mu a} G_{a \mu}) \;\prec\; \Psi^3 \,, \qquad
\frac{1}{N} \sum_{a}^{(\mu)} Q_a(G_{\mu a} G_{a \mu}^*) \;\prec\; \Psi^3 \Phi\,.
\end{equation}
In addition, we have the bounds
\begin{equation} \label{avgG}
\frac{1}{N} \sum_a (G_{aa} - m) \;\prec\; \Psi \Phi\,, \qquad \frac{1}{N} \sum_a Q_a G_{aa} \;\prec\; \Psi^2\,.
\end{equation}
\end{proposition}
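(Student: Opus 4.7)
My overall approach combines three ingredients: the resolvent identities of Lemma~\ref{lemma: res id} (Schur complement $G_{aa}^{-1} = h_{aa} - z - \sum_{ij}^{(a)} h_{ai} G_{ij}^{(a)} h_{ja}$ and the branching identity $G_{ab} = G_{ab}^{(c)} + G_{ac}G_{cb}/G_{cc}$), large deviation estimates for quadratic forms in the independent entries $(h_{ai})_i$, and the high-moment method combined with the self-consistent equation obtained by inverting $1 - m^2 S$. The $\Psi$-gain per $Q$-averaged index will come from the standard fluctuation-averaging mechanism that annihilates unique indices after conditioning, whereas the $\Phi$-gain always originates in the inversion of $1-m^2 S$, whose $\ell^\infty\to\ell^\infty$ norm is exactly $\varrho$. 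The second bound of \eqref{GGnoQ} is immediate because $G_{\mu a}G_{a\mu}^* = \abs{G_{\mu a}}^2 \prec \Psi^2$ pointwise in $a$, and requires no cancellation.

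For the diagonal bounds in \eqref{avgG}, I would define $Z_a \deq Q_a(G_{aa}^{-1})$ and use the LDM bound for centred quadratic forms to get $Z_a \prec \Psi$ individually. Taylor-expanding $G_{aa}$ around $m$ via the identity $m+m^{-1}+z=0$ and the Schur formula yields the approximate self-consistent equation $v_a = m^2 (Sv)_a - m^2 Z_a + \mathrm{error}_a$ with $v_a = G_{aa}-m$, where the error collects the diagonal term $h_{aa} = O_\prec(M^{-1/2})$ and the $O_\prec(\Psi^2)$ contributions produced by passing between $G_{ii}^{(a)}$ and $G_{ii}$ via the branching identity. Applying $(1-m^2 S)^{-1}$ to the constant vector $\f e$ gives the scalar $(1-m^2)^{-1}$, whose absolute value is at most $\varrho$, and so the first estimate of \eqref{avgG} follows from the fluctuation-averaging bound $\frac{1}{N}\sum_a Z_a \prec \Psi^2$ multiplied by $\varrho$, combined with $\varrho M^{-1/2}\cdot N^{-1/2}$ from the $h_{aa}$ average and $\varrho\Psi^2$ from the off-diagonal correction; each piece is bounded by $\Psi\Phi$. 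The second estimate follows by writing $Q_a G_{aa} = -m^2 Z_a + O_\prec(\Psi^3)$ and reusing $\frac{1}{N}\sum_a Z_a \prec \Psi^2$. The latter I would prove by evaluating $\E\absb{\frac{1}{N}\sum_a Z_a}^{2p}$: whenever an index $a_k$ occurs uniquely among $a_1,\dots,a_{2p}$, each other resolvent factor is split as $G_{a_j a_j} = G_{a_j a_j}^{(a_k)} + G_{a_j a_k}G_{a_k a_j}/G_{a_k a_k}$, after which $P_{a_k}$ annihilates the leading term and only the $O_\prec(\Psi^2)$ corrections survive, forcing every index to be matched at least twice.

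For the two bounds involving $G_{\mu a}G_{a\mu}$, I would Schur-expand both factors in row and column $a$ to obtain
\begin{equation*}
G_{\mu a}G_{a\mu} \;=\; G_{aa}^2 \sum_{ij}^{(a)} G_{\mu j}^{(a)}\,h_{ja}h_{ai}\,G_{i\mu}^{(a)}.
\end{equation*}
Using $G_{aa}^2 = m^2 + O_\prec(\Psi)$ and $P_a[h_{ja}h_{ai}] = s_{ai}\delta_{ij}$, and then replacing $G^{(a)}$ by $G$ via the branching identity at cost $O_\prec(\Psi^3)$, I arrive at the vector self-consistent equation $u_a = m^2(Su)_a + O_\prec(\Psi^3)$ for $u_a \deq G_{\mu a}G_{a\mu}$; the fluctuations $Q_a$ of the quadratic form are controlled by a separate fluctuation-averaging argument of the same type as for $Z_a$. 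Inverting $1-m^2 S$ and averaging over $a$ yields $\frac{1}{N}\sum_a^{(\mu)} u_a \prec \varrho(\Psi^3 + M^{-1/2}\Psi)$, which combined with the trivial bound $\Psi^2$ gives exactly $\Psi^2\Phi$. The $Q$-version $\frac{1}{N}\sum_a^{(\mu)} Q_a(G_{\mu a}G_{a\mu}) \prec \Psi^3$ is then handled by the same high-moment scheme as $\frac{1}{N}\sum_a Q_a G_{aa}$, and here only the $\Psi$-gain appears because $Q_a$ kills the deterministic part $m^2(Su)_a$ before any $(1-m^2 S)^{-1}$ can act.

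The most delicate bound is $\frac{1}{N}\sum_a^{(\mu)} Q_a(\abs{G_{\mu a}}^2) \prec \Psi^3\Phi$: the initial $Q_a$ produces only the $\Psi$-gain, but after Schur-expansion of $\abs{G_{\mu a}}^2$ the surviving fluctuation contains a sub-expression whose further partial expectation satisfies its own non-trivial self-consistent equation, and an additional $(1-m^2 S)^{-1}$ inversion inside the high-moment analysis delivers the extra $\Phi$. This is precisely the subtle mechanism distinguishing quantities (I) and (II) in \eqref{34} of the introduction, and it is the main obstacle of the proof; for arbitrary monomials the bookkeeping is encoded by the charged/chain-vertex structure of Theorem~\ref{theorem: Z lemma}, but in the present proposition it can be carried out by hand by combining the high-moment expansion with the self-consistent equation for the off-diagonal products $G_{\mu a}G_{a\mu}^*$.
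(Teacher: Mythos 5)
Your proposal follows the same architecture as the paper's Section~3.2 sketch: the Schur/branching identities to expand in the index $a$, the high-moment (linking) argument for the $\Psi$-gain from each $Q_a$, and a self-consistent equation inverted by $(1-m^2 S)^{-1}$ for the $\Phi$-gain, giving the bound $\min\{\Psi^2,\varrho(\Psi^3+M^{-1})\}\leq\Psi^2\Phi$. The second bound of \eqref{GGnoQ} is indeed trivial, and your treatment of \eqref{avgG} and the first bounds of \eqref{GGnoQ}--\eqref{GGwithQ} matches steps (A) and (B) of the paper in substance, modulo the usual informal writing of a ``vector'' self-consistent equation that in fact holds only after averaging (or after applying $P_a$ and averaging the $Q_a$-part separately) — a shorthand you clearly intend.

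One correction to your final paragraph, which describes the mechanism behind the hardest bound $\frac{1}{N}\sum_a Q_a(|G_{\mu a}|^2)\prec\Psi^3\Phi$. You attribute the extra $\Phi$ to ``the self-consistent equation for the off-diagonal products $G_{\mu a}G_{a\mu}^*$''. No such equation with a stable $(1-m^2 S)$ structure exists for the mixed-colour product $G_{\mu a}G_{a\mu}^* = |G_{\mu a}|^2$ — this is precisely why the second bound of \eqref{GGnoQ} carries no factor $\Phi$ (a chain vertex must have both incident edges of the \emph{same} colour, and the derivation of a stable self-consistent equation fails for $GG^*$ because the Ward-identity-like cancellation that gives $m^2 S$ on the right-hand side requires $m^2$, not $|m|^2$). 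The correct origin of the extra $\Phi$ is the \emph{vertex resolution} of Section~3.2.4: inside the high-moment expansion, after expanding the degree-four vertices $a$ and $b$ via Family B and taking the partial expectations $P_a P_b$, the Wick pairings produce fresh same-colour chains of the form $\sum_y s_{by} G^{(ab)}_{\mu y} G^{(ab)}_{yz}$ (see \eqref{most primitive chain}), and it is \emph{these} that satisfy the stable self-consistent equation of step (B) and deliver one $\Phi$ each. So the $(1-m^2 S)^{-1}$ inversion indeed happens inside the high-moment analysis as you say, but on a same-colour sub-chain created by the pairing, not on $|G_{\mu a}|^2$ itself.
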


\begin{remark} \label{rem: W in examples}
As explained after \eqref{definition of Phi}, typically $\Phi$ and $\Psi$ are comparable. In this case the right-hand sides of the estimates in \eqref{GGnoQ} can be replaced with $\Psi^3$ and $\Psi^2$, those of \eqref{GGwithQ} with $\Psi^3$ and $\Psi^4$, and those of \eqref{avgG} with $\Psi^3$ and $\Psi^3$. Thus we may keep track of the improving effect of the average using a simple power counting in the single parameter $\Psi$, replacing each $\Phi$ with a $\Psi$.
\end{remark}

The significance of Proposition \ref{prop: examples} is the following. The trivial bound $G_{a\mu}\prec \Psi$ (which follows immediately from $\Lambda \prec \Psi$)
implies, for example, that $\frac{1}{N} \sum_{a}^{(\mu)} G_{\mu a} G_{a \mu}\prec \Psi^2$.
The first estimate in \eqref{GGnoQ} represents an improvement from $\Psi^2$
to $\Psi^2 \Phi$. This improvement
is due to the averaging over the index $a$ of fluctuating quantities
with almost vanishing expectation. We shall refer to such vertices as \emph{charged}; see Definition \ref{def: charged} below.
In contrast, there is no
such improvement in the second estimate of \eqref{GGnoQ}, since
$G_{\mu a}G_{a\mu}^* = \abs{G_{\mu a}}^2$ is always positive. 
If we subtract the expectation (for technical reasons, we subtract only the
partial expectation, i.e.\ take $Q_a = 1 - P_a$), then the averaging becomes effective and it
improves the average of $G_{\mu a}G_{a\mu}^*$ by two orders, from $\Psi^2$ to $\Psi^3 \Phi$.
Interestingly, subtracting the expectation in the average of $G_{\mu a}G_{a\mu}$
does not improve the estimate further; compare the first bounds
in \eqref{GGnoQ} and \eqref{GGwithQ}. (In fact, we get the only slightly stronger bound $\Psi^3$ instead of $\Psi^2 \Phi$.) These examples indicate that
the improving effect of the averaging heavily depends on the structure of the resolvent monomials.

We shall be concerned with averages of more general expressions. Roughly, we consider arbitrary monomials in the resolvent entries $(G_{ij})$. Some of the indices are summed. The summation is always performed with respect to a \emph{weight}, a nonnegative quantity which sums to one. In the examples of Proposition \ref{prop: examples}, the weight was $N^{-1}$. Generally, we want to allow weights consisting of factors $N^{-1}$ as well as $s_{ij}$; recall that $\sum_{j} s_{ij} = \sum_j N^{-1} = 1$.  Thus, in addition to \eqref{GGnoQ}, \eqref{GGwithQ}, and \eqref{avgG} we have for example the bounds
\begin{equation} \label{examples with s}
\sum_{a}^{(\mu)} s_{\nu a} G_{\mu a} G_{a \mu} \;\prec\; \Psi^2 \Phi\,, \qquad \sum_{a}^{(\mu)} s_{\nu a} Q_a(G_{\mu a} G_{a \mu}^*) \;\prec\; \Psi^3 \Phi\,, \qquad \sum_{a} s_{\nu a} (G_{a a} - m) \;\prec\; \Psi \Phi\,.
\end{equation}
A slightly more involved average is
\begin{equation} \label{example Z-lemma object}
\sum_{a,b} s_{\mu a} s_{\rho b} \, Q_b\pb{G_{\mu a} G_{a b} G_{b \nu}^* G_{a b}^* G_{\nu a}}
\end{equation}
where $\mu$, $\nu$, and $\rho$ are fixed. In Theorem \ref{theorem: Z lemma} 
we shall see that \eqref{example Z-lemma object} is stochastically dominated by $\Psi^6 \Phi^2$.
This means that the double averaging and the effect of one $Q$-operation amounts
to an improvement of a power three, from the trivial bound $\Psi^5$ to $\Psi^6 \Phi^2$. 
It may be tempting to think that each average and each factor $Q$ improves the
trivial bound by one power of $\Psi$ or $\Phi$, but this naive rule already fails in
the some of the simplest examples in \eqref{GGnoQ} and \eqref{GGwithQ}. The relation
between the averaging structure and the improved power of $\Psi$ and $\Phi$ is
more intricate.  Our final goal (see Theorem \ref{theorem: Z lemma}) is to establish an
optimal result for general monomials, which takes into account the precise
effect of all averages.

More generally, we shall be interested in averaging arbitrary monomials $\cal Z_{\f a}$ in the resolvent entries. Each such monomial contains a family of \emph{summation indices} $\f a$ and \emph{external indices} $\f \mu$. In the example \eqref{example Z-lemma object}, we have
\begin{equation} \label{example Z}
\cal Z_{\f a} \;=\; G_{\mu a} G_{a b} G_{b \nu}^* G_{a b}^* G_{\nu a} \,, \qquad \f a = (a,b)\,, \qquad \f \mu = (\mu, \nu)\,.
\end{equation} 
The most convenient way to define such a monomial $\cal Z_{\f a}$ is using a graph. The vertices are associated with the summation and external indices, and a resolvent entry $G_{xy}$ is represented as a directed edge from vertex $x$ to vertex $y$. We draw an edge associated with a resolvent entry $G_{xy}$ with a solid line, and an edge associated with a resolvent entry $G_{xy}^*$ with a dashed line. See Figure \ref{figure: simple graph example}. As it turns out, the gain in powers of $\Psi$ resulting from the averaging has a simple expression in terms of such graphs. Moreover, this graphical representation is a key tool in our proofs.
\begin{figure}[ht!]
\begin{center}
\includegraphics{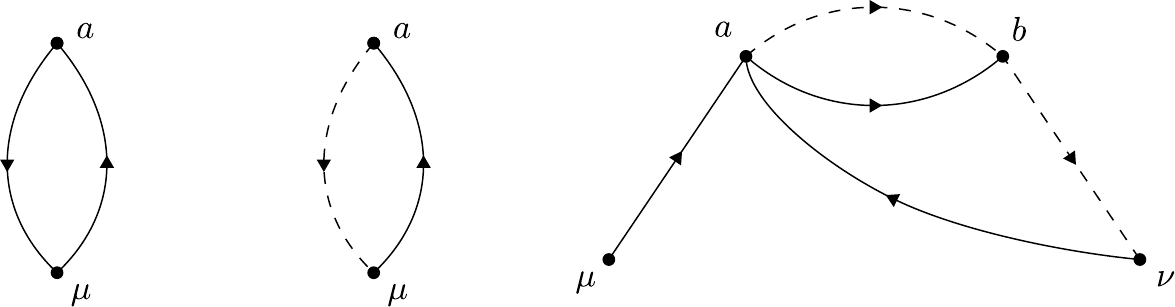}
\end{center}
\caption{Graphs associated with the monomials (from left to right) $G_{\mu a} G_{a \mu}$, $G_{\mu a} G_{a \mu}^*$, and $G_{\mu a} G_{a b} G_{b \nu}^* G_{a b}^* G_{\nu a}$ (from \eqref{example Z}). \label{figure: simple graph example}}
\end{figure}

Note that neither the $Q$-factors nor the averaging weights are encoded in the graphical structure. Later we shall give
a more precise definition of the class of weights we consider, but as an orientation
to the reader, we emphasize that they play a secondary role. As long as
the weights ensure an effective averaging over at least $M$ values of each summation
index, their final role is simply accounted for in the additional factor $M^{-1/2} \Psi^{-1}$ in the definition of $\Phi$.
The key improvement on the power of $\Psi$ in the final estimate is solely determined
by the structure of $\cal Z_{\f a}$ and by the locations of the $Q$-factors.

\subsection{Preliminaries}
In this subsection we collect some basic facts that will be used throughout the paper. We use $C$ to denote a generic large positive constant, which may depend on some fixed parameters and whose value may change from one expression to the next. For two positive quantities $A_N$ and $B_N$ we use the notation $A_N \asymp B_N$ to mean $C^{-1} A_N \leq B_N \leq C A_N$.

\begin{lemma} \label{lemma: msc}
There is a constant $c > 0$ such that for $E \in [-10, 10]$ and $\eta \in (0, 10]$
\begin{equation} \label{m is bounded}
c \;\leq\; \abs{m(z)} \;\leq\; 1\,.
\end{equation}
\end{lemma}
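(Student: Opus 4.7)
The plan is to read everything off the self-consistent identity \eqref{identity for msc}, i.e., $m^2 + zm + 1 = 0$, combined with the positivity of $\im m(z)$ for $\im z > 0$. The positivity is immediate from the integral representation \eqref{definition of msc}: writing $z = E + \ii\eta$,
\begin{equation*}
\im m(z) \;=\; \int_{-2}^{2} \frac{\eta}{(\xi - E)^2 + \eta^2} \cdot \frac{\sqrt{4 - \xi^2}}{2\pi} \, \dd \xi \;>\; 0 \for \eta > 0\,.
\end{equation*}
Also, $m$ never vanishes on the upper half-plane: if $m(z) = 0$, the identity collapses to $0 = -1$. Hence $1/m$ is well defined and \eqref{identity for msc} may be recast as $m + 1/m = -z$.

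For the upper bound, I would write $m = a + \ii b$ with $b = \im m > 0$ and take imaginary parts of $m + 1/m = -z$. A direct computation gives
\begin{equation*}
b \pB{1 - |m|^{-2}} \;=\; -\eta\,.
\end{equation*}
Since both $b$ and $\eta$ are strictly positive, this forces $1 - |m|^{-2} < 0$, i.e., $|m| < 1$.

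For the lower bound, rearrange \eqref{identity for msc} as $1/m = -z - m$, and apply the triangle inequality together with the upper bound just obtained:
\begin{equation*}
|m|^{-1} \;=\; |z + m| \;\leq\; |z| + |m| \;\leq\; |z| + 1\,.
\end{equation*}
On the parameter range $E \in [-10, 10]$, $\eta \in (0, 10]$ we have $|z| \leq \sqrt{200}$, so $|m(z)| \geq (1 + \sqrt{200})^{-1}$, and any $c \in (0, (1+\sqrt{200})^{-1}]$ satisfies the claim.

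No step presents a real obstacle; the whole lemma is a short consequence of the quadratic identity and $\im m > 0$. The only care needed is to justify division by $m$, which is handled by the one-line argument above that $m$ cannot vanish.
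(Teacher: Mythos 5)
Your proof is correct and complete. The paper itself offers no argument here, only a citation to Lemma 4.2 of \cite{EYY2}, so your self-contained derivation is a genuine improvement in readability for anyone who does not want to chase the reference. Your route is the clean one: positivity of $\im m$ on the upper half-plane follows directly from the Stieltjes-transform representation \eqref{definition of msc}, the non-vanishing of $m$ is immediate from \eqref{identity for msc}, and the upper bound $|m|<1$ drops out of taking imaginary parts in $m + 1/m = -z$ (one gets $\im m\,(1 - |m|^{-2}) = -\eta$, forcing $|m|<1$ since both $\im m$ and $\eta$ are positive). The lower bound then comes from $|m|^{-1} = |z+m| \leq |z| + 1$ and the compactness of the parameter range. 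The alternative proofs in the cited literature typically proceed from the explicit formula $m(z) = \tfrac{1}{2}(-z + \sqrt{z^2 - 4})$ with the appropriate branch of the square root, which requires some bookkeeping to control near the edges $z = \pm 2$; your argument avoids that entirely and works uniformly over the stated rectangle. No gaps.
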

\begin{proof}
See Lemma 4.2 in \cite{EYY2}. 
\end{proof}

The following lemma collects basic algebraic properties of stochastic domination $\prec$.

\begin{lemma} \label{lemma: basic properties of prec}
\begin{enumerate}
\item
Suppose that $X(u,v) \prec \Psi(u,v)$ uniformly in $u \in U$ and $v \in V$. If $\abs{V} \leq N^C$ for some constant $C$ then
\begin{equation*}
\sum_{v \in V} X(u,v) \;\prec\; \sum_{v \in V} \Psi(u,v)
\end{equation*}
uniformly in $u$.
\item
Suppose that $X_{1}(u) \prec \Psi_{1}(u)$ uniformly in $u$ and $X_{2}(u) \prec \Psi_{2}(u)$ uniformly in $u$. Then
\begin{equation*}
X_{1}(u) X_{2}(u) \;\prec\; \Psi_{1}(u) \Psi_{2}(u)
\end{equation*}
uniformly in $u$.
\item
Suppose that $\Psi(u) \geq N^{-C}$ for all $u$ and that for all $p$ there is a constant $C_p$ such that $\E \abs{X(u)}^p \leq N^{C_p}$ for all $u$. Then, provided that $X(u) \prec \Psi(u)$ uniformly in $u$, we have
\begin{equation*}
P_a X(u) \;\prec\; \Psi(u) \qquad \text{and} \qquad Q_a X(u) \;\prec\; \Psi(u)
\end{equation*}
uniformly in $u$ and $a$.
\end{enumerate}
\end{lemma}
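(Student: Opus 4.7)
The plan is to treat the three parts separately, in increasing order of difficulty.

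For part (i), I would reduce to a union bound. Fix $\epsilon > 0$ and $D > 0$. By hypothesis, for every $(u,v)$ and $N$ large enough, $\P[\abs{X(u,v)} > N^\epsilon \Psi(u,v)] \leq N^{-(D+C)}$. Taking the union over $v \in V$ (which has at most $N^C$ elements), the event $\cal E = \hb{\exists v \in V \col \abs{X(u,v)} > N^\epsilon \Psi(u,v)}$ satisfies $\P(\cal E) \leq N^{-D}$. On the complement $\cal E^c$, the triangle inequality gives $\absb{\sum_v X(u,v)} \leq \sum_v \abs{X(u,v)} \leq N^\epsilon \sum_v \Psi(u,v)$. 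Uniformity in $u$ is preserved since the constants $N_0(\epsilon,D)$ from the hypothesis are uniform in $(u,v)$.

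For part (ii), I would simply split the exponent. Given $\epsilon,D > 0$, apply the hypothesis to $X_1$ and $X_2$ with parameters $\epsilon/2$ and $D$. On the intersection of the two good events, which has probability at least $1 - 2 N^{-D}$, the bound $\abs{X_1 X_2} \leq (N^{\epsilon/2} \Psi_1)(N^{\epsilon/2} \Psi_2) = N^\epsilon \Psi_1 \Psi_2$ holds.

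Part (iii) is the main obstacle, because stochastic domination gives no information on the tail behaviour of $X$ on a small-probability set. The strategy is to upgrade the pointwise domination to an $L^p$-estimate using the moment bound $\E \abs{X(u)}^p \leq N^{C_p}$. Concretely, I would show the following intermediate claim: for every fixed $p \in \N$ and $\epsilon > 0$, and for $N \geq N_0(p,\epsilon)$,
\begin{equation*}
\E \absb{X(u)}^p \;\leq\; N^{p \epsilon} \Psi(u)^p\,,
\end{equation*}
uniformly in $u$. This follows by writing $\E \abs{X}^p = \E \abs{X}^p \ind{\abs{X} \leq N^{\epsilon/2} \Psi} + \E \abs{X}^p \ind{\abs{X} > N^{\epsilon/2} \Psi}$, bounding the first term by $N^{p\epsilon/2} \Psi^p$, and controlling the second via Cauchy--Schwarz: $\E \abs{X}^p \ind{\abs{X} > N^{\epsilon/2} \Psi} \leq (\E \abs{X}^{2p})^{1/2} \P(\abs{X} > N^{\epsilon/2} \Psi)^{1/2} \leq N^{C_{2p}/2} N^{-D/2}$, where $D$ is chosen so large (depending on $p$, $C_{2p}$, and the lower-bound constant $C$ of $\Psi \geq N^{-C}$) that this last quantity is smaller than $N^{p\epsilon/2}\Psi^p$.

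With this moment bound in hand, Jensen's conditional inequality gives $\abs{P_a X}^p = \absb{\E(X \mid H^{(a)})}^p \leq P_a \abs{X}^p$, so $\E \abs{P_a X(u)}^p \leq \E \abs{X(u)}^p \leq N^{p\epsilon}\Psi(u)^p$. Markov's inequality then yields
\begin{equation*}
\P\qb{\abs{P_a X(u)} > N^{2\epsilon} \Psi(u)} \;\leq\; \frac{\E \abs{P_a X(u)}^p}{N^{2p\epsilon} \Psi(u)^p} \;\leq\; N^{-p\epsilon}\,,
\end{equation*}
and choosing $p$ with $p\epsilon \geq D$ establishes $P_a X(u) \prec \Psi(u)$ uniformly in $u$ and $a$ (uniformity in $a$ is immediate since $a$ ranges over a set of size $N$ and the constants are independent of $a$). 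The bound $Q_a X(u) = X(u) - P_a X(u) \prec \Psi(u)$ then follows from parts (i) and the bound on $P_a X$ (with $V$ of size $2$). The key conceptual point is that the polynomial moment assumption is precisely what rules out a pathological rare event of size $N^{-D}$ on which $X$ could be astronomically large and thereby contaminate its conditional expectation.
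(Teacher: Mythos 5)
Your proof is correct and follows exactly the strategy the paper indicates: a union bound for (i) and (ii), and for (iii) a high-moment estimate combined with Jensen's inequality for conditional expectation and Markov's (Chebyshev's) inequality, with the moment hypothesis used precisely to control the contribution of the low-probability exceptional event. This fills in the details the paper explicitly omits, so there is nothing to reconcile.
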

\begin{proof}
The claims (i) and (ii) follow from a simple union bound. The claim (iii) follows from Chebyshev's inequality, using a high-moment estimate combined with Jensen's inequality for partial expectation. We omit the details.
\end{proof}

We shall frequently make use of Schur's well-known complement formula, which we write as
\begin{equation} \label{schur}
\frac{1}{G_{ii}^{(T)}} \;=\; h_{ii} - z - \sum_{k,l}^{(T i)} h_{ik} G_{kl}^{(T i)} h_{li}\,,
\end{equation}
where $i \notin T \subset \{1, \dots, N\}$.

The following resolvent identities form the backbone of all of our calculations. The idea behind them is that a resolvent  matrix entry $G_{ij}$
depends strongly on the $i$-th and $j$-th columns of $H$, but weakly on all other columns. The first set of identities (called Family A) determines how to make
a resolvent matrix entry $G_{ij}$ independent of an additional index $k \neq i,j$.
The second set of identities (Family B) expresses the dependence of a resolvent matrix entry $G_{ij}$ on the matrix entries in the $i$-th or in the $j$-th column of $H$.

\begin{lemma}[Resolvent identities] \label{lemma: res id}
For any Hermitian matrix $H$ and $T \subset \{1, \dots, N\}$ the following identities hold.
\begin{description}
\item[Family A.] For $i,j,k \notin T$ and $k \neq i,j,$ we have
\begin{equation} \label{resolvent expansion type 1}
G_{ij}^{(T)} \;=\; G_{ij}^{(Tk)} + \frac{G_{ik}^{(T)} G_{kj}^{(T)}}{G_{kk}^{(T)}}\,, \qquad \frac{1}{G_{ii}^{(T)}} \;=\; \frac{1}{G_{ii}^{(Tk)}} - \frac{G_{ik}^{(T)} G_{ki}^{(T)}}{G_{ii}^{(T)} G_{ii}^{(Tk)} G_{kk}^{(T)}}\,.
\end{equation}
\item[Family B.]
For $i,j \notin T$ satisfying $i \neq j$ we have
\begin{subequations}
\label{resolvent expansion type 2}
\begin{align} \label{res exp 2b}
G_{ij}^{(T)} &\;=\; - G_{ii}^{(T)} \sum_{k}^{(Ti)} h_{ik} G_{kj}^{(Ti)} \;=\; - G_{jj}^{(T)} \sum_k^{(Tj)} G_{ik}^{(Tj)} h_{k j}
\\ \label{res exp 2b iterated}
G_{ij}^{(T)} &\;=\;  G_{ii}^{(T)} G_{jj}^{(Ti )} \pbb{- h_{ij} + \sum_{k,l}^{(Tij)} h_{ik} G_{kl}^{(Tij)} h_{lj}}\,,
\\ \label{res exp 2c}
\frac{1}{G^{(T)}_{ii}} &\;=\; \frac{1}{m} - \pb{- h_{ii} + Z_i^{(T)} + U_i^{(Ti)}}\,,
\end{align}
\end{subequations}
 where we defined
\begin{equation} \label{def of Zi and Ji}
Z_i^{(T)} \;\deq\; Q_i \sum_{k,l}^{(T i)} h_{ik} G_{kl}^{(T i)} h_{li} \,, \qquad U_i^{(S)} \;\deq\; \sum_{k}^{(S)} s_{ik} G_{kk}^{(S)} - m\,.
\end{equation}
\end{description}
\end{lemma}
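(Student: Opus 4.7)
The plan is to reduce every identity to a single primitive: block-matrix inversion applied to $H^{(T)}$ with one index split off. All identities are algebraic relations among the entries of $G$ (the operations $P_i, Q_i$ in the definition of $Z_i$ act pointwise on realisations), so by relabelling it suffices to treat $T = \emptyset$: the general identities then follow by applying the $T = \emptyset$ version to the Hermitian matrix $H^{(T)}$ and using $(H^{(T)})^{(S)} = H^{(T \cup S)}$.

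For Family B, I would isolate the $i$-th row and column of $H - z$, writing
\begin{equation*}
H - z \;=\; \begin{pmatrix} h_{ii} - z & \f h_i^* \\ \f h_i & H^{(i)} - z \end{pmatrix}, \qquad \f h_i \;=\; (h_{ki})_{k \neq i}\,.
\end{equation*}
The scalar Schur complement yields $1/G_{ii} = h_{ii} - z - \f h_i^* (H^{(i)} - z)^{-1} \f h_i$, which is \eqref{schur}. Splitting the quadratic form as $Q_i + P_i$ and using $\E h_{ik} \overline{h_{il}} = \delta_{kl} s_{ik}$ (which holds since $G^{(i)}$ is independent of column $i$) gives
\begin{equation*}
\sum_{k,l}^{(i)} h_{ik} G^{(i)}_{kl} h_{li} \;=\; Z_i \,+\, \sum_k^{(i)} s_{ik} G^{(i)}_{kk} \;=\; Z_i + m + U_i^{(i)}\,,
\end{equation*}
and combining this with the identity $-z - m = 1/m$ from \eqref{identity for msc} yields \eqref{res exp 2c}. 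The off-diagonal $(i,j)$-block of the same inversion reads $G_{ij} = -G_{ii} \sum_k^{(i)} h_{ik} G^{(i)}_{kj}$, which is the first half of \eqref{res exp 2b}; the second half follows by splitting off row/column $j$ instead. For \eqref{res exp 2b iterated}, I would split the $k$-sum in \eqref{res exp 2b} at $k = j$: the singular term contributes $-G_{ii} G^{(i)}_{jj} h_{ij}$, and on the remaining terms I apply the second half of \eqref{res exp 2b} (at $T = (i)$) to expand $G^{(i)}_{kj}$, producing the double sum $\sum_{k,l}^{(ij)} h_{ik} G^{(ij)}_{kl} h_{lj}$ with the prefactor $G_{ii} G^{(i)}_{jj}$.

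For Family A, I would instead separate index $k$. Block inversion then identifies the restriction of $G$ to indices $\neq k$ with the inverse of the rank-one perturbation $(H^{(k)} - z) - \f h_k (h_{kk} - z)^{-1} \f h_k^*$ of $H^{(k)} - z$. The Sherman--Morrison formula gives
\begin{equation*}
G_{ij} \;=\; G^{(k)}_{ij} \,+\, \frac{(G^{(k)} \f h_k)_i \, (\f h_k^* G^{(k)})_j}{(h_{kk} - z) - \f h_k^* G^{(k)} \f h_k}\,.
\end{equation*}
By \eqref{schur} the denominator equals $1/G_{kk}$, and by \eqref{res exp 2b} the two numerator contractions equal $-G_{ik}/G_{kk}$ and $-G_{kj}/G_{kk}$, so the fraction collapses to $G_{ik} G_{kj}/G_{kk}$; this proves the first identity of \eqref{resolvent expansion type 1}. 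The second identity in \eqref{resolvent expansion type 1} is then purely algebraic: specialise $j = i$ in the first, subtract $G^{(k)}_{ii}$, and divide through by $G_{ii} G^{(k)}_{ii}$. There is no substantive obstacle in the proof; the only care required is bookkeeping the nested index sets $(T), (Ti), (Tij)$ and, for \eqref{res exp 2c}, combining Schur with \eqref{identity for msc} cleanly enough to recognise the precise form $1/m - (-h_{ii} + Z_i^{(T)} + U_i^{(Ti)})$ on the right-hand side.
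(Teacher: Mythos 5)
Your proposal is correct and gives a fully self-contained derivation. The paper's proof merely cites Lemma 4.2 of \cite{EYY1} for the first Family~A identity and Lemma 6.10 of \cite{EKYY2} for \eqref{res exp 2b}, and only sketches how \eqref{res exp 2b iterated} and \eqref{res exp 2c} follow; you supply the actual linear algebra. Two minor observations on the comparison. First, your logical ordering (prove Family~B from the Schur decomposition first, then use it to simplify the Sherman--Morrison contractions for Family~A) is a clean way to organize the argument; the fact that the first identity in \eqref{resolvent expansion type 1} ends up depending on \eqref{res exp 2b} is harmless because nothing circles back. Second, your reduction to $T = \emptyset$ relies on the paper's convention $(H^{(T)})_{ij} = \ind{i\notin T}\ind{j\notin T}h_{ij}$ being an $N\times N$ matrix rather than a literal submatrix; this works because $H^{(T)}-z$ is block-diagonal with respect to the partition $T \sqcup T^c$, so that for $i,j\notin T$ the entries $G^{(T)}_{ij}$ coincide with those of the inverse of the submatrix. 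Your identity $(H^{(T)})^{(S)}=H^{(T\cup S)}$ is an easy check with this definition, and the row vector $\f h_i$ for the split of $H^{(T)}$ automatically has zero entries at $T$, which is why the index ranges $(Ti)$, $(Tij)$ come out as stated. Everything else checks out: the split $1=Q_i+P_i$ with $P_i h_{ik}\bar h_{il}=\delta_{kl}s_{ik}$ gives \eqref{res exp 2c} after using $-z-m=1/m$; splitting off the $k=j$ term and applying the second form of \eqref{res exp 2b} at level $(i)$ gives \eqref{res exp 2b iterated}; and specializing $j=i$ in the first Family~A identity and dividing by $G_{ii}G^{(k)}_{ii}$ gives the second.
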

\begin{proof}
The first identity of \eqref{resolvent expansion type 1} was proved in Lemma 4.2 of \cite{EYY1}. The second identity of \eqref{resolvent expansion type 1} is an immediate consequence of the first. The identities \eqref{res exp 2b} were proved in Lemma 6.10 of \cite{EKYY2}, and \eqref{res exp 2b iterated} follows by iterating \eqref{res exp 2b} twice. Finally, \eqref{res exp 2c} (together with \eqref{def of Zi and Ji}) follows easily from \eqref{schur}, \eqref{identity for msc}, the partition $1 = Q_i + P_i$, and the definition \eqref{variance of h}.
\end{proof}

Next, we record a simple estimate on resolvent entries of minors. For $T \subset \{1, \dots, N\}$ define the random variable
\begin{equation*}
\Lambda^{(T)}(z) \;\deq\; \max_{i,j \notin T} \absB{G_{ij}^{(T)}(z) - \delta_{ij} m(z)}\,.
\end{equation*}
\begin{lemma}[Bound on $\Lambda^{(T)}$] \label{lemma: Lambda T}
Suppose that $\Lambda \prec \Psi$ for some admissible control parameter $\Psi$. 
Then for any fixed $\ell \in \N$ we have
\begin{equation} \label{rough bound 2}
\Lambda^{(T)} \;\prec\; \Psi
\end{equation}
provided that $\abs{T} \leq \ell$. (The threshold $N_0(\e, D)$
in Definition~\ref{def:stocdom}  may also depend on $\ell$).
\end{lemma}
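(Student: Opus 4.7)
The plan is to argue by induction on $|T|$, with the base case $|T|=0$ being just the hypothesis $\Lambda\prec\Psi$. For the inductive step, suppose the conclusion holds whenever $|T'|\le\ell-1$. Fix $T$ with $|T|=\ell$, pick any $k\in T$, and set $T'\deq T\setminus\{k\}$. The natural tool is the first identity of Family A in Lemma \ref{lemma: res id}, which I would rearrange as
\begin{equation*}
G_{ij}^{(T)} \;=\; G_{ij}^{(T')} \;-\; \frac{G_{ik}^{(T')}\,G_{kj}^{(T')}}{G_{kk}^{(T')}}
\end{equation*}
for $i,j\notin T$ (so in particular $i,j\neq k$ and $i,j\notin T'$).

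By the inductive hypothesis, $\Lambda^{(T')}\prec\Psi$; hence $G_{ij}^{(T')}-\delta_{ij}m\prec\Psi$ uniformly in $i,j\notin T'$, and in particular the off-diagonal entries $G_{ik}^{(T')}$ and $G_{kj}^{(T')}$ are each $\prec\Psi$. The first summand above is therefore already of the desired form. The second summand requires controlling the reciprocal $1/G_{kk}^{(T')}$. Here I would use Lemma \ref{lemma: msc}, which gives $|m|\ge c>0$, together with admissibility $\Psi\le M^{-c}\le N^{-c\delta}$. For $\epsilon$ sufficiently small, $N^\epsilon\Psi\le|m|/2$ for large $N$; thus on the high-probability event $\{|G_{kk}^{(T')}-m|\le N^\epsilon\Psi\}$ we have $|G_{kk}^{(T')}|\ge|m|/2$, yielding $1/G_{kk}^{(T')}\prec 1$. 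Multiplying this with the bounds on the two off-diagonal factors gives $G_{ik}^{(T')}G_{kj}^{(T')}/G_{kk}^{(T')}\prec\Psi^2\prec\Psi$.

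Combining the two contributions and invoking the algebraic rules for $\prec$ (Lemma \ref{lemma: basic properties of prec}), we obtain $|G_{ij}^{(T)}-\delta_{ij}m|\prec\Psi$ uniformly in $i,j\notin T$. Since the number of pairs $(i,j)$ and the number of sets $T$ with $|T|\le\ell$ are both bounded by a fixed power of $N$, taking the maximum preserves stochastic domination by the union bound (part (i) of Lemma \ref{lemma: basic properties of prec}), so $\Lambda^{(T)}\prec\Psi$. Iterating $\ell$ times completes the induction; the implicit threshold $N_0(\epsilon,D)$ indeed depends on $\ell$ because each step costs a polynomial factor absorbed into $N^\epsilon$.

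I do not anticipate a real obstacle here: the only delicate point is the boundedness of $1/G_{kk}^{(T')}$, which is where both the lower bound on $|m|$ and the admissibility condition $\Psi\le M^{-c}$ on the control parameter are genuinely used. Everything else is mechanical application of the resolvent identity and the closure properties of $\prec$.
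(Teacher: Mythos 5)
Your proof is correct and follows essentially the same route as the paper's: induction on $\abs{T}$ via the first Family A identity $G_{ij}^{(T)} = G_{ij}^{(T')} - G_{ik}^{(T')}G_{kj}^{(T')}/G_{kk}^{(T')}$, with the lower bound $\abs{m}\geq c$ from Lemma~\ref{lemma: msc} and admissibility $\Psi\leq M^{-c}$ used to control $1/G_{kk}^{(T')}$ at each step. The only cosmetic difference is that the paper fixes a single high-probability event $\Xi$ up front and carries an explicit deterministic constant $C_\ell$ through the induction, whereas you invoke the algebraic closure rules for $\prec$ at each of the $\ell$ steps; for a fixed finite $\ell$ these are interchangeable.
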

\begin{proof}
See Appendix \ref{section: proof of res id}.
\end{proof}

In particular, if $\Lambda \prec \Psi$ for some admissible $\Psi$, then Lemmas \ref{lemma: Lambda T} and \ref{lemma: msc} imply that for any fixed $\ell \in \N$ we have
\begin{equation} \label{1/G prec 1}
\frac{1}{G_{ii}^{(T)}} \;\prec\; 1
\end{equation}
provided that $\abs{T} \leq \ell$. We conclude this section with rough bounds on the entries of $G$, which will be used to deal with exceptional, low-probability events. 
\begin{lemma}[Rough bounds on $G$] \label{lemma: rough bounds on G}
Suppose that $\Lambda \prec \Psi$ for some admissible control parameter $\Psi$.
\begin{enumerate}
\item
We have
\begin{equation} \label{rough bound 1}
\absb{G_{ij}^{(T)}(z)} \;\leq\; \eta^{-1} \; \leq \; M
\end{equation}
for all $z \in \f S$, $T \subset \{1, \dots, N\}$, and $i,j \notin T$.
\item
For every $p \in \N$ and $\ell \in \N$ there is a constant $C_{p,\ell}$ such that
\begin{equation} \label{rough bound 3}
\E \absB{1/G_{ii}^{(T)}(z)}^p \;\leq\; C_{p,\ell}
\end{equation}
for all $T \subset \{1, \dots, N\}$ satisfying $\abs{T} \leq \ell$, all $z \in \f S$, and all $i \notin T$.
\end{enumerate}
\end{lemma}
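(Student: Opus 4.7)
Part (i) is purely deterministic: since $H^{(T)}$ is Hermitian, the spectral theorem gives the operator norm bound $\|(H^{(T)}-z)^{-1}\|_{\mathrm{op}}\le \eta^{-1}$, hence $|G^{(T)}_{ij}(z)|\le \eta^{-1}$. The second inequality $\eta^{-1}\le M$ is immediate from the definition of $\f S$, which enforces $\eta\ge M^{-1+\gamma}$.

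Part (ii) is more delicate because $G^{(T)}_{ii}$ is not deterministically bounded away from zero, so one cannot simply invoke $\Lambda^{(T)}\prec \Psi$. My plan is a two-scale argument: combine an a priori polynomial bound on all $L^q$-norms of $1/G^{(T)}_{ii}$ with a much sharper bound valid on a very-high-probability event. For the polynomial bound I would use the Schur complement formula stated in \eqref{schur},
\[
\frac{1}{G_{ii}^{(T)}} \;=\; h_{ii} - z - \sum_{k,l}^{(Ti)} h_{ik}\,G^{(Ti)}_{kl}\,h_{li},
\]
together with part (i) in the form $|G^{(Ti)}_{kl}|\le\eta^{-1}\le N$, to get $|\sum_{k,l}^{(Ti)} h_{ik} G^{(Ti)}_{kl} h_{li}|\le N\sum_k |h_{ik}|^2$. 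The moment bounds \eqref{finite moments}, the normalization $\sum_k s_{ik}=1$, and Minkowski's inequality yield $\|\sum_k|h_{ik}|^2\|_{L^q}\le \mu_{2q}^{1/q}$, and combining everything gives $\E\,|1/G^{(T)}_{ii}|^q \le N^{C_q}$ for some $C_q$ depending on $q$ and the sequence $(\mu_p)$.

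For the sharp bound on a high-probability event I would use Lemma~\ref{lemma: Lambda T} and Lemma~\ref{lemma: msc}. Since $\Psi\le M^{-c}\le N^{-c\delta}$ by admissibility, for any fixed $\epsilon<c\delta$ we have $N^\epsilon\Psi \le c_m/2$ for $N$ large enough, where $c_m>0$ is the lower bound on $|m|$ from Lemma~\ref{lemma: msc}. On the event $\cal E\deq\{\Lambda^{(T)}\le N^\epsilon\Psi\}$ we therefore have $|G^{(T)}_{ii}|\ge |m|-\Lambda^{(T)}\ge c_m/2$, so $|1/G^{(T)}_{ii}|\le 2/c_m$; by Lemma~\ref{lemma: Lambda T} and Definition~\ref{def:stocdom} we have $\P(\cal E^c)\le N^{-D}$ for any prescribed $D$. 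The claim then follows by Cauchy--Schwarz applied to the decomposition
\[
\E\,|1/G^{(T)}_{ii}|^p \;=\; \E\qb{\indb{\cal E}|1/G^{(T)}_{ii}|^p} + \E\qb{\indb{\cal E^c}|1/G^{(T)}_{ii}|^p} \;\le\; (2/c_m)^p + \sqrt{\P(\cal E^c)}\,\sqrt{\E\,|1/G^{(T)}_{ii}|^{2p}}\,,
\]
choosing $D$ so that $D>C_{2p}$. For the finitely many $N$ below the threshold implicit in Lemma~\ref{lemma: Lambda T} the bound is absorbed into the constant $C_{p,\ell}$.

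\paragraph{Main obstacle.} The subtle point is that stochastic domination alone is insufficient for part (ii): a bound of type $1/G^{(T)}_{ii}\prec 1$ would not pass to expectations without extra information, because $1/G^{(T)}_{ii}$ has no deterministic a priori bound. The key step is therefore producing a polynomial-in-$N$ $L^p$ bound (using only the naive operator-norm estimate from part (i) and the moment assumption \eqref{finite moments}) that is bad enough to be useful but good enough to be tamed by the tiny probability $N^{-D}$ of the exceptional event; the rest is routine bookkeeping.
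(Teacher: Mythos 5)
Your proposal is correct and follows essentially the same approach as the paper: part (i) is the deterministic spectral/operator-norm bound plus the constraint $\eta\geq M^{-1+\gamma}$ defining $\f S$, and part (ii) combines the sharp bound $\abs{1/G_{ii}^{(T)}}\lesssim 1$ on the high-probability event from Lemma~\ref{lemma: Lambda T} (via Lemma~\ref{lemma: msc}) with a crude polynomial-in-$N$ moment bound on $1/G_{ii}^{(T)}$ obtained from Schur's formula \eqref{schur}, part (i), and the moment hypothesis \eqref{finite moments}, glued together by Cauchy--Schwarz on the decomposition $1 = \ind{\Xi} + \ind{\Xi^c}$. The only cosmetic difference is that you state the crude $L^q$ bound as a stand-alone step and phrase the good event in terms of $\Lambda^{(T)}$ rather than the event $\Xi$ from the proof of Lemma~\ref{lemma: Lambda T}, but these are interchangeable and the logic is identical.
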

\begin{proof}
See Appendix \ref{section: proof of res id}.
\end{proof}

\subsection{Some ingredients of the proof of Proposition \ref{prop: examples}} \label{sect: sketch of proof of examples}
A reader interested only in our main theorem (Theorem \ref{theorem: Z lemma}) may skip this section and proceed to Section~\ref{sec:multZ} directly. 
Here we sketch the proof of Proposition \ref{prop: examples}. Our goal
 is to motivate some concepts underlying our main theorem, and
to give an impressionistic overview of some ideas in its proof. The actual
proof of  Proposition \ref{prop: examples} will not be needed, since Theorem \ref{theorem: Z lemma} implies Proposition \ref{prop: examples} as a special case.

To avoid needless complications in our proof, we additionally assume that we are dealing with one of the two classical symmetry classes of random matrices: 
real symmetric and complex Hermitian. For \emph{real symmetric band matrices} we assume
\begin{equation} \label{RS}
\zeta_{ij} \in \R \quad \text{for all} \quad i \leq j \,.
\end{equation}
For {\it complex Hermitian band matrices}
we assume
\begin{equation} \label{CH}
\E \zeta_{ij}^2 = 0 \quad \text{for all} \quad i < j\,.
\end{equation}
A common way to satisfy \eqref{CH} is to choose the real and imaginary parts of $\zeta_{ij}$ to be independent with identical variance.
In Remark \ref{rem: relaxing symmetry} below we explain how to remove the assumption that \eqref{RS} or \eqref{CH} holds, i.e.\ how to remove the assumption $\E \zeta_{ij}^2 = 0$ in the case \eqref{CH}.

The second estimate of \eqref{GGnoQ} follows trivially from $\abs{G_{\mu a}} \leq \Lambda \prec \Psi$. We shall sketch the proofs of the remaining inequalities in the following order:
\begin{itemize}
\item[(A)]
 first estimate of \eqref{GGwithQ} and second estimate of \eqref{avgG};
\item[(B)]
 first estimate of \eqref{GGnoQ} and first estimate of \eqref{avgG};
\item[(C)]
second estimate of \eqref{GGwithQ}.
\end{itemize}
This order corresponds to an increasing degree of complication of the proofs. These three steps thus serve as simple examples in which to introduce four basic concepts underlying our proof. More specifically, in the language of the full proof (Sections \ref{section: Z1} -- \ref{section: simplifications}), (A) requires only the simple high-moment estimate from Section \ref{section: Z3}, (B) requires in addition the inversion of a stable self-consistent equation (Section \ref{sec: chains with no Q}), and (C) requires in addition a priori bounds on chains (Sections \ref{sec: chains with no Q} and \ref{section: Z4}) as well as the procedure of vertex resolution (Section \ref{section: Z5}).

\subsubsection{Proof of (A)} \label{sec: sketch of A}
 We focus first on the first estimate of \eqref{GGwithQ}. We derive the stochastic bound from high-moment bounds and Chebyshev's inequality. To simplify the presentation, we only estimate the variance
\begin{equation} \label{variance of noQ}
\E \absbb{\frac{1}{N} \sum_{a}^{(\mu)} Q_a (G_{\mu a} G_{a \mu})}^2 \;=\; \frac{1}{N^2} \sum_{a,b}^{(\mu)} \E \, Q_a \pB{G_{\mu a} G_{a \mu}} Q_b \ol{\pB{G_{\mu b} G_{b \mu}}}\,.
\end{equation}
Our goal is to prove that \eqref{variance of noQ} is bounded by $C \Psi^6$. We partition the summation into the cases $a = b$ and $a \neq b$. For the case $a = b$, we easily get from Lemmas \ref{lemma: basic properties of prec} and \ref{lemma: rough bounds on G}  the bound $C N^{-1} \Psi^4 \leq C \Psi^6$, where we used \eqref{lower bound on W} and the fact that $\Psi$ satisfies Definition \ref{admissible Psi}.

Let us therefore focus on the case $a \neq b$. We use \eqref{resolvent expansion type 1} to get
\begin{align}
&\mspace{-20mu} \E \, Q_a \p{G_{\mu a} G_{a \mu}} Q_b \ol{\p{G_{\mu b} G_{b \mu}}}
\notag \\
&\;=\; \E \, Q_a \qBB{\pbb{G_{\mu a}^{(b)} + \frac{G_{\mu b} G_{ba}}{G_{bb}}} \pbb{G_{a \mu}^{(b)} + \frac{G_{a b} G_{b \mu}}{G_{b b}}}} 
Q_b \ol{\qBB{\pbb{G_{\mu b}^{(a)} + \frac{G_{\mu a} G_{ab}}{G_{aa}}} \pbb{G_{b \mu}^{(a)} + \frac{G_{ba} G_{a \mu}}{G_{aa}}}}}
\notag \\ \label{simple example for weak Zlemma}
&\;=\; \E \, Q_a \qBB{\pbb{G_{\mu a}^{(b)} + \frac{G_{\mu b}^{(a)} G_{ba}}{G_{bb}^{(a)}}} \pbb{G_{a \mu}^{(b)} + \frac{G_{a b} G_{b \mu}^{(a)}}{G_{b b}^{(a)}}}} 
Q_b \ol{\qBB{\pbb{G_{\mu b}^{(a)} + \frac{G_{\mu a}^{(b)} G_{ab}}{G_{aa}^{(b)}}} \pbb{G_{b \mu}^{(a)} + \frac{G_{ba} G_{a \mu}^{(b)}}{G_{aa}^{(b)}}}}} + \cdots\,,
\end{align}
where we dropped the higher order terms of the expansion. The philosophy behind this expansion is to make each resolvent entry independent of as many indices in $(a,b)$ as possible
by using \eqref{resolvent expansion type 1} iteratively.
We call such terms \emph{maximally expanded} in $(a,b)$, i.e.\ a maximally expanded resolvent entry cannot be made independent of $a$ or $b$ using the identity \eqref{resolvent expansion type 1}; the reason is that either it already has $a$ and $b$ as upper indices or an index from $(a,b)$ appears as a lower index. See Definition \ref{definition: maximally expanded} below for a precise statement. The iteration is stopped if either \eqref{resolvent expansion type 1} cannot be applied to any resolvent 
entry or if a sufficient number of resolvent entries (in our case a total of
six) have been generated. 
(In the proof of Proposition \ref{lemma: weak moment estimate} we give a precise definition of this stopping rule.)

 We now multiply everything out on the right-hand side of \eqref{simple example for weak Zlemma} to get terms of the form 
$\E Q_a(A) Q_b(B)$. The key observation is that if $B$ is independent of $a$ then the
 expectation vanishes (in fact, already the partial expectation $P_a$
renders the whole term zero). Similarly, if $A$ is independent of $b$ then the expectation vanishes. An example of a leading-order term from \eqref{simple example for weak Zlemma} that does not vanish is
\begin{equation} \label{simplest nonzero term}
\E \, Q_a \qBB{\frac{G_{\mu b}^{(a)} G_{ba}}{G_{bb}^{(a)}} G_{a \mu}^{(b)}} 
Q_b \ol{\qBB{\frac{G_{\mu a}^{(b)} G_{ab}}{G_{aa}^{(b)}} G_{b \mu}^{(a)} }}.
\end{equation}
(Note that all resolvent entries are maximally expanded in $(a,b)$.)
In this fashion each $Q$ imposes the presence of at least one additional off-diagonal entry. Since every off-diagonal resolvent entry contributes a factor $\Psi$ (see Lemma \ref{lemma: Lambda T}), we find that \eqref{simple example for weak Zlemma} is of order $\Psi^6$ instead of the naive $\Psi^4$. This concludes the sketch of the proof of the first estimate of \eqref{GGwithQ}.

The sketch of the proof of the second estimate of \eqref{avgG} 
is almost identical, and therefore omitted.

\subsubsection{Introduction of graphs} \label{sec: sketch: graphs}
Before moving on to (B) and (C), we take this opportunity to introduce a graphical language which is useful for keeping track of terms such as \eqref{simplest nonzero term}. Although not needed here, since the example in (A) is very simple,
this language will prove essential when defining more complicated expressions, as well as for the actual proof of Theorem \ref{theorem: Z lemma}.
Recall from Figure \ref{figure: simple graph example} that we can represent the expression $G_{\mu a} G_{a \mu}$ graphically by regarding $\mu$ and $a$ as vertices, and by drawing two directed edges associated with $G_{\mu a}$ and $G_{a \mu}$.  We adopt the convention given after \eqref{example Z}. Thus, an off-diagonal resolvent entry of $G_{ab}$, $a \neq b$, is represented with a directed solid line from $a$ to $b$, and the analogous entry $G^*_{ab}$ with a directed dashed line from $a$ to $b$.

\begin{convention*}
We sometimes identify a vertex with its associated summation index, and hence use the letter $a$ to denote two different things: a vertex of a graph and the value of the associated index. This allows us to avoid a proliferation of double subscripts in expressions like $G_{a_i a_j}$.
When depicting graphs, we always label a vertex using the name of the associated index.
\end{convention*}

We shall also have to deal with diagonal resolvent entries; in fact we introduce separate notations the three most common functions of them. Our graphical conventions are summarized in Figure \ref{figure: diagonal elements}.
\begin{figure}[ht!]
\begin{center}
\includegraphics{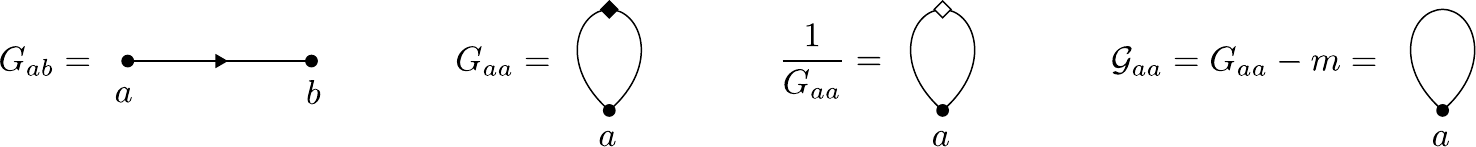}
\end{center}
\caption{The graphical representations of resolvent entries. The versions associated with $G^*$ are the same with a dashed line.  \label{figure: diagonal elements}}
\end{figure}
We may thus represent the expression on the left-hand side of \eqref{simple example for weak Zlemma}, i.e.\ $Q_a \p{G_{\mu a} G_{a \mu}} Q_b \p{G_{\mu b}^* G_{b \mu}^*}$,
See Figure \ref{figure: simple graph for variance}; note that our graphical notation does not keep track of the factors $Q$.
\begin{figure}[ht!]
\begin{center}
\includegraphics{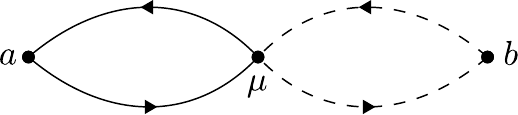}
\end{center}
\caption{Graph associated with the monomial $G_{\mu a} G_{a \mu} G_{\mu b}^* G_{b \mu}^*$. Here we draw the case $a \neq b$. \label{figure: simple graph for variance}}
\end{figure}
Having drawn the graph in Figure \ref{figure: simple graph for variance}, we start making all resolvent entries (corresponding to edges) independent of the indices $a$ and $b$, using the 
identities \eqref{resolvent expansion type 1}. As explained above, this gives rise to a sum of terms, each one of which is a fraction of resolvent entries that are maximally expanded in $(a,b)$. 
The denominator of each term contains diagonal resolvent entries, while its numerator is
a product of off-diagonal resolvent entries; this follows 
from the structure of \eqref{resolvent expansion type 1}.
A simple such example was given in \eqref{simplest nonzero term}. The associated monomial,
\begin{equation} \label{monomial for simple nonzero term}
\frac{G_{\mu b}^{(a)} G_{ba}}{G_{bb}^{(a)}} G_{a \mu}^{(b)}\,
\frac{G_{a \mu}^{(b)*} G_{ba}^*}{G_{aa}^{(b)*}} G_{\mu b}^{(a)*},
\end{equation}
may be represented graphically as in Figure \ref{figure: expanded simple graph for variance}.
\begin{figure}[ht!]
\begin{center}
\includegraphics{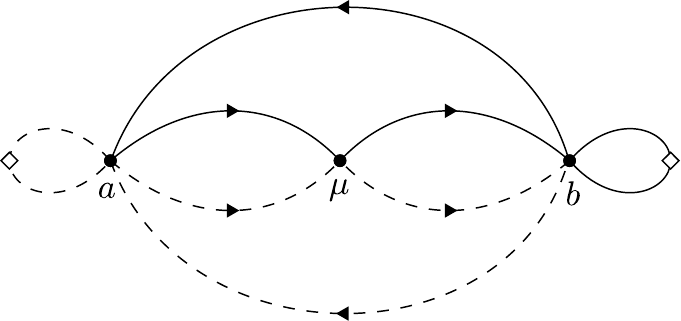}
\end{center}
\caption{Graph associated with \eqref{monomial for simple nonzero term}. Here we draw the case $a \neq b$. \label{figure: expanded simple graph for variance}}
\end{figure}

We remark that the graphs depicted in Figures \ref{figure: simple graph for variance} and \ref{figure: expanded simple graph for variance} are fundamentally different in the following sense. In Figure \ref{figure: simple graph for variance}, each edge of the graph represents a resolvent entry with no upper indices; in Figure \ref{figure: expanded simple graph for variance}, each edge of the graph represents a resolvent entry that is maximally expanded in $(a,b)$. In the language of Section \ref{section: Z3}, the former graph will be called $\gamma^2(\Delta)$ while the latter will be called $\Gamma$. It is the latter graphs that play a major role in our proofs. The former type is simply a trivial concatenation of basic graphs, and serves as an intermediate step in the construction of graphs of the latter type (i.e.\ whose edges represent maximally expanded resolvent entries). If one wanted to be more precise, one could keep track of the upper indices associated with each edge in the graphs. By definition, the edges of the graph in Figure \ref{figure: simple graph for variance} have no upper indices, and the edges of the graph in Figure \ref{figure: expanded simple graph for variance}  have
upper indices as given in \eqref{monomial for simple nonzero term}. However, these upper indices are unambiguously determined by the condition that each resolvent entry be maximally expanded in $(a,b)$. This means that $a$ appears as upper index of any edge that is not incident to $a$ (and similarly with $b$). In practice, however, we do not indicate the upper indices, as they are uniquely determined by the condition that all edges are maximally expanded in $(a,b)$.

It is possible, and indeed important for our proof, to introduce a graphical rule that generates graphs like the one depicted in Figure \ref{figure: expanded simple graph for variance} from graphs like the one depicted in Figure \ref{figure: simple graph for variance} through a sequence of graphs whose edges are not yet maximally expanded. Before the maximal expansion is achieved, we shall temporarily indicate the upper indices 
on the graph edges in parenthesis. Recall that the underlying algebra was simply governed by the identities \eqref{resolvent expansion type 1}. Figure \ref{figure: expansion of off-diag G} depicts the identity
\begin{equation} \label{exp off-diag in picture}
G_{ij} \;=\; G_{ij}^{(k)} + \frac{G_{ik} G_{kj}}{G_{kk}}\,.
\end{equation}
\begin{figure}[ht!]
\begin{center}
\includegraphics{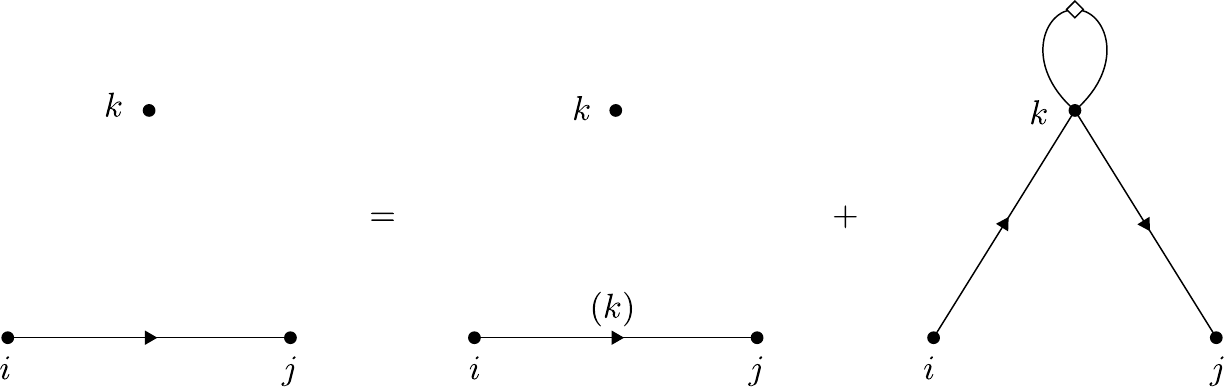}
\end{center}
\caption{The graphical representation of the formula \eqref{exp off-diag in picture}.
\label{figure: expansion of off-diag G}}
\end{figure}
Similarly, the corresponding identities for the diagonal entries,
\begin{equation} \label{exp in picture}
\frac{1}{G_{ii}} \;=\; \frac{1}{G_{ii}^{(j)}} - \frac{G_{ij} G_{ji}}{G_{ii} G_{ii}^{(j)} G_{jj}}\,, \qquad
G_{ii} \;=\; G_{ii}^{(j)} + \frac{G_{ij} G_{ji}}{G_{jj}}\,,
\end{equation}
are depicted in Figure \ref{figure: expansion of diag G}.
\begin{figure}[ht!]
\begin{center}
\includegraphics{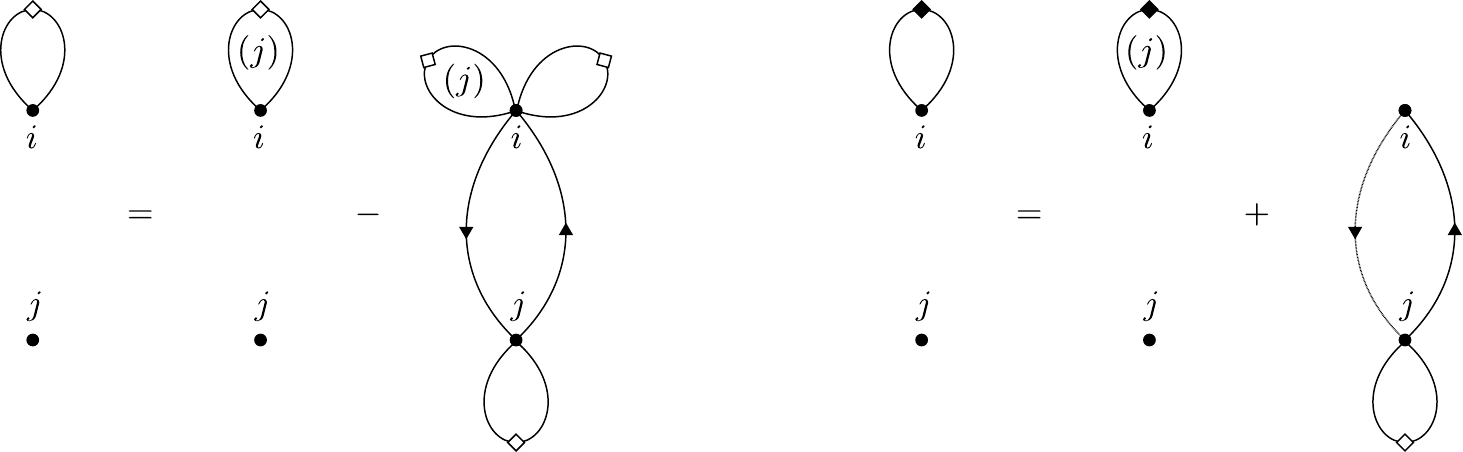}
\end{center}
\caption{Adding an upper index $j$ to the diagonal entries $1/G_{ii}$ and $G_{ii}$. These pictures correspond to \eqref{exp in picture}. We exceptionally also mark the upper indices associated with each edge. \label{figure: expansion of diag G}}
\end{figure}
Applying the graphical rules of Figures \ref{figure: expansion of off-diag G} and \ref{figure: expansion of diag G} to Figure \ref{figure: simple graph for variance} results e.g.\ in Figure \ref{figure: expanded simple graph for variance} (and many others). To be precise, we should keep track of the upper indices associated with each edge at each step, as is done in Figure \ref{figure: expansion of diag G}. When all edges are maximally expanded, we stop the application of the rules of Figures \ref{figure: expansion of off-diag G} and \ref{figure: expansion of diag G}. However, as explained above, we usually omit the explicit indication of upper indices in graphs after the maximal expansion is achieved. For future use, we record the following definition associated with the operations depicted in Figures \ref{figure: expansion of off-diag G} and \ref{figure: expansion of diag G}.

\begin{definition} \label{def: linking}
We refer to the second graph on the right-hand side of Figure \ref{figure: expansion of off-diag G} as arising from \emph{linking} the edge $(ij)$ with the vertex $k$. We also say that the vertex $k$ was \emph{linked to} by the edge $(ij)$. Similarly, in both connected graphs in Figure \ref{figure: expansion of diag G}, the vertex $j$ was linked to by the edge $(ii)$.
\end{definition}

The argument underlying \eqref{simple example for weak Zlemma} may now be formulated graphically as follows. We start from Figure \ref{figure: simple graph for variance} and apply the identities from Figures \ref{figure: expansion of off-diag G} and \ref{figure: expansion of diag G} until all resolvent entries associated with the edges are maximally expanded in $(a,b)$. Since these identities can be applied in various orders, this procedure is
not unique. This lack of uniqueness does not concern us, however: we need only a maximally expanded representation.
By the argument given after \eqref{simple example for weak Zlemma}, we know that only those graphs in which both $a$ and $b$ have been linked to by an edge survive. Such graphs (as the one from Figure \ref{figure: expanded simple graph for variance}) have (at least) two additional edges as compared to the one from Figure \ref{figure: simple graph for variance}. This results in a size $O_\prec(\Psi^6)$.

\subsubsection{Sketch of the proof of (B)} \label{sec: proof of (B)}
 We focus first on the first estimate of \eqref{GGnoQ}.
The idea is to derive a stable self-consistent equation for the quantity
\begin{equation}\label{sce}
\frac{1}{N} \sum_{a}^{(\mu)} G_{\mu a} G_{a \mu}\,.
\end{equation}
We do this by introducing the partition $1 = P_a + Q_a$ inside the summation. The second resulting term was estimated in (A). The first resulting term may be written as
\begin{align*}
\frac{1}{N} \sum_{a}^{(\mu)} P_a \pb{G_{\mu a} G_{a \mu}} &\;=\; \frac{1}{N} \sum_{a}^{(\mu)} P_a \pbb{ \frac{m^2}{G_{aa}^2}G_{\mu a} G_{a \mu}} + O_\prec(\Psi^3)
\\
&\;=\; m^2 \, \frac{1}{N} \sum_{a}^{(\mu)} P_a \pbb{\sum_{x,y}^{(a)} G_{\mu x}^{(a)} h_{xa} h_{ay} G_{y \mu}^{(a)}} + O_\prec(\Psi^3)
\\
&\;=\; m^2 \, \frac{1}{N} \sum_{a}^{(\mu)} \sum_{x}^{(a)} s_{ax} G_{\mu x}^{(a)} G_{x \mu}^{(a)} + O_\prec(\Psi^3)
\\
&\;=\; m^2 \, \frac{1}{N} \sum_{a}^{(\mu)} \sum_{x}^{(a)} s_{ax} G_{\mu x} G_{x \mu} + O_\prec(\Psi^3)
\\
&\;=\; m^2 \, \frac{1}{N} \sum_{x}^{(\mu)} G_{\mu x} G_{x \mu} + O_\prec(\Psi^3 + N^{-1})\,.
\end{align*}
In the first step we used \eqref{1/G prec 1}. In the second step we used the identity \eqref{res exp 2b} (note the usefulness of smuggling in $G_{aa}$ in the previous step). In the third step we used the identity $P_a  h_{xa} h_{ay} =\E h_{xa} h_{ay} = s_{ax}\delta_{xy}$, as follows from the definition of $H$ and the fact that $G^{(a)}$ is independent of $a$. In the fourth step we used the identity \eqref{resolvent expansion type 1} to remove the upper indices. In the fifth step we used a simple analysis of coinciding indices together with the estimates \eqref{s leq W} and \eqref{m is bounded}. Together with the bound from (A), we therefore get for the quantity \eqref{sce} the self-consistent equation
\begin{align*}
\frac{1}{N} \sum_{a}^{(\mu)} G_{\mu a} G_{a \mu} &\;=\; \frac{1}{N} \sum_{a}^{(\mu)} P_a \pb{G_{\mu a} G_{a \mu}} + O_\prec(\Psi^3)
\\
&\;=\; m^2 \frac{1}{N} \sum_{x}^{(\mu)} G_{\mu x} G_{x \mu} + O_\prec \pb{\Psi^3 + M^{-1}}
\\
&\;=\; m^2 \frac{1}{N} \sum_{x}^{(\mu)} G_{\mu x} G_{x \mu} + O_\prec \pB{\Psi^2 \pb{\Psi + M^{-1/2} \Psi^{-1}}}\,,
\end{align*}
where in the last step we used \eqref{admissible Psi}.
Using \eqref{lower bound on Phi} and the trivial bound $\frac{1}{N} \sum_{a}^{(\mu)} G_{\mu a} G_{a \mu} \prec \Psi^2$ we therefore get
\begin{equation*}
\frac{1}{N} \sum_{a}^{(\mu)} G_{\mu a} G_{a \mu}  \;\prec\; \min \hbb{ \Psi^2 \, , \,
 \frac{\Psi^2 \pb{\Psi + M^{-1/2} \Psi^{-1}}}{|1-m^2|}} \;\le\; \min \hb{ \Psi^2 \, , \,
 \varrho \Psi^2 \pb{\Psi + M^{-1/2} \Psi^{-1}}} \;=\; \Psi^2 \Phi\,,
\end{equation*}
which is the first estimate of \eqref{GGnoQ}.

The proof of the first estimate of \eqref{avgG} is similar, except that we derive the self-consistent equation using \eqref{res exp 2c} instead of \eqref{res exp 2b}. Using the second estimate of \eqref{avgG} we find  
\begin{equation}\label{eq:selfcon1}
\frac{1}{N} \sum_a (G_{aa} - m) \;=\; \frac{1}{N} \sum_a P_a (G_{aa} - m) + \frac{1}{N} \sum_a Q_a G_{aa} \;=\; \frac{1}{N} \sum_a P_a(G_{aa} - m) + O_\prec(\Psi^2)\,.
\end{equation}
Next, from a simple large deviation estimate (see the first paragraph in the proof of Lemma \ref{lemma: diagonal estimates} in Appendix \ref{section: proof of res id}) we find $Z_a \prec \Psi$. Moreover, Lemma \ref{lemma: Lambda T}, \eqref{S is stochastic}, \eqref{s leq W}, and \eqref{admissible Psi} readily imply that $U_a^{(a)} \prec \Psi$.
Recalling the estimate \eqref{h prec Psi}, we may therefore expand the identity \eqref{res exp 2c} using \eqref{identity for msc} to get
\begin{equation*}
G_{aa} \;=\; m + m^2 (-h_{aa} + Z_a  + U_a^{(a)}) + O_\prec(\Psi^2)\,.
\end{equation*}
Using $P_a h_{aa} = 0$ and $P_a Z_a = 0$ we therefore find
\begin{align*}
\frac{1}{N} \sum_a P_a (G_{aa} - m) &\;=\; m^2 \frac{1}{N} \sum_a P_a U_a^{(a)} + O_\prec(\Psi^2)
\\
&\;=\; m^2 \frac{1}{N} \sum_a \sum_x^{(a)} s_{ax} (G_{xx}^{(a)} - m) + O_\prec(\Psi^2)
\\
&\;=\; m^2 \frac{1}{N} \sum_x (G_{xx} - m) + O_\prec(\Psi^2)\,,
\end{align*}
 where in the second step we recalled the definition \eqref{def of Zi and Ji} and used \eqref{s leq W} as well as \eqref{S is stochastic} to write $m = \sum_x^{(a)} s_{ax} m + O(M^{-1})$ with $M^{-1} = O(\Psi^2)$ by \eqref{admissible Psi}, and in the third \eqref{resolvent expansion type 1} to get rid of the upper index $a$ as well as \eqref{S is stochastic}. Thus, together with \eqref{eq:selfcon1}, we get the self-consistent equation
\begin{equation} \label{sceq for Gaa}
\frac{1}{N} \sum_a (G_{aa} - m) \;=\; m^2 \frac{1}{N} \sum_x (G_{xx} - m) + O_\prec(\Psi^2)\,,
\end{equation}
from which we easily conclude the first estimate of \eqref{avgG} as before.

In both of the above examples the averaging was performed with respect to the uniform weight $w_a = N^{-1}$. We conclude by sketching the differences in the case of a nontrivial weight, e.g.\ $w_a = s_{\nu a}$. Consider for example the average $\sum_a s_{\nu a} (G_{aa} - m)$
from \eqref{examples with s}. Repeating the above derivation of \eqref{sceq for Gaa}, we find the self-consistent \emph{system of  equations}
\begin{equation*}
\sum_a s_{\nu a} (G_{aa} - m) \;=\; m^2 \sum_{a} s_{\nu a}\sum_x
 s_{ax} (G_{xx} - m) + E_\nu\,
\end{equation*}
for each $\nu$. Here the error satisfies $E_\nu = O_\prec(\Psi^2)$.
Introducing the vectors  $\f v = (v_a)_{a = 1}^N$ defined by $v_a \deq \sum_x s_{ax} (G_{xx} - m)$
and $\f E =(E_\nu)_{\nu=1}^N$, we have
\begin{equation*}
\f v \;=\; m^2 S \f v + {\f E}\,.
\end{equation*}
 Thus we find
\begin{equation*}
\f v \;=\; (1 - m^2 S)^{-1} {\f E}\,,
\end{equation*}
from which we conclude that $v_a \prec \varrho \Psi^2 \leq \Psi \Phi$.

\subsubsection{Sketch of the proof of (C)} \label{sec: sketch of resolution}
As in (A), the proof is based on a high-moment estimate. We again restrict our attention to the variance
\begin{equation} \label{variance of G G*}
\E \absbb{\frac{1}{N} \sum_{a}^{(\mu)} Q_a(G_{\mu a} G_{a \mu}^*)}^2 \;=\; \frac{1}{N^2} \sum_{a,b}^{(\mu)} \E \, Q_a(G_{\mu a} G_{a \mu}^*) \, Q_b(G_{\mu b} G_{b \mu}^*)\,.
\end{equation}
Our goal is to derive the stochastic bound $\Psi^6 \Phi^2$ for \eqref{variance of G G*}.
The case $a = b$ yields the bound
\begin{equation} \label{variance for coinciding indices}
\frac{1}{N^2} \sum_a^{(\mu)} \abs{G_{\mu a}}^4 \;\prec\; \Psi^4 N^{-1} \;\leq\; \Psi^6 \Phi^2\,.
\end{equation}
Let us therefore assume for the following that $a \neq b$. The first part of the argument follows precisely the proof of (A) above. We expand all resolvent entries 
of
\begin{equation} \label{monomial for simple resolution}
\E \, Q_a(G_{\mu a} G_{a \mu}^*) \, Q_b(G_{\mu b} G_{b \mu}^*)
\end{equation}
using \eqref{resolvent expansion type 1} and obtain a sum of monomials whose resolvent entries are maximally expanded in $(a,b)$. A typical example of a nonvanishing term arising from the expansion of \eqref{variance of G G*} is
\begin{equation*}
\E \, Q_a\pBB{\frac{G_{\mu b}^{(a)} G_{b a}}{G_{bb}^{(a)}} G_{a \mu}^{(b)*}} \, Q_b\pBB{G_{\mu b}^{(a)} \frac{G_{ba}^* G_{a \mu}^{(b)*}}{G_{aa}^{(a)*}}}\,.
\end{equation*}
As in the proof of (A), this immediately gives the stochastic bound $\Psi^6$. See Figure \ref{figure: linking for variance of G G*} for a graphical summary of the argument in this context.
\begin{figure}[ht!]
\begin{center}
\includegraphics{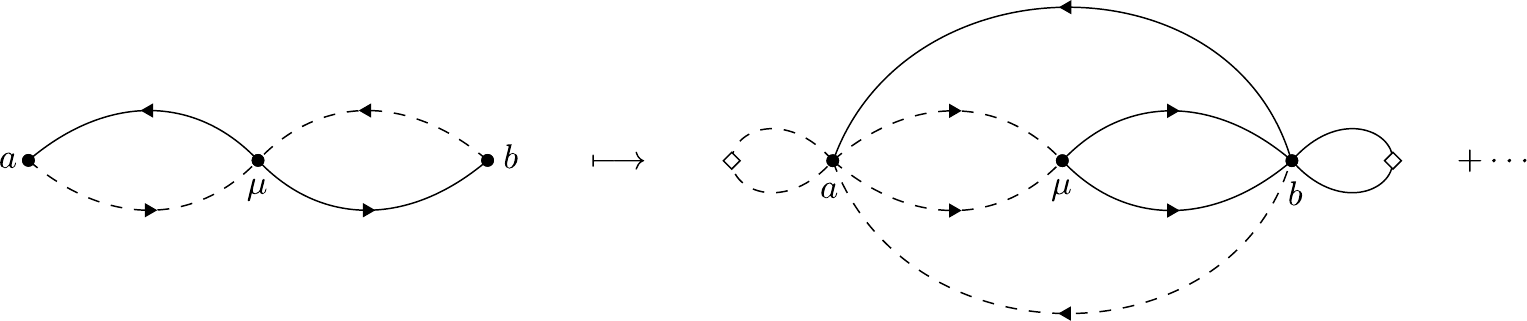}
\end{center}
\caption{The process of making all edges of the graph associated with \eqref{monomial for simple resolution} maximally expanded in $(a,b)$. \label{figure: linking for variance of G G*}}
\end{figure}

The bound $\Psi^6$ is not enough, however. In order to improve this to $\Psi^6 \Phi^2$, we introduce a new operation which we call \emph{vertex resolution}. In order to simplify the presentation, in the following we systematically replace any diagonal entry $G_{aa}^{(T)}$ by $m$. The resulting error terms are small by definition of $\Lambda$ (of course, they have to be dealt with, which is done Section \ref{section: Z6} of the full proof below). Thus, we have to estimate the expression
\begin{equation} \label{simplified expression for variance}
\E \, Q_a\pb{G_{\mu b}^{(a)} G_{b a} G_{a \mu}^{(b)*}} \, Q_b\pb{G_{\mu b}^{(a)} G_{ba}^* G_{a \mu}^{(b)*}}
\end{equation}
for $a \neq b$.
We begin by expanding all resolvent entries using the Family B identity \eqref{res exp 2b}, again neglecting the diagonal prefactors in \eqref{res exp 2b}. This gives
\begin{equation} \label{simple vertex resolution}
\sum_{x,y,z,w}^{(ab)}  \sum_{x',y',z',w'}^{(ab)} \E \, Q_a \pb{G_{\mu x}^{(ab)} h_{xb} h_{by} G_{yz}^{(ab)} h_{za} h_{a w} G_{w \mu}^{(ab)*}} \,
Q_b\pb{G_{\mu x'}^{(ab)} h_{x' b} h_{b y'} G_{y'z'}^{(ab)*} h_{z' a} h_{a w'} G_{w' \mu}^{(ab)*}}\,.
\end{equation}
(Here we also ignored a few special cases of coinciding indices when expanding both $a$ and $b$ in $G_{ab}$ using \eqref{res exp 2b}. As usual, the resulting terms are subleading and unimportant for this sketchy discussion.) The idea behind \eqref{simple vertex resolution} is to expand all of the randomness that depends on $a$ and $b$ explicitly (i.e.\ in entries of $H$), so that partial expectations may be explicitly taken. Note that all resolvent entries in \eqref{simple vertex resolution} are independent of $a$ and $b$. We may now take the expectation $\E$ in \eqref{simple vertex resolution}; more precisely, we reorganize \eqref{simple vertex resolution} as
\begin{equation} \label{simple vertex resolution organized}
\sum_{x,y,z,w}^{(ab)}  \sum_{x',y',z',w'}^{(ab)} \E \, G_{\mu x}^{(ab)} G_{yz}^{(ab)}G_{w \mu}^{(ab)*} G_{\mu x'}^{(ab)} G_{y'z'}^{(ab)*} G_{w' \mu}^{(ab)*} \, P_a \qB{Q_a(h_{za} h_{a w}) h_{z' a} h_{a w'}}  P_b \qB{h_{xb} h_{by} Q_b(h_{x' b} h_{b y'})}
\,.
\end{equation}
The two square brackets in \eqref{simple vertex resolution} may be computed explicitly. Since $\E h_{uv}=0$, each matrix entry $h_{uv}$ must (at least) be paired with another copy of the same factor or its conjugate $\bar h_{uv} = h_{vu}$. Assume first that we are dealing with a complex Hermitian band matrix (condition \eqref{CH}). In that case, each $h_{uv}$ must be paired with its conjugate $h_{vu}$ since $\E h_{uv}^2 = 0$. Of course, it may happen that more than two entries have coinciding indices, but this leads to a term that is subleading by a factor $M^{-1/2}$, and which we neglect here.
Thus, $h_{za}$ in \eqref{simple vertex resolution organized}
may be paired with $h_{aw}$ (resulting in $z=w$) or with  $h_{aw'}$ (resulting in $z = w'$). However, the
pairing $h_{za}$ with $h_{aw}$ gives a vanishing contribution owing to the presence of $Q_a$, since
\begin{equation*}
\E_{za} Q_a (h_{za}h_{az}) \;=\; 0
\end{equation*}
where $\E_{za}$ denotes partial expectation with respect to $h_{za}$. In other words,  $Q_a$ forbids the pairing of $h_{za}$ with $h_{aw}$, and similarly $Q_b$ the pairing of $h_{x' b}$ with $h_{b y'}$. Thus the leading order term resulting from the square brackets in \eqref{simple vertex resolution organized}, on which we focus here, is the pairing
\begin{equation} \label{simple pairing in case H}
s_{az} s_{aw} s_{bx} s_{by} \, \delta_{w z'} \delta_{z w'} \delta_{x'y} \delta_{x y'}\,.
\end{equation}
In the real symmetric case (condition \eqref{RS}), where $\E h_{uv}^2$ does not vanish, $h_{za}$ can also be paired with $h_{z' a}$. (Note that $Q_a$ still forbids the pairing of $h_{za}$ with $h_{aw}$.) This yields the three further allowed pairings
\begin{equation} \label{example pairings for S}
s_{az} s_{a w} s_{bx} s_{by} \delta_{z z'} \delta_{w w'} \delta_{x'y} \delta_{x y'} \,, \qquad s_{az} s_{aw} s_{bx} s_{by}  \delta_{w z'} \delta_{zw'} \delta_{x x'} \delta_{y y'}\,, \qquad s_{az} s_{a w} s_{bx} s_{by} \delta_{z z'} \delta_{w w'} \delta_{x x'} \delta_{y y'}\,.
\end{equation}
Assuming again condition \eqref{CH}, only \eqref{simple pairing in case H} contributes, and we get the expression (up to lower order error terms in $M^{-1/2}$)
\begin{multline} \label{resolution after pairing}
\sum_{x,y,z,w}^{(ab)} s_{bx} s_{by} s_{az} s_{aw}  \, \E \, G_{\mu x}^{(ab)} G_{yz}^{(ab)} G_{w \mu}^{(ab)*} G_{\mu y}^{(ab)} G_{xw}^{(ab)*} G_{z \mu}^{(ab)*}
\\
=\; \E \pBB{\sum_{x,w} s_{aw} s_{bx} G_{\mu x}^{(ab)} G_{xw}^{(ab)*} G_{w \mu}^{(ab)*}}
\pBB{\sum_{y,z} s_{by} s_{az}  G_{\mu y}^{(ab)} G_{yz}^{(ab)} G_{z \mu}^{(ab)*}}\,.
\end{multline}
Now each of the expressions in the parentheses is stochastically bounded by $\Psi^3 \Phi$. Indeed, an argument very similar to the proof of (B) above yields
\begin{equation} \label{most primitive chain}
\sum_{y} s_{by} G_{\mu y}^{(ab)} G_{yz}^{(ab)} \;\prec\; \Psi^2 \Phi\,.
\end{equation}
(The additional upper indices $(ab)$ are unimportant.) Thus, from the summation over $y$ in \eqref{resolution after pairing} we gain an additional factor $\Phi$ (and, similarly, from the summation over $w$). We therefore find that \eqref{monomial for simple resolution} is stochastically bounded by $\Psi^6 \Phi^2$, which was the claim of (C).

The use of graphs greatly clarifies the mechanism underlying the above sketch of the proof of (C). 
In order to depict vertex resolution, we need a graphical notation for edges associated with matrix entries $h_{uv}$ and $s_{ab}$; we represent the former using dotted lines and the latter using wiggly lines. See Figure \ref{figure: h and s}.
\begin{figure}[ht!]
\begin{center}
\includegraphics{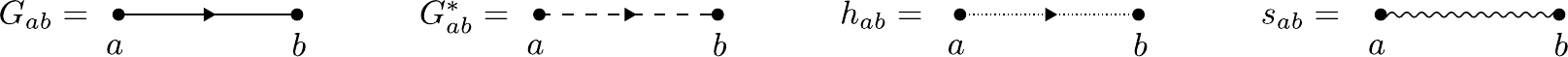}
\end{center}
\caption{The graphical notation for entries of $G$, $G^*$, $H$, and $S$. Since $S$ is symmetric, the edge associated with $s_{ab}$ is undirected. \label{figure: h and s}}
\end{figure}
As seen above, the starting point for the operation of vertex resolution is \eqref{simplified expression for variance}.
The expanded expression \eqref{simple vertex resolution} may be graphically represented as in Figure \ref{figure: detailed vertex resolution}. Thus, the vertex $a$ is ``resolved'' into four (i.e.\ the degree of $a$) new vertices, which are drawn in white and are connected to their parent vertex $a$ by dotted lines (corresponding to a matrix entry of $H$). White vertices are either \emph{incoming} or \emph{outgoing}, depending on the orientation of the dotted edge that joins them to their parent vertex $a$. Similarly, the vertex $b$ is resolved into four new vertices.
\begin{figure}[ht!]
\begin{center}
\includegraphics{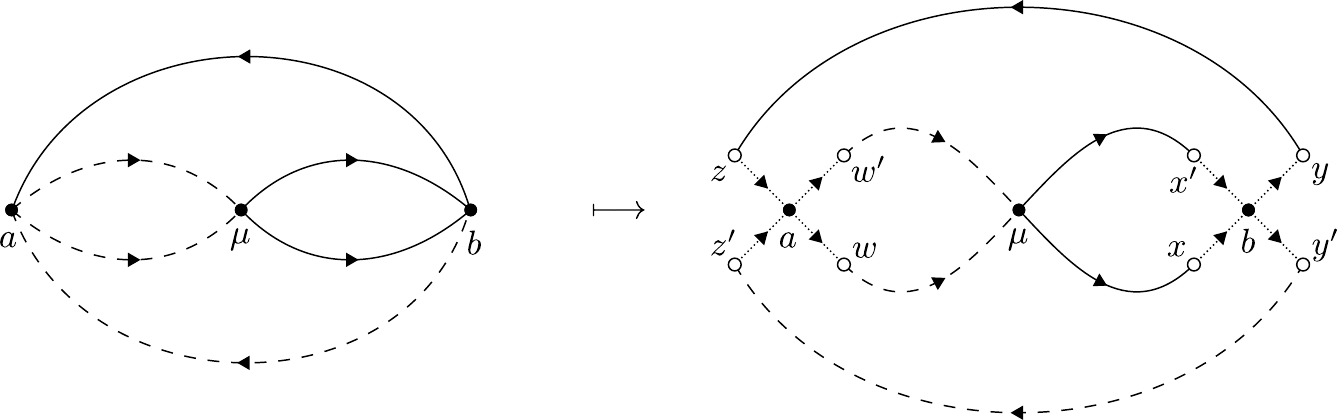}
\end{center}
\caption{Resolving the vertices $a$ and $b$. In accordance with \eqref{simple vertex resolution}, we ignore the loops associated with diagonal terms. (These lead to corrections that are higher order in $\Psi$.) \label{figure: detailed vertex resolution}}
\end{figure}
Note that each solid or dashed edge in the right-hand graph of Figure \ref{figure: detailed vertex resolution} represents a resolvent matrix entry that is independent of $a$ and $b$. The expression \eqref{resolution after pairing} was obtained from \eqref{simple vertex resolution} by computing the partial expectations $P_a$ and $P_b$ of the associated entries of $H$. Graphically, this amounts to a pairing of the white vertices surrounding each black parent vertex. (Note that the factors $Q$, which yielded constraints on the allowed pairings, are not visible in the graphs. This is not a problem, however, as the ensuing bounds will hold for all pairings, even if these restrictions are relaxed.) The pairing of two dotted lines gives rise to a wiggly line, in accordance with the identity $\E_a \abs{h_{ax}}^2 = s_{ax}$. See Figure \ref{figure: completed vertex resolution} for a graphical representation of the pairing in \eqref{resolution after pairing}. In Figure \ref{figure: completed vertex resolution} we represented the pairing \eqref{simple pairing in case H}, which is the only one in the complex Hermitian case \eqref{CH}. In this case, the orientation of the edges must be matched when pairing white vertices, i.e.\ an incoming white vertex can only be paired with an outgoing one. This is an immediate consequence of the condition $\E h_{ax}^2 = 0$, as explained after \eqref{simple pairing in case H}. In the real symmetric case \eqref{RS}, where $\E_a \abs{h_{ax}}^2 = \E h_{ax}^2 = s_{ax}$, the other pairings \eqref{example pairings for S} are also possible. Graphically, this means that, when pairing white vertices, there are no constraints on the orientation of the incident edges. In other words, the arrows on the dotted edges may be ignored. 
\begin{figure}[ht!]
\begin{center}
\includegraphics{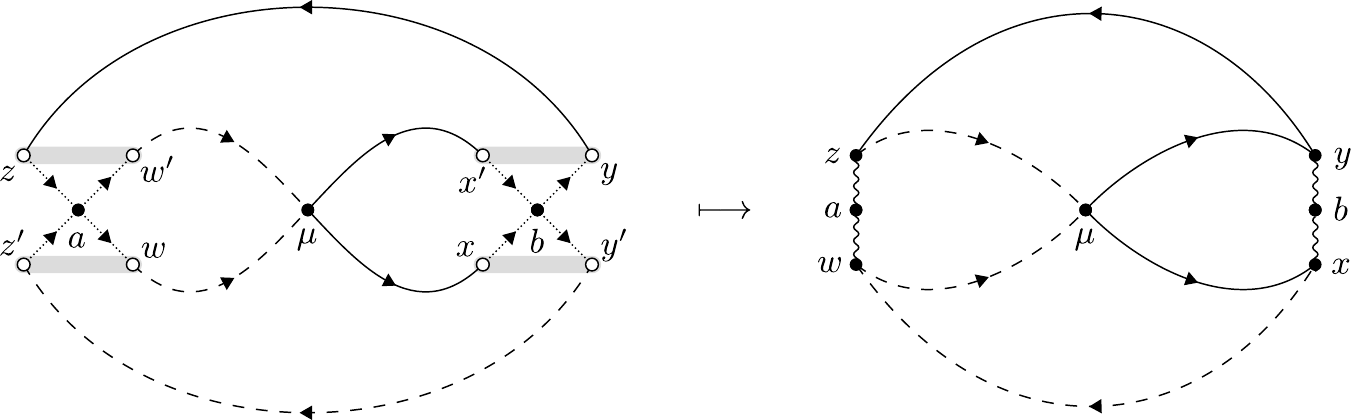}
\end{center}
\caption{Taking a pairing of the white vertices to get the completed resolution. \label{figure: completed vertex resolution}}
\end{figure}

The result of the vertex resolution is graphically evident when comparing the first graph in Figure \ref{figure: detailed vertex resolution} and second graph of Figure \ref{figure: completed vertex resolution}: the vertex $a$, of degree four, has been split (or ``resolved'') into two vertices of degree two. (The same happened for $b$). This \emph{resolution} also entails the creation of new summation indices, $z$ and $w$. Each one of them is connected to the original vertex $a$ by a factor $s_{az}$ (respectively $s_{aw}$), which implies that the summation over the larger family of summation indices is still performed with respect to a normalized weight. Generally, vertex resolution splits vertices of high degree into several vertices of degree two. The reason why this helps is that we can gain an extra factor $\Phi$ from any summation vertex of degree two whose incident edges are of the same ``colour'' (solid or dashed). We shall use the name \emph{marked vertex} (see Definition \ref{definition of marked vertex} below) to denote a vertex whose resolution yields at least one new summation vertex whose (two) incident edges are of the same colour. The mechanism behind the gain of a factor $\Phi$ from a newly created (via resolution) index is roughly the content of (B), and was used in \eqref{most primitive chain}. In our case, we gain from the summations over $y$ and $w$ (but not $z$ or $x$). Generally, the process of vertex resolution yields long ``chains'' (i.e.\ subgraphs whose vertices have degree two), each vertex of which yields an extra factor $\Phi$ provided both incident edges have the same colour. In fact, establishing such estimates for chains is an important step in our proof (see Proposition \ref{lemma: weak Z lemma} below). This concludes our overview of the proof of Proposition \ref{prop: examples}.

\section{General monomials and main result}\label{sec:multZ}

In this section we state the fluctuation averaging theorem in full generality. To that end, we introduce a general class of monomials which we shall average. We consider monomials in the variables
\begin{equation*}
\cal G_{ij}(z) \;\deq\; G_{ij}(z) - \delta_{ij} m(z)\,,
\end{equation*}
which yield a more consistent power counting for diagonal resolvent entries. Indeed, by definition $\abs{\cal G_{ij}} \leq \Lambda$ for all $i$ and $j$. As we saw in Section \ref{section: examples}, monomials in the resolvent entries are best described using graphs; see \eqref{example Z} and Figure \ref{figure: simple graph example}.

We may now define the graphs $\Delta$ used to describe monomials.

\begin{definition}[Admissible graphs] \label{definition: Delta}
\begin{enumerate}
\item
Let $V_s$ and $V_e$ be finite disjoint sets.
Let  $V \deq V_s \sqcup V_e$ be their disjoint union\footnote{Here, and throughout the following, we use the symbol $\sqcup$ to denote disjoint union.} 
and $E$ be a subset of the ordered pairs $V\times V$. 
The quadruple
\begin{equation*}
\Delta \;=\; (V_s, V_e, E, \xi)
\end{equation*}
is an \emph{admissible graph} if it 
is a directed, edge-coloured, multigraph with set of vertices $V$.
 The edges are ordered pairs of vertices with multiplicity, i.e.\ we allow loops and multiple edges.
We shall also use the notation $V_s \equiv V_s(\Delta)$, $V_e \equiv V_e(\Delta)$, and $E \equiv E(\Delta)$.

More formally, we can view $E(\Delta)$ as an arbitrary finite set
equipped with maps $\alpha, \beta \col E(\Delta) \to V(\Delta)$. Here $\alpha(e)$ and $\beta(e)$ represent the source and target vertices of the edge $e \in E(\Delta)$. The colouring $\xi \col E(\Delta) \to \{1, *\}$ is a mapping that assigns one of two ``colours'', $1$ or $*$, to each edge. If no confusion is possible with the multiplicity of an edge $e \in E(\Delta)$, we shall identify it with the ordered pair $(\alpha(e), \beta(e))$.
\item
We denote by $\fra Z$ the set of admissible graphs $\Delta$ on arbitrary $V_s$ and $V_e$.
\item
The \emph{degree} of $\Delta \in \fra Z$ is
\begin{equation*}
\deg(\Delta) \;\deq\; \abs{E(\Delta)}\,.
\end{equation*}
\end{enumerate}
\end{definition}

The set $V_s(\Delta)$ will label the family of summation indices ($(a,b)=(a_i)_{i\in V_s(\Delta)}$ in the example \eqref{example Z-lemma object}), and $V_e(\Delta)$ the set of external
 indices ($(\mu, \nu)=(\mu_i)_{i\in V_e(\Delta)}$ in the example \eqref{example Z-lemma object}). We use the notation 
\begin{equation}\label{indexsplitting}
\f u \;=\; (\f a, \f \mu) \,, \qquad \f a \;=\; (a_i)_{i \in V_s(\Delta)}\,, \qquad \f \mu \;=\; (\mu_i)_{i \in V_e(\Delta)}
\end{equation}
for the matrix indices. Generally, we try to use Latin letters $a,b,c,d,x,y,z,\dots$ for summation indices and Greek letters $\mu,\nu\,\dots$ for external indices.

Although our statements and proofs hold for any admissible graph $\Delta$, in order to
avoid trivial cases in our applications we shall always consider graphs without isolated
vertices and with the property that each edge is incident to 
at least one vertex $V_s(\Delta)$, i.e.\ every resolvent entry contains at least
one summation index.

Next, we introduce the monomials in $(\cal G_{xy})$ whose average we shall estimate.

\begin{definition}[Monomials] \label{definition: fra Z} 
Let $\Delta \in \fra Z$ be an admissible graph and let $\f \mu \in \{1, \dots, N\}^{V_e(\Delta)}$ be a collection of external indices.
We define the monomial
\begin{equation} \label{definition of Z}
\cal Z_{\f a} \;\equiv\; \cal Z_{\f a}^{\f \mu}(\Delta) \;\deq\; 
\prod_{e \in E(\Delta)} \cal G_{u_{\alpha(e)} u_{\beta(e)}}^{\xi_e}\,
\end{equation}
which is regarded as a function of the summation indices $\f a$, recalling
the splitting of the indices \eqref{indexsplitting}. We also denote by
\begin{equation*}
\cal Z \;=\; \pb{\cal Z_{\f a}^{\f \mu}(\Delta) \col \f a \in \{1, \dots, N\}^{V_s(\Delta)}}
\end{equation*}
the family of monomials associated with $(\Delta, \f \mu)$ by \eqref{definition of Z}, and say that $\Delta$ \emph{encodes} the monomial $\cal Z_{\f a}$.
\end{definition}

Note that $\deg(\Delta)$ is the degree of the monomial $\cal Z_{\f a}$ encoded by $\Delta$. Throughout the following we shall frequently drop the explicit dependence of $\cal Z_{\f a}$ on $\f \mu$ and $\Delta$.

The averaging over $\f a$ will be performed with respect to a weight $w(\f a)$. In the example \eqref{example Z-lemma object}, this weight was $w(a,b) = s_{\mu a} s_{\rho b}$. A typical example of a weight is
\begin{equation} \label{example weight}
w(a,b,c) \;=\; \frac{1}{N} \sum_{d} s_{\mu d} s_{d b} s_{bc}\, \qquad \mbox{for} \quad \f a = (a,b,c)\,.
\end{equation}
In order to define a general class of weights, the following notion of partitioning of summation indices is helpful.

\begin{definition}[Partition of indices] \label{def: partition}
Let $I$ be a finite index set. For $\f a  = (a_i)_{i \in I} \in \{1, \dots, N\}^I$ we denote by $\cal P(\f a)$ the partition of $I$ defined by 
 the equivalence relation $k \sim l$ if and only if $a_k = a_l$.
\end{definition}
Generally, we consider weights satisfying the following definition; when reading it, it is good to keep examples of the type \eqref{example weight} in mind. 

\begin{definition}[Weights] \label{definion: w}
A map $w \col \{1, \dots, N\}^{V_s(\Delta)} \to [0,1]$ is a \emph{weight adapted to} $\Delta \in \fra Z$ if it satisfies the following condition. Let  $V_s(\Delta) = I \sqcup J$ be a (possibly trivial) partition of $V_s(\Delta)$ into two disjoint subsets, inducing a splitting $\f a =(\f a_I, \f a_J)$ of the summation indices.
Then we require that, for any  partition $P$ of $J$, we have
\begin{equation} \label{definition of weight}
\max_{\f a_I} \sum_{\f a_J} \indb{\cal P(\f a_J) = P} \, w(\f a_I, \f a_J) \;\leq\; M^{\abs{P} - \abs{V_s(\Delta)}}\,,
\end{equation}
where $\abs{P}$ denotes the number of blocks in $P$.
\end{definition}

The interpretation of \eqref{definition of weight} is that the left-hand side of \eqref{definition of weight} has $\abs{P}$ free summation indices; the remaining summation indices have been either frozen (i.e.\ they belong to $\f a_I$) or merged with others (i.e.\ they belong to a nontrivial block of $P$). Then \eqref{definition of weight} simply states that each suppressed summation yields a factor $M^{-1}$.
In particular, with $J=V_s(\Delta)$ and the trivial atomic partition $P$ we have
\begin{equation*}
\sum_{\f a} w(\f a) \;\leq\; 1\,,
\end{equation*}
i.e.\ the total sum of all weights is always bounded by one.

When estimating averages such as \eqref{example Z-lemma object}, we shall always impose that all indices that have distinct names also have distinct values. In the case that two indices have the same value, we give them the same name. Thus, for example we write
\begin{multline*}
\frac{1}{N^2} \sum_{a,b} G_{\mu a} G_{ab} G_{b \mu}
\\
=\; \frac{1}{N^2}
\sum_{a,b}^{(\mu)*} G_{\mu a} G_{ab} G_{b \mu} + \frac{1}{N^2} \sum_{a}^{(\mu)} G_{\mu a} G_{aa} G_{a \mu} + \frac{1}{N^2} \sum_a^{(\mu)} G_{\mu a} G_{a\mu} G_{\mu \mu} + \frac{1}{N^2} \sum_b^{(\mu)} G_{\mu \mu} G_{\mu b} G_{b \mu} + \frac{1}{N^2} G_{\mu \mu}^3\,,
\end{multline*}
where a star on top of a summation means that all summation indices are constrained to be distinct. (Recall also the notation $\sum^{(S)}$ for $S \subset \{1, \dots, N\}$ from Definition \ref{def: minors}.)

We may now define our central quantity. Let $\Delta \in \fra Z$ and $\f \mu \in \{1, \dots, N\}^{V_e(\Delta)}$ be a collection of external indices. Let $F \subset V_s(\Delta)$ and $w$ be a weight adapted to $\Delta$. We define
\begin{equation} \label{general sum of Gs}
X_F^w(\Delta) \;\equiv\; X_F^{w, \f \mu}(\Delta) \;\deq\; \sum_{\f a}^{(\f \mu)*} w(\f a) \, \qBB{\prod_{i \in F} Q_{a_i}} \cal Z_{\f a}^{\f \mu}(\Delta)\,.
\end{equation}
Thus, $F$ denotes the set of summation indices that come with an operator $Q$. As explained above, the symbol $(\f \mu)$ on top of them sum means that $a_i \neq \mu_j$ for all $i \in V_s(\Delta)$ and $j \in V_e(\Delta)$, and the star means that $a_i \neq a_j$ for all distinct $i,j \in V_s(\Delta)$. Throughout the following, we shall frequently drop the explicit dependence of $X_F^{w, \f \mu}(\Delta)$ on $\f \mu$.

\begin{remark}
In \eqref{general sum of Gs} each operator $Q$ acts on all resolvent entries in $\cal Z_{\f a}$. We make this choice to simplify the presentation; also, this is sufficient for all of our current applications. However, our results may be easily extended to more complicated quantities, in which each $Q$ acts only on a subset of the resolvent entries in $\cal Z_{\f a}$. Thus, in general, there a resolvent entry is either \emph{outside} or \emph{inside} $Q_{a_i}$, for each $i \in F$. We require that each resolvent entry outside $Q_{a_i}$ have no index $a_i$, and at least one resolvent entry inside $Q_{a_i}$ have an index $a_i$. Then our proof carries over with merely cosmetic changes. For example, expressions such as
\begin{equation*}
\sum_{a,b,c,d}^{(\mu \nu)*} s_{\mu a} s_{\rho b} s_{b c} \, Q_a \pB{G_{\mu a} Q_b(G_{ab} G^*_{b \nu}) Q_c(G^*_{ac} G_{cd}) G_{d \mu}}
\end{equation*}
may be estimated in this fashion.
\end{remark}

From Lemmas \ref{lemma: basic properties of prec} and \ref{lemma: rough bounds on G}, we find the trivial bound
\begin{equation}\label{trivial bound}
X^w_F(\Delta) \;\prec\; \Psi^{\deg(\Delta)}
\end{equation}
for any adapted weight $w$, provided that $\Lambda \prec \Psi$. We call \eqref{trivial bound} trivial because we also have the bound
\begin{equation*}
\cal Z_{\f a}^{\f \mu}(\Delta) \;\prec\; \Psi^{\deg(\Delta)}\,.
\end{equation*}
Hence the estimate \eqref{trivial bound} has not been improved by the averaging over $\f a$.

Next, we define indices which count the gain in the size of $X_F^w(\Delta)$ resulting from the averaging over $\f a$ and from the factors $Q$.
\begin{definition} \label{def: incidence indices}
Let $\Delta$ be an edge-coloured graph as in Definition \ref{definition: Delta}. For $i \in V(\Delta)$ we set
\begin{align*}
\nu_i(\Delta) &\;\deq\; \sum_{(j,k) \in E(\Delta)} \ind{\xi_{(j,k)} = 1} \qB{\ind{i = j} + \ind{i = k}}\,,
\\
\nu_i^*(\Delta) &\;\deq\; \sum_{(j,k) \in E(\Delta)} \ind{\xi_{(j,k)} = *} \qB{\ind{i = j} + \ind{i = k}}\,.
\end{align*}
Informally, $\nu_i(\Delta)$ is the number of legs of colour $1$ incident to $i$, and $\nu_i^*(\Delta)$ the number of legs of colour $*$ incident to $i$.
\end{definition}

We shall use $\deg(i) \equiv \deg_\Delta(i)$ to denote the degree of the vertex $i \in V(\Delta)$. It is sometimes important to emphasize that this degree is computed with respect to the graph $\Delta$, which we indicate using the subscript\footnote{Of course, $\deg_\Delta$ is not the same as $\deg(\Delta)$. In fact, we have $\deg(\Delta) = \frac{1}{2} \sum_{i \in V(\Delta)} \deg_\Delta(i)$.} $\Delta$. By definition, $\deg_\Delta(i)$ is the number of legs incident to $i$, i.e.\ a loop at $i$ counts twice. In particular, $\deg_\Delta(i) = \nu_i(\Delta) + \nu_i^*(\Delta)$.

In terms of the monomials $\cal Z$ encoded by $\Delta$, the index $\nu_i(\Delta)$ (respectively $\nu_i^*(\Delta)$) is the number of resolvent entries of $\cal G$ (respectively of $\ad {\cal G}$) in which the index $a_i$ appears. (Note that if the index $a_i$ appears twice in a resolvent entry, this entry is counted twice.)

\begin{definition}[Charged vertex] \label{def: charged}
We call a summation vertex $i \in V_s(\Delta)$ \emph{charged} if either
\begin{enumerate}
\item
$i \notin F$ and $\nu_i \neq \nu_i^*$, or
\item
$i \in F$ and $\abs{\nu_i - \nu_i^*} \neq 2$.
\end{enumerate}
We denote by $V_c(\Delta) \subset V_s(\Delta)$ the set of charged vertices.
\end{definition}

We may now state our main result. 

\begin{theorem}[Averaging theorem] \label{theorem: Z lemma}
Suppose that $\Lambda \prec \Psi$ for some admissible control parameter $\Psi$. Let $\Delta \in \fra Z$ (recall Definitions \ref{definition: Delta} and \ref{definition: fra Z}) and $F \subset V_s(\Delta)$.
Then
\begin{equation} \label{main Z lemma estimate}
X_F^{w, \f \mu}(\Delta) \;\prec\; \Psi^{\deg(\Delta) + \abs{F}} \, \Phi^{\abs{V_c(\Delta)}}
\end{equation}
for any $\f \mu$ and weight $w$ adapted to $\Delta$ (recall Definition \ref{definion: w}).
\end{theorem}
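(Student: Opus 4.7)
The overall strategy is the standard Chebyshev-plus-high-moment approach: for every fixed integer $p\ge 1$ I would prove
\[
\E\bigl|X_F^{w,\f\mu}(\Delta)\bigr|^{2p} \;\prec\; \Psi^{2p(\deg(\Delta)+|F|)}\,\Phi^{2p|V_c(\Delta)|},
\]
and then let $p\to\infty$ and combine with the deterministic bound \eqref{rough bound 1} to obtain \eqref{main Z lemma estimate}. Expanding $|X_F^w|^{2p}$ gives a $2p$-fold sum over $2p$ independent families of summation indices $\f a^{(1)},\dots,\f a^{(2p)}$, half coming from $X_F^w$ and half from its complex conjugate (so that the colours of the corresponding edges are flipped). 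Since the trivial bound \eqref{trivial bound} yields $\Psi^{2p\deg(\Delta)}$, the task is to extract an additional gain of $\Psi^{2p|F|}\Phi^{2p|V_c|}$.

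The gain $\Psi^{2p|F|}$ is produced by the operators $Q_{a_i}$ for $i\in F$, in direct analogy with step (A) of Section \ref{sect: sketch of proof of examples}. I would apply the Family A identity \eqref{resolvent expansion type 1} iteratively to every resolvent factor until each one is \emph{maximally expanded} with respect to the pool of all $2p$ copies of summation indices. After multiplying out the resulting rational expressions and taking expectations, a term survives only if, for each $Q_{a_i^{(k)}}$ in the product, at least one off-diagonal entry of $\cal Z_{\f a^{(k)}}$ carries the index $a_i^{(k)}$; otherwise the associated $P_{a_i^{(k)}}$ annihilates the term. Graphically (Definition \ref{def: linking}) each $Q$-vertex must be linked to at least once, creating an extra off-diagonal edge and hence, via Lemma \ref{lemma: Lambda T}, a factor $\Psi$; collecting these over all copies gives the $\Psi^{2p|F|}$ improvement.

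The second gain, $\Phi^{2p|V_c|}$, comes from vertex resolution as in Section \ref{sec: sketch of resolution}. At each summation vertex I would apply the Family B identities \eqref{res exp 2b}--\eqref{res exp 2b iterated} to expose the entries $h_{a_i x}$ explicitly, and then compute the partial expectations $P_{a_i}$. Because $\E h_{uv}=0$ and (under \eqref{CH}) $\E h_{uv}^2=0$, only pairings of the exposed $h$-factors survive, as illustrated in \eqref{simple pairing in case H}--\eqref{example pairings for S}. The condition of Definition \ref{def: charged} is precisely calibrated so that, at every charged vertex, no admissible pairing can leave \emph{all} newly created chain vertices bichromatic: for $i\notin F$ this follows from the parity obstruction $\nu_i\neq\nu_i^*$, while for $i\in F$ the operator $Q_{a_i}$ forbids the two ``absorbing'' pairings that would otherwise neutralise the mismatch $|\nu_i-\nu_i^*|=2$. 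Each leftover monochromatic chain vertex then yields a factor $\Phi$ via an a priori chain estimate of the form \eqref{most primitive chain}, whose proof is exactly the stable self-consistent equation $\f v = m^2 S\f v + \f E$ of Section \ref{sec: proof of (B)}, inverted in $\ell^\infty$ using \eqref{def of rho} and truncated against the trivial bound $\Psi$ to reproduce the definition \eqref{definition of Phi} of $\Phi$.

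The principal obstacle is not a single analytic estimate but the combinatorial bookkeeping needed to stitch these two mechanisms together uniformly in $p$. After maximal expansion and resolution one is left with an enormous sum of graphs, and I would need to verify: (i) the remainder terms from the (finitely stopped) iterated expansions, the coinciding-index corrections, the subleading pairings in the real symmetric case \eqref{RS}, and the replacements $G_{ii}^{(T)}\to m$ are all genuinely lower order, with the weight axiom \eqref{definition of weight} furnishing a factor $M^{-1}$ per collapsed summation that \eqref{admissible Psi} converts to an extra $\Psi^2$; (ii) the assignment ``charged vertex $\mapsto$ monochromatic chain vertex $\mapsto$ factor $\Phi$'' is injective in every surviving resolved graph, so that the gains compound; and (iii) the total number of graphs is bounded independently of $N$, so that Lemma \ref{lemma: basic properties of prec}(i) delivers the final stochastic bound. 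Organising this accounting -- naturally split into a chain/self-consistent-equation step, a vertex-resolution step, and a simplification step handling diagonal factors -- is what I expect to be the hard part and what should occupy the bulk of the proof.
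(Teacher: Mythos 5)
Your proposed architecture is essentially the paper's: a high-moment expansion via Chebyshev, Family~A identities and linking to extract the factor $\Psi^{|F|}$ from the $Q$-operators, Family~B identities (vertex resolution) plus a parity argument at charged vertices to produce fresh chain vertices, and a priori chain bounds from a self-consistent equation inverted in $\ell^\infty$ using \eqref{def of rho}.

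The one structural ingredient your outline does not address is that the a priori chain bound (Proposition~\ref{lemma: weak Z lemma}) is itself circular as you describe it, and must be resolved by a two-step induction on the length of the chain. Inverting the self-consistent equation $\f v = m^2 S \f v + \f E$ for a chain of degree $\ell$ with $F=\emptyset$ requires bounding the error $\f E$, which contains (i) the same chain of degree $\ell$ with a $Q$ inserted, and (ii) products of strictly shorter chains. Bounding (i) is again a high-moment argument (Step~$\rm I_1$ of Section~\ref{sec: chains}), and that high-moment argument draws on (ii), which is the inductive input (Step~$\rm I_2$ for shorter chains). If one simply says ``each monochromatic chain vertex yields a $\Phi$ via the self-consistent equation of Section~\ref{sec: proof of (B)}'' without organising this $\rm I_1 \leftrightarrow \rm I_2$ bootstrap, the overall argument loops: the general high-moment estimate needs the chain bound, the chain bound needs the $F\neq\emptyset$ chain bound, and the $F\neq\emptyset$ chain bound is itself a high-moment estimate. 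Section~\ref{sec: chains} of the paper exists exactly to break this cycle, and any rigorous implementation of your sketch would have to reproduce that ordering.

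A smaller inaccuracy: you attribute the gain at a charged $i\in F$ to ``$Q_{a_i}$ forbidding the absorbing pairings that neutralise $|\nu_i-\nu_i^*|=2$''. In the paper the mechanism is purely combinatorial: $Q_{a_i}$ forces a linking which adds two legs of the \emph{same} colour to $i$, and the condition $|\nu_i-\nu_i^*|\neq 2$ in Definition~\ref{def: charged} is precisely what guarantees the resulting $\nu_i(\Gamma)\neq\nu_i^*(\Gamma)$, so resolution produces a monochromatic chain vertex for \emph{every} pairing. The $Q$-forbidden pairings you describe do appear, but in the expansion \eqref{res exp 2c} of diagonal entries (the $Z_a$-terms), where they control a different error — they are not what makes the charged condition work.
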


Thus, Theorem \ref{theorem: Z lemma} states that we gain a factor $\Psi$ from each $Q$ and a factor $\Phi$ 
from each charged vertex. The rationale behind the name ``charged'' is that, in the vertex resolution process from the proof of Theorem \ref{theorem: Z lemma}, a charged vertex gives rise, in leading order, to a collection vertices of degree two, at least one of which will be a \emph{chain vertex} (see Definition \ref{def: chains}) and hence yield a factor $\Phi$ using the a priori bounds of Section \ref{sec: chains}.

\begin{remark}\label{remark simple bound}
The right-hand side of \eqref{main Z lemma estimate} can be estimated from above by
\begin{equation*}
(\Psi + M^{-1/4})^{\deg(\Delta) + \abs{F} + \abs{V_c(\Delta)}}\,,
\end{equation*}
which gives a simple power counting in terms of the quantity $\Psi + M^{-1/4}$. From each summation index $a_i$ without an associated $Q_{a_i}$ we gain a factor $\Psi + M^{-1/4}$ if $\nu_i \neq \nu_i^*$. If there is a $Q_{a_i}$ then we gain at least a factor
 $\Psi + M^{-1/4}$, and, provided that $\abs{\nu_i - \nu_i^*} \neq 2$, one additional factor $\Psi + M^{-1/4}$.
Note that we gain at most two additional factors $\Psi + M^{-1/4}$ from each summation index.
\end{remark}

\begin{remark} \label{remark: bad example}
As explained after \eqref{GGwithQ}, the additional term $M^{-1/2}\Psi^{-1}$ in the definition of $\Phi$
is a (necessary) technical nuisance and should be thought of as a lower
order term in typical applications. In general, however, it cannot be eliminated,
and Theorem \ref{theorem: Z lemma} cannot be formulated in terms of powers of $\Psi$ alone. This may be seen for instance from the variance calculation of the quantity $\frac{1}{N} \sum_{a}^{(\mu)} Q_a(G_{\mu a} G_{a \mu}^*)$. Indeed, as is apparent from \eqref{variance for coinciding indices}, the term arising from $a = b$ is of order $N^{-1} \Psi^4$, which is in general not bounded by $\Psi^8$.
\end{remark}

\begin{remark} \label{remark: finite moments}
The requirement that \eqref{finite moments} hold for all $p$ can be easily relaxed. Indeed, Theorem \ref{theorem: Z lemma} has the following variant. Fix $\epsilon > 0$ and $D > 0$. Then there exists a $p(\epsilon, D) \in \N$ such that the following holds. Suppose that the hypotheses of Theorem \ref{theorem: Z lemma} hold, and that \eqref{finite moments} holds for $p(\epsilon, D)$. Then
\begin{equation*}
\P\qB{\abs{X_F^{w, \f \mu}(\Delta)} > N^\epsilon \, \Psi^{\deg(\Delta) + \abs{F}} \, \Phi^{\abs{V_c(\Delta)}}} \;\leq\; N^{-D}\,,
\end{equation*}
for all $z \in \f S$, all $\f \mu$, and all weights $w$ adapted to $\Delta$.

This variant is an immediate consequence of the proof of Theorem \ref{theorem: Z lemma}, using the observation that, for any fixed $\epsilon$ and $D$, the estimate on $X_F^w(\Delta)$ consists of a finite number of steps $s$, each of them using a bound on $\E \abs{\zeta_{ij}}^{p_s}$ for some finite $p_s$. As $\epsilon \to 0$ or $D \to \infty$, the number of these steps tends to infinity. Moreover, as the step index $s$ tends to infinity, the exponent $p_s$ in $\E \abs{\zeta_{ij}}^{p_s}$ also tends to infinity.
\end{remark}

\begin{remark} \label{rem: G in denominator}
Our result applies verbatim if (some or all) diagonal entries of the form 
$\cG_{ii}=G_{ii} - m$  in the monomial \eqref{definition of Z}
are replaced by $1 /  G_{ii} - 1 / m$. (This would be a mere notational complication in the statement of Theorem \ref{theorem: Z lemma}). 
After a little algebra (multiplying out a product of terms of the form $1 /  G_{ii} - 1 / m$), we consequently find that our result applies to monomials divided by diagonal entries $G_{ii}$, i.e.\ expressions of the form
\begin{equation*}
\frac{\cG_{xy}\cG_{uv} \cG_{wz} \cdots}{G_{aa} G_{bb} G_{cc} \cdots}\,,
\end{equation*}
where the indices can be either summation or external indices.
This extension may be proved in two ways.

The first way is to observe that if we replace the identity
\begin{equation*}
G_{ii} - m \;=\; \frac{1}{1/m - \pb{- h_{ii} + Z_i + U_i^{(i)}}} - m \;=\; m^2 \pb{- h_{ii} + Z_i + U_i^{(i)}} + m^3 \pb{- h_{ii} + Z_i + U_i^{(i)}}^2 + \cdots\,,
\end{equation*}
used in our proof by \eqref{res exp 2c} for the quantity $1/G_{ii} - 1/m$, the proof of Theorem \ref{theorem: Z lemma} carries over unchanged.

The second way is to write
\begin{equation*}
\frac{1}{G_{ii}} - \frac{1}{m} \;=\; \frac{m - G_{ii}}{m^2} + \frac{(m - G_{ii})^2}{m^3} + \frac{(m - G_{ii})^3}{m^3 G_{ii}}\,.
\end{equation*}
This induces a splitting of $\cal Z$ into three parts, which are treated separately. It is a simple matter to check that Theorem \ref{theorem: Z lemma} may be applied to the first two parts. The third part is treated trivially, by freezing the index $i$; in this case we already get a factor $\Psi^3$ from the index $i$, and hence the averaging effect of the summation over $i$ is not needed, since we already gained the maximal two 
additional factors of $\Psi$ from $i$.
\end{remark}

\begin{remark} \label{rem: relaxing symmetry}
As in Section~\ref{section: examples}, in our proofs we shall assume that either \eqref{RS} or \eqref{CH} holds (see Section~\ref{sect: sketch of proof of examples}).
We impose these conditions in order to simplify the derivation and analysis of self-consistent equations such as the ones in Sections \ref{sec: proof of (B)} and \ref{section: Z4}. Without them, however, our core argument remains unchanged. For instance, when estimating $\sum_a s_{ba} G_{\mu a} G_{\mu a}$, we instead consider the quantity $V_a \deq P_a G_{\mu a} G_{\mu a}$. Using \eqref{res exp 2b}, we may do a calculation similar to the one following \eqref{sce}, and get a self-consistent equation for $V_a$. Solving the self-consistent equation entails the analysis of the Hermitian operator $R = (r_{ij})$ where $r_{ij} \deq \E h_{ij}^2$. Using $\abs{r_{ij}} \leq s_{ij}$, the spectral analysis from the end of Section \ref{sec: chains with no Q} and Appendix \ref{section: proof of res id} carries over with minor modifications. We omit the extraneous details of this generalization.
\end{remark}

\begin{remark} \label{remark: 1/G for warm-up}
In \cite[Lemma 5.2]{EYY2}, a fluctuation averaging theorem of the form
\begin{equation} \label{old Z-lemma}
\frac{1}{N} \sum_i Q_i \sum_{k,l}^{(i)} h_{ik} G_{kl}^{(i)} h_{li} \;\prec\; \Psi^2
\end{equation}
was proved. This result was further generalized in \cite{EYY3, EKYY1, PY}. The estimate \eqref{old Z-lemma} also follows  from  Theorem \ref{theorem: Z lemma}. To see this, we use Schur's formula \eqref{schur} to get
\begin{equation} \label{link to old Z-lemma}
\frac{1}{N} \sum_i Q_i \frac{1}{G_{ii}} \;=\; \frac{1}{N} \sum_i h_{ii} - \frac{1}{N} \sum_i Q_i \sum_{k,l}^{(i)} h_{ik} G_{kl}^{(i)} h_{li}\,.
\end{equation}
The first term on the right-hand side of \eqref{link to old Z-lemma} is easily proved to be stochastically bounded by $N^{-1} \leq \Psi^2$. The second term on the right-hand side of \eqref{link to old Z-lemma} is the left-hand side of \eqref{old Z-lemma}. Moreover, the left-hand side of \eqref{link to old Z-lemma} is stochastically bounded by $\Psi^2$, as follows from Theorem \ref{theorem: Z lemma}; see Remark \ref{rem: G in denominator}. In fact, the left-hand side of \eqref{link to old Z-lemma} may be estimated using the much simpler Proposition \ref{prop: warm-up} (whose proof trivially holds for expressions like the one the left-hand side of \eqref{link to old Z-lemma}). In particular, Proposition \ref{prop: warm-up} and this remark provide 
a simpler proof than \cite{EYY2, EYY3, EKYY1, PY} of the previously known estimate \eqref{old Z-lemma}.
\end{remark}

Theorem \ref{theorem: Z lemma} has the following, simpler, variant in which the averaging with respect to a weight $w$ is replaced with partial expectation. 
\begin{theorem}[Averaging using partial expectation] \label{theorem: Z lemma variant}
Suppose that $\Lambda \prec \Psi$ for some admissible control parameter $\Psi$. Let $\Delta \in \fra Z$. and $F = \emptyset$.
Then
\begin{equation} \label{main Z lemma estimate variant}
\prod_{a \in \f a} P_a \, \cal Z_{\f a}^{\f \mu}(\Delta) \;\prec\; \Psi^{\deg(\Delta)} \, \Phi^{\abs{V_c(\Delta)}}
\end{equation}
for all $\f a$ and $\f \mu$ such that all indices of the collection $(\f a, \f \mu)$ are distinct.
\end{theorem}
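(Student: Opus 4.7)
The plan is to reduce Theorem \ref{theorem: Z lemma variant} to Theorem \ref{theorem: Z lemma} via the vertex resolution procedure sketched in Section \ref{sec: sketch of resolution}, applied directly to each index in $\f a$. First, for each $a \in \f a$, expand every resolvent entry in $\cal Z_{\f a}^{\f\mu}(\Delta)$ that is incident to $a$ as a polynomial in the entries $(h_{ak})_{k \notin \f a}$ using the Family B identities \eqref{resolvent expansion type 2}, with coefficients that are products of entries of $G^{(\f a)}$. Diagonal prefactors $G_{aa}^{(T)}$ are replaced by $m$, the error being subleading by Lemma \ref{lemma: Lambda T} and the bound $\Psi \leq 2\Phi$ from Remark \ref{rem: lower bound on Phi}. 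Iterating over $a \in \f a$, we obtain a representation of $\cal Z_{\f a}^{\f\mu}(\Delta)$ as a polynomial in $\{h_{ak} : a \in \f a\}$ with $\sigma(H^{(\f a)})$-measurable coefficients, modulo a subleading remainder.

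Next, apply $\prod_{a \in \f a} P_a$. The coefficients are unaffected, and a Wick-type computation under \eqref{RS} or \eqref{CH} yields
\begin{equation*}
\prod_{a \in \f a} P_a \cal Z_{\f a}^{\f\mu}(\Delta) \;=\; \sum_{\cal M} c_{\cal M} \sum_{\f k} w_{\cal M}(\f k) \, \cal Z'_{\f k}(\Delta'_{\cal M}) \;+\; \text{(subleading)},
\end{equation*}
where $\cal M$ ranges over the finitely many admissible pairings of the $h$-factors, $c_{\cal M}$ is a constant (a power of $m$), $\Delta'_{\cal M}$ is an admissible graph whose summation vertices $V_s(\Delta'_{\cal M})$ index the pairs of $\cal M$ and whose external vertices are $V_e(\Delta) \sqcup \f a$, and $w_{\cal M}(\f k) = \prod_i s_{a_{j(i)},\, k_i}$ is a product of one variance factor per pair. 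Adaptedness of $w_{\cal M}$ in the sense of Definition \ref{definion: w} is routine: for any splitting $V_s(\Delta'_{\cal M}) = I \sqcup J$ and any partition $P$ of $J$, combining $s_{ab} \leq M^{-1}$ on the frozen indices $\f k_I$ with the row-stochasticity \eqref{S is stochastic} on each block of $P$ yields the required bound $M^{|P| - |V_s(\Delta'_{\cal M})|}$. Hence Theorem \ref{theorem: Z lemma} (with $F = \emptyset$) applies to each inner sum, bounding it by $\Psi^{\deg(\Delta'_{\cal M})} \Phi^{|V_c(\Delta'_{\cal M})|}$.

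It remains to compare $(\deg, V_c)$ for $\Delta$ and $\Delta'_{\cal M}$. In the generic case, each edge incident to $\f a$ yields one resolvent edge of $G^{(\f a)}$ in $\Delta'_{\cal M}$, its $h$-factor being absorbed into the weight upon pairing; edges disjoint from $\f a$ are unchanged. Degenerate pairings involving the direct term $-h_{ij}$ of \eqref{res exp 2b iterated} reduce $\deg(\Delta'_{\cal M})$ but are compensated by extra variance factors of size $M^{-1} \leq \Psi^2$, so in every case the product of the Theorem \ref{theorem: Z lemma} bound with the extra weights is controlled by $\Psi^{\deg(\Delta)}$. For the charged vertices, fix $a \in V_c(\Delta)$ so that $\nu_a \neq \nu_a^*$, and let $n_\mathrm{ss}, n_\mathrm{sd}, n_\mathrm{dd}$ denote the numbers of solid-solid, solid-dashed and dashed-dashed pairs of $\cal M$ at $a$; the constraints $2n_\mathrm{ss} + n_\mathrm{sd} = \nu_a$ and $2n_\mathrm{dd} + n_\mathrm{sd} = \nu_a^*$ force $n_\mathrm{ss} - n_\mathrm{dd} = (\nu_a - \nu_a^*)/2 \neq 0$, hence $n_\mathrm{ss} + n_\mathrm{dd} \geq 1$. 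Every monochromatic pair produces a new vertex $k \in V_s(\Delta'_{\cal M})$ of degree $2$ with both incident edges of the same colour, so $\nu_k \neq \nu_k^*$ and $k \in V_c(\Delta'_{\cal M})$. Since distinct $a \in V_c(\Delta)$ produce disjoint sets of new vertices, $|V_c(\Delta'_{\cal M})| \geq |V_c(\Delta)|$, completing the estimate.

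The main obstacle I anticipate is the careful bookkeeping of the several lower-order corrections: errors from replacing $G_{aa}^{(T)}$ with $m$, from coinciding summation indices in the iterated expansion, from degenerate pairings (which lose resolvent edges but gain variance factors), and from higher-order cumulant contributions when more than two $h$-factors share the same $k$. Each is strictly smaller than $\Psi^{\deg(\Delta)}\Phi^{|V_c(\Delta)|}$ once one uses the $\prec$-calculus of Lemma \ref{lemma: basic properties of prec} together with the a priori bounds of Lemma \ref{lemma: rough bounds on G} and the admissibility $M^{-1/2} \leq \Psi$ from \eqref{admissible Psi}, in the same spirit as the error management developed in Sections \ref{section: Z3}--\ref{section: Z6} for Theorem \ref{theorem: Z lemma}.
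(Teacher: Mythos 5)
Your overall strategy --- resolve the summation vertices with Family B, Wick--pair the resulting entries of $H$ under $\prod_a P_a$, and reduce to Theorem~\ref{theorem: Z lemma} by recognizing chain vertices --- is exactly the route the paper takes, and the parity count $n_{\mathrm{ss}}-n_{\mathrm{dd}}=(\nu_a-\nu_a^*)/2\neq 0$ forcing a monochromatic pair (hence a chain vertex) is the right mechanism for the gain at charged vertices. However, there is a genuine gap in your first step: you expand the entries incident to $\f a$ via Family B and claim the coefficients are entries of $G^{(\f a)}$. That is false unless one has first applied the Family~A identities \eqref{resolvent expansion type 1} to make every entry maximally expanded in $\f a$. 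To see the problem, take $\f a=\{a,b\}$ and $\cal Z = G_{\mu a}G_{ab}G_{b\mu}$. Applying \eqref{res exp 2b} to $G_{\mu a}$ gives $-G_{aa}\sum_k G^{(a)}_{\mu k}h_{ka}$, and the coefficient $G^{(a)}_{\mu k}$ still depends on the column $b$ of $H$. Since neither $\mu$ nor $k$ equals $b$, Family B cannot be applied to $G^{(a)}_{\mu k}$ in the index $b$; the only way to remove the $b$-dependence is the Family~A identity
\begin{equation*}
G^{(a)}_{\mu k} \;=\; G^{(ab)}_{\mu k} + \frac{G^{(a)}_{\mu b}\,G^{(a)}_{bk}}{G^{(a)}_{bb}}\,.
\end{equation*}
Consequently, the ``iterating over $a\in\f a$'' you describe does not terminate with $\sigma(H^{(\f a)})$-measurable coefficients if one only uses \eqref{resolvent expansion type 2}, and the assertion ``apply $\prod_a P_a$, the coefficients are unaffected'' collapses.

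This is precisely why the paper's proof of Theorem~\ref{theorem: Z lemma variant} inserts as its first step the maximal-expansion (linking) procedure of Section~\ref{section: Z3}, producing the graphs $\Gamma\in\fra G(\Delta)$ whose resolvent edges are maximally expanded in $\f a$, \emph{before} doing vertex resolution. That step also produces extra off-diagonal edges at the vertices which get linked, which is what is compensated by the set $V_m(\Gamma)\subset V_c(\Delta)$ of vertices that were \emph{not} linked (and hence need the factor $\Phi$ from resolution); your direct approach has no analogue of this bookkeeping because you never introduce the extra edges. Once the Family~A step is restored, the rest of your argument --- the chain weight $w_{\cal M}$, its adaptedness, the Wick pairings, the $M^{-1/2}$ absorption of degenerate pairings and higher cumulants, and the invocation of Theorem~\ref{theorem: Z lemma} on $\Delta'_{\cal M}$ (after stripping the upper indices $(\f a)$ via \eqref{inverted res exp 1}, a point you also leave implicit) --- does go through as in Sections~\ref{section: Z5}--\ref{section: simplifications}.
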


Thus in Theorem \ref{theorem: Z lemma variant} we set $F = \emptyset$, i.e.\ there are no factors $Q$, whose presence would be nonsensical because the identity $P_a Q_a = 0$ implies that the partial expectation of any monomial preceded by a factor $Q$ vanishes. The condition  $F = \emptyset$ is still used indirectly in the theorem since the definition of $V_c(\Delta)$ depends on $F$.

\begin{remark}
It is possible to combine Theorems \ref{theorem: Z lemma} and \ref{theorem: Z lemma variant} by splitting $\f a = (\f a', \f a'')$, and averaging over $\f a'$ with respect to a weights $w(\f a')$
and taking the partial expectation $\prod_{a \in \f a''} P_a$ over $\f a''$. We omit the details.
\end{remark}

\begin{remark}
Remarks \ref{remark simple bound} -- \ref{rem: relaxing symmetry} also apply to Theorem \ref{theorem: Z lemma variant} with the obvious modifications.
\end{remark}

\section{Outline of proof} \label{section: Z1}

We now outline the strategy behind the proof of Theorem \ref{theorem: Z lemma}. The first part of the proof relies on an inductive argument to prove the claim of Theorem \ref{theorem: Z lemma} for a special class of $\Delta$'s (the \emph{chains}) that encode monomials containing only factors $\cal G$ and not $\cal G^*$ (or the other way around). 
These $\Delta$'s act as building blocks which are used to estimate the error terms arising in the estimate of arbitrary $\Delta$'s, in the second part of the proof. The need to have a priori bounds on chains was already hinted at in Section \ref{sect: sketch of proof of examples}. Indeed, the estimate \eqref{most primitive chain} is the simplest prototype of a chain estimate, and was used to estimate quantities arising from the process of vertex resolution. This is in fact a general phenomenon: a priori bounds on chains will be used used in combination with vertex resolution.

\begin{definition}[Chains] \label{def: chains}
Let $\Delta \in \fra Z$.
\begin{enumerate}
\item
We call a vertex $i \in V_s(\Delta)$ a \emph{chain vertex} if $i$ is not adjacent to itself, $i$ has degree two, and both incident edges have the same colour.
We denote by $c(\Delta)$ the number of chain vertices in $\Delta$.
\item
We call $\Delta$ an \emph{open (undirected) chain} if all vertices $i \in V_s(\Delta)$ are chain vertices, $\abs{V_e(\Delta)} = 2$, and $\deg(i) = 1$ for both $i \in V_e(\Delta)$.
\item
We call $\Delta$ a \emph{closed (undirected) chain} if all vertices $i \in V_s(\Delta)$ are chain vertices, $\abs{V_e(\Delta)} \leq 1$, and $\deg(i) = 2$ for $i \in V_e(\Delta)$.
\item
A chain vertex $i \in V_s(\Delta)$ is \emph{directed} if one incident edge is incoming and the other outgoing. A chain is \emph{directed} if every $i \in V_s(\Delta)$ is directed.
\end{enumerate}
\end{definition}

Figure \ref{figure: chains} gives a few examples of chains. The notion of a directed chain will be used in the complex Hermitian case \eqref{CH}, in which all chains that arise in our proof will be directed. In the real symmetric case \eqref{RS}, there is no such restriction.

\begin{figure}[ht!]
\begin{center}
\includegraphics{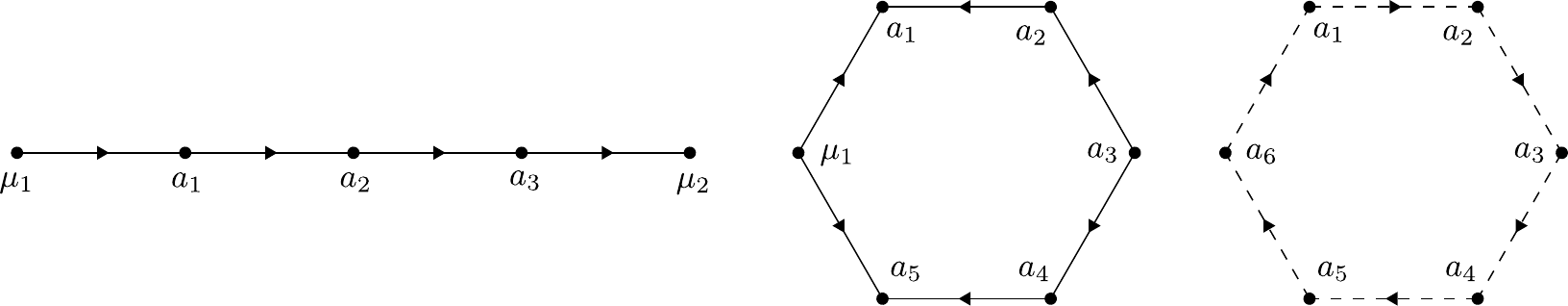}
\end{center}
\caption{From left to right: an open directed chain, a closed undirected chain with one external vertex, a closed directed chain with no external vertices. \label{figure: chains}}
\end{figure}

If $\Delta$ is a chain then by definition $X^w_F(\Delta)$ contains no diagonal entries $\cal G_{ii}$. Since $\cal G_{ij} = G_{ij}$ for $i \neq j$, we may (and shall) therefore replace all entries of $\cal G$ with entries of $G$ when $\Delta$ is a chain.

Chains are useful in combination with the following family of special weights.
\begin{definition}[Chain weights] \label{def: chain weight}
Let $n \in \N$. For any fixed $\f b = (b_1, \dots, b_n)$ define the weight
\begin{equation} \label{chain weight}
w_{\f b}(\f a) \;\equiv\; w(\f a) \;\deq\; s_{a_1 b_1} \cdots s_{a_n b_n}\,.
\end{equation}
We call weights of the form \eqref{chain weight} \emph{chain weights}.
\end{definition}

Using \eqref{s leq W}, it is easy to check that a chain weight from Definition \ref{def: chain weight} is a weight in the sense of Definition \ref{definion: w}.
The role of chains is highlighted by the two following facts.

\begin{itemize}
\item
If $\Delta$ is a chain and $w_{\f b}$ is an adapted chain weight, then the family $\pb{\sum_{\f a} w_{\f b}(\f a)\cal Z_{\f a}(\Delta)}_{b_1}$ for fixed $(b_2, \dots, b_{n - 1})$ satisfies a stable self-consistent equation; See Step $\rm I_2$ below.
\item
Proving Theorem \ref{theorem: Z lemma} (in fact, a weaker version given in Proposition \ref{lemma: weak Z lemma} below) for a chain $\Delta$ and an adapted chain weight is a key tool for proving Theorem \ref{theorem: Z lemma} for arbitrary $\Delta$.
\end{itemize}

\begin{proposition}\label{lemma: weak Z lemma}
Suppose that $\Lambda \prec \Psi$ for some admissible control parameter $\Psi$, and recall the definition \eqref{definition of Phi} of $\Phi$. Let $\Delta$ be a chain, $w$ an adapted chain weight, and $F \subset V_s(\Delta)$. 
Then we have
\begin{equation} \label{bound for closed chains}
X^w_F(\Delta) \;\prec\; \Psi^{\deg(\Delta) + \abs{F}} \Phi^{c(\Delta) - \abs{F}}
\end{equation}
for any $\f \mu$ and adapted chain weight $w$.
\end{proposition}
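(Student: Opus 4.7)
My plan is to prove Proposition \ref{lemma: weak Z lemma} by induction on $|V_s(\Delta)|$, combining two ingredients: a high-moment/fluctuation argument that extracts a factor $\Psi^{|F|}$ from the vertices in $F$, and a stable self-consistent equation that extracts a factor $\Phi^{c(\Delta)-|F|}$ from the chain vertices outside $F$. These are the chain-level generalisations of the arguments (A) and (B) sketched in Section \ref{sect: sketch of proof of examples}, here used in tandem and iteratively.

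For the high-moment part, I would estimate $\E|X^w_F(\Delta)|^{2p}$ and then invoke Chebyshev via Definition \ref{def:stocdom}. After opening the $2p$-fold product and repeatedly applying \eqref{resolvent expansion type 1} to every resolvent entry so that it becomes maximally expanded in all summation indices, the identity $P_{a_i}Q_{a_i}=0$ forces every nonvanishing term to still contain, for each $i\in F$, at least one resolvent entry with $a_i$ as a lower index. Every such additional off-diagonal entry is bounded by $\Lambda\prec\Psi$, which contributes the claimed factor $\Psi^{|F|}$. This is a direct chain analogue of the sketch in Section \ref{sec: sketch of A}.

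For the self-consistent equation, I treat a chain vertex $a_i\notin F$ by inserting $1=P_{a_i}+Q_{a_i}$. The $Q_{a_i}$ contribution is absorbed into the high-moment argument applied to the enlarged $Q$-set $F\cup\{i\}$, producing one extra factor $\Psi$. For the $P_{a_i}$ contribution, I apply \eqref{res exp 2b} to each of the two resolvent entries incident to $a_i$ (which share a colour by the chain property) and take partial expectation. In case \eqref{CH} the only surviving pairing is $\E h_{a_i x}\bar h_{a_i y}=s_{a_i x}\delta_{xy}$ (the additional pairings that appear in case \eqref{RS} are handled via the direction structure, in the spirit of Remark \ref{rem: relaxing symmetry}), which replaces the vertex $a_i$ with a new summation vertex $k$ carrying the weight $m^2 s_{a_i k}$. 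After removing the upper indices $(a_i)$ via \eqref{resolvent expansion type 1}, the expression, viewed as a function of $b_i$, satisfies a vector equation of the schematic form $\f v=m^2 S\f v+\f E$. Inverting $1-m^2 S$, combined with the estimate $\varrho\Psi\le\Phi$ and the trivial $\Psi$-bound on the two incident edges, shortens the chain by one vertex at the cost of a single $\Phi$; iterating yields the total gain $\Phi^{c(\Delta)-|F|}$.

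The main obstacle will be the careful bookkeeping of error terms: lower-order contributions from coinciding indices in \eqref{res exp 2b}, subleading pairings of $h$-entries, and stray $\Psi$-factors produced when removing upper indices via \eqref{resolvent expansion type 1}. All of these errors carry factors of $M^{-1/2}$ or $M^{-1}$, and the definition of $\Phi$ with the summand $M^{-1/2}\Psi^{-1}$ (so that $M^{-1/2}\le\Psi\Phi$) is precisely tailored to absorb them into the claimed bound. A second delicate point is arranging the inductive scheme so that the self-consistent equation step always reduces to a strictly shorter chain, with the base case being the empty or single-edge chain handled directly; this avoids any circularity between the high-moment step and the self-consistent equation step.
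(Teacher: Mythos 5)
Your overall plan has the right two ingredients---a high-moment estimate and a stable self-consistent equation, run as an induction on chain length---and this is indeed the architecture of the paper's alternating Steps $\mathrm{I}_1$ (Proposition~\ref{prop:step2}) and $\mathrm{I}_2$ (Proposition~\ref{prop:step1}). The gaps are in how each ingredient is supposed to deliver its share of the bound.

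Your high-moment step, as described, yields only $\Psi^{\deg(\Delta)+|F|}$---one extra $\Psi$ per $Q$-vertex and nothing more. This is too weak to close the loop: when you insert $P_{a_i}+Q_{a_i}$ at a chain vertex $a_i\notin F$, the $Q_{a_i}$ piece is $X^w_{F\cup\{i\}}(\Delta)$, and you need it bounded by $\Psi^{\deg(\Delta)+|F|+1}\Phi^{c(\Delta)-|F|-1}$; the weaker bound $\Psi^{\deg(\Delta)+|F|+1}$ is only $O(\Psi^{\deg(\Delta)+|F|}\Phi)$, one $\Phi$ short whenever $c(\Delta)-|F|\geq 2$. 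The paper obtains the missing $\Phi$'s in the high-moment step by expanding only in the indices $\f a_F$ (not all summation indices), so that after linking the graph decomposes into \emph{subchains} of degree strictly less than $\deg(\Delta)$, to each of which the induction hypothesis with $F=\emptyset$ is then applied (cf.\ the discussion around Figure~\ref{figure: square of Z star}). Your plan to make all entries maximally expanded \emph{in all summation indices} destroys this subchain structure without a replacement mechanism for extracting $\Phi$'s. Relatedly, ``inverting $1-m^2S$ \dots\ shortens the chain by one vertex at the cost of a single $\Phi$; iterating yields $\Phi^{c(\Delta)-|F|}$'' is not what happens: after expanding $a_1$ via \eqref{res exp 2b}, taking $P_{a_1}$, and renaming $(a_1,d)\mapsto(d,a_1)$, one recovers the \emph{same} chain with a shifted weight, so \eqref{vself} is a fixed-point identity $\f v=m^2 S\f v + \f E$ which is inverted exactly once, yielding a single extra factor via $\varrho\Psi\leq\Phi$. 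All the remaining $\Phi^{c(\Delta)-1}$ factors must already be present in $\f E$, and putting them there is precisely the job of the (stronger-than-you-describe) high-moment step together with the induction on shorter subchains---not of an iterated inversion.
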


As an a priori bound in Sections \ref{section: Z5} and \ref{section: simplifications}, we shall always use Proposition \ref{lemma: weak Z lemma} with $F = \emptyset$.
The statement of Proposition \ref{lemma: weak Z lemma} for $F = \emptyset$ may be summarized by saying that from each chain vertex we gain a factor $\Phi$ (as compared to the trivial bound \eqref{trivial bound}). 

Next, we outline the proof Theorem \ref{theorem: Z lemma}. The argument consists of two main steps: establishing a priori bounds on chains (i.e.\ proving Proposition \ref{lemma: weak Z lemma}) and proving Theorem \ref{theorem: Z lemma} using Proposition \ref{lemma: weak Z lemma} as input.

Proposition \ref{lemma: weak Z lemma} is proved first for open chains, using a two-step induction. The induction parameter is the length of the chain $\ell \deq \deg(\Delta)$. The induction is started at $\ell = 1$, and consists of two steps, $\rm I_1$ and $\rm I_2$. It may be summarized in the form
\begin{equation*}
(\ell = 1\,,\, F = \emptyset)
\xrightarrow{\rm I_1}
(\ell = 2\,,\, F \neq \emptyset)
\xrightarrow{\rm I_2}
(\ell = 2\,,\, F = \emptyset)
\xrightarrow{\rm I_1}
(\ell = 3\,,\, F \neq \emptyset)
\xrightarrow{\rm I_2}
(\ell = 3\,,\, F = \emptyset)
\xrightarrow{\rm I_1} \cdots\,.
\end{equation*}
What follows is a sketch of steps $\rm I_1$ and $\rm I_2$.
\begin{description}
\item[Step $\rm I_1$.]
The input for Step $\rm I_2$ is the claim of Proposition \ref{lemma: weak Z lemma} with $F = \emptyset$, for all open chains $\Delta'$ satisfying $\deg(\Delta') < \ell$. Using a high moment expansion, we estimate $X_F^w(\Delta)$, where $\Delta$ is an open chain, $\deg(\Delta) = \ell$, and $F \neq \emptyset$. The details are carried out in Section \ref{section: Z4}.

\item[Step $\rm I_2$.]
We fix an open chain $\Delta$ and prove the claim of Proposition \ref{lemma: weak Z lemma} for $F = \emptyset$, under the assumption that the claim of Proposition \ref{lemma: weak Z lemma} has been established for
\begin{enumerate}
\item
$\Delta$ with $F \neq \emptyset$;
\item
all open chains $\Delta'$ satisfying $\deg(\Delta') < \deg(\Delta)$ with $F = \emptyset$.
\end{enumerate}
The proof is based on a self-consistent equation for the family $\pb{\sum_{\f a} w_{\f b}(\f a)\cal Z_{\f a}}_{b_1}$ for fixed $b_2, \dots, b_n$. This self-consistent equation will be stable provided $E = \re z$ lies away from the spectral edges $\pm 2$. This stability is ensured by the fact that $\cal Z$ only contains factors $G$ and not $G^*$. The details are carried out in Section \ref{sec: chains with no Q}.
\end{description}

The induction is started by noting that Proposition \ref{lemma: weak Z lemma} holds trivially for the open chain of length 1 (which has no chain vertex), encoding the monomial $G_{\mu \nu} \prec \Psi$. After Steps $\rm I_1$ and $\rm I_2$ are complete, the induction argument outlined above completes the proof of Proposition \ref{lemma: weak Z lemma} for open chains. The proof for closed chains is almost identical, except that no induction is needed; the only required assumption is that Proposition \ref{lemma: weak Z lemma} hold for \emph{open} chains of arbitrary degree.

Once Proposition  \ref{lemma: weak Z lemma} has been proved, we use it as input to prove Theorem \ref{theorem: Z lemma} for a general $\Delta \in \fra Z$. Similarly to Step $\rm I_2$, we use a high-moment expansion. The estimates are considerably more involved than in Step $\rm I_2$, however. (In the language of Sections \ref{sec: sketch of resolution} and \ref{section: Z5}, we use vertex resolution to gain extra powers of $\Psi$ from the charged vertices.) The details are carried out in Sections \ref{section: Z5} -- \ref{section: simplifications}.

We record the following guiding principle for the entire proof of Theorem \ref{theorem: Z lemma}. 
It is a \emph{basic power counting} that can be summarized as follows.
The size of $X_F^w(\Delta)$ is given by a product of three main ingredients:
\begin{itemize}
\item[(a)]
The naive size $\Psi^{\deg(\Delta)}$, which is simply the number of entries of $\cal G$ in $X_F^w(\Delta)$ (obtained by a trivial power counting and $\Lambda \prec \Psi$).
\item[(b)]
The smallness arising from $F$, i.e.\ $\Psi^{\abs{F}}$ (obtained from the linking imposed by the factors $Q$).
\item[(c)]
The smallness arising from the charged vertices, i.e.\ $\Phi^{\abs{V_c(\Delta)}}$ (obtained from vertex resolution and the a priori bounds of Proposition \ref{lemma: weak Z lemma} applied to chain vertices).
\end{itemize}
We shall frequently refer to the factors $\Psi^{\abs{F}}$ and $\Phi^{\abs{V_c(\Delta)}}$ from (b) and (c) as \emph{gain} over the naive size $\Psi^{\deg(\Delta)}$.
It is very important for the whole proof that the mechanism of this gain is \emph{local} in the graph,  i.e.\ operates on the level of individual vertices. Each factor gained in the case (c) can be associated with a charged vertex. In the case (b), a linking results in an additional
edge adjacent to the vertex on which a linking was performed. There will be some technical complications which somewhat obscure this picture, such as 
occasionally coinciding indices. We shall always analyse these exceptional situations by comparing them to the basic power counting dictated by the generic situation.  We remark that these ``exceptional'' situations sometimes in fact lead to leading-order error terms, which is for instance the reason why the parameter $\Phi$ cannot in general be replaced with $\Psi$ in \eqref{main Z lemma estimate}.

Figure \ref{figure: roadmap} contains a diagram summarizing all key steps of the proof.
\begin{figure}[ht!]
\begin{center}
\includegraphics{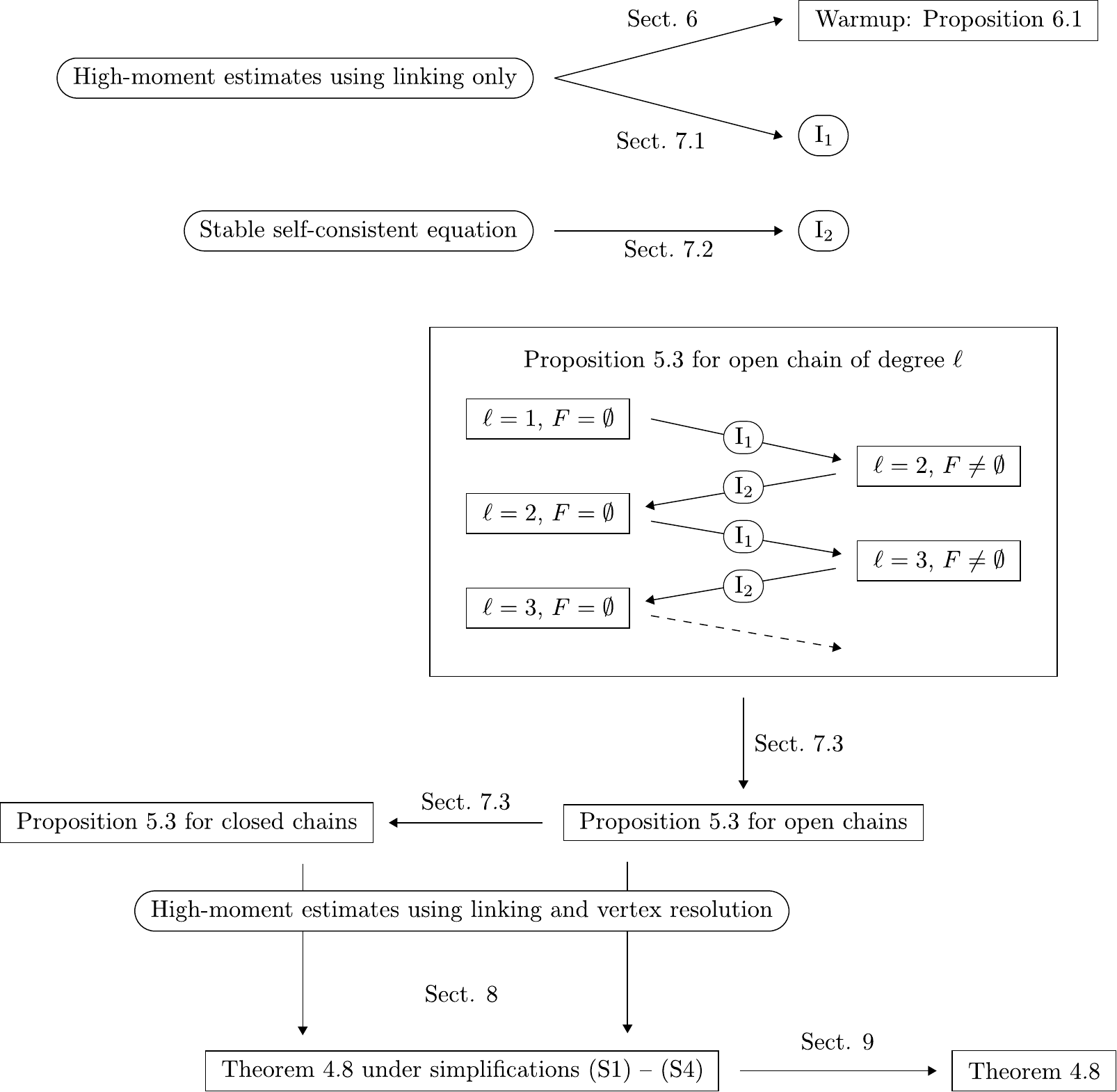}
\end{center}
\caption{The structure of the proof of Theorem \ref{theorem: Z lemma}. Concepts and arguments are displayed in rounded boxes, statements and results in rectangular boxes. \label{figure: roadmap}}
\end{figure}

We conclude this section with an outline of Sections \ref{section: Z3} -- \ref{section: simplifications}. In Section \ref{section: Z3} we present a simple high-moment estimate that only uses the process of \emph{linking} (see Definition \ref{def: linking}); more algebraically, the argument of Section \ref{section: Z3} only uses Family A identities (and not Family B). The result is Proposition \ref{prop: warm-up}, which obtains a gain of a factor $\Psi$ from each $Q$ but no gain from charged vertices (see Definition \ref{def: charged}). The goal of Section \ref{section: Z3} is twofold, the first goal being pedagogical. It provides a complete but vastly simplified proof of a special case of Theorem \ref{theorem: Z lemma}, thereby illustrating the process of linking. In addition, it lays the ground for Step $\rm I_1$ used to derive a priori bounds on chains, as well as for the more complicated high-moment estimates used in the full proof of Theorem \ref{theorem: Z lemma}.

Section 6 is devoted to chains; its goal is to prove Proposition \ref{lemma: weak Z lemma}. Step $\rm I_1$ is proved in Section \ref{section: Z4} and Step $\rm I_2$ in Section \ref{sec: chains with no Q}. The induction, and hence the proof of Proposition \ref{lemma: weak Z lemma}, is completed in Section \ref{sect: completion of induction}. In Section \ref{section: Z5} we prove Theorem \ref{theorem: Z lemma} under four simplifying assumptions, {\bf (S1)} -- {\bf (S4)} 
listed in Sections \ref{section: Z3} and \ref{section: Z5}. These simplifications allow us to ignore some additional complications, and give a streamlined argument in which the fundamental mechanism is evident. The starting point for the argument in Section \ref{section: Z5} is the high-moment expansion using vertex linking, already introduced in Section \ref{section: Z3}. In addition, we make use of Family B identities, which leads us to the process of vertex resolution (sketched in Section \ref{sec: sketch of resolution}). In Section \ref{section: simplifications} we present the additional arguments needed to drop Simplifications {\bf (S1)} -- {\bf (S4)}, and hence prove Theorem \ref{theorem: Z lemma} in full generality. Finally, in Section \ref{sec: proof of variant} we prove Theorem \ref{theorem: Z lemma variant} as a relatively easy consequence of Theorem \ref{theorem: Z lemma}.

\section{Warmup: simple high-moment estimates} \label{section: Z3}
We now move on to the high-moment estimates which underlie our proofs. The idea is to derive high-probability bounds on $X_F^w(\Delta)$ by controlling its high moments using a graphical expansion scheme.

For pedagogical reasons, we shall throughout the following selectively ignore some complications so as to make the core strategy clearer. We shall eventually put back the complications one by one. In this section we consistently assume the following simplification.
\begin{itemize}
\item[{\bf (S1)}]
All summation indices in the expanded summation $\E \abs{X_F^w(\Delta)}^p$ (see \eqref{E X^p} below) are distinct. (I.e.\ we ignore repeated indices which give rise to a smaller combinatorics of the summation.)
\end{itemize}
In this section we present a simple argument which proves the following weaker estimate.
\begin{proposition} \label{prop: warm-up}
Suppose that $\Lambda \prec \Psi$ for some admissible control parameter $\Psi$, $\Delta \in \fra Z$, and $w$ is an adapted weight. Then for all $F \subset V_s(\Delta)$ and $\f \mu$ we have
\begin{equation} \label{weak bound without vertex res}
X_F^w(\Delta) \;\prec\; \Psi^{\deg(\Delta) + \abs{F}}\,.
\end{equation}
\end{proposition}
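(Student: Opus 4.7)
The plan is to derive \eqref{weak bound without vertex res} by a high-moment argument, following the spirit of example (A) sketched in Section \ref{sec: sketch of A}. Fix an even integer $p$ and expand
\begin{equation*}
\E \absb{X^w_F(\Delta)}^p \;=\; \sum_{\f a^{(1)}, \dots, \f a^{(p)}} \prod_{k=1}^p w(\f a^{(k)}) \; \E \prod_{k=1}^p \qBB{\prod_{i \in F} Q_{a_i^{(k)}}} \cal Z_{\f a^{(k)}}^{\f \mu}(\Delta)\,,
\end{equation*}
where the $k$-th factor is complex-conjugated for even $k$ so that the integrand equals $\abs{X^w_F(\Delta)}^p$. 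Under simplification \textbf{(S1)}, the $p\abs{V_s(\Delta)}$ summation indices are pairwise distinct and distinct from $\f \mu$. To the integrand I apply the Family A identities of Lemma \ref{lemma: res id} iteratively, producing a sum of \emph{branches} in which every resolvent entry is maximally expanded with respect to every $a_i^{(k)}$, $(i,k) \in F \times \{1, \dots, p\}$. Each application is a \emph{linking} in the sense of Definition \ref{def: linking}: its main branch leaves an edge unchanged up to acquiring $a_i^{(k)}$ as an upper index, while its secondary branch replaces one off-diagonal entry $G_{xy}^{(T)}$ by $G_{x a_i^{(k)}}^{(T)} G_{a_i^{(k)} y}^{(T)} / G_{a_i^{(k)} a_i^{(k)}}^{(T)}$, creating two off-diagonal entries at $a_i^{(k)}$ in place of the original one. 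This procedure produces finitely many branches in all, each of them fully expanded.

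Next I identify which branches survive. For each fixed $(i,k_0) \in F \times \{1, \dots, p\}$ I write the branch as $Q_{a_i^{(k_0)}}(\cal Y) \, R$, where $R$ collects every factor from copies $k \neq k_0$. If $R$ is independent of $a_i^{(k_0)}$, pulling $P_{a_i^{(k_0)}}$ through $R$ and applying $P_{a_i^{(k_0)}} Q_{a_i^{(k_0)}} = 0$ kills the branch. Under \textbf{(S1)} the index $a_i^{(k_0)}$ appears naturally in no copy $k \neq k_0$, so $R$ can depend on $a_i^{(k_0)}$ only via a linking at $a_i^{(k_0)}$ performed during the expansion of some copy $k \neq k_0$. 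Such a linking yields a net gain of one off-diagonal entry over the original $p\deg(\Delta)$ edges; hence each of the $p\abs{F}$ pairs $(i, k_0)$ forces at least one additional off-diagonal entry in any surviving branch.

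For the power counting, off-diagonal entries are stochastically bounded by $\Psi$ by Lemma \ref{lemma: Lambda T}, while diagonal entries and denominators $G_{aa}^{(T)}$ are $O_\prec(1)$ by \eqref{1/G prec 1}; the number of upper indices on any entry is uniformly bounded (depending on $p$ and $\Delta$ but not $N$), so Lemma \ref{lemma: Lambda T} applies. A surviving branch is thus of size $\prec \Psi^{p(\deg(\Delta) + \abs{F})}$. The weighted sum $\sum_{\f a^{(1)}, \dots, \f a^{(p)}} \prod_k w(\f a^{(k)})$ is bounded by $1$ via the atomic-partition case of Definition \ref{definion: w}, and the number of branches is $N$-independent. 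Consequently $\E \absb{X^w_F(\Delta)}^p \prec \Psi^{p(\deg(\Delta) + \abs{F})}$, and Chebyshev's inequality with $p$ arbitrary yields \eqref{weak bound without vertex res}.

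The main obstacle is the bookkeeping in the second step: checking that a single linking cannot serve more than one pair $(i, k_0)$ and thereby cheat the $\Psi^{\abs{F}}$ accounting. Assumption \textbf{(S1)} is exactly what makes this clean—distinct summation indices ensure that a linking at $a_i^{(k_0)}$ creates dependence on the sole index $a_i^{(k_0)}$, and that the generated edges are automatically off-diagonal. The phenomena suppressed by \textbf{(S1)}, namely coinciding indices and linkings incident to external indices, can in the full proof either eliminate gain from some $Q$ factor or turn generated edges into diagonal ones, but they always enter as subleading corrections. Extracting the further gain $\Phi^{\abs{V_c(\Delta)}}$ of Theorem \ref{theorem: Z lemma} is not accessible by linking alone and requires the vertex resolution technique developed in Section \ref{section: Z5}.
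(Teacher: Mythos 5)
Your approach matches the paper's: expand $\E\absb{X_F^w(\Delta)}^p$, maximally expand in $\f a_F$ via Family~A linkings, kill branches where $P_{a_i^{(k_0)}}Q_{a_i^{(k_0)}}=0$ applies, and power-count the surviving branches. The paper packages this as Lemma~\ref{lemma: weak moment estimate} before deducing Proposition~\ref{prop: warm-up}. Two points in your write-up, however, are genuine gaps rather than mere omissions of detail.

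First, the claim that the iterated expansion ``produces finitely many branches in all, each of them fully expanded'' is false without a cutoff: every secondary branch of a Family~A linking creates two new off-diagonal entries carrying fewer upper indices, so the recursion does not terminate on its own. The paper's algorithm explicitly includes stopping rule~(b), halting a branch once it has $\deg(\Delta)+2pn$ entries of $\cal G$ in the numerator; such a branch is then discarded as $O_\prec(\Psi^{\deg(\Delta)+2pn})$, which is subleading. You need this cutoff, or an equivalent monotonicity argument, to make the finiteness claim legitimate.

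Second, and more importantly, the step ``A surviving branch is thus of size $\prec \Psi^{p(\deg(\Delta)+\abs{F})}$\ldots\ Consequently $\E\absb{X_F^w(\Delta)}^p \prec \Psi^{p(\deg(\Delta)+\abs{F})}$'' silently converts a stochastic domination (a high-probability bound) into a moment bound. That conversion does not follow from $\prec$ alone: on the complementary low-probability event the random variable $X_\Gamma$ could a priori be enormous, and $\E X_\Gamma$ would not be controlled. This is precisely why Lemma~\ref{lemma: weak moment estimate} supplies \emph{both} the stochastic bound \eqref{claimed estimate 1} and the rough moment bound $\E\abs{X_\Gamma}^2 \leq N^{C_p}$ of \eqref{claimed estimate 2}, which are then combined in the proof of Proposition~\ref{prop: warm-up} by splitting $\E X_\Gamma = \E(X_\Gamma\ind{\Xi_\Gamma}) + \E(X_\Gamma\ind{\Xi_\Gamma^c})$ and estimating the second term via Cauchy--Schwarz and $\P(\Xi_\Gamma^c)\leq N^{-C_p-pq}$. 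Without this two-sided control (using \eqref{rough bound 1}, \eqref{rough bound 3}) the deduction of the $p$-th moment estimate is not rigorous, and Chebyshev cannot be applied.
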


The estimate \eqref{weak bound without vertex res} expresses that from each $Q$ in $X_F^w(\Delta)$ one gains an additional factor $\Psi$.

\begin{remark} \label{rem: warmup with denominators}
As in Remark \ref{rem: G in denominator}, the statement of Proposition \ref{prop: warm-up} remains true if some (or all) diagonal entries of the form $\cal G_{aa} = G_{aa} - m$ are replaced by $1/ G_{aa} - 1/m$. The proof is exactly the same.
\end{remark}

The simplified argument behind the proof of Proposition \ref{prop: warm-up} uses only the Family A identities, i.e.\ \eqref{resolvent expansion type 1}. It relies on a high-moment estimate of the following form. The precise statement is somewhat complicated by the need to keep track of low-probability exceptional events. The sum over $\Gamma \in \fra G$ in Lemma \ref{lemma: weak moment estimate} will arise as a summation over graphs.
\begin{lemma} \label{lemma: weak moment estimate}
Suppose that $\Lambda \prec \Psi$ for some admissible control parameter $\Psi$, and let $p \in 2 \N$ be even. Then we have
\begin{equation} \label{summation over graphs}
\E \abs{X_F^w(\Delta)}^p \;\leq\; \sum_{\Gamma \in \fra G} \E X_\Gamma\,,
\end{equation}
where $\fra G$ is a finite set (depending on $\Delta$, $F$, and $p$) and $X_\Gamma$ is a random variable satisfying
\begin{equation} \label{claimed estimate 1}
X_\Gamma \;\prec\; \Psi^{p(\deg(\Delta) + \abs{F})}
\end{equation}
as well as the rough bound
\begin{equation} \label{claimed estimate 2}
\E \abs{X_\Gamma}^2 \;\leq\; N^{C_p}
\end{equation}
for some constant $C_p$.
\end{lemma}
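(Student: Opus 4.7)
The plan is a high-moment expansion along the lines of the sketch of step~(A) in Section~\ref{sect: sketch of proof of examples}. First, using $\abs{X_F^w(\Delta)}^p = X_F^w(\Delta)^{p/2} \, \overline{X_F^w(\Delta)}^{p/2}$ for even $p$ and expanding each factor, I obtain
\begin{equation*}
\E \abs{X_F^w(\Delta)}^p \;=\; \sum_{(\f a^{(\alpha)})_{\alpha=1}^p}^{(\f \mu)*} \pbb{\prod_{\alpha=1}^p w(\f a^{(\alpha)})} \, \E \prod_{\alpha=1}^p \prod_{i \in F} Q_{a_i^{(\alpha)}} \, \wt{\cal Z}^{(\alpha)}_{\f a^{(\alpha)}}\,,
\end{equation*}
where $\wt{\cal Z}^{(\alpha)}$ is $\cal Z_{\f a^{(\alpha)}}^{\f \mu}(\Delta)$ for $\alpha \leq p/2$ and its complex conjugate for $\alpha > p/2$. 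Simplification \textbf{(S1)} further restricts the sum to tuples whose summation indices are pairwise distinct across all $p$ copies, so that different copies share no lower indices.

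For each fixed index tuple, I apply the Family~A identities \eqref{resolvent expansion type 1} iteratively to every resolvent entry until each entry is \emph{maximally expanded} in the full set of summation indices (cf.\ Figures~\ref{figure: expansion of off-diag G} and \ref{figure: expansion of diag G}). The procedure terminates after finitely many steps, because a given edge can be linked at most once to each summation index during the expansion; its output is a finite sum of expanded monomials $\cal Y_\Gamma$ indexed by a finite collection $\fra G$ of graphs $\Gamma$ encoding the resulting edge structures (original edges plus those created by the linking operation of Definition~\ref{def: linking}). Setting
\begin{equation*}
X_\Gamma \;\deq\; \sum_{(\f a^{(\alpha)})}^{(\f \mu)*} \pbb{\prod_\alpha w(\f a^{(\alpha)})} \, \prod_{\alpha=1}^p \prod_{i \in F} Q_{a_i^{(\alpha)}} \, \cal Y_\Gamma \,,
\end{equation*}
we obtain $\E \abs{X_F^w(\Delta)}^p = \sum_{\Gamma \in \fra G} \E X_\Gamma$, which is the desired decomposition~\eqref{summation over graphs}.

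The heart of the proof is a \emph{saturation} observation. Suppose for some copy $\alpha$ and some $i \in F$ no edge of $\Gamma$ in any copy $\beta \neq \alpha$ is incident to the vertex $a_i^{(\alpha)}$. Then every edge appearing in $\prod_{\beta \neq \alpha} \cal Y_\Gamma^{(\beta)}$ is independent of $a_i^{(\alpha)}$: either its lower indices never involved $a_i^{(\alpha)}$, or maximal expansion has placed $a_i^{(\alpha)}$ among its upper indices. Factoring $P_{a_i^{(\alpha)}}$ past these copies and using $P_{a_i^{(\alpha)}} Q_{a_i^{(\alpha)}} = 0$ yields $\E X_\Gamma = 0$. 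Restricting $\fra G$ to \emph{saturated} graphs, in which every $Q_{a_i^{(\alpha)}}$ is accompanied by at least one edge in some other copy incident to $a_i^{(\alpha)}$, costs nothing in the expansion. Every such ``saturating'' edge was created by a linking step, and each linking step adds exactly one extra off-diagonal edge; hence any saturated $\Gamma$ contains at least $p \deg(\Delta) + p\abs{F}$ off-diagonal edges. By Lemma~\ref{lemma: Lambda T} each off-diagonal entry is $O_\prec(\Psi)$, by \eqref{1/G prec 1} each diagonal denominator is $O_\prec(1)$, and by Definition~\ref{definion: w} the total weight obeys $\sum_{\f a} \prod_\alpha w(\f a^{(\alpha)}) \leq 1$; combining these yields \eqref{claimed estimate 1}.

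Finally, the rough bound \eqref{claimed estimate 2} follows from almost-sure estimation: Lemma~\ref{lemma: rough bounds on G}(i) bounds each off-diagonal factor by $\eta^{-1} \leq N$ almost surely, Lemma~\ref{lemma: rough bounds on G}(ii) controls arbitrary moments of each diagonal denominator $1/G_{ii}^{(T)}$, the weights satisfy $w \leq 1$, and the index set has size $N^{O(p)}$; together with Cauchy--Schwarz this gives $\E \abs{X_\Gamma}^2 \leq N^{C_p}$. The main technical obstacle is formalizing the saturation argument — specifying a canonical stopping rule for the maximal expansion and keeping accurate track of the one-extra-edge-per-linking accounting — but under Simplification \textbf{(S1)} the combinatorics stays clean and the argument is essentially as outlined above.
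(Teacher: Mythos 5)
Your overall strategy is the same as the paper's: expand the $p$-th moment, make all resolvent entries maximally expanded via Family~A, use $P_a Q_a = 0$ to kill non-saturated graphs, and count the extra off-diagonal edges forced by the $Q$'s. The variant of expanding in the full set $\f a$ rather than only $\f a_F$ is legitimate (it is in fact what the paper does later in Section~\ref{section: Z5}), and your saturation/edge-count argument and the almost-sure bound via Lemma~\ref{lemma: rough bounds on G} are essentially the paper's.

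There is, however, one genuine gap: your termination claim is wrong, and without repairing it the decomposition \eqref{summation over graphs} with a \emph{finite} $\fra G$ does not exist. You assert that the iterated application of \eqref{resolvent expansion type 1} stops ``because a given edge can be linked at most once to each summation index.'' But look at \eqref{family A fo cal G 2}: the second branch
\begin{equation*}
-\frac{\cal G_{ud}^{(T)} \cal G_{du}^{(T)}}{G_{uu}^{(T)} G_{uu}^{(Td)} G_{dd}^{(T)}}
\end{equation*}
reproduces the factor $1/G_{uu}^{(T)}$ with the \emph{same} upper index set $T$. Since $d \notin T \cup \{u\}$ still holds, the algorithm is free to pick this entry and the same $d$ again, so the recursion has an infinite descending branch. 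More generally, the deficiency (number of missing upper indices) of the new entries created in a second branch does not decrease, so there is no termination measure of the kind you invoke. The paper resolves this with a \emph{brutal stopping rule}: the recursion for a term $A$ is also stopped if $A$ has accumulated $\deg(\Delta) + 2pn$ entries of $\cal G$ in the numerator; the resulting rest term $R$ then satisfies $R \prec \Psi^{\deg(\Delta) + 2pn}$ as well as an almost-sure polynomial bound (via \eqref{rough bound 1} and \eqref{rough bound 3}), so the corresponding $X_\Gamma$ already satisfies \eqref{claimed estimate 1} and \eqref{claimed estimate 2} trivially and can be dropped from the maximal-expansion analysis. You need to add this cutoff (or an equivalent well-founded termination argument) both to make $\fra G$ finite and to justify the a.s.\ moment bound \eqref{claimed estimate 2}, which otherwise involves a potentially infinite product.
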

Before proving Lemma \ref{lemma: weak moment estimate}, we show how it implies Proposition \ref{prop: warm-up}. 

\begin{proof}[Proof of Proposition \ref{prop: warm-up}]
Let $\epsilon > 0$ and $D > 0$ be given. Define $p$ as the smallest even number greater than $4 D / \epsilon$, and abbreviate $q \deq \deg(\Delta) + \abs{F}$. Then by Lemma \ref{lemma: weak moment estimate}, for each $\Gamma \in \fra G$ there exists an event $\Xi_\Gamma$ such that
\begin{equation*}
\abs{X_\Gamma} \ind{\Xi_\Gamma} \;\leq\; N^{\epsilon p/2} \Psi^{pq} \,, \qquad \P(\Xi_\Gamma^c) \;\leq\; N^{-C_p - pq}
\end{equation*}
for all $w$, $\f \mu$, and $z \in \f S$. Then we find, using Lemma \ref{lemma: weak moment estimate} again,
\begin{align*}
\E \abs{X_F^w(\Delta)}^p &\;\leq\; \sum_{\Gamma \in \fra G} \pB{\E \pb{X_\Gamma \ind{\Xi_\Gamma}} + \E \pb{X_\Gamma \ind{\Xi^c_\Gamma}}}
\\
&\;\leq\; \sum_{\Gamma \in \fra G} \pB{N^{\epsilon p/2} \Psi^{pq} + \pb{\E \abs{X_\Gamma}^2}^{1/2} \P(\Xi_\Gamma^c)^{1/2}}
\\
&\;\leq\; \absb{\fra G} \pB{N^{\epsilon p/2} \Psi^{pq} + N^{-pq/2}}
\\
&\;\leq\; 2 \absb{\fra G} N^{\epsilon p/2} \Psi^{pq}\,,
\end{align*}
for all $w$, $\f \mu$, and $z \in \f S$. Therefore Chebyshev's inequality gives
\begin{equation*}
\P \pB{\abs{X_F^w(\Delta)} > N^\epsilon \Psi^q} \;\leq\; 2 \absb{\fra G} N^{- \epsilon p / 2} \;\leq\; N^{-D}
\end{equation*}
for all $w$, $\f \mu$, and $z \in \f S$.
\end{proof}

The rest of this section is devoted to the proof of Lemma \ref{lemma: weak moment estimate}. All of our estimates will be uniform in $w$ and $\f \mu$, and we shall henceforth no longer mention this explicitly. Throughout this section we assume Simplification {\bf (S1)}.

\begin{proof}[Proof of Lemma \ref{lemma: weak moment estimate}]
The idea of the proof was already outlined in Section \ref{sec: sketch of A}. Let $\Delta \in \fra Z$ have $n$ summation indices, denoted by $a_1, \dots, a_n$, and $k$ external indices, denoted by $\mu_1, \dots, \mu_k$. Let $F\subset \{1, \dots, n\}$. Let $p \in 2 \N$ be even and write
\begin{equation} \label{E X^p}
\E \abs{X_F^w(\Delta)}^p \;=\; \sum^{(\f \mu) *}_{\f a^1} w(\f a^1) \cdots \sum^{(\f \mu)*}_{\f a^p} w(\f a^p) \E \prod_{j = 1}^{p/2} \qBB{\pBB{\prod_{i \in F} Q_{a_i^j}} \cal Z_{\f a^j}} \prod_{j = p/2 + 1}^{p} \ol{\qBB{\pBB{\prod_{i \in F} Q_{a_i^j}} \cal Z_{\f a^j}}}\,,
\end{equation}
where we abbreviated
\begin{equation*}
\f a^j \;\deq\; \p{a^j_i \col 1 \leq i \leq n}\,, \qquad \f a \;\deq\; \pb{a_i^j \col 1 \leq i \leq n \,,\, 1 \leq j \leq p}\,.
\end{equation*}
We now make the crucial observation that $\wh w(\f a) \deq w(\f a^1) \ldots w(\f a^j)$ is a weight on the set of indices $(i,j)$; this is an elementary consequence of the Definition \eqref{definition of weight}. In particular, $\sum_{\f a} \wh w(\f a) \leq 1$.

By Simplification {\bf (S1)}, we assume that all indices $\f a$ are distinct: in addition to the constraint $a_i^j \notin \{\mu_1, \dots, \mu_k\}$, we introduce into \eqref{E X^p} an indicator function that imposes $a_i^j \neq a_{i'}^{j'}$ if $(i,j) \neq (i',j')$.

We now make each $\cal G_{xy}$ independent of as many summation indices as possible using Family A identities. To that end, we define
\begin{equation*}
\cal G_{ij}^{(T)} \;\deq\; G^{(T)}_{ij} - \delta_{ij} \, m\,.
\end{equation*}
Using \eqref{resolvent expansion type 1} iteratively, we expand every factor $\cal G_{xy}$ appearing in \eqref{E X^p} in all the indices
\begin{equation*}
\f a_F \;\deq\; \pb{a_i^j \col i \in F \,,\, 1 \leq j \leq p}
\end{equation*}
associated with a factor $Q$. Let $\cal G_{xy}$ be a fixed entry in \eqref{E X^p}. The idea is to successively add to $\cal G_{xy}$ as many upper indices from the collection $\f a_F$ as possible. The goal is to obtain a 
quantity satisfying the following definition.
\begin{definition} \label{definition: maximally expanded}
An entry $G_{xy}^{(T)}$ or $\cal G_{xy}^{(T)}$ is \emph{maximally expanded in $S$} if $S \subset T \sqcup \{x,y\}$.
In other words, a maximally expanded resolvent entry cannot be expanded any further in the indices $S$ using \eqref{resolvent expansion type 1}.
\end{definition}

Along the expansion of each  $\cal G_{xy}$ using
 \eqref{resolvent expansion type 1}, new terms in \eqref{E X^p} appear;
each such term is a monomial of entries of $\cal G$
divided by diagonal entries of $G$. We stop expanding a term if either
\begin{itemize}
\item[(a)]
all its factors are maximally expanded in $\f a_F$, or
\item[(b)]
it contains $\deg(\Delta) + 2 p n$ entries of $\cal G$ in the numerator.
\end{itemize}
The precise recursive procedure is as follows. We start by setting $A \deq \cal G_{xy}$, where $\cal G_{xy}$ is an entry on the right-hand side of \eqref{E X^p}. 
\begin{itemize}
\item[1.]
Let $\cal G_{uv}^{(T)}$ denote an entry in $A$ and $d$ an index in $\f a_F$ such that $d \notin T \cup \{u,v\}$. (This choice is arbitrary and unimportant.) If (a) no such pair exists, or (b) $A$ contains $\deg(\Delta) + 2 p n$ factors $\cal G$ in the numerator, stop the recursion of the term $A$.
\item[2.]
Using Family A identities, write
\begin{equation} \label{family A fo cal G 1}
\cal G^{(T)}_{uv} \;=\; \cal G_{uv}^{(Td)} + \frac{\cal G_{ud}^{(T)} \cal G_{dv}^{(T)}}{G_{dd}^{(T)}}
\end{equation}
if $\cal G_{uv}^{(T)}$ is a resolvent entry in the numerator and
\begin{equation} \label{family A fo cal G 2}
\frac{1}{G_{uu}^{(T)}} \;=\; \frac{1}{G_{uu}^{(Td)}} - \frac{\cal G_{ud}^{(T)} \cal G_{du}^{(T)}}{G_{uu}^{(T)} G_{uu}^{(Td)} G_{dd}^{(T)}}
\end{equation}
if $G_{uv}^{(T)} = G_{uu}^{(T)}$ is a diagonal resolvent entry in the denominator. This yields the splitting $A = A' + A''$,
where  both terms have the form of a product of entries in the numerator
and diagonal entries in the denominator.
 Repeat step 1 for both $A'$ and $A''$ (playing the role of $A$ in step 1).
\end{itemize}
It is not hard to see that the stopping rule defined by the conditions (a) or (b) ensures that the recursion terminates after a finite number of steps. Indeed, the quantity ``number of entries of $\cal G$'' + ``number of upper indices'' must remain bounded by the stopping rules (a) and (b).

The result is of the form
\begin{equation*}
\cal G_{xy} \;=\; \sum_{\alpha} H_\alpha + R\,,
\end{equation*}
where each summand $H_\alpha$ is a fraction with entries of $\cal G$ in the numerator and diagonal entries of $G$ in the denominator, all of them maximally expanded in $\f a_F$. Here the rest term $R$ satisfies 
\begin{equation} \label{estimate on rest term}
R \;\prec\; \Psi^{\deg(\Delta) + 2pn}\,, \qquad \E \abs{R}^2 \;\leq\; N^{C_{p,\Delta}}
\end{equation}
for some constant $C_{p,\Delta}$. The first estimate of \eqref{estimate on rest term} follows from \eqref{rough bound 2} combined with \eqref{m is bounded} and Lemma \ref{lemma: basic properties of prec}, and the second estimate of \eqref{estimate on rest term} from \eqref{rough bound 1} and \eqref{rough bound 3}.

We then multiply the resulting sums on the right-hand side of \eqref{definition of Z} out to get
\begin{equation} \label{expansion of Y}
\cal Z_{\f a^j} \;=\; \sum_{\alpha} Y^{j, \alpha}_{\f a}\,,
\end{equation}
where $Y^{j, \alpha}_{\f a}$ is a monomial and $\alpha$ a counting index ranging over some finite set. Each term $Y_{\f a}^{j, \alpha}$ is a fraction with entries of $\cal G$ in the numerator and diagonal entries of $F$ in the denominator. Moreover, either (i) all entries of $Y_{\f a}^{j,\alpha}$ are maximally expanded in $\f a_F$ or (ii) $Y_{\f a}^{j,\alpha} \prec \Psi^{\deg(\Delta) + 2pn}$ (the latter arises if $Y_{\f a}^{j,\alpha}$ contains one or more rest terms $R$). We now multiply out the expectation in \eqref{E X^p} as
\begin{equation} \label{fully expanded term}
\E \qBB{\pBB{\prod_{i \in F} Q_{a_i^1}} \cal Z_{\f a^1}} \cdots \qBB{\pBB{\prod_{i \in F} Q_{a_i^p}} \ol{\cal Z_{\f a^p}}} \;=\; \sum_{\alpha_1, \dots, \alpha_p}
\E \qBB{\pBB{\prod_{i \in F} Q_{a_i^1}} Y_{\f a}^{1,\alpha_1}} \cdots \qBB{\pBB{\prod_{i \in F} Q_{a_i^p}} \ol{Y_{\f a}^{p,\alpha_p}}}\,.
\end{equation}
We plug this into \eqref{E X^p} and pull out the summation over $\alpha_1, \dots, \alpha_p$.  This gives rise to the summation in \eqref{summation over graphs}, indexed by the set $\fra G = \{(\alpha_1, \dots, \alpha_p)\}$. If, for some $(\alpha_1, \dots, \alpha_p)$, one or more of $Y^{1, \alpha_1}_{\f a}, 
\dots, Y^{p, \alpha_p}_{\f a}$ is not maximally expanded in $\f a_F$, 
 it is easy to see that
\begin{equation} \label{definition of X_alpha}
X_{\alpha_1 \cdots \alpha_p} \;\deq\; \sum^{(\f \mu) *}_{\f a^1} w(\f a^1) \cdots \sum^{(\f \mu)*}_{\f a^p} w(\f a^p) \qBB{\pBB{\prod_{i \in F} Q_{a_i^1}} Y_{\f a}^{1,\alpha_1}} \cdots \qBB{\pBB{\prod_{i \in F} Q_{a_i^p}} \ol{Y_{\f a}^{p,\alpha_p}}}
\end{equation}
satisfies \eqref{claimed estimate 1} and \eqref{claimed estimate 2}. Indeed, each term  $Y_{\f a}^{j,\alpha_j}$ contains at least $\deg(\Delta)$ entries of $\cal G$; thus the trivial bound $Y_{\f a}^{j,\alpha_j} \prec \Psi^{\deg(\Delta)}$ always holds
by Lemma \ref{lemma: rough bounds on G}. Using Lemma \ref{lemma: basic properties of prec} we can multiply these estimates. Recalling \eqref{estimate on rest term}, \eqref{rough bound 1}, and \eqref{rough bound 3}, we find \eqref{claimed estimate 1} and \eqref{claimed estimate 2}.

It therefore suffices to consider products of $Y_{\f a}^{j,\alpha_j}$'s 
in \eqref{fully expanded term} which are all maximally expanded in $\f a_F$ (i.e.\ terms which are products of $H_\alpha$'s only and not $R$'s). The presence of $Q$'s leads to the following crucial restriction on terms yielding a nonzero contribution to \eqref{fully expanded term}. For each $i \in F$, we claim that at least one of $Y_{\f a}^{2,\alpha_2}, \dots, Y_{\f a}^{p,\alpha_p}$ is not independent of $a_i^1$. This follows from the observation that generally $\E [Q_a(X) Y] = 0$ if $Y$ is independent of $a$.
More generally, we require that, for any $i \in F$ and $j = 1, \dots, p$, at least of one of
\begin{equation*}
Y_{\f a}^{1,\alpha_1}, \dots, \widehat{Y_{\f a}^{j,\alpha_j}}, \dots, Y_{\f a}^{p,\alpha_p}
\end{equation*}
is not independent of $a_i^j$ (hat indicates omission
from the list).  This imposes a constraint on the terms that survive the expansion.

Moreover, the term that is not independent of $a_i^j$
contains at least one additional entry of $\cal G$, since
at some point the formula \eqref{family A fo cal G 1} or \eqref{family A fo cal G 2} had to be applied
with $d=a_i^j$ and the second term of \eqref{family A fo cal G 1} or \eqref{family A fo cal G 2}
contains at least one additional entry of $\cal G$. Since we assumed Simplification {\bf (S1)},
i.e.\ all $a_i^j$'s are different, it is a general fact
that each $Q$ gives rise to an additional off-diagonal entry of $\cal G$
and contributes a factor $\Lambda \prec \Psi$ to \eqref{E X^p}. In other words, any $X_{\alpha_1 \cdots \alpha_p}$ yielding a nonzero contribution to \eqref{E X^p} has at least $p(\deg(\Delta) + \abs{F})$ entries of $\cal G$ in the numerator. Recalling Lemma \ref{lemma: basic properties of prec} and Lemma \ref{lemma: rough bounds on G}, we find that any term $X_{\alpha_1 \cdots \alpha_p}$ yielding a nonzero contribution to \eqref{E X^p} satisfies \eqref{claimed estimate 1} and \eqref{claimed estimate 2}. This concludes the proof of Lemma \ref{lemma: weak moment estimate}.
\end{proof}

\subsection{Graphical representation}
The phenomenon behind the proof of Lemma \ref{lemma: weak moment estimate} in fact has a simple graphical representation, which will prove essential for later, more intricate, estimates. We illustrate its usefulness by applying it to the proof of Lemma \ref{lemma: weak moment estimate}. We recall the basic graphical notation introduced in Section \ref{sec: sketch: graphs}.

The quantity whose expectation we are estimating, $\abs{X_F^w(\Delta)}^p =\big[ X_F^w(\Delta)\big]^{p/2} \big[\ov{X_F^w(\Delta)}\big]^{p/2}$, has a natural representation in terms of a multigraph, which  we call $\gamma^p(\Delta)$ and which is essentially a $p$-fold copy of the graph $\Delta$ encoding $\cal Z$. The graph $\gamma^p(\Delta)$ is obtained as follows.
\begin{enumerate}
\item
Take $p/2$ copies of $\Delta$ and $p/2$ copies of $\Delta$ whose edges have inverted direction and colour arising from the relation $\overline{\cal G}_{ab}= \cal G_{ba}^*$. More precisely, this inversion means that each edge $e \in E(\Delta)$ gives rise to an inverted edge $e'$ satisfying
\begin{equation*}
\xi_{e'} \;=\;
\begin{cases}
1 & \text{if } \xi_e = *
\\
* & \text{if } \xi_e = 1\,,
\end{cases}
\qquad
\alpha(e') \;=\; \beta(e) \,, \qquad \beta(e') \;=\; \alpha(e)\,.
\end{equation*}
\item
For each external vertex $i \in V_e(\Delta)$ merge all $p$ copies of $i$ to form a single vertex.
\end{enumerate}
Note that the set $F$ is not depicted in $\gamma^p(\Delta)$. The vertex set of $\gamma^p(\Delta)$ consists of summation vertices and external vertices (this classification is inherited from the vertices of $\Delta$ in the obvious way), so that we may write $V(\gamma^p(\Delta)) = V_s(\gamma^p(\Delta)) \sqcup V_e(\gamma^p(\Delta))$.
\begin{definition}[Projection $\pi$] \label{definition: pi}
We introduce the $p$-to-one canonical projection $\pi \col V(\gamma^p(\Delta)) \to V(\Delta)$, defined as $\pi(i) = j$ if $i$ is a copy of $j$ in the construction of $\gamma^p(\Delta)$.
\end{definition}
\begin{figure}[ht!]
\begin{center}
\includegraphics{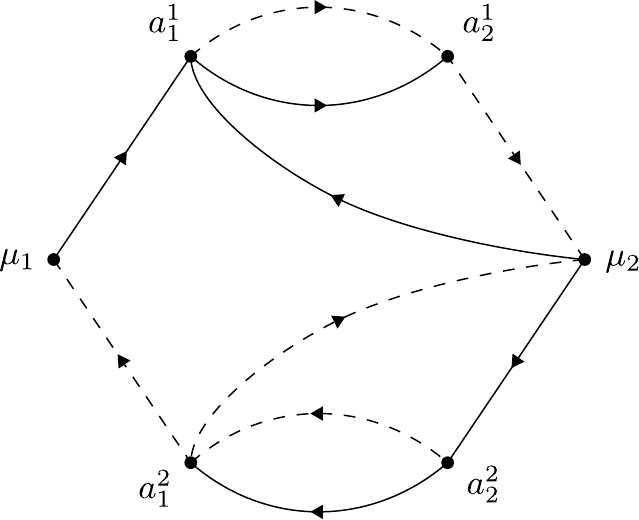}
\end{center}
\caption{The graph $\gamma^{2}(\Delta)$ that encodes $\E \abs{X_F^w(\Delta)}^2$, where $\cal Z_{a_1 a_2} \;=\; \cal G_{\mu_1 a_1} \cal G_{a_1 a_2} \cal G_{a_2 \mu_2}^* \cal G_{a_1 a_2}^* \cal G_{\mu_2 a_1}$. \label{figure: variance graph}}
\end{figure}

We start with the graph $\gamma^p(\Delta)$ (see Figure \ref{figure: variance graph}). We shall construct a set $\wt {\fra G}_F^p(\Delta)$ of graphs, denoted by $\Gamma$, on the same vertex set $V(\gamma^p(\Delta))$. The algorithm that generates $\wt {\fra G}_F^p(\Delta)$ is precisely the one given after Definition \ref{definition: maximally expanded}. On the level of graphs, this algorithm consists of a repeated application of the graphical rules in Figures \ref{figure: expansion of off-diag G} and \ref{figure: expansion of diag G}. (Note that the second identity of Figure \ref{figure: expansion of diag G} is also valid for $\cal G$ instead of $G$, i.e.\ without the black diamonds.) As indicated in Figures \ref{figure: expansion of off-diag G} and \ref{figure: expansion of diag G}, we keep track of the upper indices associated with an edge by attaching a list of upper indices to each edge. The algorithm terminates when either all edges are maximally expanded in $\f a_F$ or there are $\deg(\Delta) + pn$ edges that do not bear a diamond (i.e.\ that contribute a factor $\Psi$). Indicating these upper indices may be more precisely implemented using decorated edges, but we shall not need such formal constructions.

Recall from Definition \ref{def: linking} that choosing the second graph on the right-hand side of any identity in Figures \ref{figure: expansion of off-diag G} and \ref{figure: expansion of diag G} is called \emph{linking} (an edge with a vertex). As shown above, any graph $\Gamma \in \wt {\fra G}_F^p(\Delta)$ whose edges are not maximally expanded yields a small enough contribution by a trivial power counting. In the following we shall therefore only consider the remaining graphs, i.e.\ we shall assume that all edges of $\Gamma \in \wt {\fra G}_F^p(\Delta)$ are maximally expanded in $\f a_F$. Moreover, the upper indices of an edge are uniquely determined by the constraint that the edge be maximally expanded: the entry encoded by the edge $(x,y)$ is $\cal G_{xy}^{(\f a_F \setminus \{x,y\})}$. Thus, we shall consistently drop the upper indices associated with edges from our graphs $\Gamma$.

We introduce some convenient notions when dealing with graphs in $\wt {\fra G}_F^p(\Delta)$.

\begin{definition} \label{definition: basics of Gamma}
Let $\Gamma \in \wt {\fra G}_F^p(\Delta)$.
\begin{enumerate}
\item
We denote the vertex set of $\Gamma$ by $V(\Gamma) = V_s(\Gamma) \sqcup V_e(\Gamma)$, where $V_s(\Gamma)$ denotes the summation vertices and $V_e(\Gamma)$ external (fixed) vertices. By definition, all three sets are the same as those of $\gamma^p(\Delta)$.
\item
We denote the set of edges of $\Gamma$ by $E(\Gamma)$; the set $E(\Gamma)$ has a colouring $\xi \col E(\Gamma) \to \{1, *\}$.
\item
The $p$-to-one canonical projection $\pi \col V(\Gamma) \to V(\Delta)$ is taken over from Definition \ref{definition: pi}.
\end{enumerate}
\end{definition}
Thus, $\f a_F = \pb{a_i \col i \in \pi^{-1}(F)}$.
In this manner we write the sum of maximally expanded terms on the right-hand side of \eqref{fully expanded term} as a sum of graphs $\Gamma \in \wt {\fra G}^p_F(\Delta)$. By definition, each vertex in $\pi^{-1}(F)$ has been linked with at least one edge, possibly more. Each such linking adds an edge to the graph, and hence contributes a factor $\Lambda \prec \Psi$ to its size. This concludes the graphical discussion behind the proof of \eqref{weak bound without vertex res}. Figure \ref{figure: example linking} shows two sample graphs from $\wt {\fra G}_{F}^2(\Delta)$ for the graph $\Delta$ from Figure \ref{figure: variance graph}, where $F$ consists of a single vertex associated with the summation variable $a_1$.
\begin{figure}[ht!]
\begin{center}
\includegraphics{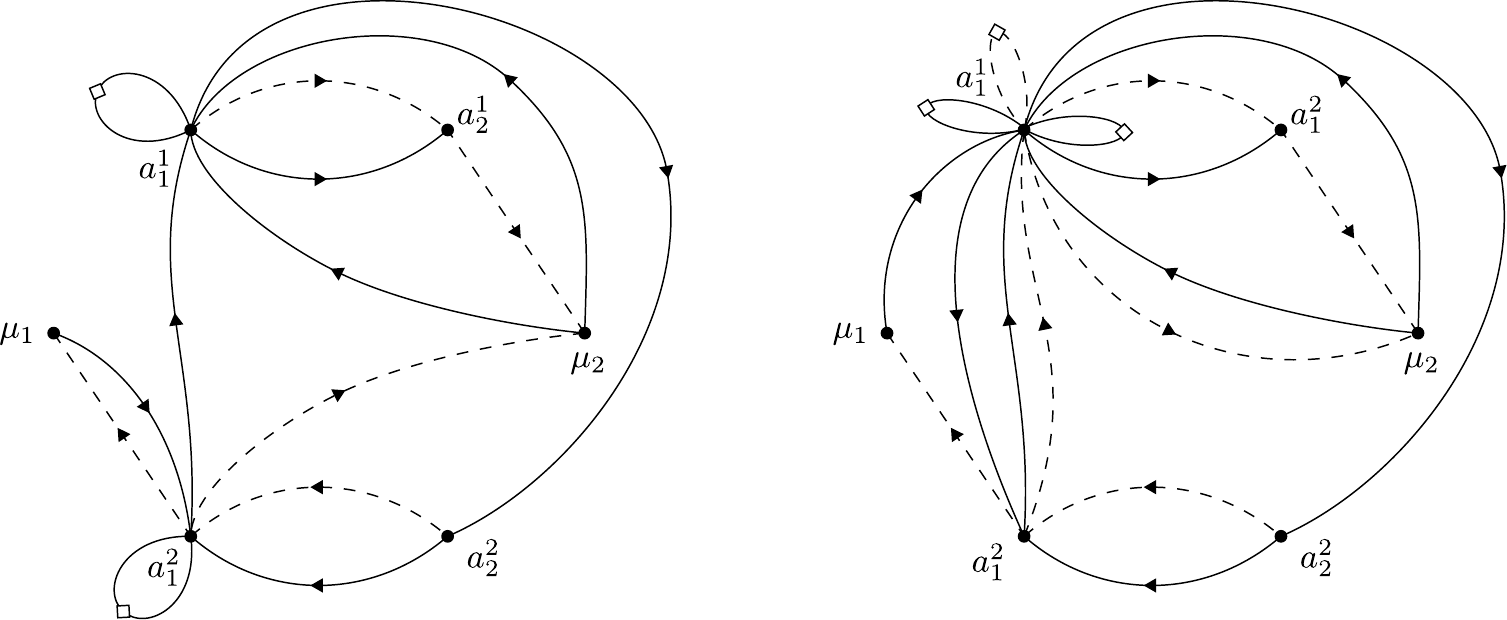}
\end{center}
\caption{Left: a graph obtained from the one in Figure \ref{figure: variance graph}
with $|F|=1$ by linking both vertices in $\f a_F = (a_1^1, a_1^2)$ with an edge; $a_1^1$ was linked with the edge $(\mu_2, a_2^2)$
and $a_1^2$ was linked with $(\mu_1, a_1^1)$. This graph contains the minimal number of edges to yield a nonzero contribution after taking the expectation. Hence it is of leading order. Right: a graph obtained by linking two further edges with $a_1^1$, namely 
the edges $(a_1^2,\mu_2)$ and $(\mu_1, a_1^2)$. Its size is subleading. \label{figure: example linking}}
\end{figure}

\section{Chains} \label{sec: chains}

In this section we derive the a priori estimate on chains, Proposition \ref{lemma: weak Z lemma}.

\subsection{Step $\rr I_1$: chains with $F \neq \emptyset$} \label{section: Z4}
Step $\rm I_1$ is an application of the simple high-moment expansion method from Section \ref{section: Z3}. It is formulated in the following proposition. In Section \ref{sect: completion of induction}, it will be used in conjunction with Proposition \ref{prop:step1} below to complete the induction and hence the proof of Proposition \ref{lemma: weak Z lemma}.

\begin{proposition}[Induction Step $\rm I_1$] \label{prop:step2}
Suppose that $\Lambda \prec \Psi$ for some admissible control parameter $\Psi$, and let $\ell \geq 2$. Suppose that
\begin{equation} \label{Step 1 bound}
X^w_F(\Delta) \;\prec\; \Psi^{\deg(\Delta) + \abs{F}} \Phi^{c(\Delta) - \abs{F}}
\end{equation}
holds for any open chain $\Delta$ of degree strictly less than $\ell$, $F = \emptyset$, and any adapted chain weight $w$. 
Then \eqref{Step 1 bound} holds for any open chain $\Delta$ of degree $\ell$, $F \neq \emptyset$, and any adapted chain weight $w$.
\end{proposition}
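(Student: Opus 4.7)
The plan is to adapt the high-moment expansion of Section \ref{section: Z3} (specifically Lemma \ref{lemma: weak moment estimate}) and then exploit the inductive hypothesis to extract one factor of $\Phi$ from each non-$F$ chain vertex. For an even integer $p$, the starting point is
\begin{equation*}
\E \abs{X_F^w(\Delta)}^p \;\le\; \sum_{\Gamma} \absb{\E X_\Gamma} \;+\; (\text{rest})\,,
\end{equation*}
where $\Gamma$ ranges over the graphs obtained from the $p$-fold copy $\gamma^p(\Delta)$ by applying Family A identities \eqref{resolvent expansion type 1} until every resolvent entry is maximally expanded in $\f a_F \deq (a_i^j \col i \in F,\, j = 1, \dots, p)$; the rest term is controlled by Lemma \ref{lemma: rough bounds on G} exactly as in the proof of Lemma \ref{lemma: weak moment estimate}. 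As in Section \ref{section: Z3}, non-vanishing of $\E X_\Gamma$ forces, for each $(i,j)$ with $i \in F$, at least one linking of some edge in a copy $j' \ne j$ with target $d = a_i^j$. Writing $k_{j'}$ for the total number of linkings performed on edges of copy $j'$, this gives $\sum_{j'} k_{j'} \ge p\abs{F}$, and each linking contributes one additional factor $\Psi$.

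The crucial step is then to exploit the chain structure that survives inside each copy. A linking of an edge $(u,v)$ in copy $j$ with target $d \in \f a_F$ replaces $(u,v)$ by $(u,d),(d,v)$, so that the non-$F$ vertices $u,v$ retain degree two while only $d$ (which becomes a fixed index once $\f a_F$ is frozen) acquires additional incidences. Consequently, after fixing $\f a_F$, the non-$F$ vertices of each copy $j$ decompose into disjoint open sub-chains whose endpoints are the external $\f \mu$-vertices together with the fixed ($F$-type or foreign-linking) vertices of copy $j$. Since Definition \ref{def: chains} forces every chain to be monochromatic, each sub-chain is again a monochromatic open chain of some degree $d_r$. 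A direct count in copy $j$ gives $\abs{F} + k_j + 1$ sub-chains of total degree $\ell + k_j$, whose interior non-$F$ chain vertices sum to $\ell + k_j - (\abs{F} + k_j + 1) = c(\Delta) - \abs{F}$. Because $\abs{F} \ge 1$, every $d_r$ is strictly less than $\ell$, and the inductive hypothesis applies to each sub-chain with $F = \emptyset$ and with the adapted chain weight inherited from the product form of $w$, yielding
\begin{equation*}
(\text{sub-chain})_r \;\prec\; \Psi^{d_r}\, \Phi^{d_r - 1}
\end{equation*}
uniformly in the fixed endpoint values.

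Multiplying the sub-chain bounds across $r$ gives $\Psi^{\ell + k_j} \Phi^{c(\Delta) - \abs{F}}$ per copy $j$, and then across copies, $\Psi^{p \ell + \sum_j k_j} \Phi^{p(c(\Delta) - \abs{F})}$. Using $\sum_j k_j \ge p \abs{F}$ and the fact that the residual weight $\prod_{j,\, i \in F} s_{a_i^j b_i}$ sums over $\f a_F$ to at most $1$ by \eqref{S is stochastic}, one arrives at
\begin{equation*}
\E \abs{X_F^w(\Delta)}^p \;\prec\; \Psi^{p(\ell + \abs{F})}\, \Phi^{p(c(\Delta) - \abs{F})}\,,
\end{equation*}
and Chebyshev's inequality, applied exactly as in the proof of Proposition \ref{prop: warm-up}, yields the claimed stochastic domination.

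The main technical obstacle I anticipate is the need to promote the inductive hypothesis (a stochastic-domination statement, uniform in the external indices) to a high-moment bound valid uniformly in $\f a_F$, so that the many sub-chain bounds may be combined in parallel before the final Chebyshev step. This is handled, as in the proof of Proposition \ref{prop: warm-up}, by truncating on the good event where $\abs{G_{xy}} \le \eta^{-1} \le M$ from Lemma \ref{lemma: rough bounds on G}. A secondary, routine issue is controlling the loop factors $1/G_{dd}^{(T)}$ generated by each linking, which are uniformly $O_\prec(1)$ by Lemma \ref{lemma: Lambda T} and Lemma \ref{lemma: msc}; and verifying that even when a linking lands on a vertex that is simultaneously an $F$-vertex of copy $j$ (an ``own-target'' linking), the vertex-and-edge bookkeeping leading to the identity $\sum_r (d_r - 1) = c(\Delta) - \abs{F}$ remains valid.
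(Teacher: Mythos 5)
Your proof is correct and follows essentially the same route as the paper's. The only visible difference is a matter of bookkeeping: you track the per-copy linking count $k_j$ and show that copy $j$ decomposes into $\abs{F}+k_j+1$ monochromatic open sub-chains of total degree $\ell+k_j$ whose interior chain vertices sum to $c(\Delta)-\abs{F}$, whereas the paper works globally with the partition $\f a = (\f a',\f a'')$ into chain vertices and linked/frozen vertices, using $\abs{\f a''}\geq p\abs{F}$ and $\abs{\f a'}+\abs{\f a''}=pc(\Delta)$. Your per-copy count is a slight refinement and both accountings yield the identical bound $\Psi^{p(\deg(\Delta)+\abs{F})}\Phi^{p(c(\Delta)-\abs{F})}$ for $\E\abs{X^w_F(\Delta)}^p$. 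You correctly identify the role of $\abs{F}\geq 1$ in forcing every sub-chain degree below $\ell$, the need to decouple sub-chain summations (Lemma \ref{lemma: factor chains}), and the truncation mechanism needed to combine many stochastic-domination bounds into a moment estimate. The one step you gloss over is that the maximally-expanded resolvent entries carry upper indices in $\f a_F$, and these must be removed via the inverted Family A identities \eqref{inverted res exp 1} before the induction hypothesis (stated for bare resolvent entries) can be invoked; the paper flags this explicitly at the end of the proof, and it introduces further sub-chains of smaller degree which are absorbed into the same counting. This is a technical detail rather than a gap in the argument.
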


In this section we continue to assume Simplification {\bf (S1)} (see the beginning
of Section~\ref{section: Z3}).

\begin{proof}[Proof of Proposition \ref{prop:step2}]
For simplicity of notation, we focus on the case where $\Delta$ is a directed chain of degree $\ell$; the undirected case is proved in the same way. The argument is best understood in a representative example,
\begin{equation} \label{example of Z star}
\cal Z_{\f a} \;=\; G_{\mu_1 a_1} G_{a_1 a_2} G_{a_2 a_3} G_{a_3 a_4} G_{a_4 \mu_2}\,,
\end{equation}
which is encoded by the graph $\Delta$ depicted in Figure \ref{figure: example Z star}.
\begin{figure}[ht!]
\begin{center}
\includegraphics{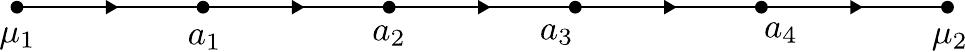}
\end{center}
\caption{The graph $\Delta$ that encodes $\cal Z$ defined in \eqref{example of Z star}. \label{figure: example Z star}}
\end{figure}
Let us take $F = \{1\}$ and compute the variance of $X_F^w(\Delta)$. In the following we use the terminology and notation of Section \ref{section: Z3} without further comment. In Section \ref{section: Z3} is was shown that the only graphs $\Gamma \in \wt {\fra G}^2_F(\Delta)$ that contribute are those in which the vertices $a_1^1$ and $a_1^2$ have both been linked to some edge.
\begin{figure}[ht!]
\begin{center}
\includegraphics{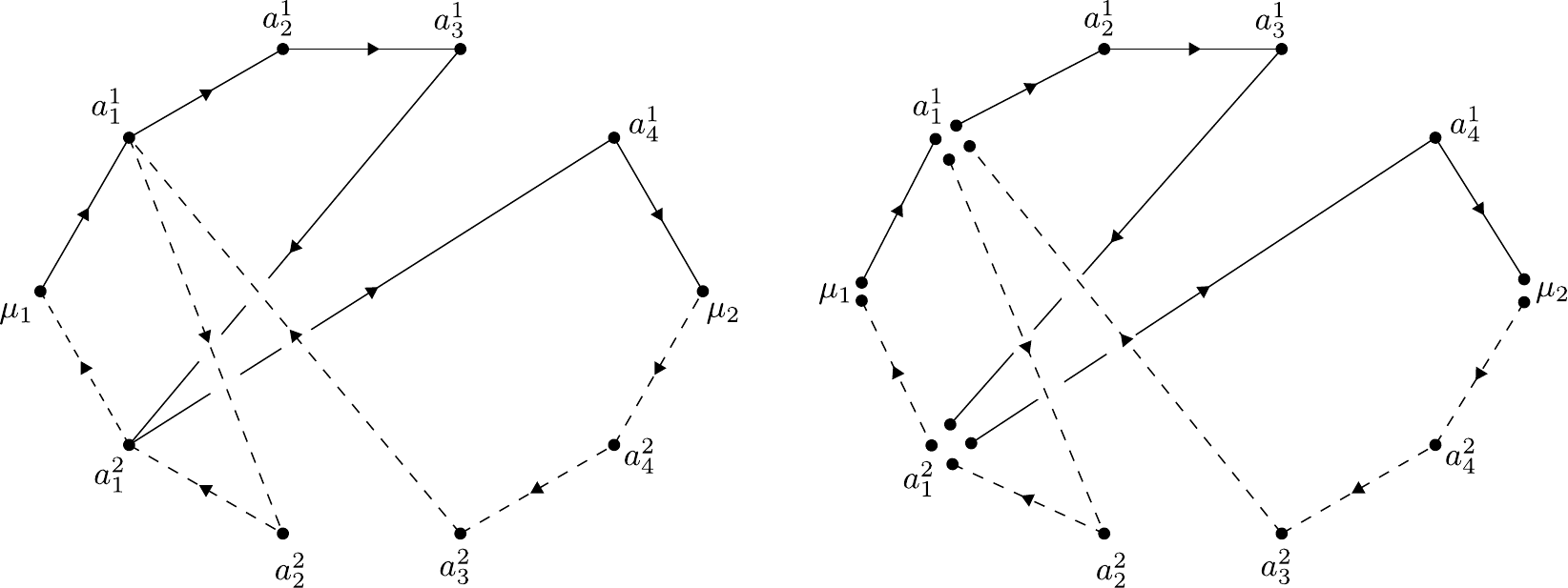}
\end{center}
\caption{Left: a graph $\Gamma$ of leading order in $\wt {\fra G}_F^2(\Delta)$ with $\Delta$ defined through \eqref{example of Z star}, and $F = \{1\}$. We do not draw the loops that encode diagonal resolvent entries in the denominator. Right: the same graph broken down to chains.
 \label{figure: square of Z star}}
\end{figure}
Figure \ref{figure: square of Z star} shows such a graph $\Gamma$ of leading order. Since the two vertices $a_1^1$ and $a_1^2$ have been linked, they each contribute a factor $\Lambda \prec \Psi$ (two edges were added by the linking process). Now we break the graph in Figure \ref{figure: square of Z star} down to its chains, i.e.\ we freeze all those summation vertices, $a_1^1$ and $a_1^2$, that were linked to. What remains is a collection of chains, each shorter than the original chain $\Delta$. In this example there are four nontrivial subchains:
\begin{equation*}
a_1^1 \to a_2^1 \to a_3^1 \to a_1^2\,, \qquad a_1^2 \to a_4^1 \to \mu_2\,, \qquad \mu_2 \to a_4^2 \to a_3^2 \to a_1^1\,, \qquad a_1^1 \to a_2^2 \to a_1^2\,.
\end{equation*}
Moreover, the monomial encoded by each subchain lies either inside $Q_{a_1^1}(\cdot)$, inside $Q_{a_1^2}(\cdot)$, or inside neither.
Thus the monomials encoded by the first two subchains lie inside $Q_{a_1^1}(\cdot)$, and the monomials encoded by the two last subchains inside $Q_{a_1^2}(\cdot)$.

Now we may invoke the induction assumption (i.e.\ Proposition \ref{lemma: weak Z lemma} for $F = \emptyset$) on each of the four subchains. We use that they all have degree strictly less than $\deg(\Delta)$. To be precise, before invoking Proposition \ref{lemma: weak Z lemma}, we have to get rid of the upper indices using \eqref{resolvent expansion type 1}; see below for details.

Moreover, we ignore some minor technicalities associated with coinciding indices. By Simplification {\bf (S1)}, we assumed that all summation indices of $\Gamma$ were distinct. In particular, the indices associated with different subchains of $\Gamma$ are distinct, which implies that the subchains of $\Gamma$ are coupled. This coupling is manifested in the fact that summation indices within a subchain are subject to additional restrictions that are unrelated to that subchain: these summation indices cannot take on values of indices in other subchains. This means that  summations cannot be performed independently within each subchain. Hence we may not strictly speaking invoke Proposition \ref{lemma: weak Z lemma} for each subchain; in order to do so, we first have to \emph{decouple} the subchains so as to get a product of terms associated with the subchains. In order to achieve this decoupling we have to allow indices associated with different subchains to coincide. This decoupling is a simple inclusion-exclusion argument whose details are postponed to Lemma \ref{lemma: factor chains} in Section \ref{section: Z8}.

Summarizing this example, we obtain an estimate of order $\Psi^{12} \Phi^6$ for the graph depicted in Figure \ref{figure: square of Z star}. Since, by Proposition \ref{lemma: weak Z lemma}, the contribution of an open subchain of degree $d$ is $\Psi^{d} \Phi^{d - 1}$,
the four non-trivial subchains yield a contribution $\Psi^3 \Phi^2 \, \Psi^2 \Phi \, \Psi^3 \Phi^2 \, \Psi^3 \Phi  = \Psi^{10} \Phi^6$.
There are also two trivial subchains, thus resulting in a total contribution $\Psi^{12} \Phi^6$.
Another way to think about such estimates is to count the additional factors of $\Psi$ and $\Phi$ gained along the proof. The naive size of the original
graph, before linking, was $\Psi^{2 \deg (\Delta)} = \Psi^{10}$ since
$\cal Z$ in \eqref{example of Z star} contains five factors and we consider its second moment (i.e.\ set $p=2$). Since $\abs{F}=1$,
we gain an additional $\Psi^{2\abs{F}} = \Psi^2$ from the linking; this
step increases the number of edges from 10 to 12 in the graph on
the left-hand side of Figure~\ref{figure: square of Z star}. Moreover, we gain an additional $\Phi$ factor from each internal summation vertex
in the subchains, in this example we gain a factor $\Phi$ from each of the six vertices $a_2^1$, $a_3^1$, $a_4^1$, $a_4^2$, $a_3^2$, and $a_2^2$. Thus we recover the bound $\Psi^{12} \Phi^6$.

Let us now give the general argument, which is in fact a trivial generalization of the above example. We start with a graph $\Gamma \in \wt {\fra G}_F^p(\Delta)$, as constructed in Section \ref{section: Z3}.
We split the summation indices $\f a = (\f a', \f a'')$, where $\f a'$ consists of the chain vertices of $\Gamma$. Thus, $\f a''$ contains in particular the indices associated with vertices which have been linked to an edge. By the argument of Section \ref{section: Z3}, $\f a''$ contains all indices of $\f a_F$, so that $ \abs{\f a''} \geq \abs{\f a_F}= p \abs{F}$. Since each linked vertex is incident to an additional edge resulting from linking, the graph $\Gamma$ contains at least $p \deg(\Delta) + \abs{\f a_F}
\geq p(\deg(\Delta)+|F|)$ edges. So far we have simply repeated the argument of Section \ref{section: Z3} and reproved the bound \eqref{weak bound without vertex res}.

In order to gain an additional factor $\Phi$ from each of the summation indices in $\f a'$, we use the induction assumption.  The assumption is used on open chains of vertices, i.e.\ subgraphs of $\Gamma$ which are open chains. We fix $\f a''$ and regard $\f a'$ as the summation indices. Then $\Gamma$ becomes a collection of open (sub)chains, and the vertices associated with $\f a'$ are the chain vertices of these subchains. If we can ensure that each subchain has degree strictly less than $\Delta$, we can apply the induction assumption to get an additional factor $\Phi$ from each chain vertex in represented in $\f a'$. This will give us a bound of size
\begin{equation*}
\Psi^{p \deg(\Delta) + \abs{\f a''}} \Phi^{\abs{\f a'}} \;\leq\; \Psi^{p (\deg(\Delta) + \abs{F})} \Phi^{p (c(\Delta) - \abs{F})}\,,
\end{equation*}
where we used that $\abs{\f a''} \geq p \abs{F}$, $\abs{\f a''} + \abs{\f a'} = p \, c(\Delta)$, and $\Psi \leq \Phi$.

In order to carry out this argument, we make the following observations.
\begin{enumerate}
\item
All subchains of $\Gamma$ have degree strictly less than $\deg(\Delta)$. This property is crucial for the induction. It is a consequence of the two following facts. First, the linking of vertices never produces new subchains nor lengthens pre-existing subchains. Note that vertices in $\f a''$ are fixed, and subchains terminate at them. Second, since $F \neq \emptyset$, at least one vertex of every subchain of degree $\deg(\Delta)$ in $\gamma^p(\Delta)$ will be linked to an edge, hence cutting the subchain of degree $\deg(\Delta)$ into smaller subchains.
\item
The expression $\cal Z'_{\f b}$ encoded by any subchain $\Gamma'$ of $\Gamma$ always appears in conjunction with a chain weight $w'(\f b)$. This is an immediate consequence of the fact that the weight $w(\f a^1) \cdots w(\f a^p)$ is a chain weight by assumption.
\item
Let $\cal Z'_{\f b}$ denote the monomial encoded by a subchain $\Gamma'$ of $\Gamma$.
Then any $Q_a$ has an index $a$ in $\f a''$ (i.e.\ is fixed), and acts either on all resolvent entries of $\cal Z'_{\f b}$ or none of them.
\end{enumerate}

In order to invoke the induction assumption, we still have to get rid of the upper indices in the maximally expanded resolvent entries. The procedure is almost identical to the one following Definition \ref{definition: maximally expanded}, but in the opposite direction. In particular,
the key formula \eqref{resolvent expansion type 1} should be viewed in the form
\begin{equation}\label{inverted res exp 1}
G_{ij}^{(Tk)} \;=\; G_{ij}^{(T)} - \frac{G_{ik}^{(T)} G_{kj}^{(T)}}{G_{kk}^{(T)}}\,, \qquad \frac{1}{G_{ii}^{(Tk)}} \;=\; \frac{1}{G_{ii}^{(T)}} +\frac{G_{ik}^{(T)} G_{ki}^{(T)}}{G_{ii}^{(T)} G_{ii}^{(Tk)} G_{kk}^{(T)}}\,.
\end{equation}
We start removing the upper indices one by one using \eqref{inverted res exp 1}, and stop if either all upper indices have been removed or if the number of off-diagonal resolvent entries exceeds $\deg(\Delta) + 2 p \ell$. The
size of the latter terms is already sufficiently small by the trivial bound $\Lambda \prec \Psi$. As for the former terms, they are represented by a new (but still finite) set of graphs in which every vertex is either a chain vertex or has been linked with an edge.

Now the induction assumption is applicable to each subchain, and the proof is completed by invoking Lemma \ref{lemma: basic properties of prec}. (Note that as before we ignored issues related to coinciding indices according to Simplification {\bf (S1)}; these are dealt with using the inclusion-exclusion argument of Lemma \ref{lemma: factor chains}.)
\end{proof}

\subsection{Step $\rr I_2$: chains with $F = \emptyset$} \label{sec: chains with no Q}
Step $\rr I_2$ is completed in the following proposition.

\begin{proposition}[Induction Step $\rr I_2$]\label{prop:step1}
Suppose that $\Lambda \prec \Psi$ for some admissible control parameter $\Psi$, and let $\ell \geq 2$. Suppose that
\begin{equation} \label{Step 2 bound}
X^w_F(\Delta) \;\prec\; \Psi^{\deg(\Delta) + \abs{F}} \Phi^{c(\Delta) - \abs{F}}
\end{equation}
holds for any open chain $\Delta$ of degree $\ell$, any $F \neq \emptyset$, and any adapted chain weight $w$. If $\ell \geq 3$, suppose in addition that \eqref{Step 2 bound} holds for any open chain $\Delta$ of degree strictly less than $\ell$, $F = \emptyset$, and any adapted chain weight $w$.

Then \eqref{Step 2 bound} holds for any open chain $\Delta$ of degree $\ell$, $F = \emptyset$, and any adapted chain weight $w$.
\end{proposition}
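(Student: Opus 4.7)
The plan is to derive a stable self-consistent equation for the vector
$v_b \deq \sum_{\f a}^{(\f \mu)*} s_{a_1 b}\prod_{i=2}^{\ell-1} s_{a_i b_i}\,\cal Z_{\f a}(\Delta)$,
viewed as a function of $b\in\{1,\ldots,N\}$ with $b_2,\ldots,b_{\ell-1}$ held fixed, and to invert the operator $1-m^2 S$ using the bound $\|(1-m^2 S)^{-1}\|_{\ell^\infty\to\ell^\infty}=\varrho$. Up to exchanging colours via complex conjugation, I may take $\cal Z_{\f a} = G_{\mu_1 a_1}G_{a_1 a_2}\cdots G_{a_{\ell-1}\mu_2}$; the undirected case makes no use of edge orientation in \eqref{resolvent expansion type 1}--\eqref{resolvent expansion type 2} and is handled identically. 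Splitting $1=P_{a_1}+Q_{a_1}$ inside the sum over $a_1$, the $Q_{a_1}$-part equals $X^{w_{\f b}}_{\{1\}}(\Delta)$, to which the hypothesis with $F\ne\emptyset$ applies, giving the bound $\Psi^{\ell+1}\Phi^{\ell-2}\leq 2\Psi^\ell \Phi^{\ell-1}$ (using $\Psi\leq 2\Phi$); this is already within the target and requires no inversion.

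For the $P_{a_1}$-part, I first apply Family A \eqref{resolvent expansion type 1} to add the upper index $(a_1)$ to every edge $G_{a_i a_{i+1}}$ with $i\geq 2$. The correction terms $G_{a_i a_1}G_{a_1 a_{i+1}}/G_{a_1 a_1}$ break the monomial into two strictly shorter chains glued at $a_1$, to which the hypothesis on shorter chains (applicable since $\ell\geq 3$ in this case) applies after decoupling the distinct-index constraint via Lemma \ref{lemma: factor chains}. Next, on the two edges incident to $a_1$, I apply the Family B identity \eqref{res exp 2b} to write
$$G_{\mu_1 a_1}G_{a_1 a_2} \;=\; G_{a_1 a_1}^2\sum_{x,y}^{(a_1)} h_{xa_1}h_{a_1 y}\,G_{\mu_1 x}^{(a_1)}G_{ya_2}^{(a_1)}.$$
Since $P_{a_1}(h_{xa_1}h_{a_1 y})=s_{xa_1}\delta_{xy}$ in both \eqref{RS} and \eqref{CH}, and $G_{a_1 a_1}^2 = m^2+O_\prec(\Psi)$, the leading contribution to $P_{a_1}(\cal Z_{\f a})$ becomes
$$m^2\sum_x^{(a_1)} s_{xa_1}\,G_{\mu_1 x}G_{xa_2}G_{a_2 a_3}\cdots G_{a_{\ell-1}\mu_2},$$
after removing the $(a_1)$ upper indices via the inverted Family A identity, with the removal error absorbed into the self-consistent-equation error.

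Summing this leading term against the chain weight, renaming $a_1\leftrightarrow x$, and using $s_{ij}=s_{ji}$ together with the definition of $v_c$ for every $c$, I recognize the result as $m^2(Sv)_b$. Hence
$$v_b \;=\; m^2(Sv)_b + E_b,\qquad\text{so}\qquad v_b \;\prec\; \varrho\,\|E\|_\infty,$$
and the target $v_b\prec \Psi^\ell\Phi^{\ell-1}$ follows once $\|E\|_\infty\prec \Psi^\ell\Phi^{\ell-2}(\Psi+M^{-1/2}\Psi^{-1})$, since $\varrho(\Psi+M^{-1/2}\Psi^{-1})=\Phi$ outside the trivial regime $\Phi=1$. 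The error $E$ collects four contributions: (a) the $Q_{a_1}$-part bounded above; (b) corrections from $G_{a_1 a_1}^2\ne m^2$, which introduce a factor $\cG_{a_1 a_1}$ on a length-$\ell$ chain and, after extracting an additional $Q_{a_1}$ via \eqref{res exp 2c}, fit the hypothesis with $F\ne\emptyset$; (c) corrections from the upper-index addition, producing strictly shorter chains and controlled by the hypothesis on shorter chains; (d) non-pair contractions of $h_{a_1\cdot}$ entries arising from the Schur expansion of $G_{a_1 a_1}$ beyond leading order, which carry an extra factor $M^{-1/2}$ and yield the $M^{-1/2}\Psi^{-1}$ tail of $\Phi$.

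The main obstacle is the systematic bookkeeping of $E$: each correction summand must, after removal of upper indices and inclusion-exclusion over coinciding indices, be reorganized into an object of the form $X^{w'}_{F'}(\Delta')$ to which one of the two inductive hypotheses applies. The subtlest piece is the $M^{-1/2}$ gain in (d), which comes precisely from events where three or more $h_{a_1\cdot}$ entries meet at the same index (of probability $O(M^{-1})$); this is the mechanism that forces $\Phi$, rather than $\Psi$, to be the natural control parameter for chain averages, as foreshadowed in Remark \ref{rem: bad example}.
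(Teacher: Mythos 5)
Your overall strategy matches the paper's proof of this proposition: split $1 = P_{a_1}+Q_{a_1}$, use Family A to make the non-$a_1$ edges independent of $a_1$, apply Family B \eqref{res exp 2b} to the two edges incident to $a_1$, take the partial expectation $P_{a_1}(h_{xa_1}h_{a_1 y})=\delta_{xy}s_{a_1 x}$ to set up the vector equation $v = m^2 S v + \cal E$, and invert $1-m^2 S$ using $\varrho$. The remaining error contributions are estimated via the two inductive hypotheses and the inclusion--exclusion lemma. All of this is as in the paper.

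There is, however, a genuine technical gap in the way you handle the diagonal factors $G_{a_1 a_1}^2$ produced by Family B. You write the identity $G_{\mu_1 a_1}G_{a_1 a_2}=G_{a_1 a_1}^2\sum_{x,y}^{(a_1)}h_{xa_1}h_{a_1 y}G^{(a_1)}_{\mu_1 x}G^{(a_1)}_{y a_2}$ and then replace $G_{a_1a_1}^2$ with $m^2+O_\prec(\Psi)$ \emph{inside} $P_{a_1}$. Since $G_{a_1a_1}$ depends on the $a_1$-row and column of $H$, it does not commute with $P_{a_1}$, and extracting the ``leading'' $m^2$ is not just a matter of a trivial power count. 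The paper sidesteps this cleanly by \emph{first} smuggling in the factor $1 = \big((G_{a_1a_1}-m)^2 + 2m(G_{a_1a_1}-m)+m^2\big)/(G_{a_1a_1})^2$ and discarding all but the $m^2$ part as an error (bounded by the shorter-chain hypothesis and $\Lambda\prec\Psi$); then, after Family B, the $G_{a_1a_1}^2$ cancels the prefactor $1/(G_{a_1a_1})^2$ \emph{exactly}, leaving the deterministic $m^2$, which does commute with $P_{a_1}$. Your errors (b) and (d) are symptoms of not doing this: (b) proposes ``extracting an additional $Q_{a_1}$ via \eqref{res exp 2c},'' but any such $Q_{a_1}$ is annihilated by the ambient $P_{a_1}$ ($P_{a_1}Q_{a_1}=0$), and what is actually needed is the shorter-chain hypothesis applied to the inner sum with $a_1$ frozen (together with Lemma \ref{lemma: basic properties of prec}); and (d) (higher-order lumpings from a Schur expansion of $G_{a_1a_1}$) does not occur at all in this proposition — only the two $h$'s from Family B appear, and they are simply paired. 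The $M^{-1/2}\Psi^{-1}$ term does not need to be generated by a separate error mechanism here: it is already built into $\Phi$ by definition, and the inversion $\varrho\,\Psi^{\ell+1}\Phi^{\ell-2}\leq\Psi^\ell\Phi^{\ell-1}$ goes through with the tighter error estimate $\cal E\prec\Psi^{\ell+1}\Phi^{\ell-2}$.
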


\begin{proof}
As before, we focus on the case where $\Delta$ is a directed open chain; the proof in in the undirected case is the same. Thus,
\begin{equation*}
\cal Z_{a_1 \cdots a_n} \;=\; G_{\mu_1 a_1} G_{a_1 a_2} \cdots G_{a_n \mu_2}\,, \qquad w_{\f b}(\f a) \;\equiv\; w(\f a) \;=\; s_{a_1 b_1} \cdots s_{a_n b_n}
\end{equation*}
for some $\f b = \{b_1, \dots, b_n\}$. (Recall that $G_{ij} = \cal G_{ij}$ for $i \neq j$.) Note that $n = \deg(\Delta) - 1$. We have to prove that
\begin{equation} \label{self-const equ claim}
X_\emptyset^w(\Delta) \;\prec\; \Psi^{n + 1} \Phi^n\,.
\end{equation}
(Here we used that $c(\Delta) = n$.) The main idea of the proof was given in Section \ref{sec: proof of (B)}: derive a stable self-consistent equation whose error terms may be estimated using the induction assumption. We subdivide the proof into six steps.

To simplify notation, throughout this proof we use $\cal E \equiv \cal E(\f b, \mu_1, \mu_2)$ to denote a random error term satisfying $\cal E \prec \Psi^{n + 2} \Phi^{n - 1}$. Like generic constants $C$, these error terms may change from line to line
without changing name.

Moreover, in order to keep the presentation more concise, we shall sometimes ignore unimportant subtleties arising from coinciding summation indices. These complications are harmless and will be dealt with precisely using the inclusion-exclusion argument of Lemma \ref{lemma: factor chains}. The general philosophy is the following: if we constrain a pair of indices to coincide instead of being distinct, we lose at most two factors of $\Psi$. Indeed, we lose at most one chain vertex (resulting in a loss of $\Phi \geq \Psi$), and at most one off-diagonal entry of $G$ may become diagonal (resulting in a loss of $\Psi$). Note that, since $\Delta$ is an open chain, at most one off-diagonal entry may become diagonal when setting two summation indices to be equal. (This is not true for closed chains; see Section \ref{sect: completion of induction}.) This loss of $\Psi^2$ is compensated by the factor $M^{-1} \leq \Psi^2$ we gain from the reduction in the number of summation variables.

\medskip
{\bf Step (i).} We introduce a factor $P_{a_1}$ into the summation in $X_\emptyset^w(\Delta)$. We find
\begin{align*}
X_\emptyset^w(\Delta) &\;=\; \sum_{\f a}^{(\mu_1 \mu_2)*}  w(\f a) \, P_{a_1} \cal Z_{a_1 \cdots a_n} + X_{\{1\}}^w(\Delta)
\\
&\;=\; \sum_{\f a}^{(\mu_1 \mu_2)*}  w(\f a) \, P_{a_1} \cal Z_{a_1 \cdots a_n} + \cal E\,.
\end{align*}
where the second equality follows from the induction assumption.

\medskip
{\bf Step (ii).}
We introduce a factor $m^2 / (G_{a_1 a_1})^2$ in front of $X_\emptyset^w(\Delta)$; this prefactor will be important in the fourth step below, as the factor $1 / (G_{a_1 a_1})^2$ will be used to cancel diagonal resolvent entries arising from two applications of the identity \eqref{res exp 2b}. We find
\begin{align*}
\sum_{\f a}^{(\mu_1 \mu_2)*}  w(\f a) \, P_{a_1} \cal Z_{a_1 \cdots a_n} &\;=\; \sum_{\f a}^{(\mu_1 \mu_2)*} w(\f a) P_{a_1} \qBB{\frac{(G_{a_1 a_1} - m)^2 + 2 m (G_{a_1 a_1} - m) + m^2}{(G_{a_1 a_1})^2} \cal Z_{a_1 \cdots a_n}}
\\
&\;=\; \sum_{a_1}^{(\mu_1 \mu_2)} s_{a_1 b_1} P_{a_1} \Biggl[ G_{\mu_1 a_1} \frac{(G_{a_1 a_1} - m)^2 + 2 m (G_{a_1 a_1} - m) + m^2}{(G_{a_1 a_1})^2}
\\
&\mspace{40mu} \times \sum_{a_2, \dots, a_n}^{(a_ 1\mu_1 \mu_2)*} s_{a_2 b_2} \cdots s_{a_n b_n} G_{a_1 a_2} \cdots G_{a_n \mu_2} \Biggr]\,.
\end{align*}
For $n = 1$ (i.e.\ $\deg(\Delta) = 2$) the last line is understood to be $G_{a_1 \mu_2}$.

We now show that the only the term $m^2$ in the numerator is relevant. By induction assumption and Lemma \ref{lemma: basic properties of prec}, we find that
\begin{equation} \label{self-const proof step 1}
\sum_{a_2, \dots, a_n}^{(a_1 \mu_1 \mu_2)*} s_{a_2 b_2} \cdots s_{a_n b_n} G_{a_1 a_2} \cdots G_{a_n \mu_2} \;\prec\; \Psi^{n} \Phi^{n - 1}\,.
\end{equation}
(Note that the induction assumption is only used if $n \geq 2$; for the initial value $n = 1$ \eqref{self-const proof step 1} is trivial.) Using $\sum_{a_1} s_{a_1 b_1} \leq 1$, \eqref{1/G prec 1}, and Lemma \ref{lemma: basic properties of prec} again, we get
\begin{equation} \label{self-const proof step 2}
X_\emptyset^w(\Delta) \;=\; \wt X_\emptyset^w(\Delta) + \cal E\,, \qquad \wt X_\emptyset^w(\Delta) \;\deq\; \sum_{\f a}^{(\mu_1 \mu_2)*}  w(\f a) \, P_{a_1} \qbb{\frac{m^2}{(G_{a_1 a_1})^2} \cal Z_{a_1 \cdots a_n}}\,.
\end{equation}

\medskip
{\bf Step (iii).} We make all resolvent entries which do not contain the index $a_1$ independent of $a_1$ using \eqref{resolvent expansion type 1}. Thus, we assume that $n \geq 2$; if $n = 1$ there is nothing to be done and this step is trivial. Using \eqref{resolvent expansion type 1} we find
\begin{align*}
\wt X_\emptyset^w(\Delta) &\;=\; \sum_{\f a}^{(\mu_1 \mu_2)*}  w(\f a) \, P_{a_1} \qbb{\frac{m^2}{(G_{a_1 a_1})^2} G_{\mu_1 a_1} G_{a_1 a_2} G_{a_2 a_3} G_{a_3 a_4} \cdots G_{a_n \mu_2}}
\\
&\;=\; \sum_{\f a}^{(\mu_1 \mu_2)*}  w(\f a) \, P_{a_1} \qbb{\frac{m^2}{(G_{a_1 a_1})^2}G_{\mu_1 a_1} G_{a_1 a_2} \pbb{G_{a_2 a_3}^{(a_1)} +\frac{G_{a_2 a_1} G_{a_1 a_3}}{G_{a_1 a_1}}} G_{a_3 a_4} \cdots G_{a_n \mu_2}}
\\
&\;=\; \sum_{\f a}^{(\mu_1 \mu_2)*}  w(\f a) \, P_{a_1} \qbb{\frac{m^2}{(G_{a_1 a_1})^2} G_{\mu_1 a_1} G_{a_1 a_2} G_{a_2 a_3}^{(a_1)} G_{a_3 a_4} \cdots G_{a_n \mu_2}} + \cal E\,.
\end{align*}
Here the bound on the error term
\begin{equation*}
\sum_{a_1}^{(\mu_1 \mu_2)} s_{a_1 b_1} P_{a_1} \qBB{\frac{m^2}{(G_{a_1 a_1})^3} G_{\mu_1 a_1} \sum_{a_2, \dots, a_n}^{(a_1 \mu_1 \mu_2)*}  s_{a_2 b_2} \cdots s_{a_n b_n} \, G_{a_1 a_2} G_{a_2 a_1} G_{a_1 a_3} G_{a_3 a_4} \cdots G_{a_n \mu_2}}
\end{equation*}
follows by first fixing the summation index $a_1$ and using the induction assumption combined with Lemma \ref{lemma: basic properties of prec}, similarly to Step (ii) above. The induction assumption is used on two chains: one of degree $2$ (corresponding to $G_{a_1 a_2} G_{a_2 a_1}$) and one of degree $n - 1$ (corresponding to $G_{a_1 a_3} G_{a_3 a_4} \cdots G_{a_n \mu_2}$); here $a_1$ is regarded as an external index. (Here we swept under the rug a minor technicality. Strictly speaking, the expressions encoded by different subchains do not factor, since their summations are still coupled by the constraint $a_2 \notin \{a_1, a_3, a_4, \dots, a_n\}$. As outlined above, we ignore such complications here; they are dealt with using the inclusion-exclusion argument from Lemma \ref{lemma: factor chains} in Section \ref{section: Z8} by introducing a partitioning on the values of $a_2$, which results in a decoupled expression plus a series of small error terms.)

Next, we write
\begin{multline} \label{self-const proof step 2'}
\sum_{\f a}^{(\mu_1 \mu_2)*} w(\f a) \, P_{a_1} \qbb{\frac{m^2}{(G_{a_1 a_1})^2} G_{\mu_1 a_1} G_{a_1 a_2} G_{a_2 a_3}^{(a_1)} G_{a_3 a_4} G_{a_4 a_5} \cdots G_{a_n \mu_2}}
\\
=\; \sum_{\f a}^{(\mu_1 \mu_2)*}  w(\f a) \, P_{a_1} \qbb{\frac{m^2}{(G_{a_1 a_1})^2}G_{\mu_1 a_1} G_{a_1 a_2} G_{a_2 a_3}^{(a_1)} G_{a_3 a_4}^{(a_1)} G_{a_4 a_5} \cdots G_{a_n \mu_2}} + \cal R\,,
\end{multline}
where the error term is
\begin{align*}
\cal R &\;\deq\; \sum_{\f a}^{(\mu_1 \mu_2)*}  w(\f a) \, P_{a_1} \qbb{\frac{m^2}{(G_{a_1 a_1})^2}G_{\mu_1 a_1} G_{a_1 a_2} G_{a_2 a_3}^{(a_1)} \frac{G_{a_3 a_1} G_{a_1 a_4}}{G_{a_1 a_1}} G_{a_4 a_5} \cdots G_{a_n \mu_2}}
\\
&\;=\; \sum_{\f a}^{(\mu_1 \mu_2)*}  w(\f a) \, P_{a_1} \qbb{\frac{m^2}{(G_{a_1 a_1})^2}G_{\mu_1 a_1} G_{a_1 a_2} \pbb{G_{a_2 a_3} - \frac{G_{a_2 a_1} G_{a_1 a_3}}{G_{a_1 a_1}}} \frac{G_{a_3 a_1} G_{a_1 a_4}}{G_{a_1 a_1}} G_{a_4 a_5} \cdots G_{a_n \mu_2}}\,.
\end{align*}
We may estimate this exactly as above, by first fixing $a_1$ and regarding it as an external index. The induction assumption allows us to estimate the two resulting terms
\begin{equation*}
\sum_{a_2, \dots, a_n}^{(a_1 \mu_1 \mu_2)*} s_{a_2 b_2} \cdots s_{a_n b_n} \, G_{a_1 a_2} G_{a_2 a_3} G_{a_3 a_1} G_{a_1 a_4} G_{a_4 a_5} \cdots G_{a_n \mu_2}
\end{equation*}
(a product of two subchains) and
\begin{equation*}
\sum_{a_2, \dots, a_n}^{(a_1 \mu_1 \mu_2)*} s_{a_2 b_2} \cdots s_{a_n b_n} \, G_{a_1 a_2} G_{a_2 a_1} G_{a_1 a_3} G_{a_3 a_1} G_{a_1 a_4} G_{a_4 a_5} \cdots G_{a_n \mu_2}\,,
\end{equation*}
(a product of three subchains). Note that the induction assumption is always used on subchains of degree strictly less than $\deg(\Delta) = n + 1$.  Using Lemma \ref{lemma: basic properties of prec}, one therefore finds that $\cal R$ in \eqref{self-const proof step 2'} can be replaced with an $\cal E$. 
Continuing in this manner, we eventually get
\begin{equation} \label{self-const proof step 3}
\wt X_\emptyset^w(\Delta) \;=\; \sum_{\f a}^{(\mu_1 \mu_2)*}  w(\f a) \, P_{a_1} \qbb{\frac{m^2}{(G_{a_1 a_1})^2}G_{\mu_1 a_1} G_{a_1 a_2} G_{a_2 a_3}^{(a_1)} G_{a_3 a_4}^{(a_1)} \cdots G_{a_n \mu_2}^{(a_1)}} + \cal E\,.
\end{equation}

\medskip
{\bf Step (iv).} We apply the identity \eqref{res exp 2b} to both resolvent entries with lower index $a_1$. This yields
\begin{align}
\wt X_\emptyset^w(\Delta) &\;=\; m^2 \sum_{\f a}^{(\mu_1 \mu_2)*}  w(\f a) \, P_{a_1} \qBB{ \sum_{d,d'}^{(a_1)} G_{\mu_1 d}^{(a_1)} h_{d a_1} h_{a_1 d'} G_{d' a_2}^{(a_1)} G_{a_2 a_3}^{(a_1)} G_{a_3 a_4}^{(a_1)} \cdots G_{a_n \mu_2}^{(a_1)}} + \cal E
\notag \\
&\;=\; \label{self-const proof step 4}
m^2 \sum_{\f a}^{(\mu_1 \mu_2)*} \sum_{d}^{(a_1)} w(\f a) s_{a_1 d} \, G_{\mu_1 d}^{(a_1)} G_{d a_2}^{(a_1)} G_{a_2 a_3}^{(a_1)} G_{a_3 a_4}^{(a_1)} \cdots G_{a_n \mu_2}^{(a_1)} + \cal E\,,
\end{align}
where in the second step we used that $P_{a_1} (h_{d a_1} h_{a_1 d'}) = \delta_{d d'} s_{a_1 d}$, and that all resolvent entries are independent of $a_1$.
Renaming $(a_1, d) \mapsto (d,a_1)$ and interchanging the order of summation, we find from \eqref{self-const proof step 2} and \eqref{self-const proof step 4}
\begin{equation} \label{self-const proof step 5}
X_\emptyset^w(\Delta) \;=\; m^2 \sum_d^{(\mu_1 \mu_2)} s_{b_1 d} \sum_{a_1}^{(d)} \sum_{a_2, \dots, a_n}^{(d \mu_1 \mu_2) *} s_{d a_1} s_{b_2 a_2} \cdots s_{b_n a_n} G_{\mu_1 a_1}^{(d)} G_{a_1 a_2}^{(d)} \cdots G_{a_n \mu_2}^{(d)} + \cal E .
\end{equation}

\medskip
{\bf Step (v).} Having performed the expectation, we now get rid of the upper indices $d$ in all of the resolvent entries of \eqref{self-const proof step 5} to go back to the original resolvent entries. To that end, we write
\begin{equation} \label{removing upper indices}
G_{\mu_1 a_1}^{(d)} G_{a_1 a_2}^{(d)} \cdots G_{a_n \mu_2}^{(d)} \;=\; \pbb{G_{\mu_1 a_1} - \frac{G_{\mu_1 d} G_{d a_1}}{G_{dd}}} \pbb{G_{a_1 a_2} - \frac{G_{a_1 d} G_{d a_2}}{G_{dd}}} \cdots \pbb{G_{a_n \mu_2} - \frac{G_{a_n d} G_{d \mu_2}}{G_{dd}}}
\end{equation}
in the summand of \eqref{self-const proof step 5}, and multiply everything out. As before, each term results in a collection of subchains of degree strictly less than $\deg(\Delta) = n + 1$, to which the induction assumption may be applied. More precisely, suppose that we have chosen the second term in $k \leq n + 1$ of the factors in \eqref{removing upper indices}. Then we get $k$ additional resolvent entries of $G$ (yielding a total of $n + k + 1$), as well as a collection of subchains whose total number of chain vertices is $n - k$.  Notice that the index structure of every terms after multiplying 
\eqref{removing upper indices} out is chain-like.
The result is
\begin{equation} \label{self-const proof step 6}
X_\emptyset^w(\Delta) \;=\; m^2 \sum_d^{(\mu_1 \mu_2)} s_{b_1 d} \sum_{a_1, \dots, a_n}^{(d \mu_1 \mu_2) *} s_{d a_1} s_{b_2 a_2} \cdots s_{b_n a_n} G_{\mu_1 a_1} G_{a_1 a_2} \cdots G_{a_n \mu_2} + \cal E\,.
\end{equation}
(As before, we ignore the issues related to the cases $a_1 \in \{\mu_1, \mu_2, a_2, \dots, a_n\}$; see the inclusion-exclusion argument of Lemma \ref{lemma: factor chains}.) We have the rough bound
\begin{equation} \label{rough chain bound}
\sum_{a_1, \dots, a_n}^{(d \mu_1 \mu_2) *} s_{d a_1} s_{b_2 a_2} \cdots s_{b_n a_n} G_{\mu_1 a_1} G_{a_1 a_2} \cdots G_{a_n \mu_2} \;\prec\; \Psi^{n + 1} \Phi^{n - 1}\,.
\end{equation}
Indeed, by fixing $a_1$ and using the induction assumption on the subchain of degree $n$ that encodes the expression $G_{a_1 a_2} \cdots G_{a_n \mu_2}$, \eqref{rough chain bound} follows by Lemma \ref{lemma: basic properties of prec}. It follows using $s_{b_1 d} \leq M^{-1}$ that we may replace the summation $\sum_d^{(\mu_1 \mu_2)}$ in \eqref{self-const proof step 6} by $\sum_d$ up to an error of type $\cal E$. Finally, we may replace the sum $\sum_{a_1, \dots, a_n}^{(d \mu_1 \mu_2) *}$ in \eqref{self-const proof step 6} by $\sum_{a_1, \dots, a_n}^{(\mu_1 \mu_2) *}$ up to an error type $\cal E$, by the inclusion-exclusion argument of Lemma \ref{lemma: factor chains}. The result is
\begin{equation} \label{self-const proof step 7}
X_\emptyset^w(\Delta) \;=\; m^2 \sum_d s_{b_1 d} \sum_{a_1, \dots, a_n}^{(\mu_1 \mu_2) *} s_{d a_1} s_{b_2 a_2} \cdots s_{b_n a_n} G_{\mu_1 a_1} G_{a_1 a_2} \cdots G_{a_n \mu_2} + \cal E\,.
\end{equation}

\medskip
{\bf Step (vi).}
We fix $b_2, \dots, b_n$ and regard $b_1$ as a free index. Define
\begin{equation*}
X_{\emptyset}^w(\Delta) \;\equiv\; v_{b_1} \;\deq\; \sum_{\f a}^{(\mu_1 \mu_2) *} w_{b_1 b_2 \dots b_n}(\f a)  \, \cal Z_{a_1 \dots a_n}\,.
\end{equation*}
Then \eqref{self-const proof step 7} reads
\begin{equation}\label{vself}
v_{b_1} \;=\; (m^2 S v)_{b_1} + \cal E_{b_1}\,,
\end{equation}
where $\cal E_{b_1} \prec \Psi^{n + 2} \Phi^{n - 1}$. (Here we use the notation $\cal E_{b_1}(b_2, \dots, b_n, \mu_1, \mu_2) \equiv \cal E(b_1, \dots, b_n,\mu_1,\mu_2)$, indicating that $b_1$ is the variable index and
all other indices are fixed for this argument.)
Inverting the self-consistent equation yields
\begin{equation*}
v_{b_1} \;=\; \pb{(1 - m^2 S)^{-1} \cal E}_{b_1}\,.
\end{equation*}
In order to complete the proof, we observe that if $X_j^k \prec \Psi$ uniformly in $(j,k)$, then for any matrix $A = (A_{ij})$ we have $\sum_j A_{ij} X_j^k \prec \norm{A}_{\ell^\infty \to \ell^\infty} \Psi$ uniformly in $(i,k)$.
Recalling the definition \eqref{def of rho}, we therefore get
\begin{equation*}
X_{\emptyset}^w(\Delta) \;=\; \pb{(1 - m^2 S)^{-1} \cal E}_{b_1} \;\prec\; \varrho \, \Psi^{n + 2} \Phi^{n - 1} \;\leq\; \Psi^{n + 1} \Phi^n\,.
\end{equation*}
The last inequality is valid only if $\Phi<1$, but the final bound is still correct even if $\Phi = 1$ by using the trivial bound $X_{\emptyset}^w(\Delta) \prec \Psi^{n + 1}$.
This concludes the proof.
\end{proof}

\subsection{Completion of the induction and the proof of Proposition \ref{lemma: weak Z lemma}} \label{sect: completion of induction}
We may now complete the proof of Proposition \ref{lemma: weak Z lemma}. We begin with open chains. As outlined in Section \ref{section: Z1}, the proof is by induction on $\deg(\Delta)$. The induction is started with the trivial open chain $\Delta$ corresponding to $\cal Z = G_{\mu \nu}$, for which we have the trivial bound $G_{\mu \nu} \prec \Psi$. Then \eqref{bound for closed chains} for an arbitrary open chain $\Delta$ follows by induction, using Propositions \ref{prop:step2} and \ref{prop:step1}.

In order to prove \eqref{bound for closed chains} for an arbitrary closed chain $\Delta$, we follow almost to the letter the arguments from Sections \ref{section: Z4} and \ref{sec: chains with no Q}. The proof consists of two steps, each repeated twice.
\begin{enumerate}
\item[(a)]
Prove \eqref{bound for closed chains} for $F \neq \emptyset$.
\item[(b)]
Prove \eqref{bound for closed chains} for $F = \emptyset$.
\end{enumerate}
The order of the argument is as follows. First we do step (a) for closed chains with one external vertex, then
step (b) for closed chains with one external vertex, then
step (a) again but now for closed chains with no external vertex,
and finally step (b) for closed chains with no external vertex.
Here no induction is required; the necessary input is \eqref{bound for closed chains} for arbitrary open chains. Each one of the four above steps uses the previous ones as input. The proof of either step (a) is almost identical to that of Proposition \ref{prop:step2}, and the proof of either step (b) almost identical to that of Proposition \ref{prop:step1}. The only nontrivial difference is associated with coinciding indices, where we may lose a factor $\Psi^2 \Phi^2$ if two indices coincide. Here the worst case is the closed chain of degree two with no external vertices: $\sum_{a,b} s_{\mu a} s_{\nu b} G_{ab} G_{ba}$. The associated monomial $G_{ab} G_{ba}$ is of degree two and has two chain vertices. However, setting $a = b$ yields a contribution of order $M^{-1}$, i.e.\  we lost a factor $\Psi^2$ (from the two off-diagonal entries that became diagonal) and $\Phi^2$ (from the two chain vertices). However, this loss is compensated by the gain $M^{-1}$: we get the bound $\Psi^2 \Phi^2 + M^{-1} \leq 2 \Psi^2 \Phi^2$.

Let us sketch the general cases. Consider a closed chain $\Delta$ with no external vertices. Thus, $\Delta$ has $c(\Delta) = \deg(\Delta)$ chain vertices. If we ignore coinciding indices, we get the bound $\Psi^{\deg(\Delta)} \Phi^{\deg(\Delta)}$ on its size. On the other hand, if all of the $\deg(\Delta)$ indices coincide, we get the bound $M^{-\deg(\Delta) + 1}$. Indeed, all but one of the entries of $S$ in the chain weight can be estimated using their maximum $M^{-1}$; moreover, all resolvent entries are diagonal and hence of size $1$. This yields the combined bound
\begin{equation} \label{chain estimate 1}
\Psi^{\deg(\Delta)} \Phi^{\deg(\Delta)} + M^{-\deg(\Delta) + 1} \;\asymp\; \Psi^{\deg(\Delta)} \Phi^{\deg(\Delta)}\,,
\end{equation}
where we used that $\deg(\Delta)/2 \geq \deg(\Delta) - 1$. This is \eqref{bound for closed chains}.

The case of a closed chain $\Delta$ with one external vertex is similar. In this case we have $c(\Delta) = \deg(\Delta) - 1$. Ignoring coinciding indices, we get the bound $\Psi^{\deg(\Delta)} \Phi^{\deg(\Delta) - 1}$. On the other hand, if all indices coincide we get the bound $M^{-\deg(\Delta) + 1}$ exactly as before. This yields the combined bound
\begin{equation} \label{chain estimate 2}
\Psi^{\deg(\Delta)} \Phi^{\deg(\Delta) - 1} + M^{-\deg(\Delta) + 1} \;\asymp\; \Psi^{\deg(\Delta)} \Phi^{\deg(\Delta) - 1}\,,
\end{equation}
where we used \eqref{admissible Psi}. This is \eqref{bound for closed chains}.

Note that in the bounds \eqref{chain estimate 1} and \eqref{chain estimate 2} we only considered the two extreme cases: when all summation indices are distinct, and when they all coincide. In Lemma \ref{corollary: weak Z lemma}, we prove that these bounds in fact cover all possible index configurations. The full details on coinciding indices are given in Section \ref{section: Z8}. This concludes the proof of \eqref{bound for closed chains}, and hence of Proposition \ref{lemma: weak Z lemma}.

\section{General monomials and vertex resolution} \label{section: Z5}

In this section we conclude the proof of Theorem \ref{theorem: Z lemma} for general $\Delta$ under certain simplifying assumptions. A sketch of the argument presented in this section was given in Section \ref{sec: sketch of resolution}; the key new concept is that of \emph{vertex resolution}, which relies on Family B identities.

Our starting point is the graphical expansion from Section \ref{section: Z3} as well as the chain estimates from Proposition \ref{lemma: weak Z lemma}. Throughout this section we assume Simplification {\bf (S1)} from Section \ref{section: Z3}. Moreover, we shall tacitly make use of Lemma \ref{lemma: basic properties of prec} as well as the notations and definitions from Section \ref{section: Z3}.

In order to perform the vertex resolution, it will prove necessary to expand all resolvent entries in \emph{all} of the summation indices $\f a$ instead of the smaller set $\f a_F$ (as was done in Section \ref{section: Z3}). Thus, as first step, we repeat the construction of Section \ref{section: Z3}: we start with the graph $\gamma^p(\Delta)$ that encodes the $p$-th moment of $X_F^w(\Delta)$, and perform the expansion given after Definition \ref{definition: maximally expanded}, except we now expand in the full set $\f a$ of summation indices instead of $\f a_F$. This gives rise to a family of graphs which we denote by $\fra G_F^p(\Delta)$. Note that $\fra G_F^p(\Delta) \supset \wt {\fra G}_F^p(\Delta)$, where, we recall, the set $\wt {\fra G}_F^p(\Delta)$ is the set generated in Section \ref{section: Z3} by expanding in the indices $\f a_F$ only. Thus, each graph $\Gamma \in \fra G_F^p(\Delta)$ encodes a monomial of entries of $\cal G$, the edge $(x,y)$ giving rise to the maximally expanded entry $\cal G_{xy}^{(\f a \setminus \{x,y\})}$ or $\cal G_{xy}^{(\f a \setminus \{x,y\}) *}$ depending on its colour. Here, and throughout the following, we use the phrase \emph{maximally expanded} to mean maximally expanded in $\f a$ (see Definition \ref{definition: maximally expanded}). As in Section \ref{section: Z3}, we do not keep track of the $Q$'s in our notation. (Indeed, this information will turn out to be unimportant for our proof.)

Note that Definition \ref{definition: basics of Gamma} carries over verbatim for $\Gamma \in \fra G_F^p(\Delta)$. For the following we pick and fix a $\Gamma \in \fra G_F^p(\Delta)$. Thus, $\Gamma$ encodes a monomial, whereby each edge of $\Gamma$ gives rise to a maximally expanded entry of $\cal G$. As explained in Section \ref{section: Z3}, the linking procedure used to make all entries maximally expanded ensures, thanks to the presence of the $Q$'s, that $\abs{E(\Gamma)} \geq p (\deg(\Delta) + \abs{F})$.

The main idea of vertex resolution already appeared in Section \ref{sec: sketch of resolution}. Roughly, we \emph{resolve} each summation vertex using the Family B identities \eqref{res exp 2b} and \eqref{res exp 2b iterated}, which results in a new set of summation vertices which we call fresh and draw using white dots. We call the resulting graph $\Theta$. (More precisely, from each graph $\Gamma$ we get a finite family of new graphs $\{\Theta_\alpha\}$.) Next, we take the expectation, which results in a summation over all pairings (in fact, more generally, over all lumpings) of the white vertices adjacent to the original summation vertex. Each pairing gives rise to a new graph, which we call $\Upsilon$. (As above, from each graph $\Theta$ we get a finite family of new graphs $\{\Upsilon_\alpha\}$.) Although each steps results in
an increase in the number of graphs, it is easy to see that this combinatorial factor is bounded by 
a constant depending only on  $\abs{V(\Gamma)}$,  the number of vertices in $\Gamma$. In other words, the above families $\{\Theta_\alpha\}$ and $\{\Upsilon_\alpha\}$ are finite and do not depend on $N$.

The step $\Gamma \mapsto \Theta$ is performed in Section \ref{sect: generation of fresh vertices}, and the step $\Theta \mapsto \Upsilon$ in Section \ref{sec: vertex lumping}. Figure \ref{figure: flower} contains a summary of this process, on the level of a single vertex, which is helpful to keep in mind while reading the following. The idea is that, provided the vertex being resolved arose as a copy of a charged vertex (see Definition \ref{def: charged}), the resolution process will (in leading order) generate at least one fresh chain vertex. From this chain vertex we shall gain a factor $\Phi$ by invoking Proposition \ref{lemma: weak Z lemma}, and this will conclude the proof.
\begin{figure}[ht!]
\begin{center}
\includegraphics{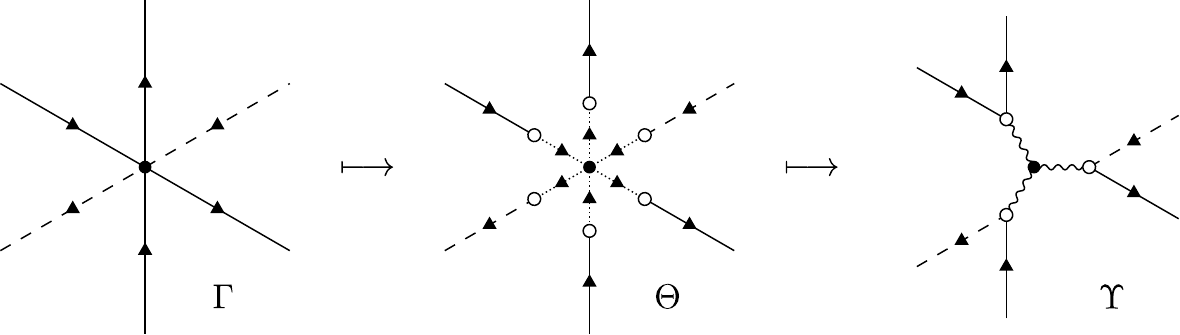}
\end{center}
\caption{A graphical overview of vertex resolution on the level of a single vertex. (We do not draw the other vertices.) In the second step
we draw only one of the possible six pairings (in the Hermitian case).
\label{figure: flower}}
\end{figure}

Note that the notion of charged vertex can be lifted from $\Delta$ to $\Gamma$ using the projection $\pi$ (see Definition \ref{definition of marked vertex} below).
The following class of vertices, called \emph{marked vertices}, plays the central role in this section. Informally, a marked vertex $i \in V_s(\Gamma)$ is a charged vertex that, in the construction of $\Gamma$ from $\gamma^p(\Delta)$, was linked to by the smallest allowed number of edges (zero if $\pi(i) \notin F$ and one if $\pi(i) \in F$).

Recall that we intend to gain an additional factor $\Phi$ from 
any charged vertex of $\Gamma$.
 However, the possibility of doing this may be destroyed if a charged
vertex was linked to in the construction of $\Gamma$ from $\gamma^p(\Delta)$. In this case we gain from the linking, as always, but we may not additionally
gain from the vertex's being charged. If the vertex was excessively linked 
(i.e.\ more than minimally required), then we have the additional gain from this
extra linking. Marked vertices are exactly those charged vertices which have been minimally linked. Thus, in order to gain from a marked vertex we cannot use the simple power counting that underlies linking, but need the more refined mechanism of vertex resolution.

\begin{definition}[Charged and Marked vertices in $\Gamma$] \label{definition of marked vertex}
The set of \emph{charged vertices} of $\Gamma$ is by definition $V_c(\Gamma) \deq \pi^{-1}(V_c(\Delta))$ (see Definition \ref{def: charged}).

The vertex $i \in V_c(\Gamma)$ is called \emph{marked} if either
\begin{enumerate}
\item
$\pi(i) \notin F$ and $\deg_\Gamma(i) = \deg_\Delta(\pi(i))$, or
\item
$\pi(i) \in F$ and $\deg_\Gamma(i) = \deg_\Delta(\pi(i)) + 2$.
\end{enumerate}
We denote the set of marked vertices by $V_m(\Gamma) \subset V_c(\Gamma)$.
\end{definition}
Note that (i) corresponds to the case where $i$ was not linked to at all, and (ii) to the case where $i$ was linked to exactly once. The following lemma gives a lower bound on the number of edges of $\Gamma$. Informally, it states that if $i$ is not marked but $\pi(i)$ is charged then $i$ was linked to at least once more than the minimum required amount (zero if $\pi(i) \notin F$ and one if $\pi(i) \in F$).
\begin{lemma} \label{lemma: lower bound on edges of Gamma}
We have the bound
\begin{equation} \label{lower bound on edges of Gamma}
\abs{E(\Gamma)} \;\geq\; p (\deg(\Delta) + \abs{F}) + \abs{V_c(\Gamma)} - \abs{V_m(\Gamma)}\,.
\end{equation}
\end{lemma}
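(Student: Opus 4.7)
The plan is to reduce the inequality to two combinatorial bounds: one counting the edges produced during the construction of $\Gamma$ from $\gamma^p(\Delta)$, and one counting the linkings performed at each summation vertex. For $i \in V_s(\Gamma)$ let $L(i)$ denote the number of times an edge was linked to $i$ during the recursion of Lemma \ref{lemma: weak moment estimate} (now carried out in all of $\f a$, not just $\f a_F$), and set $L \deq \sum_i L(i)$. First I would establish
\begin{equation*}
\abs{E(\Gamma)} \;\geq\; p\deg(\Delta) + L \,,
\end{equation*}
which follows because $\gamma^p(\Delta)$ has exactly $p\deg(\Delta)$ edges, and because under Simplification {\bf (S1)}---which guarantees that the linking index $d$ differs from $u$ and $v$ in every application of \eqref{family A fo cal G 1} or \eqref{family A fo cal G 2}---choosing the ``second'' summand in those identities strictly increases the number of $\cal G$-edges encoded by the resulting monomial by at least one.

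The key combinatorial step is then to show that
\begin{equation*}
L \;\geq\; p\abs{F} + \abs{V_c(\Gamma)} - \abs{V_m(\Gamma)}\,.
\end{equation*}
The basic observation is that each linking at a vertex $i$ splits an incident edge into two new edges both incident to $i$, so degrees evolve as $\deg_\Gamma(i) = \deg_\Delta(\pi(i)) + 2\, L(i)$ for every $i \in V_s(\Gamma)$. Consulting Definition \ref{definition of marked vertex}, I would then read off three pointwise lower bounds on $L(i)$. First, for every $i$ with $\pi(i) \in F$, the identity $\E[Q_{a_i}(X)\,Y] = 0$ whenever $Y$ is independent of $a_i$, already exploited in the proof of Lemma \ref{lemma: weak moment estimate}, forces $L(i) \geq 1$. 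Second, if $\pi(i) \notin F$ and $i \in V_c(\Gamma) \setminus V_m(\Gamma)$, then the defining equality $\deg_\Gamma(i) = \deg_\Delta(\pi(i))$ fails, so $L(i) \geq 1$. Third, if $\pi(i) \in F$ and $i \in V_c(\Gamma) \setminus V_m(\Gamma)$, then the defining equality $\deg_\Gamma(i) = \deg_\Delta(\pi(i)) + 2$ fails, which together with the first case forces $L(i) \geq 2$. Summing these bounds over all $i$ and using $V_m(\Gamma) \subset V_c(\Gamma)$ yields the displayed inequality: the contribution $p\abs{F} = \abs{\pi^{-1}(F)}$ comes from the $Q$-constraint, while each vertex of $V_c(\Gamma) \setminus V_m(\Gamma)$ contributes one additional linking above the minimum dictated by its membership (or not) in $\pi^{-1}(F)$.

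The only subtle point in the argument is verifying the first inequality when a linking is performed on a diagonal denominator entry $1/G_{uu}^{(T)}$ rather than on an off-diagonal $\cal G$-entry: in that case the second identity of \eqref{family A fo cal G 2} in fact produces two new off-diagonal $\cal G$-entries in the numerator, so the edge count increases by at least one and the bookkeeping remains intact. Once this is checked, combining the two inequalities proves \eqref{lower bound on edges of Gamma}.
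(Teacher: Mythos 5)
Your proof follows the same overall strategy as the paper's: partition $V_s(\Gamma)$ by whether $\pi(i) \in F$ and whether $i \in V_c(\Gamma) \setminus V_m(\Gamma)$, read off a lower bound on the number of linkings performed at each vertex, and convert linkings into additional edges. The numerology is also the same: you get $p\abs{F}$ from the $Q$-constraint and $\abs{V_c(\Gamma)} - \abs{V_m(\Gamma)}$ from the charged-but-unmarked vertices, after which the identity $V_m(\Gamma) \subset V_c(\Gamma)$ closes the count. So in terms of the underlying idea your proposal is essentially the paper's argument.

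There is, however, a technical gap in the exact form of your degree bookkeeping that you should be aware of (the paper's own phrasing ``\emph{$i$ was linked to at least twice}'' has the same informal character, so this is not a criticism of the approach per se, but of how the intermediate step is stated). You assert the \emph{equality} $\deg_\Gamma(i) = \deg_\Delta(\pi(i)) + 2\,L(i)$, and from it deduce $L(i) \geq 2$ whenever $\pi(i) \in F$ and $\deg_\Gamma(i) \neq \deg_\Delta(\pi(i)) + 2$. This equality is not correct in general. Linking to $i$ does add $2$ to $\deg_\Gamma(i)$, but the degree of $i$ can also increase by $2$ when a denominator loop $1/G_{a_ia_i}^{(T)}$ at $i$ — created by an \emph{earlier} linking to $i$ — is subsequently expanded using the second identity of \eqref{family A fo cal G 2} with some $d$: this inserts $\cal G_{a_i d}^{(T)}\cal G_{d a_i}^{(T)}$, both entries incident to $i$, yet per Definition \ref{def: linking} the linking operation is attributed to $d$, not $i$. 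Hence the correct relation is only $\deg_\Gamma(i) \geq \deg_\Delta(\pi(i)) + 2L(i)$, and the conclusion $L(i) \geq 2$ can fail: one can construct $\Gamma$ in which a vertex $i \in \pi^{-1}(F) \cap V_c(\Gamma) \cap V_m(\Gamma)^c$ has $L(i) = 1$ and $\deg_\Gamma(i) = \deg_\Delta(\pi(i)) + 4$. In such a graph $L$ itself may fall strictly below $p\abs{F} + \abs{V_c(\Gamma)} - \abs{V_m(\Gamma)}$, so the two displayed inequalities you propose do not chain together to give \eqref{lower bound on edges of Gamma}, even though the inequality is still true.

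The clean way to close the gap is to bound the degree excess directly rather than the linking count $L$. External vertices never gain degree during linking, so the handshake lemma gives
\begin{equation*}
\abs{E(\Gamma)} - p\deg(\Delta) \;=\; \frac{1}{2} \sum_{i \in V_s(\Gamma)} \bigl[\deg_\Gamma(i) - \deg_\Delta(\pi(i))\bigr]\,.
\end{equation*}
The degree lower bounds $\deg_\Gamma(i) - \deg_\Delta(\pi(i)) \geq 4$ for $i \in \pi^{-1}(F) \cap V_c(\Gamma) \cap V_m(\Gamma)^c$, and $\geq 2$ for $i$ in the other two nontrivial cases, follow directly from Definitions \ref{def: charged} and \ref{definition of marked vertex} together with the $Q$-constraint (which forces $\deg_\Gamma(i) \geq \deg_\Delta(\pi(i)) + 2$ whenever $\pi(i) \in F$), and are insensitive to \emph{how} the degree excess was accumulated — direct linking or loop expansion. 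Substituting these into the identity and using $V_m(\Gamma) \subset V_c(\Gamma) = \pi^{-1}(V_c(\Delta))$ yields \eqref{lower bound on edges of Gamma} rigorously.
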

\begin{proof}
By definition of $V_c(\Gamma)$ and $V_m(\Gamma)$, we find that $i \in V_s(\Gamma)$ was linked to at least once if
\begin{equation*}
i \;\in\; \pi^{-1}(F^c) \cap V_c(\Gamma) \cap V_m(\Gamma)^c \qquad \text{or} \qquad
i \;\in\; \pi^{-1}(F) \cap \pb{V_c(\Gamma) \cap V_m(\Gamma)^c}^c\,,
\end{equation*}
and $i$ was linked to at least twice if
\begin{equation*}
i \;\in\; \pi^{-1}(F) \cap V_c(\Gamma) \cap V_m(\Gamma)^c\,.
\end{equation*}
Since each linking adds an edge to the $p \deg(\Delta)$ edges of $\gamma^p(\Delta)$, we find
\begin{align*}
\abs{E(\Gamma)} &\;\geq\; p \deg(\Delta) + \absB{\pi^{-1}(F^c) \cap V_c(\Gamma) \cap V_m(\Gamma)^c} + \absB{\pi^{-1}(F) \cap \pb{V_c(\Gamma) \cap V_m(\Gamma)^c}^c}
\\
&\mspace{40mu}+ 2 \absB{\pi^{-1}(F) \cap V_c(\Gamma) \cap V_m(\Gamma)^c}
\\
&\;=\; p \deg(\Delta) + p \abs{F} + p \abs{V_c(\Delta)} - \abs{V_m(\Gamma)}\,,
\end{align*}
where in the last step we used that $\pi$ is $p$-to-one and $V_m(\Gamma) \subset V_c(\Gamma)$.
\end{proof}

The goal of this section is to gain an extra factor $\Phi$ from each marked vertex of $\Gamma$ using vertex resolution. Provided we can do this, the proof of Theorem \ref{theorem: Z lemma} will be complete. This can be informally understood as follows. In order to get the estimate \eqref{main Z lemma estimate}, we need a bound of size $\Psi^{p (\deg(\Delta) + \abs{F})} \Phi^{p\abs{V_c(\Delta)}}$. Each vertex $i \in V_s(\Gamma)$ contributes factors $\Psi$ and $\Phi$ (in addition to the trivial $p \deg(\Delta)$) to the estimate as follows.
\begin{enumerate}
\item
$\pi(i) \notin F$ and $i \notin V_c(\Gamma)$. In this case $i$ yields no factor $\Psi$ or $\Phi$.
\item
$\pi(i) \notin F$ and $i \in V_c(\Gamma)$. If $\deg_\Gamma(i) = \deg_\Delta(\pi(i))$ then $i$ is marked and will yield a factor $\Phi$ by vertex resolution. If $\deg_\Gamma(i) > \deg_\Delta(\pi(i))$ then $i$ is not marked but carries an extra factor $\Psi$ since it has been linked to more times than needed. (Thus, $\Gamma$ has at least one extra edge, corresponding to an off-diagonal entry $G_{uv} \prec \Psi$, incident to $i$).
\item
$\pi(i) \in F$ and $i \notin V_c(\Gamma)$. In this case $i$ has been linked to at least once and is consequently incident to at least one extra edge. This yields a factor $\Psi$.
\item
$\pi(i) \in F$ and $i \in V_c(\Gamma)$. As in (iii), $i$ has been linked to at least once and hence yields a factor $\Psi$. In addition, $i$ yields a second factor $\Phi$ as follows. If $\deg_\Gamma(i) = \deg_\Delta(\pi(i)) + 2$ then $i$ is marked and will yield an extra factor $\Phi$ by vertex resolution. If $\deg_\Gamma(i) > \deg_\Delta(\pi(i)) + 2$ then $i$ has been linked to at least twice, hence yielding a second factor $\Psi$. In either case the vertex $i$ generates a factor $\Psi \Phi$.
\end{enumerate}
Thus, from each case (i) -- (iv) we gain $\ell_\Psi$ factors $\Psi$ and $\ell_\Phi$ factors $\Phi$ in addition to the trivially available $p \deg(\Delta)$ factors of $\Psi$, where the values of $\ell_\Psi$ and $\ell_\Phi$ is as follows: (i) $\ell_\Psi = \ell_\Phi = 0$, (ii) $\ell_\Psi = 0$, $\ell_\Phi = 1$, (iii) $\ell_\Psi = 1$, $\ell_\Phi = 0$, (iv) $\ell_\Psi = \ell_\Phi = 1$. From this the bound $\Psi^{p (\deg(\Delta) + \abs{F})}\Phi^{p\abs{V_c(\Delta)}}$ follows immediately.

Before moving on to the main argument of this section, we outline how a marked vertex yields an additional factor $\Phi$. We claim that Definitions \ref{def: charged} and \ref{definition of marked vertex} imply that
\begin{equation} \label{condition for marked vertices}
i \in V_s(\Gamma) \text{ is marked} \qquad \Longrightarrow \qquad \nu_i(\Gamma) \;\neq\; \nu_i^*(\Gamma)
\end{equation}
(see Definition \ref{def: incidence indices}). To see this, let $i$ be an arbitrary marked vertex. If $\pi(i) \notin F$ then by Definition \ref{definition of marked vertex} $i$ has not been linked to, and hence $\nu_i^\xi(\Gamma) = \nu_{\pi(i)}^\xi(\Delta)$ for $\xi = 1, *$. Therefore \eqref{condition for marked vertices} follows from Definition \ref{def: charged}. On the other hand, if $\pi(i) \in F$ then by Definition \ref{definition of marked vertex} $i$ has been linked to once, and either (a) $\nu_i(\Gamma) = \nu_{\pi(i)}(\Delta)$ and $\nu^*_i(\Gamma) = \nu^*_{\pi(i)}(\Delta) + 2$ or (b) $\nu_i(\Gamma) = \nu_{\pi(i)}(\Delta) + 2$ and $\nu^*_i(\Gamma) = \nu^*_{\pi(i)}(\Delta)$. Either way, Definition \ref{def: charged} yields \eqref{condition for marked vertices}.

Roughly, vertex resolution splits each summation vertex of degree $2d$ (we assume for simplicity that the vertex is of even degree) into $d$ fresh summation vertices of degree two. If the right-hand side of \eqref{condition for marked vertices} holds (as it does if we are resolving a marked vertex), at least one of the fresh summation vertices will be a chain vertex (i.e.\ both of its incident edges will have the same colour). The desired gain of a factor $\Phi$ will then come from an application of Proposition \ref{lemma: weak Z lemma}.

\subsection{General graphical representation} \label{sect: graphs}
Throughout Sections \ref{section: Z5} and \ref{section: simplifications}, we shall fix $\f a$ and apply three algebraic operations to the monomial encoded by $\Gamma$:
\begin{enumerate}
\item
Family A identities,
\item
Family B identities,
\item
partial expectation in $\f a$.
\end{enumerate}
In order to keep track of the structure of the ensuing terms, we make heavy use of graphs. For pedagogical reasons, we shall develop the algebraic and graphical languages in parallel. Each algebraic expression (a monomial in the matrix entries of $G$, $\cal G$, $H$, and $S$) is represented by a graph. Application of one of the three elementary algebraic identities listed above can, as before, be described by a elementary transformation on graphs.
Although the entire argument could be stated in terms of graphs alone, this would rather obscure the underlying mechanism, which always corresponds to applying one of the three algebraic operations listed above. Instead, we introduce each graph operation when it naturally arises in our argument. In order to obtain a set of graphs that is closed under all of the operations we shall need, we extend our set of graphs according to the following definition.

\begin{definition}[General graph] \label{def: general graphs}
By a \emph{graph} we mean a quintuple $(V_f, V_s, V_e, E, \xi)$ with the following properties. The set $E$ is a set of edges on the vertex set $V \deq V_f \sqcup V_s \sqcup V_e$. Multiple edges as well as loops are allowed. The colouring $\xi$ is a map
\begin{equation*}
\xi \col E \; \longrightarrow\; \hb{\text{solid},\, \text{dashed}, \,\text{dotted},\, \text{wiggly}}\,.
\end{equation*}
As in Definition \ref{definition: Delta}, we sometimes use the alternative notations $\text{solid} \equiv 1$ and $\text{dashed} \equiv *$.

An edge that is solid or dashed is called a \emph{resolvent edge}. Dotted and resolvent edges are directed, while wiggly edges are undirected. The vertices in $V_s$ are called the \emph{original summation vertices}, in $V_f$ the \emph{fresh summation vertices}, and in $V_e$ the \emph{external vertices}. Vertices in $V_f$ are drawn using white dots and vertices in $V_s \sqcup V_e$ using black dots.
\end{definition}
Figure \ref{figure: h and s} contains the dictionary of the colour-code: a solid edge encodes an entry of $G$, a dashed edge an entry of $G^*$, a dotted edge an entry of $H$, and a wiggly edge an entry of $S$. More precisely, each resolvent edge encodes a \emph{maximally expanded} (in ${\f a}$) resolvent entry (see Definition \ref{definition: maximally expanded}), and each dotted edge an \emph{$\f a$-admissible entry of $H$}, which is the subject of the following definition.
\begin{definition} \label{admissible h}
The entry $h_{uv}$ is an \emph{$\f a$-admissible entry of $H$} if $u \in \f a$ or $v \in \f a$.
\end{definition}
The arguments below consist of a series of operations on the set of graphs from Definition \ref{def: general graphs}. To be completely precise, below we shall in fact adorn the graphs from Definition \ref{def: general graphs} with \emph{decorations}: resolvent loops may be decorated with a black or white diamond (see Figure \ref{figure: simple graph for variance}), wiggly edges with an arbitrary number of crossing strokes, and original summation vertices with an arbitrary number of rings. (The latter two concepts are defined above Definition \ref{def: lumping} and in the beginning of the proof of Lemma \ref{lm:stage2} respectively.)

To each vertex $i \in V(\Gamma)$ of a graph $\Gamma$ we assign an index $u_i$. We shall consistently use the splitting
\begin{equation*}
\f u \;=\; (u_i)_{i \in V(\Gamma)} \;=\; (\f x, \f a, \f \mu)\,, \qquad \f x \;=\; (x_i)_{i \in V_f(\Gamma)} \,, \qquad \f a \;=\; (a_i)_{i \in V_s(\Gamma)}\,, \qquad \f \mu \;=\; (\mu_i)_{i \in V_e(\Gamma)}\,.
\end{equation*}
We say that the index $u_i$ is \emph{associated with} the vertex $i$, and vice versa.
We introduce the notation 
\begin{equation} \label{Max}
\cal A(\Gamma) \;\equiv\; \cal A_{\f a, \f x}(\Gamma)
\end{equation}
for the monomial (in the entries of $G^{(T)}$, $\cal G^{(T)}$, $H$, and $S$ where $T \subset \f a$) encoded by the graph $\Gamma$. Note that $\cal A(\Gamma)$ has an explicit formula, analogous to \eqref{definition of Z} except needing much heavier notation. As we shall not need it, we shall not give it. (Note also that our graphs do not keep track of any factors of $Q$, as we shall not need this information.)

\subsection{Generation of the fresh summation vertices} \label{sect: generation of fresh vertices}
Next, we define the vertex resolution operation precisely. Our starting point is a fixed $\Gamma \in \fra G_F^p(\Delta)$.
In order to streamline the argument, we at first make the following simplifying assumption on $\Delta$, which is removed in Section \ref{section: simplifications}.
\begin{itemize}
\item[{\bf (S2)}]
There are no diagonal entries $\cal G_{aa} = G_{aa} - m$ in $\cal Z(\Delta)$. (I.e.\ $\Delta$ has no loops.)
\end{itemize}
The operation of vertex resolution consists of two main steps: the \emph{generation} and \emph{lumping} of fresh summation vertices. The idea behind the first step -- the generation of fresh summation vertices -- is to resolve, using the Family B identities \eqref{res exp 2b} and \eqref{res exp 2b iterated}, each (already maximally expanded) off-diagonal resolvent entry $G_{uv}^{(\f a \setminus \{u,v\})}$, with $u \neq v$, in any summation index from the set $\{u,v\}$.
(The word ``resolve'' here refers to explicitly identifying the dependence on all matrix entries $h_{ij}$ with $i,j \in \f a$ so that partial expectation in these variables can be taken. After taking the partial expectation, we shall get expressions that can again be represented by admissible graphs.) More precisely, we write
\begin{equation} \label{identity for resolution}
G^{(\f a \setminus \{u,v\})}_{uv} \;=\;
\begin{cases}
- G_{uu}^{(\f a \setminus \{u\})} \sum_x^{(\f a)} h_{u x} G_{x v}^{(\f a)} & \text{($u$ summation and $v$ external)}\,,
\\
- G_{vv}^{(\f a \setminus \{v\})} \sum_x^{(\f a)} G_{u x}^{(\f a)} h_{xv} & \text{($u$ external and $v$ summation)}\,,
\\
G_{uu}^{(\f a \setminus \{u,v\})} G_{vv}^{(\f a \setminus \{v\})} \pb{-h_{uv} + \sum_{x,y}^{(\f a)} h_{ux} G_{xy}^{(\f a)} h_{xv}} & \text{($u$ and $v$ summation)}\,.
\end{cases}
\end{equation}
(As stated after Definition \ref{definition: Delta}, we exclude the trivial case where both $u$ and $v$ are external indices.)
The proof of \eqref{identity for resolution} is a straightforward consequence of the identities \eqref{res exp 2b} and \eqref{res exp 2b iterated}, and the fact that $u \neq v$ by definition of $X_F^w(\Delta)$. For example, if $a$ and $b$ are summation indices and $\mu$ and $\nu$ are external indices, we write
\begin{multline} \label{resolution example}
G_{\mu a}^{(b)} G_{ab} G_{b \mu}^{(a)*} G_{a \nu}^{(b)} G_{\nu a}^{(b)*}
\\
=\; G_{aa}^{(b)} G_{aa} G_{bb}^{(a)} G_{bb}^{(a) *} G_{aa}^{(b)} G_{aa}^{(b)*} \sum_{x,y,z,u,v,w}^{(ab)} G_{\mu x}^{(ab)} h_{xa} h_{a y} G^{(ab)}_{yz} h_{zb} h_{b u} G_{u \mu}^{(ab)*} h_{a v} G^{(ab)}_{v \nu} G_{\nu w}^{(ab)*} h_{wa}
\\
- G_{aa}^{(b)} G_{aa} G_{bb}^{(a)} G_{bb}^{(a) *} G_{aa}^{(b)} G_{aa}^{(b)*} \sum_{x,u,v,w}^{(ab)} G_{\mu x}^{(ab)} h_{xa} h_{ab} h_{b u} G_{u \mu}^{(ab)*} h_{a v} G^{(ab)}_{v \nu} G_{\nu w}^{(ab)*} h_{wa}\,.
\end{multline}
Notice that along this procedure we may generate non-maximally expanded diagonal terms, (e.g.\ $G_{aa}$ above), but off-diagonal
terms always have upper indices $\f a$. Moreover, all entries of $H$ on the right-hand side of \eqref{resolution example} are $\f a$-admissible.

At this point we make the following further simplification, which leaves the essence of the argument unchanged but removes some technicalities.
\begin{itemize}
\item[{\bf (S3)}]
We replace any diagonal term $G_{aa}^{(T)}$ with $m$ and any diagonal term $G_{aa}^{(T)*}$ with $\bar m$. (Recall that $G_{aa}^{(T)} \approx m$ in the sense that $G_{aa}^{(T)} - m \prec \Psi$ by definition of $\Lambda$.) This replacement is done in two places: in $\cal A(\Gamma)$ and in the identities \eqref{res exp 2b} and \eqref{res exp 2b iterated} which underlie the algebra of vertex resolution.
\end{itemize}
Again, Simplification {\bf (S3)} is removed in Section \ref{section: simplifications}. Thus, under Simplification {\bf (S3)}, we neglect all diagonal terms in \eqref{resolution example}. (More precisely, we replace each one of them with $m$ or $\bar m$; the resulting powers of $m$ and $\bar m$ are irrelevant for estimates by \eqref{m is bounded}.)

The use of graphs greatly simplifies the analysis of complicated expressions like \eqref{resolution example}. The identities \eqref{identity for resolution} all have obvious graphical representations. In Figure \ref{figure: vertex resolution} we give a graphical depiction of \eqref{resolution example}. (Recall the conventions introduced in Figure \ref{figure: h and s}.)
In the typical case, the summation vertex of degree
four associated with $a$ gives rise to four fresh summation vertices, associated with $x, y, v, w$; likewise the summation vertex of degree two associated with $b$ creates two fresh vertices associated with $u$ and $z$ (first term in the right side of \eqref{resolution example}).
Due to presence of the term $h_{uv}$  in \eqref{identity for resolution},
sometimes two summation indices are directly connected with a dotted line at the expense
of one fewer fresh summation vertex adjacent to each of these two indices. This results in the second term on the right-hand side of \eqref{resolution example}, which contains a factor $h_{ab}$ (note that $G_{ab}$ plays the role
of  $G_{uv}^{\f a \setminus \{ u,v\}}$ from \eqref{identity for resolution}).
\begin{figure}[ht!]
\begin{center}
\includegraphics{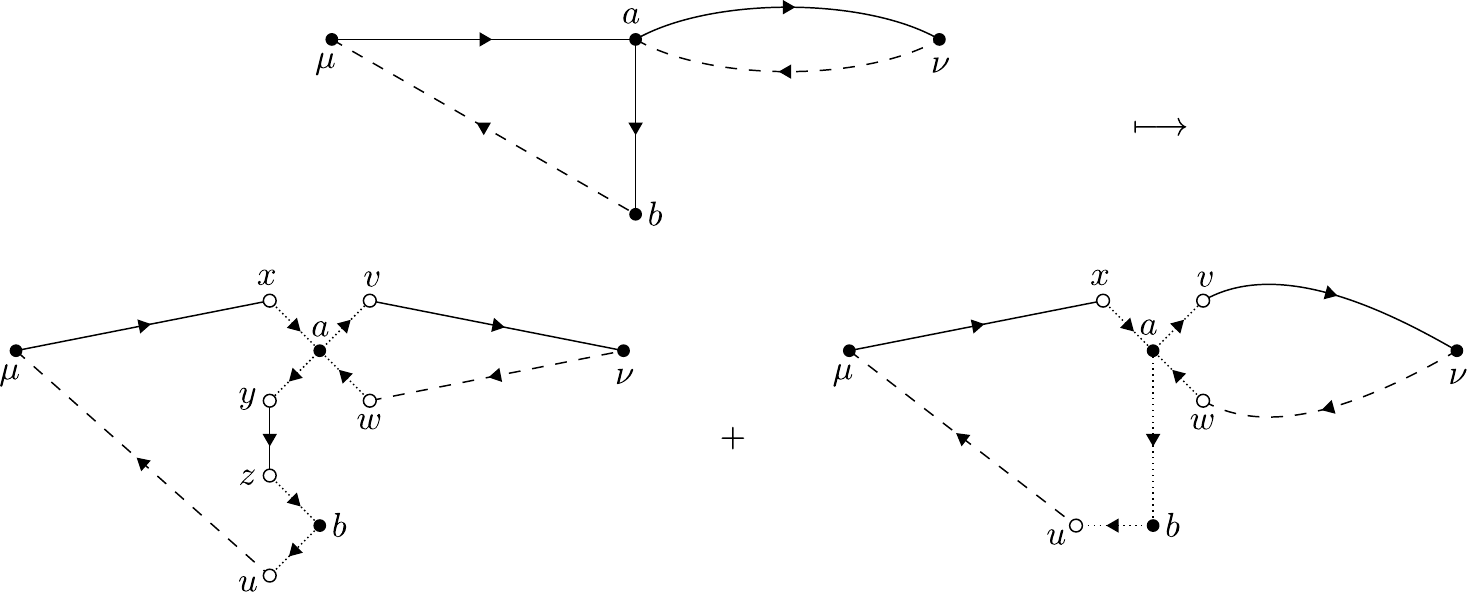}
\end{center}
\caption{The vertex resolution from \eqref{resolution example}. The graph $\Gamma$ is represented on the top and both graphs of $\wt {\fra R}(\Gamma)$ are represented on the bottom. In accordance with Simplification {\bf (S3)} we do not draw the diagonal entries of $G$. \label{figure: vertex resolution}}
\end{figure}

The generation of fresh summation vertices for a general $\Gamma \in \fra G_F^p(\Delta)$ is no different from the above example. Applying the graphical rules associated with \eqref{identity for resolution} to each edge of $\Gamma$, we get a finite family of graphs which we denote by $\wt {\fra R}(\Gamma)$. In accordance with Simplifications {\bf (S2)} and {\bf (S3)}, in this section we drop all diagonal terms, and hence all loops from the graphs in $\wt {\fra R}(\Gamma)$. (In Section \ref{section: simplifications} below we keep track of the loops, which will lead to the larger set $\fra R$.)

Any graph $\Theta \in \wt {\fra R}(\Gamma)$ has the following properties.

\begin{enumerate}
\item
$\Theta$ has only straight, dashed or dotted edges but no wiggly edge
(see Figure \ref{figure: h and s}).
\item
$\Theta$ has no loops or multiple edges.
\item
The sets $V_s(\Theta) = V_s(\Gamma)$ and $V_e(\Theta) = V_e(\Gamma)$ remain unchanged, as do the associated indices $\f a$ and $\f \mu$. In addition, we now have a new set of vertices, $V_f(\Theta) \neq \emptyset$, which indexes the fresh summation vertices $\f x$.
\item
Each resolvent edge of $\Theta$ encodes an entry of $G^{(\f a)}$ or $G^{(\f a)*}$ in $\cal A(\Theta)$ (in particular, no resolvent edge of $\Theta$ is incident to $V_s(\Theta)$). Each dotted edge of $\Theta$ encodes an $\f a$-admissible entry of $H$ in $\cal A(\Theta)$.
\item
For $i,j \in V_s(\Theta)$ let the symmetric function $\sigma(i,j)$ denote the number of dotted edges joining $i$ and $j$. The number of edges in $\Gamma$ and $\Theta$ is conserved in the sense that
\begin{equation} \label{edge conservation Theta}
\abs{E(\Gamma)} \;=\; \sum_{e \in E(\Theta)} \pb{\ind{\xi_e = 1} + \ind{\xi_e = *}} + \frac{1}{2}\sum_{i,j \in V_s(\Theta)} \sigma(i,j)\,.
\end{equation}
Informally: in the process $\Gamma \mapsto \Theta$ that generates fresh summation vertices, each resolvent entry either remains a resolvent entry or is replaced with an entry of $H$ with original summation vertices. This simply corresponds to the two terms in the last line of the right-hand side of \eqref{identity for resolution}.
\item
Each original summation vertex is incident only to dotted edges.
\item
Each fresh summation vertex has degree two and is incident to precisely one dotted edge.
\end{enumerate}

We remark that, by construction, the sets $\f x$ and $\f a$ are disjoint, as are the sets $\f a$ and $\f \mu$. However, $\f x$ and $\f \mu$ are in general not disjoint, and indices of $\f x$ may coincide.

\subsection{Lumping of the fresh summation vertices} \label{sec: vertex lumping}
We now take the expectation of $\cal A(\Theta)$. In fact, all that we shall need is the partial expectation in $\f a$. The key observation is that, by Property (iv) in Section \ref{sect: generation of fresh vertices}, each resolvent entry of $\cal A(\Theta)$ is independent of $\f a$ and each entry of $H$ is $\f a$-admissible. In particular, the partial expectation $\prod_{a \in \f a} P_a$ acting on $\cal A(\Theta)$ acts on the product of the entries of $H$ alone. Since this product is an explicit monomial, we can evaluate its expectation directly. If the random variables $h_{uv}$ were Gaussian, this would correspond to 
a simple Wick-pairing of the dotted edges. Pairing two dotted edges, each of them incident to a fresh summation vertex and a common original summation vertex, results in a pairing of two fresh summation vertices. Since $\f x$ and $\f a$ are distinct, a dotted edge incident to a fresh summation vertex cannot be paired with a dotted edge incident to two original summation vertices. In the non-Gaussian case, higher-order moments are also present, but they are suppressed by a combinatorial factor (i.e.\ a positive power of $M^{-1}$). Graphically, we represent the procedure of taking expectation by pairing (or in general lumping) fresh summation indices, and replace the corresponding doubled dotted line by a wiggly line. What follows is a more precise description.

We define the second step of vertex resolution -- the lumping of the fresh summation vertices. Before giving the general procedure, we complete the analysis of the example \eqref{resolution example}. From \eqref{resolution example}, assuming Simplification {\bf (S3)}, we get
\begin{align}
&\E G_{\mu a}^{(b)} G_{ab} G_{b \mu}^{(a)*} G_{a \nu}^{(b)} G_{\nu a}^{(b)*}
\notag \\
&\mspace{30mu}\overset{\mathrm{{\bf (S3)}}}{=}\; m^4 \bar m^2 \sum_{x,y,z,u,v,w}^{(ab)} \E G_{\mu x}^{(ab)} G^{(ab)}_{yz} G_{u \mu}^{(ab)*} G^{(ab)}_{v \nu} G_{\nu w}^{(ab)*} P_a \pb{h_{xa} h_{a y} h_{a v} h_{wa} } P_b (h_{zb} h_{b u}) 
\notag \\ \label{resolve step 1}
&\mspace{50mu}-  m^4 \bar m^2  \sum_{x,u,v,w}^{(ab)} \E G_{\mu x}^{(ab)} G_{u \mu}^{(ab)*} G^{(ab)}_{v \nu} G_{\nu w}^{(ab)*} h_{a v} h_{wa} h_{xa}  P_b \pb{h_{ab} h_{b u}}\,,
\end{align}
where we used the trivial identity $\E X = \E P_a P_b X$ and the fact that $G^{(ab)}$ is independent of $a$ and $b$ (see Definition \ref{definition: P Q}).
Since $a \neq u$, the partial expectation $P_b$ on the second line of \eqref{resolve step 1} vanishes. The partial expectations on the first line of \eqref{resolve step 1} may be computed explicitly, similarly to the computation \eqref{example pairings for S}:
\begin{multline*}
P_a \pb{h_{xa} h_{a y} h_{a v} h_{wa} } P_b (h_{zb} h_{b u})
\\
=\; s_{ax} s_{av} s_{bz} \delta_{xy} \delta_{vw} \delta_{zu} + s_{ax} s_{ay} s_{bz} \delta_{xv} \delta_{yw} \delta_{zu} + \ind{x = y = v = w} s_{ax}^2 s_{bz} \delta_{zu} \E \abs{\zeta_{ax}}^4\,.
\end{multline*}
Here we assumed the condition \eqref{CH}. In the case \eqref{RS}, we get the additional term
\begin{equation*}
s_{ax} s_{ay} \delta_{xw} s_{bz} \delta_{yv} \delta_{zu}\,.
\end{equation*}
Thus we may write (in the case \eqref{CH} for simplicity)
\begin{align}
\E G_{\mu a}^{(b)} G_{ab} G_{b \mu}^{(a)*} G_{a \nu}^{(b)} G_{\nu a}^{(b)*}
&\;\overset{\mathrm{{\bf (S3)}}}{=}\; m^4 \bar m^2 \E \sum_{x,v,z}^{(ab)} s_{ax} s_{av} s_{bz} \, G_{\mu x}^{(ab)} G^{(ab)}_{xz} G_{z \mu}^{(ab)*} G^{(ab)}_{v \nu} G_{\nu v}^{(ab)*}
\notag \\
&\mspace{30mu} +
m^4 \bar m^2 \E \sum_{x,y,z}^{(ab)} s_{ax} s_{ay} s_{bz} \, G_{\mu x}^{(ab)} G^{(ab)}_{yz} G_{z \mu}^{(ab)*} G^{(ab)}_{x \nu} G_{\nu y}^{(ab)*}
\notag \\ \label{resolve step 2}
&\mspace{30mu} +
m^4 \bar m^2 \E \sum_{x,z}^{(ab)} s_{ax}^2 s_{bz} \pb{\E \abs{\zeta_{ax}}^4} \, G_{\mu x}^{(ab)} G^{(ab)}_{xz} G_{z \mu}^{(ab)*} G^{(ab)}_{z \nu} G_{\nu x}^{(ab)*}\,.
\end{align}
Note that the summations on the right-hand side are performed with respect to weights (see Definition \ref{definition of weight}).
Now it is apparent how better estimates are available for each term on the right-hand side than the term on the left-hand side. Indeed, all terms contain five off-diagonal entries of $G$. In addition, however, the first two terms on the right-hand side contain a summation index associated with a chain vertex $x$ (see Definition \ref{def: chains}) which is summed over with respect to the chain weight $s_{ax}$ (see Definition \ref{def: chain weight}). The last term is suppressed by an additional factor $s_{ax} \leq M^{-1}$. Invoking Proposition \ref{lemma: weak Z lemma} (and neglecting the upper indices $(ab)$ which are dealt with easily in the full proof below), we find that the arguments of $\E$ on the right-hand side of \eqref{resolve step 2} are all $O_\prec(\Psi^5 \Phi)$. Here the extra factor $\Phi$ was extracted from the resolution of the vertex $i$ associated with the summation variable $a$, and arose from the fact that $\nu_i(\Gamma) \neq \nu_i^*(\Gamma)$.

Again, the lumping of fresh summation vertices is best represented graphically. In order to represent terms like the last term on the right-hand side of \eqref{resolve step 2} graphically, we represent the expression $\E \abs{h_{ij}}^{2 + k}$ using a wiggly line crossed by $k$ strokes. Thus, a wiggly edge may be either \emph{uncrossed} or \emph{crossed}. We shall always use the bound
\begin{equation*}
P_i \abs{h_{ij}}^{2 + k} \;=\; s_{ij}^{1 + k/2} \E \abs{\zeta_{ij}}^{2 + k} \;\leq\; s_{ij} M^{-k/2}
\end{equation*}
in combination with crossed wiggly lines. See Figure \ref{figure: pairing example} for a graphical depiction of \eqref{resolve step 2}.
\begin{figure}[ht!]
\begin{center}
\includegraphics{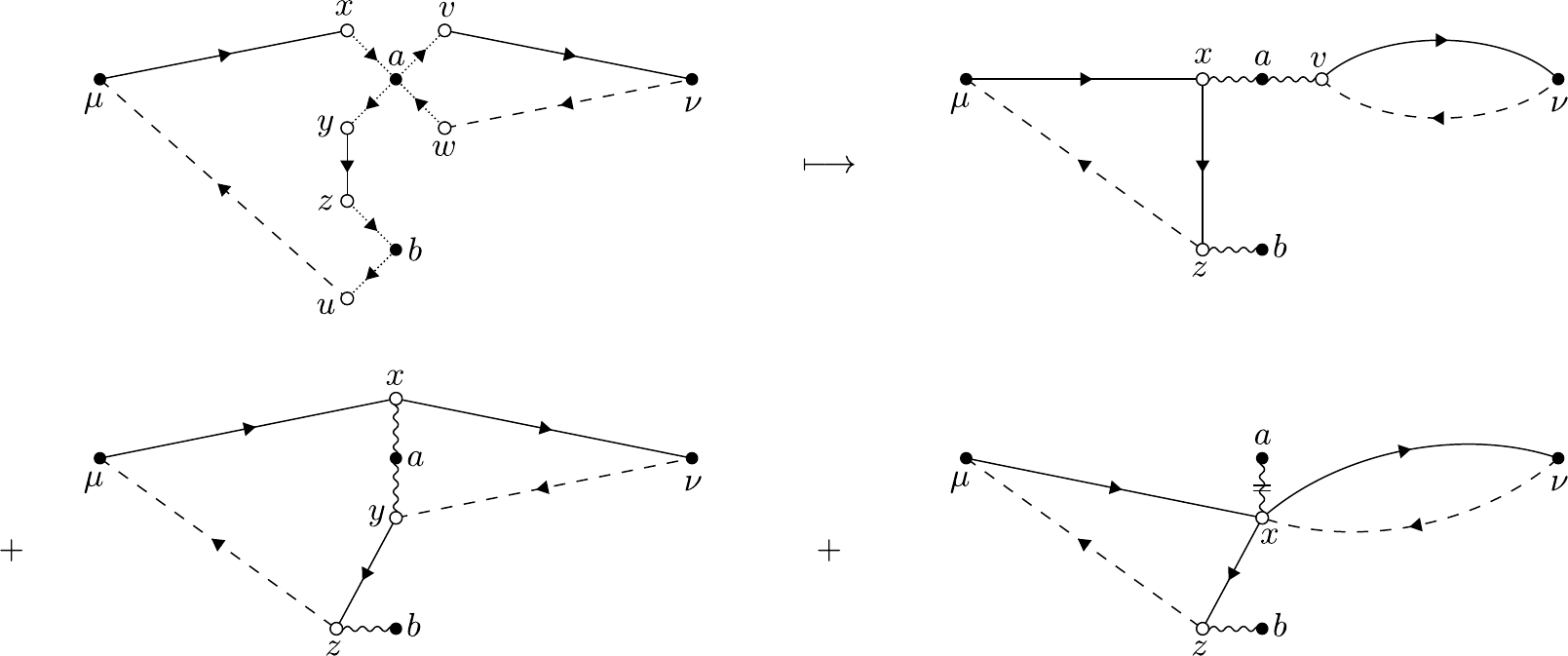}
\end{center}
\caption{The lumping of the fresh summation vertices in the example \eqref{resolution example}. The graph on the left-hand side is $\Theta$ (representing the first term on the right-hand side of \eqref{resolve step 1}), and the three graphs on the right-hand side are the elements of $\fra L(\Theta)$ (representing the three terms on the right-hand side of \eqref{resolve step 2}). \label{figure: pairing example}}
\end{figure}

The general case is similar to the example \eqref{resolve step 2}.
\begin{definition}[Lumping] \label{def: lumping}
A \emph{lumping} is a partition whose blocks, called \emph{lumps}, have size greater than or equal to two.
\end{definition}
Let $\Gamma \in \fra G_F^p(\Delta)$ and $\Theta \in \wt {\fra R}(\Gamma)$. From $\Theta$ we generate a finite family of graphs $\Upsilon$ by taking all lumpings of the white vertices (fresh summation vertices) of $\Theta$. (This lumping, or identification of vertices, arises from taking the partial expectation $\prod_{a \in \f a} P_a$ of all entries of $H$ in $\cal A(\Theta)$. Note that each fresh summation vertex of $\Theta$ must be lumped with at least another one because $h_{ij}$ and $h_{kl}$ are independent if $\{i,j\} \neq \{k,l\}$, and $P_i h_{ij} = 0$.) Thus, the result of the lumping is to merge some white vertices into lumps, where each lump consists of at least two vertices, and is again represented as a single white vertex. We denote by $\fra L(\Theta)$ the set of such graphs $\Upsilon$ obtained by lumping the fresh summation vertices of $\Theta$. Recall that all resolvent matrix entries encoded by the resolvent edges of $\Theta$ have upper indices $\f a$. Hence any factors $Q_a$ with $a \in \f a$ act trivially on them according to the identity $Q_a \pb{G^{(\f a)}_{ij} X} = G^{(\f a)}_{ij} Q_a X$ for all $a \in \f a$. Therefore the factors $Q$ only act on entries of $H$. As in the example \eqref{simple vertex resolution organized}, they simply forbid some pairings; this restriction is no importance for us, and we shall estimate the contribution of arbitrary pairings. Note that after the partial expectation in $\f a$ has been taken, no factors $Q$ remain. In particular, $\cal A(\Upsilon)$ has no factors $Q$.

Any graph $\Upsilon \in \fra L(\Theta)$ has the following properties.
\begin{enumerate}
\item
$\Upsilon$ has no loops or multiple edges.
\item
The vertex set of $\Upsilon$ is a disjoint union $V(\Upsilon) = V_f(\Upsilon) \sqcup V_s(\Upsilon) \sqcup V_e(\Upsilon)$, where $V_s(\Upsilon) = V_s(\Theta) = V_s(\Gamma)$ is the set of original summation vertices, and $V_e(\Upsilon) =  V_e(\Theta) = V_e(\Gamma)$ is the set of external summation vertices. (The set of fresh summation vertices $V_f(\Upsilon)$ is strictly smaller than $V_f(\Theta)$.)
\item
A directed edge $(i,j)$ of $\Theta$ has one of two colours: solid (encoding $G_{ij}^{(\f a)}$) or dashed (encoding $G_{ij}^{(\f a)*}$). An undirected edge is always wiggly. An uncrossed wiggly edge $\{i,j\}$ encodes $\E \abs{h_{ij}}^2 = s_{ij}$, and a wiggly edge $\{i,j\}$ crossed by $k$ strokes encodes $\E \abs{h_{ij}}^{2 + k} = s_{ij}^{1 + k/2} \E \abs{\zeta_{ij}}^{2 + k}$.
\item
Each $i \in V_f(\Upsilon)$ is adjacent (via a wiggly line) to a unique vertex $p(i) \in V_s(\Upsilon)$. Thus, the map $p \col V_f(\Upsilon) \to V_s(\Upsilon)$ is a projection which associates with each fresh summation vertex $i$ its ``parent'' original summation vertex $p(i)$.
(E.g.\ in the first graph on the right-hand side of Figure \ref{figure: pairing example} we have $p(x)=p(v)=a$ and $p(z)=b$.)
\item
For each $i,j \in V_s(\Upsilon)$, we have $\sigma(i,j) \in \{0,2,3,4,\dots\}$. (Recall that $\sigma(i,j)$ is the number of dotted lines in the graph $\Theta$
between vertices $i,j \in V_s(\Upsilon)=V_s(\Theta)$.) If $\sigma(i,j) \geq 2$ then $i$ and $j$ are connected by a wiggly line crossed by $\sigma(i,j) - 2$ strokes. Moreover, the number of edges is conserved in the sense that
\begin{equation} \label{edge conservation Upsilon}
\abs{E(\Gamma)} \;=\; \sum_{e \in E(\Upsilon)} \pb{\ind{\xi_e = 1} + \ind{\xi_e = *}} + \frac{1}{2}\sum_{i,j \in V_s(\Upsilon)} \sigma(i,j)\,,
\end{equation}
as follows from \eqref{edge conservation Theta}.
\end{enumerate}
See Figure \ref{figure: nonzero sigma} for an illustration of (v).

\begin{figure}[ht!]
\begin{center}
\includegraphics{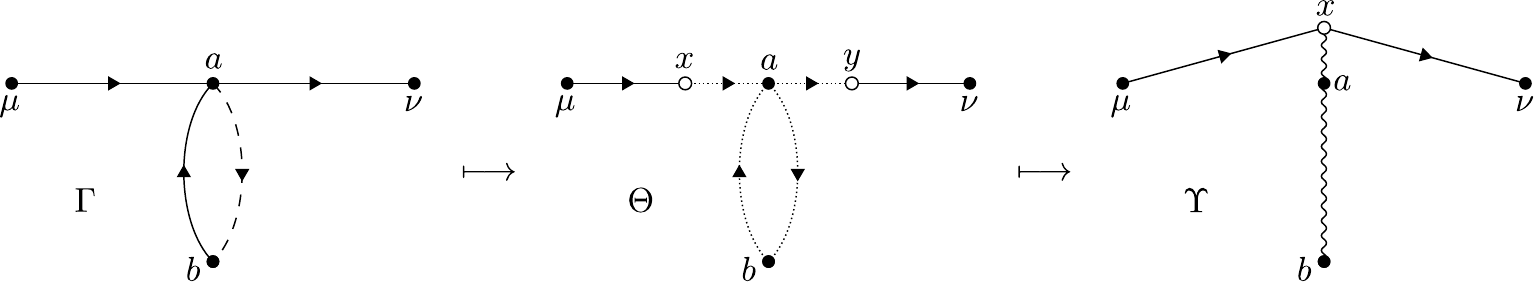}
\end{center}
\caption{The resolution process $\Gamma \mapsto \Theta \mapsto \Upsilon$, where in the first step we chose a $\Theta$ satisfying $\sigma(i,j) = 2$ where $i$ and $j$ are the original summation vertices associated with $a$ and $b$ respectively. This figure illustrates the property (v) above, as well as the conservation of the number of edges from \eqref{edge conservation Theta} and \eqref{edge conservation Upsilon}.
\label{figure: nonzero sigma}}
\end{figure}

\subsection{Summing over the fresh summation indices and completion of the proof} \label{sec: completion of proof under all S}
The complete process of vertex resolution may be summarized as
\begin{equation*}
\Gamma \in \fra G_F^p(\Delta) \quad \longmapsto \quad \Theta \in \wt {\fra R}(\Gamma) \quad \longmapsto \quad \Upsilon \in \fra L(\Theta)\,.
\end{equation*}

Here the first step represents explicitly resolving all maximally expanded entries of
$G$ (encoded by resolvent edges of $\Gamma$) in $\f a$-admissible entries of $H$.
The second step represents taking partial expectation in all these $h$-variables.

At this point, we introduce a further, and final, simplification that allows us to postpone some needless technicalities to Section \ref{section: simplifications}.
\begin{itemize}
\item[{\bf (S4)}]
The sets $\f x = (x_i)_{i \in V_f(\Upsilon)}$ and $\f \mu = (\mu_i)_{i \in V_e(\Upsilon)}$ are disjoint, and all indices of $\f x$ are distinct.
\end{itemize}
Thus, Simplification {\bf (S4)} is in the same spirit as Simplification {\bf (S1)}. We assume Simplification {\bf (S4)} throughout Section \ref{sec: completion of proof under all S}.

Let $\Gamma \in \fra G_F^p(\Delta)$, $\Theta \in \wt {\fra R}(\Gamma)$, and $\Upsilon \in \cal L(\Theta)$.
Choose and fix a marked vertex $i \in V_m(\Gamma)$ (see Definition \ref{definition of marked vertex}). In order to gain an extra factor $\Phi$ from $i$, we consider three cases, (a), (b), and (c).

First we explain them informally.  Case (a) is the typical situation. Since $i$ is marked,
it is a charged vertex in $\Delta$, i.e.\ after having been minimally linked to,
the number of solid and dashed edges adjacent to it are different.  In case (a) we
show that this property is inherited by at least one of the fresh summation vertices
whose parent is $i$. This fact is fairly clear since neither vertex generation
nor lumping alters the number of solid or dashed edges. This will imply
that one fresh summation vertex generated by $i$ is a chain vertex in $\Upsilon$;
this in turn will give the extra factor $\Phi$ associated with $i \in V_m(\Gamma)$.
The other two cases represent exceptional cases. Case (b) deals with a higher-order lumping (i.e.\ a lumping which has a lump of size greater than two). As indicated above, this results in a combinatorial gain expressed in terms of powers of $M^{-1}$. Such a factor is depicted graphically using a crossed wiggly line. Finally, case (c) deals with the consequence of the
factor $h_{uv}$ on the last line of \eqref{identity for resolution}, i.e.\ when a dotted edge joins two original summation vertices. We note that choosing the term $h_{uv}$ is the only way to change the number of solid or dashed edges (off-diagonal entries of $G$) in the process of vertex resolution.
Since dotted edges must be paired, we find that at least two parallel solid or
dashed edges must be replaced with a dotted edge; this corresponds to
replacing two off-diagonal factors $G_{uv}$ with $\abs{h_{uv}}^2$. After expectation,
this means trading in a factor $\Psi^2$ for $M^{-1}$. Since $\Psi \Phi \geq M^{-1/2}$, the factor $M^{-1}$ may be estimated by $\Psi^2 \Phi^2$. Out of this, $\Psi^2$ is used to compensate for the loss of $\Psi^2$ mentioned above (losing two off-diagonal entries of $G$), and the remaining $\Phi^2$ provides us with the gains of $\Phi$ associated with each of the two vertices incident to the dotted edge. See Figure \ref{figure: cases abc} for a graphical depiction of the cases (a), (b), and (c). Below we formalize these ideas.
\begin{figure}[ht!]
\begin{center}
\includegraphics{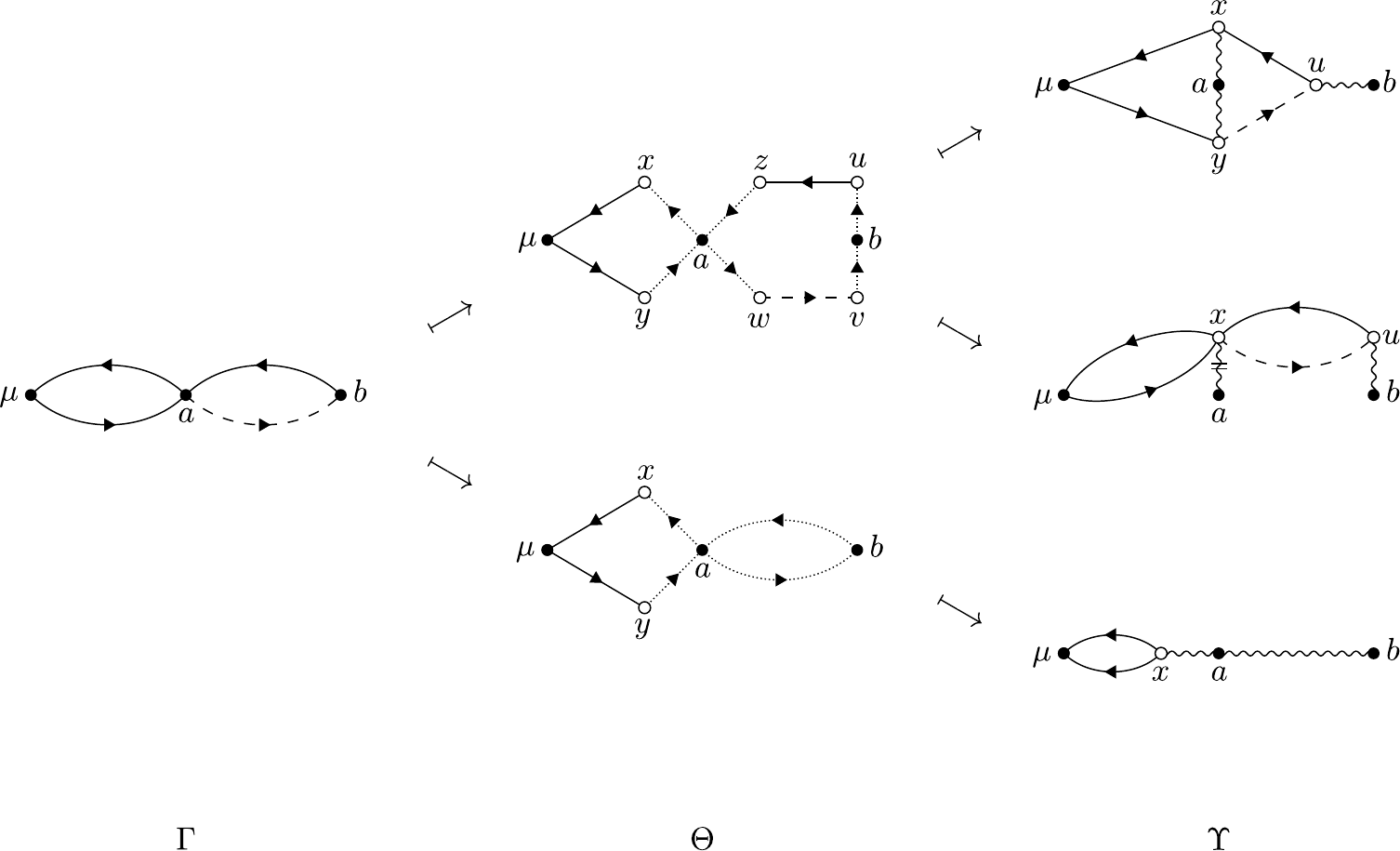}
\end{center}
\caption{A simple example of vertex resolution giving rise to the three cases (a), (b), and (c) when resolving the vertex associated with $a$. From top to bottom in the right-hand column: cases (a), (b), and (c). In the top graph, corresponding to case (a), $x$ is a chain vertex (within the chain $u\to x\to\mu$)
which is a remnant of the path $b\to a\to\mu$ consisting of solid edges in the graph $\Gamma$. In the middle graph, corresponding to case (b), the crossed wiggly line expresses that all four fresh summation indices arising from the resolution of $a$ coincide, and the four associated dotted edges of $\Theta$ were collapsed into one of $\Upsilon$.  Finally, in the bottom figure, corresponding to case (c), the solid and dashed edges between the vertices associated with $a$ and $b$ in $\Gamma$ give rise to a single wiggly line 
according to $s_{ab}=\E \abs{h_{ab}}^2$.
\label{figure: cases abc}}
\end{figure}

Recall the definitions of $\nu_i$ and $\nu_i^*$ for an arbitrary edge-coloured, directed multigraph (Definition \ref{def: incidence indices}). Informally, $\nu_i^\xi$ gives the number of legs of colour $\xi$ incident to $i$. The following definition gives a natural extension of chain weights to graphs with wiggly edges.
\begin{definition} \label{def: details of Upsilon}
A vertex $i \in V_f(\Upsilon)$ is a \emph{chain vertex} if it has degree three, such that: (i) $i$ is incident to exactly one wiggly edge, and (ii) $i$ is incident to exactly two resolvent edges, which are of the same colour.
\end{definition}
Thus a chain vertex $i \in V_f(\Upsilon)$ corresponds to a chain vertex in the sense of Definition \ref{def: incidence indices} (i.e.\ $\nu_i(\Upsilon) + \nu_i^*(\Upsilon) = 2$ and one of the terms vanishes), with the added condition that the summation over the index of $i$ is done with respect to a chain weight (encoded in $\cal A(\Upsilon)$ by the wiggly edge of $\Upsilon$ incident to $i$).

Now we present the details of the cases (a), (b), and (c).
Let $\Gamma \in \fra G_F^p(\Delta)$, $\Theta \in \wt {\fra R}(\Gamma)$, and $\Upsilon \in \cal L(\Theta)$. Let $i \in V_m(\Gamma)$ be marked.
\begin{itemize}
\item[(a)]
Suppose that $\sigma(i,j) = 0$ for all $j \in V_s(\Upsilon)$ (i.e.\ there are no wiggly edges in $\Upsilon$ that join two original summation vertices). Suppose moreover that the lumping of the fresh summation vertices of $p^{-1}(i)$ that generates $\Upsilon$ is a pairing (hence $\deg_\Gamma(i)$ must be even).

Then $\nu_j(\Upsilon) + \nu_j^*(\Upsilon)=2$ for any $j\in p^{-1}(i)$. More precisely, each $j \in p^{-1}(i)$ has degree three in $\Upsilon$; two of the incident edges are resolvent edges, and the third edge is a wiggly uncrossed edge connecting $j$ with $i$. Moreover,
\begin{equation} \label{conservation of degree}
\nu_i^\xi(\Gamma) \;=\; \sum_{j \in p^{-1}(i)} \nu_j^\xi(\Upsilon)
\end{equation}
for $\xi = 1,*$, since, under the assumption that there are no wiggly edges in $\Upsilon$ that join two original summation vertices, the total number of resolvent edges incident to $i$ does not change by vertex resolution and taking expectation. Hence we find from \eqref{condition for marked vertices} and from $\nu_j(\Upsilon) + \nu_j^*(\Upsilon) = 2$ that there must exist a $j \in p^{-1}(i)$ such that either $\nu_j(\Upsilon) = 0$ or $\nu_j^*(\Upsilon) = 0$; this implies that $j$ is a chain vertex of $\Upsilon$. We conclude:

At least one $j \in p^{-1}(i)$ is a chain vertex of $\Upsilon$.

\item[(b)]
Suppose that $\sigma(i,j) = 0$ for all $j \in V_s(\Upsilon)$, and the lumping of the fresh summation vertices of $p^{-1}(i)$ that generates $\Upsilon$ is a not pairing.

In this case one fresh summation vertex $j \in p^{-1}(i)$ was obtained by lumping together three or more fresh summation vertices of $\Theta$. We conclude:

At least one $j \in p^{-1}(i)$ is connected in $\Upsilon$ to $i$ by a crossed wiggly edge.

\item[(c)]
Suppose that $\sigma(i,j) > 0$ for some $j \in V_s(\Gamma)$. By property (v) of Section \ref{sec: vertex lumping}, $\sigma(i,j) \geq 2$. By construction of $\Upsilon$, if $\sigma(i,j) \geq 2$ then $i$ is joined to $j$ in $\Upsilon$ by a wiggly edge that is crossed by $\sigma(i,j) - 2$ strokes. We conclude:

The vertex $i$ is connected in $\Upsilon$ to some $j \in V_s(\Upsilon)$ by a wiggly edge.
\end{itemize}

Partition $V_m(\Gamma) = V_m^{(a)} \sqcup V_m^{(b)} \sqcup V_m^{(c)}$ into three subsets according to the three case (a), (b), and (c).
Now we may estimate the contribution of $\Upsilon$. For the whole estimate we freeze the original summation vertices $\f a$. We shall use Proposition \ref{lemma: weak Z lemma} on the $\abs{V_m^{(a)}}$ chain vertices of $\Upsilon$. In order to do so, we still have to get rid of the upper indices $(\f a)$ from each resolvent entry. This is done exactly as in the end of Section \ref{section: Z4}, using the identity \eqref{inverted res exp 1}. We omit further details. Thus Proposition \ref{lemma: weak Z lemma} is applicable to each chain vertex of $\Upsilon$. The remainder of the proof is a simple counting of different types of vertices.

Let
\begin{equation*}
\sigma \;\deq\; \frac{1}{2} \sum_{i,j \in V_s(\Theta)} \sigma(i,j)
\end{equation*}
denote the number of dotted edges of $\Theta$ that join two vertices in $V_s(\Theta)$. From \eqref{edge conservation Upsilon} we find that $\Upsilon$ has $\abs{E(\Gamma)} - \sigma$ resolvent edges. Moreover, $\Upsilon$ has $\abs{V_m^{(a)}}$ chain vertices. The gain from cases (b) and (c) is as follows. From the vertices of type (b) we gain $M^{-\abs{V_m^{(b)}} / 2}$. (Each such vertex is incident to a crossed wiggly line, and dropping the strokes crossing  such a lines yields a factor $M^{-1/2}$.) From the vertices of type (c) we gain $M^{-\sigma / 2}$. (Each dotted edge, encoding $h_{uv} = (s_{uv})^{1/2} \zeta_{uv}$, yields a contribution of size $(s_{uv})^{1/2} \leq C M^{-1/2}$ after taking the expectation in the $h$-variables.)

Now we sum over $\f x$ (while still keeping $\f a$ frozen). Invoking Proposition \ref{lemma: weak Z lemma} to estimate the chain vertices of $\Upsilon$, we therefore find that the contribution of $\Upsilon$ is bounded by
\begin{equation*}
\cal X \;\deq\; \Psi^{\abs{E(\Gamma)} - \sigma} \, \Phi^{\abs{V_m^{(a)}}} \, M^{-\abs{V_m^{(b)}} / 2}\, M^{-\sigma / 2}
\;\leq\; \Psi^{\abs{E(\Gamma)} - \sigma} \, \Phi^{\abs{V_m^{(a)}} + \abs{V_m^{(b)}}} \, M^{-\sigma / 2}\,,
\end{equation*}
where we used $M^{-1/2}\leq \Phi$.
Since $\sigma(i,j)\geq 2$ in case (c), it is easy to see that $\abs{V_m^{(c)}} \leq \sigma$. Thus we have $\abs{V_m^{(a)}} + \abs{V_m^{(b)}} + \sigma \geq \abs{V_m(\Gamma)}$. This yields the bound
\begin{equation*}
\cal X \;\leq\; \Psi^{\abs{E(\Gamma)}} \Phi^{\abs{V_m^{(a)}} + \abs{V_m^{(b)}} + \sigma} \, \pb{\Psi^{-1} \Phi^{-1} M^{-1/2}}^\sigma \;\leq\; \Psi^{\abs{E(\Gamma)}} \Phi^{\abs{V_m(\Gamma)}}\,,
\end{equation*}
where we also used that $\Psi \Phi \geq M^{-1/2}$. Recalling Lemma \ref{lemma: lower bound on edges of Gamma}, we find
\begin{equation} \label{main resolution estimate}
\cal X \;\leq\; \Psi^{p (\deg(\Delta) + \abs{F}) + \abs{V_c(\Gamma)} - \abs{V_m(\Gamma)}} \, \Phi^{\abs{V_m(\Gamma)}}
\;\leq\; \Psi^{p (\deg(\Delta) + \abs{F})} \, \Phi^{p \abs{V_c(\Delta)}}
\,,
\end{equation}
where we used $\abs{V_m(\Gamma)} \leq \abs{V_c(\Gamma)} = p \abs{V_c(\Delta)}$ and $\Psi\leq \Phi$. This estimate was obtained for the sum over $\f x$ with fixed $\f a$. Finally, we sum over $\f a$ trivially, using the fact the $\f a$-summation is performed with respect to a weight.
This concludes the proof of Theorem \ref{theorem: Z lemma} under Simplifications {\bf (S1)} -- {\bf (S4)}.

\section{Removing Simplifications {\bf (S1)} -- {\bf (S4)}} \label{section: simplifications}

In this section we go back to the proof of Theorem \ref{theorem: Z lemma} of Section \ref{section: Z5}, and give the additional arguments required to remove the Simplifications {\bf (S1)} -- {\bf (S4)} which were assumed there. For ease of reference, we recall them here.
\begin{itemize}
\item[{\bf (S1)}]
All summation indices in the expanded summation $\E \abs{X_F^w(\Delta)}^p$ (see \eqref{E X^p} below) are distinct. (I.e.\ we ignore repeated indices which give rise to a smaller combinatorics of the summation.)
\item[{\bf (S2)}]
There are no diagonal entries $\cal G_{aa} = G_{aa} - m$ in $\cal Z(\Delta)$. (I.e.\ $\Delta$ has no loops.)
\item[{\bf (S3)}]
We replace any diagonal term $G_{aa}^{(T)}$ with $m$ and any diagonal term $G_{aa}^{(T)*}$ with $\bar m$. (Recall that $G_{aa}^{(T)} \approx m$ in the sense that $G_{aa}^{(T)} - m \prec \Psi$ by definition of $\Lambda$.) This replacement is done in two places: in $\cal A(\Gamma)$ and in the identities \eqref{res exp 2b} and \eqref{res exp 2b iterated} which underlie the algebra of vertex resolution.
\item[{\bf (S4)}]
The families $\f x = (x_i)_{i \in V_f(\Upsilon)}$ and $\f \mu = (\mu_i)_{i \in V_e(\Upsilon)}$ are disjoint, and all indices of $\f x$ are distinct.
\end{itemize}

\subsection{Removing Simplification {\bf (S3)}} \label{section: Z6}
We start by removing Simplification {\bf (S3)}, while still assuming Simplifications {\bf (S1)}, {\bf (S2)}, and {\bf (S4)}. In order to remove Simplification {\bf (S3)}, we have deal with the error terms made in the replacement $G_{aa}^{(T)} \mapsto m$. The key formula for dealing with the diagonal terms is \eqref{res exp 2c} with the error terms $h_{aa}$, $Z_a^{(T)}$, and $U_a^{(Ta)}$ (see \eqref{def of Zi and Ji}). Recall that by \eqref{hsmallerW} we have $h_{aa} \prec M^{-1/2}$. The other error terms are estimated in the following lemma.

\begin{lemma} \label{lemma: diagonal estimates}
Suppose that $\Lambda \prec \Psi$ for some admissible control parameter $\Psi$. Fix $\ell \in \N$. Then we have
\begin{equation} \label{bound on A}
Z_a^{(T)} \;\prec\; \Psi \,, \qquad U_a^{(S)} \;\prec\; \min \{\varrho \Psi^2 ,  \Psi\} \;\leq\; \Psi \Phi\,,
\end{equation}
for $\abs{T}, \abs{S} \leq \ell$ and $a \in \{1, \dots, N\} \setminus T$.
Moreover, $Z_a^{(T)}$ is independent of $T$ and $U_a^{(S)}$ is independent of $S$.
\end{lemma}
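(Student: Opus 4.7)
The two independence statements are immediate from the construction: $G^{(Ta)}_{kl}$ is a measurable function of the entries of $H$ with both indices outside $T \cup \{a\}$, and the additional factors $h_{ak}, h_{la}$ appearing in $Z_a^{(T)}$ satisfy $k,l \notin T \cup \{a\}$; hence $Z_a^{(T)}$ is independent of the $\sigma$-algebra generated by entries of $H$ with at least one index in $T$. The analogous statement for $U_a^{(S)}$ follows since $U_a^{(S)}$ is a deterministic function of $G^{(S)}$ alone.

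For $Z_a^{(T)} \prec \Psi$: conditional on $H^{(Ta)}$, the quantity $Z_a^{(T)}$ is a mean-zero quadratic form in the independent centred variables $(h_{ak})_{k \notin T \cup \{a\}}$, and a standard Marcinkiewicz-Zygmund / Hanson-Wright moment inequality (together with \eqref{finite moments}) yields
\begin{equation*}
\E \absb{Z_a^{(T)}}^{2p} \;\leq\; C_p \pbb{\sum_{k \neq l}^{(Ta)} s_{ak} s_{al} \absb{G_{kl}^{(Ta)}}^2 \;+\; M^{-1} \sum_{k}^{(Ta)} s_{ak} \absb{G_{kk}^{(Ta)}}^{2}}^{p}\,.
\end{equation*}
Using $\absb{G_{kl}^{(Ta)}} \prec \Psi$ for $k \neq l$ and $\absb{G_{kk}^{(Ta)}} \prec 1$ (both from Lemma \ref{lemma: Lambda T}), $\sum_k s_{ak} = 1$, and $s_{ak} \leq M^{-1} \leq \Psi^2$, the bracket is $\prec \Psi^2$; Chebyshev then gives $Z_a^{(T)} \prec \Psi$. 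The crude bound $U_a^{(S)} \prec \Psi$ is immediate from Lemma \ref{lemma: Lambda T}, $\sum_k s_{ak} \leq 1$, and $\abs{S}/M \leq \Psi^2$.

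The main point is the sharp bound $U_a^{(S)} \prec \varrho \Psi^2$, which I would obtain by the self-consistent-equation scheme sketched in Section \ref{sec: proof of (B)}. Set $w_a \deq G_{aa} - m$. Expanding \eqref{res exp 2c} using the just-established $\prec \Psi$ bounds on $h_{aa}$, $Z_a$, $U_a^{(a)}$ yields $w_a = m^2\pb{-h_{aa} + Z_a + U_a^{(a)}} + O_\prec(\Psi^2)$, and the upper index in $U_a^{(a)}$ may be removed via \eqref{resolvent expansion type 1} to give the stable vector equation
\begin{equation*}
(1 - m^2 S)\f w \;=\; m^2\pb{-\f h_{\mathrm d} + \f Z} + O_\prec(\Psi^2)\,, \qquad (\f h_{\mathrm d})_a \;\deq\; h_{aa}\,, \qquad \f Z_a \;\deq\; Z_a\,.
\end{equation*}
Since $U_a^{(S)} = (S\f w)_a + O_\prec(\Psi^2)$ (again via \eqref{resolvent expansion type 1}), and since $S(1 - m^2 S)^{-1} = m^{-2}\pb{(1-m^2 S)^{-1} - 1}$, invoking the operator bound $\norm{(1 - m^2 S)^{-1}}_{\ell^\infty \to \ell^\infty} = \varrho$ (see \eqref{def of rho}) reduces the claim to the averaged noise estimate $\sum_k s_{ak}(Z_k - h_{kk}) \prec \Psi^2$.

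The hard part will be precisely this averaged bound on $\sum_k s_{ak} Z_k$: each individual $Z_k$ is only $\prec \Psi$, so an additional factor $\Psi$ must be gained from the averaging, which is exactly the fluctuation-averaging phenomenon of Proposition \ref{prop: warm-up}. I would deduce it by inverting \eqref{res exp 2c} to write $Z_k = h_{kk} - Q_k(1/G_{kk})$ (using that $Q_k$ annihilates the $k$-independent terms $1/m$ and $U_k^{(k)}$), whence
\begin{equation*}
\sum_k s_{ak} Z_k \;=\; \sum_k s_{ak} h_{kk} \;-\; \sum_k s_{ak} Q_k\pb{1/G_{kk}}\,.
\end{equation*}
The first sum has variance $\sum_k s_{ak}^2 s_{kk} \leq M^{-2}$ and is thus $\prec M^{-1} \leq \Psi^2$ by independence of the $h_{kk}$'s; the second is exactly of the form covered by Proposition \ref{prop: warm-up} (together with Remark \ref{rem: warmup with denominators}) applied with $\deg(\Delta) = 1$ and $\abs{F} = 1$, giving $\prec \Psi^2$. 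Substituting these two estimates back closes the self-consistent equation and delivers $U_a^{(S)} \prec \varrho \Psi^2$; combined with the trivial bound $U_a^{(S)} \prec \Psi$ proved earlier, this yields $U_a^{(S)} \prec \min\{\varrho \Psi^2, \Psi\} \leq \Psi \Phi$ by definition \eqref{definition of Phi}.
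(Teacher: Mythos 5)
Your proof is correct and follows essentially the same route as the paper: a Hanson--Wright-type moment estimate for $Z_a^{(T)}$, and a stable self-consistent equation inverted by $(1-m^2 S)^{-1}$ for $U_a^{(S)}$, with Proposition \ref{prop: warm-up} supplying the fluctuation-averaging input. The only cosmetic difference is that you set up the vector equation for $\f w = (G_{aa}-m)_a$ and then apply $S$, whereas the paper inserts $1 = P_k + Q_k$ directly inside $U_i = \sum_k s_{ik}(G_{kk}-m)$; the two reductions invoke the same averaging estimate (on $\sum_k s_{ik}Q_k(1/G_{kk})$ versus $\sum_k s_{ik}Q_k G_{kk}$), related to each other by \eqref{res exp 2c}.
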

\begin{proof}
See Appendix \ref{section: proof of res id}.
\end{proof}
Using Lemma \ref{lemma: diagonal estimates} and \eqref{res exp 2c} we write, for any fixed $K \in \N$,
\begin{equation} \label{main identity for diag entries}
\frac{1}{G_{aa}^{(\f a \setminus \{a\})}} \;=\; \frac{1}{m} + h_{aa} - Z_a^{(\f a \setminus \{a\})} - U_a^{(\f a)} \,, \qquad G_{aa}^{(\f a \setminus \{a\})} \;=\; \sum_{k = 0}^{K - 1} m^{k+1} \pb{-h_{aa} + Z_a^{(\f a \setminus \{a\})} + U_a^{(\f a)}}^k + O_\prec(\Psi^K)\,.
\end{equation}
(The error term $O_\prec(\Psi^K)$ is uniform in the same sense as the estimates of \eqref{bound on A}.)

In this section we revisit the argument from Section \ref{section: Z5}, and explain the differences resulting from the added diagonal terms.

\subsubsection{Generation of the fresh summation vertices (revisited), Step I: from $\Gamma$ to $\Pi$} \label{section: generation of fresh summation 1}
As in Section \ref{sect: generation of fresh vertices}, we start with $\Gamma \in \fra G_F^p(\Delta)$. 
The goal in Section \ref{sect: generation of fresh vertices} was to decompose $\Gamma$ into
a finite union of graphs (called $\wt {\fra R}(\Gamma)$) whereby a resolvent edge of $\Theta \in \wt {\fra R}(\Gamma)$ encoded in $\cal A(\Theta)$ an entry of $G^{(\f a)}$ or $G^{(\f a)*}$, and a dotted edge an $\f a$-admissible entry of $H$.  Thus $\cal A(\Theta)$ was well-suited for taking the partial expectation in $\f a$. In this section we keep track of the diagonal entries (represented graphically by loops) that arise both
in Family A and B identities and were previously freely replaced by powers of $m$ and $\bar m$, according to Simplification {\bf (S3)}.
The main difficulty here is that resolving diagonal entries
requires the more complicated formulas \eqref{main identity for diag entries}
instead of \eqref{identity for resolution} (which immediately yielded
resolvents with upper indices $\f a$). 

The ultimate goal of this section and of Section \ref{section: generation of fresh summation 2} is the same as that of Section \ref{sect: generation of fresh vertices}: to obtain graphs $\Theta$ whose resolvent edges encode resolvent entries with upper indices $\f a$
(up to a negligible error term that can be estimated brutally). We shall reach this in two steps.
In the first step, which is the content of this section (Section \ref{section: generation of fresh summation 1}), we express $\cal A(\Gamma)$ as a sum of monomials whose off-diagonal resolvent entries have upper indices $\f a$ and whose diagonal resolvent entries are maximally expanded.  We denote by
$\fra D(\Gamma)$ the family of graphs encoding these new monomials, and we shall use the letter $\Pi$ for a generic element of $\fra D(\Gamma)$. Graphically, therefore, this step corresponds to the mapping $\Gamma \mapsto \{\Pi_\al\} = \fra D(\Gamma)$. Sometimes we shall refer to it informally as  $\Gamma \mapsto \Pi$.

In the second step, which is the content of Section \ref{section: generation of fresh summation 2}, we use \eqref{main identity for diag entries}
to replace all maximally expanded diagonal resolvent entries (in $\cal A(\Pi)$ for any $\Pi \in \fra D(\Gamma)$) with resolvent entries having upper indices $\f a$ (again up to a negligible error term that can be estimated brutally).
We generically call the resulting graphs $\Theta$,
and let $\fra R(\Pi) =\{ \Theta_\alpha\}$ be the collection of such graphs obtained from a
fixed $\Pi \in \fra D(\Gamma)$.  Graphically, this step corresponds to the mapping $\Pi \mapsto \{ \Theta_\alpha \} = \fra R(\Pi)$.
The graphs $\Theta$ play the same role as the graphs $\Theta$ in Section \ref{section: Z5}. Indeed, each resolvent edge of $\Theta$ encodes in $\cal A(\Theta)$ a resolvent entry that has upper indices $\f a$, and each dotted edge an $\f a$-admissible entry of $H$. Hence $\cal A(\Theta)$ is amenable to taking the partial expectation in $\f a$. (This will be done in Section \ref{sec: lumping without S3}.)

\begin{lemma}\label{lm:stage1}
For any $K \in \N$ we have the decomposition
\begin{equation*}
\cal A(\Gamma) \;=\; \sum_\alpha A_\alpha + O_\prec(\Psi^K)\,,
\end{equation*}
where the summation is over a finite $N$-independent set, and $A_\al$ is a monomial in the entries of $G^{(T)}$, the entries of $G^{(T)*}$, and the $\f a$-admissible entries of $H$. Moreover, each entry $G_{uv}^{(T)}$ of $A_\alpha$ satisfies the condition
\begin{itemize}
\item[($*$)]
$G^{(T)}_{uv}$ either has upper indices $T = \f a$ or is a maximally expanded diagonal entry ($u = v$ and $T = \f a \setminus \{u\}$).
\end{itemize}
The same condition also applies to each entry $G_{uv}^{(T)*}$.
\end{lemma}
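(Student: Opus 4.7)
I would prove this by an iterative algorithm that rewrites $\cal A(\Gamma)$ as a sum of monomials satisfying $(*)$ plus a remainder of size $O_\prec(\Psi^K)$. The key observation is that the only entries of $\cal A(\Gamma)$ that can violate $(*)$ are the \emph{off-diagonal} resolvent entries $\cal G^{(T)}_{uv}$, $\cal G^{(T)*}_{uv}$ whose lower indices intersect $\f a$: by the maximal-expansion construction of $\fra G_F^p(\Delta)$ in Section~\ref{section: Z5}, every such entry satisfies $T = \f a \setminus (\{u,v\} \cap \f a)$, so $T = \f a$ can fail. All diagonal entries in $\cal A(\Gamma)$ (loops at summation vertices, or denominator entries generated during the Family~A expansion that built $\Gamma$) are already maximally expanded and thus automatically satisfy $(*)$.

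The main expansion step is as follows. Pick any off-diagonal entry $\cal G^{(T)}_{uv}$ violating $(*)$. If both $u,v\in\f a$, apply the iterated Family~B identity \eqref{res exp 2b iterated} with $T = \f a\setminus\{u,v\}$, which gives
\[
\cal G^{(T)}_{uv} \;=\; G^{(T)}_{uu}\,G^{(Tu)}_{vv}\pbb{-h_{uv} + \sum_{k,l}^{(\f a)} h_{uk} G^{(\f a)}_{kl} h_{lv}}\,.
\]
Here the factor $G^{(Tu)}_{vv} = G^{(\f a\setminus\{v\})}_{vv}$ is maximally expanded; the factors $h_{uv}, h_{uk}, h_{lv}$ are $\f a$-admissible (by Definition~\ref{admissible h}); and $G^{(\f a)}_{kl}$ satisfies $(*)$. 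The only remaining offending factor is the prefactor $G^{(\f a\setminus\{u,v\})}_{uu}$. If instead only one of $u,v$ lies in $\f a$ (the other external), apply \eqref{res exp 2b} in the same fashion; this immediately produces only entries satisfying $(*)$, with no leftover bad factor.

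To resolve a bad diagonal prefactor $G^{(\f a\setminus\{u,v\})}_{uu}$, I would apply the Family~A identity \eqref{resolvent expansion type 1} with $k = v$,
\[
G^{(\f a\setminus\{u,v\})}_{uu} \;=\; G^{(\f a\setminus\{u\})}_{uu} + \frac{\cal G^{(\f a\setminus\{u,v\})}_{uv}\cal G^{(\f a\setminus\{u,v\})}_{vu}}{G^{(\f a\setminus\{u,v\})}_{vv}}\,,
\]
whose first summand is maximally expanded and satisfies $(*)$. An analogous identity (the second formula of \eqref{resolvent expansion type 1}) handles any bad diagonal entry appearing in a denominator. The second summand introduces two new bad off-diagonal entries and one further bad diagonal, to be expanded recursively by the same two rules.

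Termination is enforced by the following stopping rule: stop expanding any monomial the moment it accumulates $K$ (or more) off-diagonal resolvent factors in its numerator. Any surviving monomial satisfying $(*)$ throughout is collected as an $A_\alpha$; every other terminal monomial has at least $K$ off-diagonal entries, so by $\Lambda\prec\Psi$, Lemma~\ref{lemma: basic properties of prec}, and Lemma~\ref{lemma: rough bounds on G}(ii) it contributes $O_\prec(\Psi^K)$ (the diagonal numerator factors and denominator reciprocals are all $O_\prec(1)$; the almost-sure polynomial bound needed for applying stochastic domination is supplied by Lemma~\ref{lemma: rough bounds on G}(i) and~(ii)). At every expansion step one of the two resulting children strictly removes a violation of $(*)$ while the other strictly increases the number of off-diagonal factors by at least two; consequently every root-to-leaf path in the recursion tree has length $O(K + p\deg(\Delta))$ and the tree has branching factor two, so the total number of surviving $A_\alpha$'s is bounded by a constant depending on $K$, $p$, and $\deg(\Delta)$ but independent of $N$. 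The main obstacle is precisely this termination/bookkeeping argument: one must verify that the recursion cannot loop indefinitely on ``bad'' branches without increasing the off-diagonal count, which is secured by the fact that every Family~A/B expansion either eliminates a bad entry in the good branch or adds at least one new off-diagonal entry in the bad branch.
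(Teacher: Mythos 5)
Your overall strategy matches the paper's almost exactly: alternate Family~B expansion of off-diagonal entries with Family~A maximal expansion of the leftover diagonal prefactors, cut off brutally once enough smallness has accumulated, and estimate the discarded tail. The identification of the only offending factor in the third line of~\eqref{identity for resolution} (the $G^{(\f a\setminus\{u,v\})}_{uu}$ prefactor) and the handling of the one-external-index cases is exactly right.

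There is, however, a genuine gap in the remainder estimate. You claim that a terminal monomial with at least $K$ off-diagonal resolvent entries contributes $O_\prec(\Psi^K)$ ``by $\Lambda\prec\Psi$, Lemma~\ref{lemma: basic properties of prec}, and Lemma~\ref{lemma: rough bounds on G}(ii).'' But by the time Family~B has been applied, the resolvent entries that live in the numerator are of the form $G^{(\f a)}_{kl}$ with \emph{fresh} summation indices $k,l$, and they sit inside sums $\sum_{k}^{(\f a)} h_{uk} G^{(\f a)}_{kv}$ or $\sum_{k,l}^{(\f a)} h_{uk} G^{(\f a)}_{kl} h_{lv}$. Bounding $\absb{G^{(\f a)}_{kl}}\prec\Psi$ termwise and $\abs{h_{uk}}\prec M^{-1/2}$ termwise and then summing over $k$ (respectively $k,l$) produces a factor of order $N M^{-1/2}\Psi$ per expansion, which is not small; Lemma~\ref{lemma: basic properties of prec}(i) only gives you $\sum_k\Psi$, not a cancellation. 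The estimate you actually need is the large-deviation bound on the \emph{summed} expressions themselves, $\sum_x^{(\f a)} h_{ux} G^{(\f a)}_{xv} = O_\prec(\Psi)$ and $\sum_{x,y}^{(\f a)} h_{ux} G^{(\f a)}_{xy} h_{yv} = O_\prec(\Psi)$, which is what the paper records in~\eqref{estimates for brutal cutoff} (these follow from $\Lambda\prec\Psi$ only after running the Family~B identity backwards, or from the Lemmas~B.1--B.2 of~\cite{EYY1} cited in the paper's footnote). Correspondingly, the counter you cut off on should be the number of such \emph{combined} $O_\prec(\Psi)$ blocks, together with the loose entries $G^{(\f a\setminus\{u,v\})}_{uv}$ and the internal factors $h_{ab}$ with $a,b\in\f a$; this is essentially the paper's $\ell(A)$. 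Your ``number of off-diagonal resolvent factors'' is not quite this quantity, and that, combined with the missing large-deviation step, is what needs to be repaired.

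Two minor additional imprecisions worth fixing: (1) the claim that ``one of the two resulting children strictly removes a violation of $(*)$'' is not literally true for the $A'$-branch of Family~B in the third case of~\eqref{identity for resolution}, which trades one bad off-diagonal entry for one bad diagonal prefactor; the paper's termination argument rests instead on the fact that every application of the Family~B step strictly increases the counter $\ell$. (2) The branching factor is not always two: the Family~B identity for two summation indices splits into the $h_{uv}$ term and the double sum, and the Family~A identity also splits into two terms, so the factor two is fine, but you should double-check that the fresh summation variables created by Family~B, although infinite in number, are not counted as separate terms (they are bundled into a single $A_\alpha$, which is why the index set stays $N$-independent). The paper handles this implicitly by making $A_\alpha$ a sum over fresh indices; making this explicit will tighten your argument.
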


We shall apply this lemma below with the choice $K \deq p (\deg(\Delta) + 2 \abs{V_s(\Delta)})$, which will ensure that the error term $O_\prec(\Psi^K)$ is negligible.

\begin{proof}[Proof of Lemma \ref{lm:stage1}]
We apply \eqref{identity for resolution} to each off-diagonal (maximally expanded) resolvent entry of $\cal A(\Gamma)$.
After an application of \eqref{identity for resolution}, the resulting expression does not in general satisfy ($*$), due to the factor $G^{(\f a \setminus \{u,v\})}_{uu}$ on the last line of \eqref{identity for resolution}, which is not maximally expanded. (Note that all other factors on the right-hand side of \eqref{identity for resolution} satisfy ($*$).) As always, we use \eqref{resolvent expansion type 1} to make $G^{(\f a \setminus \{u,v\})}_{uu}$ maximally expanded:
\begin{equation} \label{iteration for diagonal terms}
G^{(\f a \setminus \{u,v\})}_{uu} \;=\; G^{(\f a \setminus \{u\})}_{uu} + \frac{G^{(\f a \setminus \{u,v\})}_{uv} G^{(\f a \setminus \{u,v\})}_{vu}}{G^{(\f a \setminus \{u,v\})}_{vv}}\,,
\qquad
\frac{1}{G^{(\f a \setminus \{u,v\})}_{uu}} \;=\; \frac{1}{G^{(\f a \setminus \{u\})}_{uu}} - \frac{G^{(\f a \setminus \{u,v\})}_{uv} G^{(\f a \setminus \{u,v\})}_{vu}}{G^{(\f a \setminus \{u,v\})}_{uu} G^{(\f a \setminus \{u\})}_{uu} G^{(\f a \setminus \{u,v\})}_{vv}}\,.
\end{equation}
Here we use the first identity of \eqref{iteration for diagonal terms}. The first term is maximally expanded and good as it is. The second consists of two maximally expanded off-diagonal terms in the numerator and one diagonal term in the denominator which is not maximally expanded. We now apply \eqref{identity for resolution} to each of the terms in the numerator. The result is an expression with entries of $G$ that either have upper indices $\f a$ or are diagonal. The diagonal entries are not maximally expanded, and hence we must apply \eqref{resolvent expansion type 1} to each of them. Moreover, the diagonal entry in the denominator is not maximally expanded, and must be further expanded using the second identity of \eqref{iteration for diagonal terms}. We continue in this manner, successively using \eqref{identity for resolution} on maximally expanded off-diagonal entries and \eqref{resolvent expansion type 1} on diagonal entries that are not maximally expanded. This procedure is reminiscent of the one introduced after Definition \ref{definition: maximally expanded}. As in Section \ref{section: Z3}, although this procedure in general does not terminate, it does increase the number of off-diagonal terms, which allows us to stop brutally once a sufficient number of off-diagonal terms have been generated. Note that, unlike the one-step iteration of Section \ref{section: Z3} (which only used \eqref{resolvent expansion type 1}), we now have a two-step iteration, which repeatedly uses \eqref{resolvent expansion type 1} and \eqref{identity for resolution} in tandem.

More formally, the algorithm may be described as follows. In order to define the brutal stopping rule precisely, we set
\begin{equation*}
\ell(A) \;\deq\; (\text{number of entries of $G^{(\f a)}$ in $A$}) + \sum_{a,b \in V_s(\Gamma)} (\text{number of entries $h_{ab}$ in $A$})\,,
\end{equation*}
where $A$ is a monomial in the entries of $G^{(T)}$ and $H$. Now set $A \deq \cal A(\Gamma)$; $A$ will denote the running monomial in the algorithm, and $\cal A(\Gamma)$ is its initial value.
\begin{itemize}
\item[Step 1.]
Pick an off-diagonal term $G^{(\f a \setminus \{u,v\})}_{uv}$ in $A$ which does not have upper indices $\f a$. If no such term exists, go to Step 2. Otherwise apply \eqref{identity for resolution} to $G^{(\f a \setminus \{u,v\})}_{uv}$. This yields the splitting $A = A' + A''$ (where $A'$
is the main term that contains a factor $G^{(\f a)}$ and $A'' = 0$ unless both $u$ and $v$ are summation vertices, in which case $A''$
contains the special factor $h_{uv}$ from
 the third line of \eqref{identity for resolution}). Repeat step 1 for $A'$ and $A''$ (provided $A'' \neq 0$). (Notice that at each repetition of  Step 1
the number of off-diagonal terms with no upper index $\f a$
decreases by one, so after finitely many steps the algorithm exits to Step 2.)
\item[Step 2.]
If $ \ell(A) \geq K$,
stop. Otherwise go to Step 3.
\item[Step 3.]
Pick a diagonal term $G^{(\f a \setminus \{u,v\})}_{uu}$ in $A$ that is not maximally expanded. If no such term exists, stop. Otherwise apply \eqref{iteration for diagonal terms} to $G^{(\f a \setminus \{u,v\})}_{uu}$. This induces a splitting $A = A' + A''$ according to the two summands in either identity of \eqref{iteration for diagonal terms}. Repeat Step 1 for both $A'$ and $A''$.
\end{itemize}
Since Step 1 increases $\ell$ by exactly one, it follows by Step 2 that the algorithm must terminate after a finite, $N$-independent, number of steps. The result is a finite sum of terms $\{A_\alpha\}$ whose number does not depend on $N$. Pick one such $A_\alpha \equiv A$. We consider two cases depending on whether the algorithm, in generating $A$, stopped at Step 2 or Step 3. In other words, we differentiate based on whether it is stopped because there are sufficiently many small factors (stopping at Step 2) or because each resolvent entry satisfies ($*$) (stopping at Step 3).

Consider first the case where the algorithm stopped at Step 2. 
This corresponds to a brutal stopping, where we may estimate $A$ by a simple power counting using the lower bound on $\ell(A)$. We claim that we have the trivial bound
\begin{equation} \label{brutal estimate of A}
A \;=\; O_\prec(\Psi^{K}).
\end{equation}
In order to see this, we note that 
\eqref{identity for resolution} (reading these formulas from right to left) and \eqref{h prec Psi} imply\footnote{Note that these estimates also follow directly from basic large deviation results such as Lemmas B.1 and B.2 in \cite{EYY1}.}
\begin{equation} \label{estimates for brutal cutoff}
\sum_x^{(\f a)} h_{u x} G_{x v}^{(\f a)} \;=\; O_\prec(\Psi), \qquad
\sum_x^{(\f a)} G_{u x}^{(\f a)} h_{xv} \;=\; O_\prec(\Psi)\,, \qquad
\sum_{x,y}^{(\f a)} h_{ux} G_{xy}^{(\f a)} h_{xv} \;=\; O_\prec(\Psi)\,.
\end{equation}
Moreover, by definition of Step 1 and the explicit expressions in \eqref{identity for resolution} each entry of $G^{(\f a)}$ in $A$ comes in one of the three forms in \eqref{estimates for brutal cutoff}. Hence the lower bound $ \ell(A) \geq K$,
 the definition of $\ell$, and \eqref{h prec Psi} yield \eqref{brutal estimate of A}.

Apart from this error term, all other terms resulted in stopping the
algorithm at Step 3.
In this case it is immediate that each entry $G^{(T)}_{uv}$ of $A$ satisfies ($*$).
This proves Lemma~\ref{lm:stage1}.
\end{proof}

The algorithm from the proof of Lemma \ref{lm:stage1} (Steps 1 -- 3) has a trivial reformulation on the level of graphs.
This also yields a convenient graphical representation of the monomials $\{A_\alpha\}$. For future use, we give more details on the graphical version of Step 3. Let $\fra D(\Gamma)$ denote the set of graphs that encode the monomials  $\{A_\alpha\}$ obtained through Steps 1--3 starting from $\Gamma$
and stopping at Step 3. The elements of $\fra D(\Gamma)$ will generically be denoted by $\Pi$.

We use the graphical notations from Figures \ref{figure: diagonal elements} and \ref{figure: h and s}. In Figure \ref{figure: vertex resolution for edge} we summarize the rules \eqref{identity for resolution} graphically.
\begin{figure}[ht!]
\begin{center}
\includegraphics{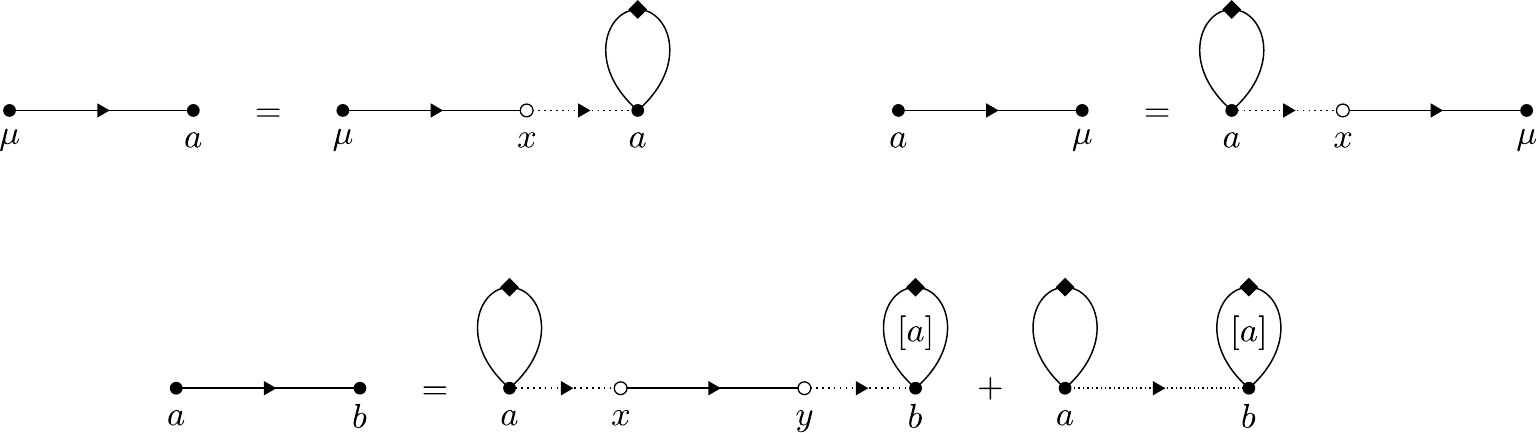}
\end{center}
\caption{The graphical representation of the rules \eqref{identity for resolution} (using $a$ and $b$ for summation indices and $\mu$ for external index
instead of the generic indices $u$ and $v$). Two of the loops on the last line correspond to resolvent entries $G_{bb}^{(\f a \setminus \{a,b\})}$ which are not maximally expanded: they still depend on $a$, which is indicated by the label $[a]$ inside the loop.
\label{figure: vertex resolution for edge}}
\end{figure}

The term $A''$ from Step 3 arises from taking the second term on the right-hand sides of \eqref{iteration for diagonal terms}, which in the graphical language translates to creating two (non-loop) resolvent edges connecting the vertices $i$ and $j$ associated with the indices $u$ and $v$ respectively, i.e.\ linking a loop at $i$ with $j$. We call this process \emph{linking $i$ with $j$}. See Figure \ref{figure: vertex linking}. 
\begin{figure}[ht!]
\begin{center}
\includegraphics{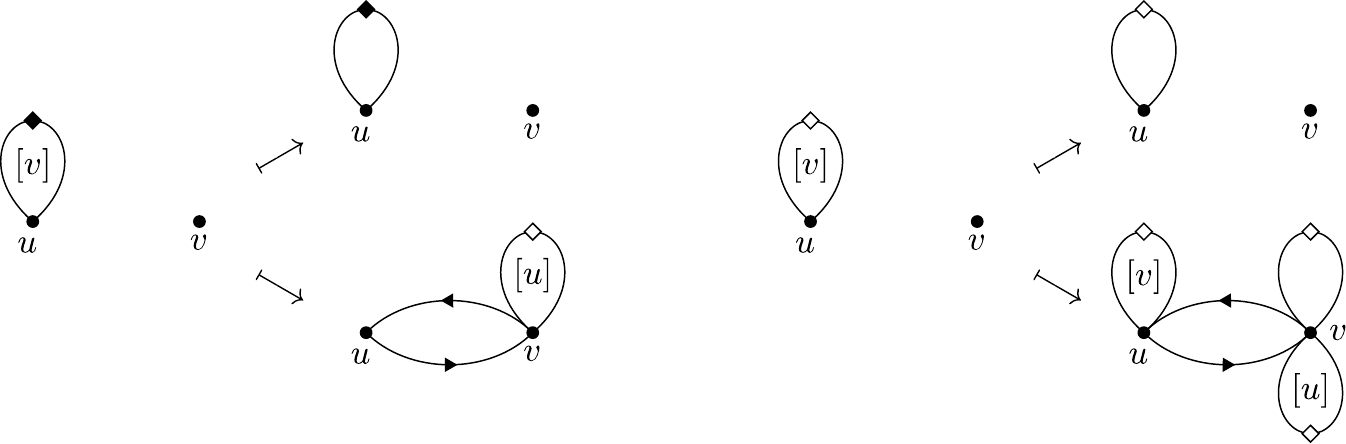}
\end{center}
\caption{Linking the vertex $i$ (depicted in the picture with its associated index $u$) with the vertex $j$ (depicted with index $v$). These two diagrams correspond to the two identities in \eqref{iteration for diagonal terms}. As in Figure \ref{figure: vertex resolution for edge}, if a diagonal resolvent entry encoded by a loop is not maximally expanded, we indicate the index on which it depends in angular brackets inside the loop.
\label{figure: vertex linking}}
\end{figure}

This graphical algorithm provides an alternative, graphical, construction of $\fra D(\Gamma)$ starting from $\Gamma$. Each $\Pi \in \fra D(\Gamma)$ encodes a monomial $\cal A(\Pi)$ whose resolvent entries satisfy ($*$). As in Section \ref{section: Z5}, the vertex set of $\Pi$ may be written as $V(\Pi) = V_f(\Pi) \sqcup V_s(\Pi) \sqcup V_e(\Pi)$, corresponding to the fresh summation vertices, the original summation vertices, and the external vertices, respectively. Note that $\Pi$ now contains loops, which bear either a black or white diamond (encoding diagonal entries of $G$ in the numerator or denominator respectively). Moreover, each diagonal entry encoded by a loop of $\Pi$ is maximally expanded, and each off-diagonal entry encoded by a non-loop edge of $\Pi$ has upper indices $\f a$. See Figure \ref{figure: Pi} for a simple example of the process $\Gamma \mapsto \Pi \in \fra D(\Gamma)$.
\begin{figure}[ht!]
\begin{center}
\includegraphics{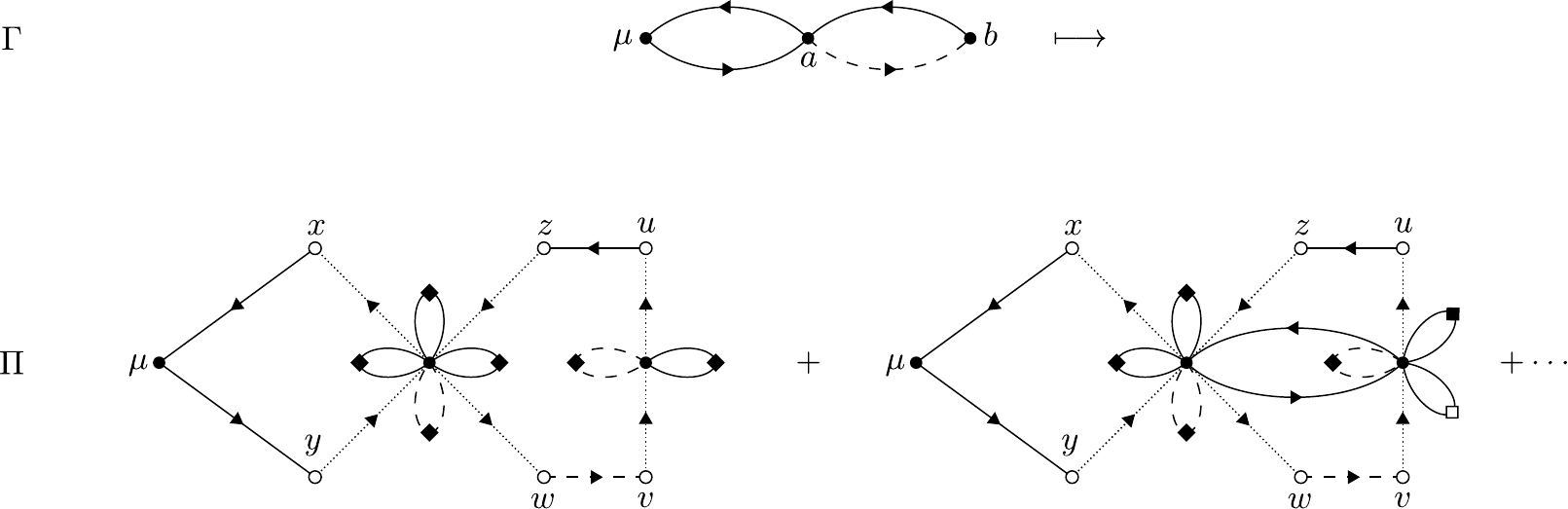}
\end{center}
\caption{The process $\Gamma \mapsto \Pi \in \fra D(\Gamma)$, where we draw the two simplest elements of $\fra D(\Gamma)$ on the bottom line. (For reasons of space, we omit the labels $a$ and $b$ on the bottom line.) The first graph is just the top-middle graph of
 Figure~\ref{figure: cases abc} but with added loops. In the second graph one
loop at $a$ was linked with $b$.
\label{figure: Pi}}
\end{figure}

\subsubsection{Generation of the fresh summation vertices (revisited), Step II: from $\Pi$ to $\Theta$} \label{section: generation of fresh summation 2}
In this section we complete the second part of the generation of the fresh summation vertices, by constructing a family of graphs $\Theta \in \fra R(\Pi)$ from $\Pi$. The underlying algebraic identity is \eqref{main identity for diag entries}.

\begin{lemma}\label{lm:stage2}
For any $K \in \N$ and any $\Pi \in \fra D(\Gamma)$
there is a decomposition
\begin{equation} \label{statement of lmstage2}
\cal A(\Pi) \;=\; \sum_\alpha B_\alpha + O_\prec(\Psi^K)\,,
\end{equation}
such that each $B_\al$ is a monomial in the entries of $G^{(\f a)}$ and the $\f a$-admissible entries of $H$. The sum over $\alpha$ ranges over a finite set that is independent of $N$.
\end{lemma}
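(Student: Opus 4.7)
The plan is to apply the identities in \eqref{main identity for diag entries} to each maximally expanded diagonal factor appearing in $\cal A(\Pi)$, then substitute the explicit formulas \eqref{def of Zi and Ji} for $Z_a^{(\f a\setminus\{a\})}$ and $U_a^{(\f a)}$, and finally multiply out. Since by construction (property $(*)$ from Lemma~\ref{lm:stage1}) every resolvent entry in $\cal A(\Pi)$ either already carries the upper index set $\f a$ or is a maximally expanded diagonal entry $G^{(\f a\setminus\{a\})}_{aa}$ or its reciprocal, this procedure will turn every resolvent factor into one whose upper index set is $\f a$ and simultaneously produce factors of $h_{aa}$ and $h_{ak}h_{la}$ with $a\in\f a$, which are $\f a$-admissible.

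More concretely, let $N_d$ denote the (finite, $N$-independent) number of maximally expanded diagonal resolvent factors in $\cal A(\Pi)$. For each inverse diagonal factor $1/G^{(\f a\setminus\{a\})}_{aa}$ use the \emph{exact} first identity of \eqref{main identity for diag entries}, and for each ordinary diagonal factor $G^{(\f a\setminus\{a\})}_{aa}$ use the second identity of \eqref{main identity for diag entries} truncated at order $K_1$, where $K_1$ is chosen large enough so that, combined with the trivial bound $\abs{\cal G^{(T)}_{ij}}\le \eta^{-1}\le M$ from \eqref{rough bound 1} and the moment bound \eqref{rough bound 3} on remaining factors, the total remainder is $O_\prec(\Psi^K)$. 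The exceptional events are handled exactly as in the proof of Proposition~\ref{prop: warm-up}: split into a good event on which each factor obeys its stochastic bound, and a complementary low-probability event on which we use the deterministic bound \eqref{rough bound 1} together with Cauchy--Schwarz and \eqref{rough bound 3}, choosing $K_1=K_1(K,N_d)$ to absorb the resulting $N^{C}$ losses.

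Next, substitute into each occurrence of $Z_a^{(\f a\setminus\{a\})}$ and $U_a^{(\f a)}$ their definitions. Using $Q_a = 1 - P_a$ together with $P_a(h_{ak}h_{la})=s_{ak}\delta_{kl}$ yields the deterministic identity
\begin{equation*}
Z_a^{(\f a\setminus\{a\})} \;=\; \sum_{k,l}^{(\f a)} h_{ak}\,G^{(\f a)}_{kl}\,h_{la} \;-\; \sum_{k}^{(\f a)} s_{ak}\,G^{(\f a)}_{kk}\,,
\end{equation*}
so that $Z_a^{(\f a\setminus\{a\})}$ and $U_a^{(\f a)}=\sum_k^{(\f a)}s_{ak}G^{(\f a)}_{kk}-m$ are \emph{explicit} finite sums whose summands are products of $\f a$-admissible entries of $H$ (since $a\in\f a$) and entries of $G^{(\f a)}$, with numerical coefficients built out of $m$, $\bar m$ and $s_{ak}$. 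Expanding the $k$-th power $(-h_{aa}+Z_a^{(\f a\setminus\{a\})}+U_a^{(\f a)})^k$ appearing in \eqref{main identity for diag entries} by the multinomial theorem and carrying out the resulting products across all diagonal factors gives a finite sum (the total number of terms is bounded in terms of $N_d$, $K_1$ and the number of summation indices in $\Pi$, but is independent of $N$) of monomials in entries of $G^{(\f a)}$ and $\f a$-admissible entries of $H$, with the sums over the internal indices $k,l$ arising from $Z_a, U_a^{(\f a)}$ left in place.

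The only possible subtlety is bookkeeping: one must verify that every entry of $H$ produced carries an index in $\f a$ (this is automatic since it arises as $h_{aa}$ or as one of $h_{ak}, h_{la}$ with $a\in\f a$), and that every resolvent entry produced has upper index set exactly $\f a$ (this is built into the definitions \eqref{def of Zi and Ji} and into the exact identities \eqref{main identity for diag entries}). This is the main bookkeeping step, but it is routine and deterministic; there is no analytic obstacle. This yields the decomposition \eqref{statement of lmstage2} and completes the proof.
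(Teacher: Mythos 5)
Your proof is correct and follows the same main route as the paper: apply \eqref{main identity for diag entries} to each maximally expanded diagonal factor, note that the error term is $O_\prec(\Psi^K)$ once multiplied by the remaining (all $\prec$-bounded) factors, and observe that the resulting non-error pieces are built from entries of $G^{(\f a)}$ and $\f a$-admissible entries of $H$ because property $(*)$ of Lemma~\ref{lm:stage1} guarantees that every resolvent entry in $Z_a^{(\f a\setminus\{a\})}$ and $U_a^{(\f a)}$ carries the full upper index set $\f a$.

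Two small comparisons with the paper's own argument. First, you additionally unfold $Q_a = 1-P_a$ inside $Z_a^{(\f a\setminus\{a\})}$. The paper deliberately does \emph{not} do this: it keeps the factor $Q_a$ explicit and represents $Z_a$ as a single graphical ``triangle'' piece (Figure~\ref{figure: three error pieces}). The reason is that the identity $P_a Q_a=0$ is invoked later, in the lumping step of Section~\ref{sec: lumping without S3}, to forbid the self-pairing of the two $h$-legs of $Z_a$ -- this is exactly what turns $Z_a$ into an off-diagonal (hence $O_\prec(\Psi)$) contribution. Your expansion $Z_a = \sum_{k,l}^{(\f a)} h_{ak}G^{(\f a)}_{kl}h_{la} - \sum_k^{(\f a)}s_{ak}G^{(\f a)}_{kk}$ is algebraically correct, and the cancellation still happens after taking $P_a$, but it splits one $O_\prec(\Psi)$ object into two $O_\prec(1)$ objects, so downstream you would have to re-combine them to recover the size gain. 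For this lemma alone this is immaterial, but it is worth being aware that the paper's unexpanded form is the one that is convenient later. Second, your remainder analysis is more cautious than needed: since every residual factor ($1/G^{(\f a\setminus\{a\})}_{aa}$, off-diagonal $G^{(\f a)}$, $h_{aa}$, $Z_a$, $U_a^{(\f a)}$) is individually $\prec$-bounded, Lemma~\ref{lemma: basic properties of prec}(ii) gives the product bound $O_\prec(\Psi^K)$ directly with $K_1=K$; the explicit good-event/bad-event split with Cauchy--Schwarz and \eqref{rough bound 3} (as in Proposition~\ref{prop: warm-up}) is only needed when one must control moments of the whole expression, which is not asked for here.
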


We shall apply Lemma \ref{lm:stage2} with the choice $K \deq p (\deg(\Delta) + 2 \abs{V_s(\Delta)})$, which will ensure that the error term $O_\prec(\Psi^K)$ is negligible.

\begin{proof}[Proof of Lemma \ref{lm:stage2}]
We simply apply \eqref{main identity for diag entries} to each diagonal resolvent entry of $\cal A(\Pi)$. Recall that each diagonal resolvent entry of $\cal A(\Pi)$ is maximally expanded, which implies that all resolvent entries explicitly appearing in the definition \eqref{def of Zi and Ji} for $Z_a^{(\f a \setminus \{a\})}$ and $U_a^{(\f a)}$ have upper indices $\f a$.
Then, as above, it immediately follows that if we pick the rest term $O_\prec(\Psi^K)$ in \eqref{main identity for diag entries} from any diagonal entry, the resulting monomial is $O_\prec(K)$ and may be absorbed into the error term on the right-hand side of \eqref{statement of lmstage2}.

For the following we therefore assume that there are no rest terms $O_\prec(\Psi^K)$ in the expansion \eqref{main identity for diag entries} of the diagonal resolvent entries of $\cal A(\Pi)$. The result is a finite family of monomials whose number does not depend on $N$ (but does of course depend on $K$), and which may again be represented graphically. In such graphs we represent a term $U_a^{(\f a)}$ with a solid ring around the vertex associated with $a$
(these terms $U_a^{(\f a)}$ will not be expanded further, so their precise structure
does not matter; the number of rings simply encode their size). See Figure \ref{figure: three error pieces} for a depiction of the three nontrivial terms arising from the expansion of $G_{aa}^{(\f a \setminus \{a\})}$.
\begin{figure}[ht!]
\begin{center}
\includegraphics{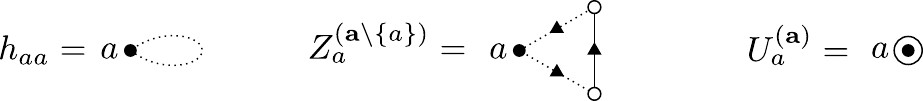}
\end{center}
\caption{The graphical representation of the three error terms resulting from the expansion of $G_{aa}^{(\f a \setminus \{a\})}$ using \eqref{main identity for diag entries}. Apart from the entries of $H$ encoded by the dotted edges, all terms are independent of $\f a$.}
\label{figure: three error pieces}
\end{figure}
Thus, when expanding a loop at $a$ with a white diamond (encoding $1 / G_{aa}^{(\f a \setminus \{a\})}$), we replace the loop with either nothing  (corresponding to the term $1/m$ used in the argument of Section~\ref{section: Z5})
or one of the three pieces in Figure \ref{figure: three error pieces}.
Similarly, when expanding a loop at $a$ with a black diamond (encoding $G_{aa}^{(\f a \setminus \{a\})}$), we replace the loop with either nothing (corresponding to a factor $m$ coming from the zeroth order term in the summation in \eqref{main identity for diag entries}) or an agglomeration of pieces from Figure \ref{figure: three error pieces} at the vertex associated with $a$. (We use concentric rings around $a$ to depict several factors $U_a^{(\f a)}$). See Figure \ref{figure: expanding diagonal}.
\begin{figure}[ht!]
\begin{center}
\includegraphics{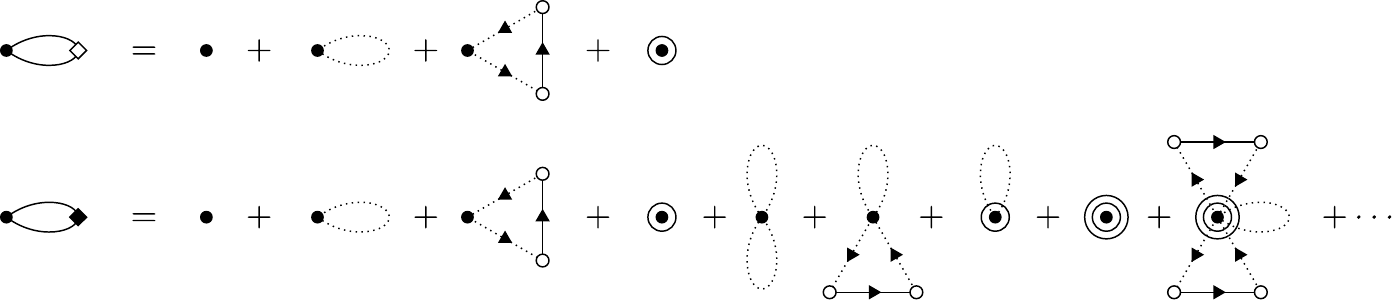}
\end{center}
\caption{Expanding a loop. The two lines correspond to the two identities of \eqref{main identity for diag entries} respectively.}
\label{figure: expanding diagonal}
\end{figure}

The application of \eqref{main identity for diag entries} to the diagonal entries encoded by $\Pi$ yields a new family of graphs, which we call $\fra R(\Pi)$ and whose elements we denote by $\Theta$. Each resolvent edge of $\Theta \in \fra R(\Pi)$ now encodes an entry of $G^{(\f a)}$ or $G^{(\f a) *}$. We also have the usual self-explanatory splitting $V(\Theta) = V_f(\Theta) \sqcup V_s(\Theta) \sqcup V_e(\Theta)$. See Figure \ref{figure: example Theta with diag} for an example of the process $\Pi \mapsto \fra R(\Pi)$.
\begin{figure}[ht!]
\begin{center}
\includegraphics{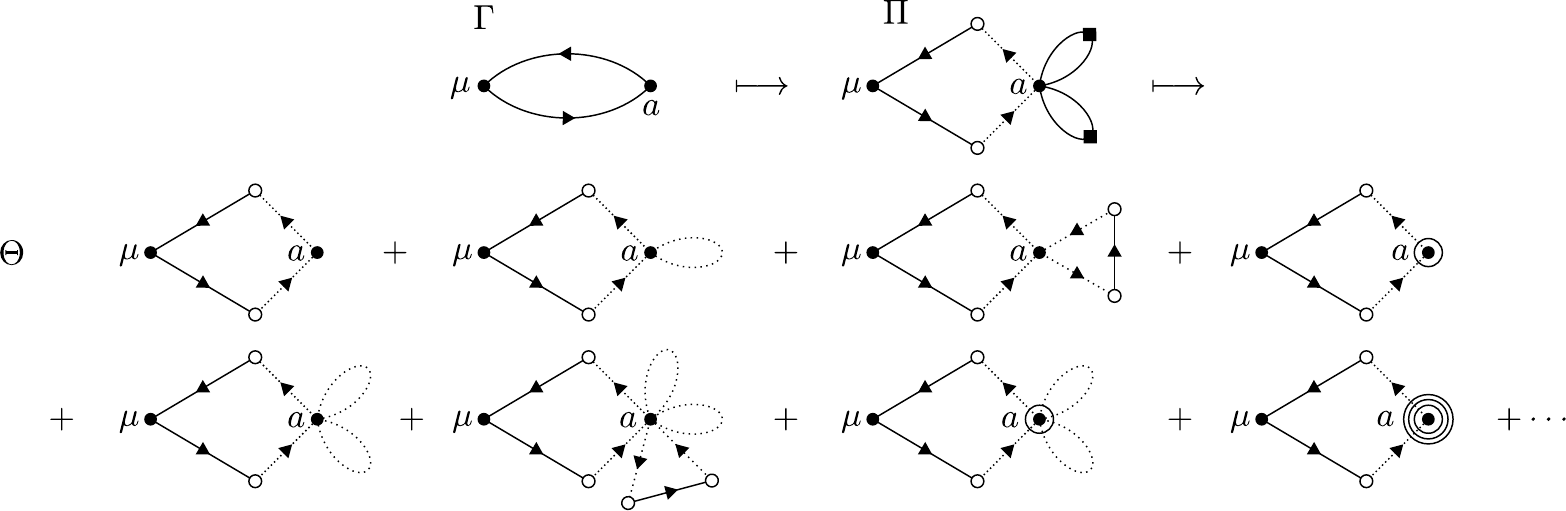}
\end{center}
\caption{The process $\Gamma \mapsto \Pi \mapsto \Theta$. On the second line we draw four graphs $\Theta$ corresponding to choosing one of the four terms on the right-hand side of the first equation of \eqref{main identity for diag entries}. On the third line we draw some more complicated graphs $\Theta$.  \label{figure: example Theta with diag}}
\end{figure}
\end{proof}
 
Summarizing the results of this Sections \ref{section: generation of fresh summation 1} and \ref{section: generation of fresh summation 2}, for a given $\Gamma \in \fra G_F^p(\Delta)$, we have constructed an $N$-independent set of graphs,
\begin{equation*}
\fra R(\fra D(\Gamma)) \;=\; \bigcup \hb{\fra R(\Pi) \col \Pi \in \fra D(\Gamma)}\,.
\end{equation*}
If $\Theta \in \fra R (\fra D(\Gamma))$ then each resolvent entry of $\cal A(\Theta)$ has upper indices $\f a$, and the fresh summation indices $\f x = (x_i)_{i \in V_f(\Theta)}$ and the original summation indices $\f a = (a_i)_{i \in V_s(\Theta)}$ are disjoint.
Moreover, we have the splitting
\begin{equation*}
\cal A_{\f a}(\Gamma) \;=\; \sum_{\Theta \in \fra R (\fra D(\Gamma))} \sum_{\f x}^{(\f a)} \cal A_{\f a, \f x}(\Theta) + O_\prec\pb{\Psi^{p (\deg(\Delta) + 2 \abs{V_s(\Delta)})}}
\end{equation*}
where we explicitly indicated the set of summation indices in the subscript of $\cal A$, see \eqref{Max}.
Note that the elements of the family  $\fra R(\fra D(\Gamma))$ have the same properties as the elements of the smaller set $\wt {\fra R}(\Gamma)$ from Section \ref{section: Z5}.

\subsubsection{Lumping of the fresh summation vertices (revisited) and conclusion of the estimate} \label{sec: lumping without S3}
Fix a $\Theta \in \fra R(\fra D(\Gamma))$. Now we may proceed as in Section \ref{sec: vertex lumping} and take the lumping of the entries of $H$ in $\cal A(\Theta)$ by computing their partial expectation $\prod_{a \in \f a} P_a$. Since all resolvent entries of $\cal A(\Theta)$ are independent of $\f a$, this partial expectation acts only on the entries of $H$, and leads to lumpings exactly as in Section \ref{sec: vertex lumping}. This gives rise to a family of graphs $\Upsilon \in \fra L(\Theta)$. As before, we seek to gain a factor $\Phi$ from each marked vertex $i \in V_m(\Gamma)$.

Thus, let us fix a sequence $\Gamma \mapsto \Pi \mapsto \Theta \mapsto \Upsilon$. It is convenient to extend the definition of the degree of a vertex as follows. By definition, the \emph{degree} of $i \in V(\Theta)$, written $\deg_\Theta(i)$, is equal to the number of legs incident to $i$ plus two times the number of rings around $i$. This convention is chosen so that each error term in Figure \ref{figure: three error pieces} increases the degree of $i$ by two.

Now take a marked vertex $i \in V_m(\Gamma)$. Note that, by construction of $\Pi$ and $\Theta$, we have $\deg_\Theta(i) \geq \deg_\Gamma(i)$. We consider two cases.
\begin{enumerate}
\item
Suppose that $\deg_\Theta(i) = \deg_\Gamma(i)$. This means that in the process $\Gamma \mapsto \Pi$ the original summation vertex $i$ was not linked with another original summation vertex (see Section \ref{section: generation of fresh summation 1}), and that in the process $\Pi \mapsto \Theta$ (see Section \ref{section: generation of fresh summation 2}) we always chose the main term ($1/m$ or $m$) on the right-hand sides of \eqref{main identity for diag entries} when applying \eqref{main identity for diag entries} to any diagonal entries with lower indices $a_i a_i$.
In particular, \eqref{conservation of degree} holds. We may therefore proceed exactly as in Section \ref{section: Z5}: any pairing in $\Theta \mapsto \Upsilon$ of the white vertices adjacent to $i$ gives rise to at least one chain vertex of $\Upsilon$ (see Definition \ref{def: details of Upsilon}). A higher-order lumping (i.e.\ one that is not a pairing) gives rise to a positive power of $M^{-1/2} \leq \Psi$. Either way, we shall gain a factor $\Phi$ from $i$ after summing over $\f x$ and invoking Proposition \ref{lemma: weak Z lemma}.
\item
Suppose that $\deg_\Theta(i) > \deg_\Gamma(i)$. In this case we have that either
\begin{enumerate}
\item[(ii.1)]
in the process $\Gamma \mapsto \Pi$ the vertex $i$ was linked to another original summation vertex, or
\item[(ii.2)]
in the process $\Pi \mapsto \Theta$ we chose at least one error term (represented graphically by one of the graphs in Figure \ref{figure: three error pieces}) on the right-hand sides of \eqref{main identity for diag entries} when applying \eqref{main identity for diag entries} to the diagonal entries with lower indices $i$.
\end{enumerate}
We claim that either case, (ii.1) or (ii.2), results in an extra error factor in $\Upsilon$ of order $\Psi$.

In order to see this, consider first the case (ii.1). Here the linking means that there is a $j \in V_s(\Upsilon)$ such that in $\Upsilon$ we have two extra resolvent edges (as compared to case (i)), each connecting a vertex in $p^{-1}(i)$ to a vertex in $p^{-1}(j)$. This yields a factor $\Psi^2$. Thus we gain a 
factor $\Psi$ that we ascribe to $i$ (the other factor $\Psi$ is in general not available, as it may be needed for exactly the same reason at the vertex $j$). Next, consider the case (ii.2). If there is a term $h_{aa}$ or $U_a^{(\f a)}$ in $\cal A(\Theta)$, then we immediately get a factor $\Psi \Phi$. 
(For $U_a^{(\f a)}$ this is trivial by \eqref{bound on A} and for $h_{aa}$ taking $P_a$ implies that there must be at least another factor $h_{aa}$, in which case we get a factor $M^{-1} \leq \Psi \Phi$.) Finally, if we have a factor $Z_a^{(\f a \setminus \{a\})}$ observe that in the expression
\begin{equation} \label{explicit Za}
Z_a^{(\f a \setminus \{a\})} \;=\; Q_a \pBB{\sum_{x,y}^{(\f a)} h_{a x} G_{xy}^{(\f a)} h_{y a}}
\end{equation}
we cannot pair $h_{ax}$ with $h_{y a}$ when computing the partial expectation $P_a$ (since $P_a Q_a = 0$). (Of course in a higher-order lumping, they could be in the same lump provided this lump contains at least three elements.) This implies that, in the leading-order pairing, the fresh summation vertices of $\Theta$ associated with $x$ and $y$ will be paired into different vertices of $\Upsilon$. In particular, we gain an additional off-diagonal resolvent entry $G_{xy}^{(\f a)} \prec \Psi$. See Figure \ref{figure: resolution of Z} for a graphical depiction of this lumping.
\end{enumerate}
\begin{figure}[ht!]
\begin{center}
\includegraphics{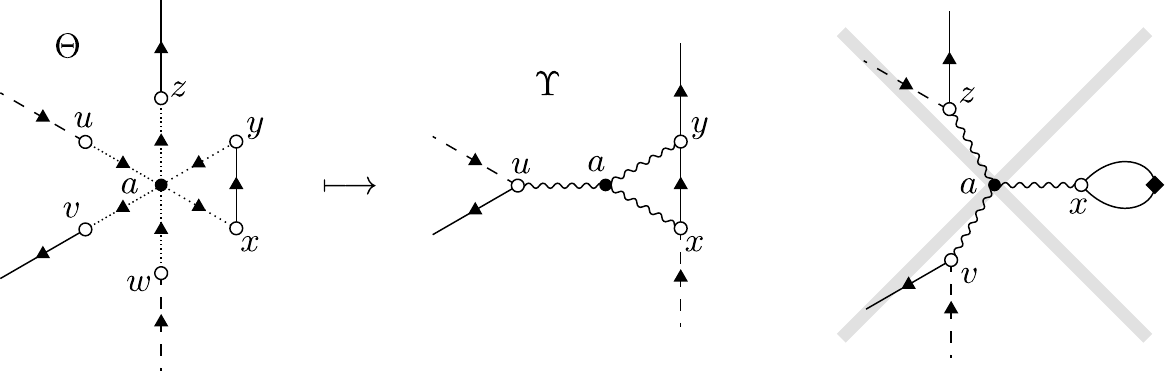}
\end{center}
\caption{The lumping of fresh summation vertices in the presence of a factor $Z_a^{(\f a \setminus \{a\})}$ (represented by a triangle in $\Theta$.). Due to the $Q_a$ in the definition of $Z_a^{(\f a \setminus \{a\})}$, the last graph (crossed out in grey) does not contribute. \label{figure: resolution of Z}}
\end{figure}
In summary, each marked vertex $i$ therefore yields a gain of $\Phi$ upon summation over $\f x$. This concludes the proof of Theorem \ref{theorem: Z lemma} without Simplification {\bf (S3)}.

\subsection{Removing Simplification {\bf (S2)}} \label{section: Z7}
In this section we revisit the arguments of Sections \ref{section: Z5} and \ref{section: Z6}, and explain the modifications required if we relax Simplification {\bf (S2)}, i.e.\ allow diagonal entries $\cal G_{aa} = G_{aa} - m$ in the definition of $\cal Z$. (On the level of $\Delta$, this amounts to allowing loops.) The construction of $\Gamma \in \fra G_F^p(\Delta)$ remains unchanged. Each diagonal entry of $\cal A(\Gamma)$ is maximally expanded, i.e.\ of the form $\cal G_{aa}^{(\f a \setminus \{a\})}$. Hence the construction of $\Pi \in \fra D(\Gamma)$ from $\Gamma$ carries over unchanged from Section \ref{section: generation of fresh summation 1}. Now $\Pi$ has loops of three kinds: with a black diamond (encoding $G_{aa}^{(\f a \setminus \{a\})}$), with a white diamond (encoding $1/G_{aa}^{(\f a \setminus \{a\})}$), and plain (encoding $\cal G_{aa}^{(\f a \setminus \{a\})}$). Note that a decorated loop encodes a factor of size $O_\prec(1)$ while a plain loop encodes a factor of size $O_\prec(\Psi)$. The additional difficulty in this section as compared to Section \ref{section: Z5} is that the naive size of a plain loop is smaller than the size of the decorated loops dealt with in Section \ref{section: Z5}. Thus we have to establish bounds which, in addition to the gain extracted in Section \ref{section: Z5}, also contain the smallness associated with the naive size of a plain loop.

The process $\Pi \mapsto \Theta \in \fra R(\Pi)$, in which all (maximally expanded) diagonal entries are expanded using \eqref{main identity for diag entries} is again the same as that of Section \ref{section: generation of fresh summation 2}. For $1/G_{aa}^{(\f a \setminus \{a\})}$ and $G_{aa}^{(\f a \setminus \{a\})}$ we use \eqref{main identity for diag entries}, and, in addition, for $\cal G_{aa}^{(\f a \setminus \{a\})}$ we use
\begin{equation} \label{identity for diag cal G}
\cal G_{aa}^{(\f a \setminus \{a\})} \;=\; \sum_{k = 1}^{K - 1} m^{k+1} \pb{-h_{aa} + Z_a^{(\f a \setminus \{a\})} + U_a^{(\f a)}}^k + O_\prec(\Psi^K)
\end{equation}
(note that the sum starts with $k = 1$). Finally, lumping the white summation vertices yields the graph $\Upsilon$. As in Section \ref{sec: lumping without S3}, the important observation is that the two white vertices associated with a $Z_a^{(\f a \setminus \{a\})}$ (see Figure \ref{figure: three error pieces}) cannot be paired. In summary, the resolution process $\Gamma \mapsto \Pi \mapsto \Theta \mapsto \Upsilon$ is almost identical to that in Sections \ref{section: Z5} and \ref{section: Z6}. There is only one new ingredient: the expansion \eqref{identity for diag cal G} which starts with $k = 1$.

To illustrate this procedure, let us consider the simple example with $\f a = \{a\}$
\begin{align}
\E G_{\mu a}^* G_{a \mu} \cal G_{aa} &\;=\; \E \sum_{x,y}^{(a)} G_{aa} G_{aa}^* h_{a x} G_{x \mu}^{(a)} G_{\mu y}^{(a)*} h_{y a} \cal G_{aa}
\notag \\
&\;=\; m^2 \bar m \, \E \sum_{x,y}^{(a)} h_{a x} G_{x \mu}^{(a)} G_{\mu y}^{(a)*} h_{y a} \pbb{Q_a \sum_{z,w}^{(a)} h_{az} G_{zw}^{(a)} h_{wa} - h_{aa} + U_a^{(a)}} + \cdots
\notag \\ \label{simple resolution for cal G}
&\;=\; m^2 \bar m \, \E \sum_{x,y}^{(a)} s_{ax} s_{ay} G_{\mu y}^{(a)*} G_{yx}^{(a)} G_{x \mu}^{(a)} + 0 + m^2 \bar m \, \E \sum_{x} s_{ax} G_{\mu x}^{(a)*} G_{x \mu}^{(a)} U_{a}^{(a)} + \cdots\,,
\end{align}
where $+ \cdots$ denotes higher-order terms in the expansion \eqref{main identity for diag entries} and \eqref{identity for diag cal G}. The expectation of the middle term in the parentheses vanishes because $\E h_{aa} = 0$. See Figure \ref{figure: cal G} for a graphical version of \eqref{simple resolution for cal G}.
\begin{figure}[ht!]
\begin{center}
\includegraphics{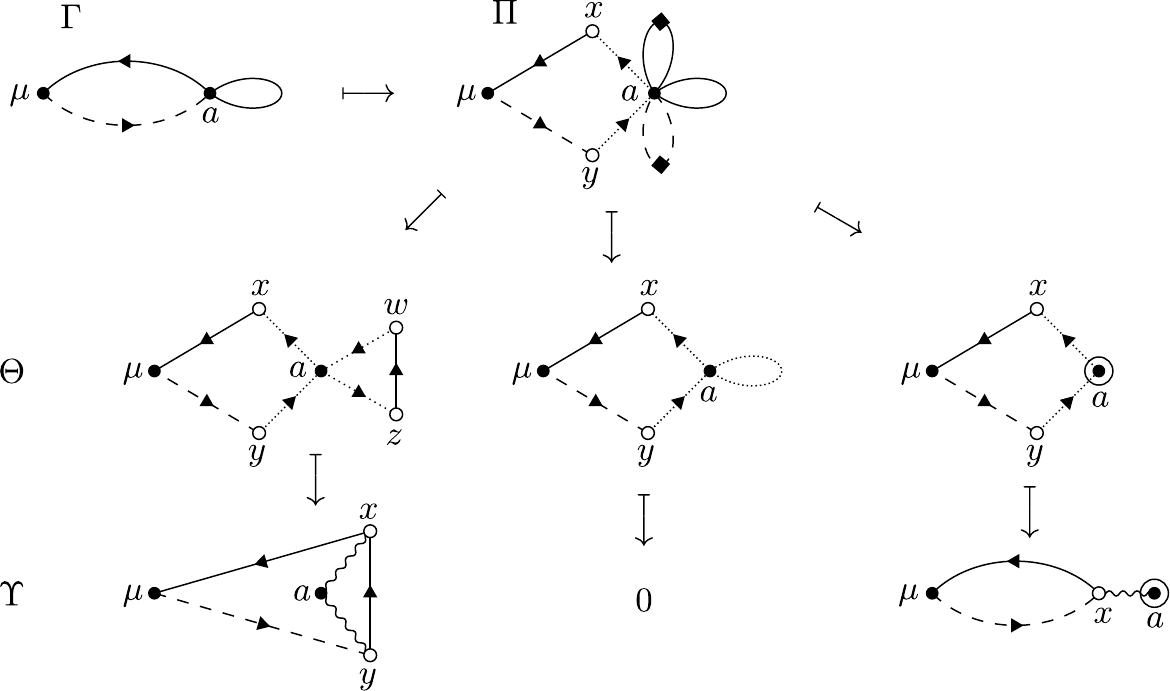}
\end{center}
\caption{The complete resolution process $\Gamma \mapsto \Pi \mapsto \Theta \mapsto \Upsilon$ for the example \eqref{simple resolution for cal G}. At each step we only draw the leading-order graphs. The first, second, and third lines of the figure correspond to the first, second, and third lines of \eqref{simple resolution for cal G} respectively. \label{figure: cal G}}
\end{figure}

Note that each unmarked loop (encoding a diagonal entry of $\cal G$) contributes a factor $O_\prec(\Psi)$ to $\cal A(\Gamma)$. When performing the vertex resolution $\Gamma \mapsto \Upsilon$, we therefore have to ensure that this gain of $\Psi$ is not lost (i.e.\ that $\cal A(\Upsilon)$ has an associated factor of size $O_\prec(\Psi)$). In addition, we have to gain a factor $\Phi$ from each marked vertex of $\Upsilon$.

In the example \eqref{simple resolution for cal G}, the vertex $a$ is marked and each term on the bottom line of \eqref{simple resolution for cal G} is of order $\Psi^3 \Phi$. This bound should be read as $\Psi^2 \Psi \Phi$, where $\Psi^2$ is the trivial bound on the off-diagonal entries, $\Psi$ is the bound on the diagonal entry of $\cal G$, and $\Phi$ is the additional gain arising from the fact that $a$ is marked. Indeed, the first term on the bottom line of \eqref{simple resolution for cal G} is of order $\Psi^3 \Phi$ by Proposition \ref{lemma: weak Z lemma}, and the last term of order $\Psi^4$ by Lemma \ref{lemma: diagonal estimates}.

This is in fact a general phenomenon. Let $i \in V_m(\Gamma)$ be marked, with associated summation index $a$. We shall give the details only for a leading-order graph $\Upsilon$, i.e.\  a graph $\Upsilon$ that satisfies:
\begin{enumerate}
\item
In the process $\Gamma \mapsto \Pi$ the original summation vertex $i$ was not linked with another original summation vertex.
\item
In the process $\Pi \mapsto \Theta$ we always chose the main term ($1/m$ or $m$) on the right-hand sides \eqref{main identity for diag entries}, and the term $m Z_a^{(\f a \setminus \{a\})}$ on the right-hand side of \eqref{identity for diag cal G}.
\item
In the process $\Theta \mapsto \Upsilon$, we chose a pairing of the white vertices incident to $i$. (I.e.\ no higher-order lumping is allowed.)
\end{enumerate}
If $\Upsilon$ does not satisfy (i) -- (iii), an argument almost identical to that of Sections \ref{sec: completion of proof under all S} and \ref{sec: lumping without S3} yields an extra factor $\Phi$, in addition to $\Psi^\ell$ where $\ell$ is the number of plain loops incident to $i$ in $\Gamma$. (This is a simple power counting that uses the fact that each $U_a^{(\ba)}$ yields a factor $\Psi \Phi$, and each $h_{aa}$ and $Z_a^{(\f a \setminus \{a\})}$ yield a factor $\Psi$ each. That $Z_a^{(\f a \setminus \{a\})}$ yields a factor $\Psi$ follows from the observation that after resolution it yields an off-diagonal resolvent entry in $\cal A(\Upsilon)$, as explained after \eqref{explicit Za}. Note that, unlike in Section \ref{sec: lumping without S3} where it was enough to gain a factor $\Psi$ from $U_a^{(\f a)}$, here it is crucial that $U_a^{(\f a)} \prec \Psi \Phi$.)

Let us therefore assume that $\Upsilon$ satisfies (i) -- (iii). By (i) and (ii) we have \eqref{conservation of degree}. Recall the definition of the projection $p$ from (iv) in Section \ref{sec: vertex lumping}. Each $j \in p^{-1}(i)$ is incident to precisely two resolvent edges and one wiggly edge that is also incident to $i$. Moreover, no vertex of $p^{-1}(i)$ is incident to a loop; this follows from the above observation that the two white vertices associated with $m Z_a^{(\f a \setminus \{a\})}$ cannot be paired. From \eqref{condition for marked vertices} and \eqref{conservation of degree}, we therefore find that at least one vertex in $p^{-1}(i)$ is a chain vertex. Consequently summation over $\f x$ results in an extra factor $\Phi$ by Proposition \ref{lemma: weak Z lemma}, and hence completes the argument.

\subsection{Removing Simplification {\bf (S4)}} \label{section: Z8}
In this section we remove Simplification {\bf (S4)}, by allowing the fresh summation indices $\f x$ to coincide with each other and with external indices $\f \mu$. This entails proving Proposition \ref{lemma: weak Z lemma} without the simplifying assumption {\bf (S4)} that was assumed in its proof. Roughly, there are two kinds of problems arising from such coincidences: an off-diagonal resolvent entry $G_{xy}$ may become diagonal (hence leading to a loss of a factor $\Psi$), and a chain vertex may cease to be one (hence leading to a loss of a factor $\Phi$). However, these losses are compensated by powers of $M^{-1}$ resulting from a reduction in the number of independent summation variables. The main point is to prove each coincidence of summation variables
results in a loss of at most two factors of $\Psi$ and at most two factors of $\Phi$. Since $M^{-1}\leq \Psi^2\Phi^2$, the gain of $M^{-1}$ will be enough to compensate this loss.

Throughout Sections \ref{section: Z5}, \ref{section: Z6}, and \ref{section: Z7}, we invoked Proposition \ref{lemma: weak Z lemma} in order to gain from chain vertices. To that end, we had to assume Simplification {\bf (S4)} (since the indices $(\f x, \f \mu)$ are assumed to be disjoint in Proposition \ref{lemma: weak Z lemma}).
The main result of this section is the following extension of Proposition \ref{lemma: weak Z lemma}. It states that the stochastic bound of Proposition \ref{lemma: weak Z lemma} is valid even if the summation over $\f x$ has no restriction. (As in Proposition \ref{lemma: weak Z lemma}, we use $\f a$ to denote the summation indices; in our applications of Proposition \ref{corollary: weak Z lemma} $\f a$ always consists of fresh summation vertices which we denoted by $\f x$ in Sections \ref{section: Z5}, \ref{section: Z6}, and \ref{section: Z7}.)

\begin{proposition} \label{corollary: weak Z lemma}
Suppose that $\Lambda \prec \Psi$ for some admissible control parameter $\Psi$. Let $\Delta$ be a chain encoding $\cal Z_{\f a}$. Then
\begin{equation} \label{weak estimate for coinciding indices}
\sum_{\f a} w(\f a) \cal Z_{\f a} \;\prec\; \Psi^{\deg(\Delta)} \Phi^{c(\Delta)}
\end{equation}
for any $\f \mu$ and chain weight $w$.
\end{proposition}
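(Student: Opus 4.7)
The plan is to decompose the unrestricted sum according to the coincidence pattern of the indices $(\f a, \f \mu)$:
\begin{equation*}
\sum_{\f a} w(\f a) \cal Z_{\f a}^{\f \mu} \;=\; \sum_{(P_s, f)} \sum_{\f a \,:\, (P_s, f)} w(\f a) \cal Z_{\f a}^{\f \mu},
\end{equation*}
where $P_s$ is a partition of $V_s(\Delta)$ and $f$ is a partial function from the blocks of $P_s$ to $V_e(\Delta)$ encoding gluings of summation indices to external indices. The trivial partition (all singletons, $f = \emptyset$) is precisely the distinct-index sum already handled by Proposition~\ref{lemma: weak Z lemma} in the (S4) setting of Sections~\ref{section: Z4}--\ref{sect: completion of induction}, and yields the desired bound $\prec \Psi^{\deg(\Delta)} \Phi^{c(\Delta)}$. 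The task is to show that every non-trivial partition contributes no more.

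Fix a non-trivial partition with $r \deq |V_s(\Delta)| - |P_s|$ internal identifications and $g \deq |\mathrm{dom}(f)|$ gluings, so $r+g \geq 1$. Reparametrizing by block labels, the chain weight factors as $w = \widetilde w_0 \cdot \widetilde w_{\mathrm{chain}}(\f a_{\mathrm{free}})$, where $\widetilde w_0$ collects $r+g$ ``frozen'' factors $s_{ij} \leq M^{-1}$ (so $\widetilde w_0 \leq M^{-(r+g)}$) and $\widetilde w_{\mathrm{chain}}$ is again a chain weight in the reduced free indices $\f a_{\mathrm{free}}$ (one factor per non-glued block). The collapsed graph $\Delta'$ contains at most $\ell \leq r+g$ loops, since any two distinct vertices of a chain share at most one edge, so each elementary identification produces at most one new loop; each loop encodes $G_{aa} \prec 1$ and contributes no factor of $\Psi$. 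Removing the loops, $\Delta'$ decomposes into maximal sub-chains joined at ``hub'' vertices (non-singleton or glued blocks) and original external vertices. Applying Proposition~\ref{lemma: weak Z lemma} under (S4) to each sub-chain, with the matching chain-weight factors from $\widetilde w_{\mathrm{chain}}$, and summing over the remaining free hub indices via stochasticity, bounds this partition's contribution by
\begin{equation*}
M^{-(r+g)} \, \Psi^{\deg(\Delta) - \ell} \, \Phi^{c(\Delta')}.
\end{equation*}

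Two elementary combinatorial bounds close the argument: $\ell \leq r+g$ (already noted), and $c(\Delta) - c(\Delta') \leq 2(r+g)$, since each identification eliminates at most two chain vertices (both merged partners), the new hub vertex itself being generically disqualified as a chain vertex by self-adjacency or by degree exceeding two. Together with $\Psi \geq M^{-1/2}$ (admissibility) and $\Phi \geq M^{-1/4}$ (AM--GM applied to $\Psi + M^{-1/2}\Psi^{-1} \geq 2M^{-1/4}$, combined with $\varrho \geq 1/2$ from Remark~\ref{rem: lower bound on Phi}), which give $M^{-1} \leq \Psi \Phi^2$, this yields
\begin{equation*}
M^{-(r+g)} \Psi^{\deg(\Delta) - \ell} \Phi^{c(\Delta')} \;\leq\; (\Psi \Phi^2)^{r+g} \Psi^{\deg(\Delta) - \ell} \Phi^{c(\Delta')} \;=\; \Psi^{\deg(\Delta) + (r+g-\ell)} \Phi^{2(r+g) + c(\Delta')} \;\leq\; \Psi^{\deg(\Delta)} \Phi^{c(\Delta)}.
\end{equation*}
The main obstacle is executing the sub-chain decomposition cleanly when several identifications create hub vertices of high degree together with multi-edges in $\Delta'$: the decomposition must be chosen so that the chain-weight factors of $\widetilde w_{\mathrm{chain}}$ attached to sub-chain internal vertices match exactly the ones used in each invocation of Proposition~\ref{lemma: weak Z lemma}, while the leftover factors at free hub vertices are absorbed via stochasticity. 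An additional layer of inclusion--exclusion on hub-level coincidences may be required to secure the distinct-index hypothesis (S4) within each sub-chain.
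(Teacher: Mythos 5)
The approach you take — partitioning the unrestricted sum by the coincidence pattern of the indices, bounding each partition's contribution by $M^{-(r+g)} \Psi^{\deg(\Delta)-\ell}\Phi^{c(\Delta')}$, and closing with a comparison between $M^{-1}$ and powers of $\Psi$ and $\Phi$ — is essentially the paper's strategy, and your reduction of the chain weight to $\widetilde w_0 \cdot \widetilde w_{\mathrm{chain}}$ with $\widetilde w_0 \leq M^{-(r+g)}$ is correct. The gap is in the combinatorial claim $\ell \leq r+g$.

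Your justification — ``any two distinct vertices of a chain share at most one edge, so each elementary identification produces at most one new loop'' — does not follow: the premise concerns the original graph $\Delta$, but once some identifications have been made, the intermediate graph can have multi-edges between a free chain vertex and a hub, so a single further identification can turn several edges into loops at once. Worse, the conclusion $\ell \leq r+g$ is actually \emph{false} for closed chains: the closed chain of degree $n$ with no external vertices and all $n$ summation indices identified has $r+g = n-1$ but $\ell = n$ (every edge becomes a loop), and in your final chain of inequalities the factor $\Psi^{(r+g)-\ell}$ then carries a \emph{negative} exponent. The paper avoids this by merging vertices one at a time in a carefully chosen order (smallest vertex of a nontrivial block into the already-merged prefix of that block), which for open chains guarantees $\Delta r \geq -1$ at every step precisely because the larger neighbor of the vertex being merged is still unprocessed; for closed chains the estimate can fail at the very last merge, and the paper then falls back on the trivial bounds $r, c \geq 0$ and the admissibility inequality $\Psi^{\deg(\Delta)}\Phi^{\deg(\Delta)} + M^{-\deg(\Delta)+1} \asymp \Psi^{\deg(\Delta)}\Phi^{\deg(\Delta)}$ for $\deg(\Delta) \geq 2$. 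Your argument does not address closed chains at all.

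A secondary issue: you use $M^{-1} \leq \Psi\Phi^2$, whereas the paper uses the stronger $M^{-1} \leq C\Psi^2\Phi^2$ (from $\Psi\Phi \gtrsim M^{-1/2}$); with the stronger inequality you would only need $\ell \leq 2(r+g)$, which \emph{does} hold for all chains (each merge step consumes an unmerged vertex of degree two, so at most two edges become loops), and this would rescue the closed-chain case without a separate argument. As written, though, your estimate does not go through. The sub-chain decoupling issue you flag at the end is real but secondary; the paper handles it via an explicit inclusion--exclusion (Lemma \ref{lemma: factor chains}).
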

\begin{proof}
The basic idea is to split the summation into partitions
\begin{equation*}
\sum_{\f a} w(\f a) \cal Z_{\f a} \;=\; \sum_{P} \sum_{\f a} \indb{\cal P(\f \mu, \f a) = P} w(\f a) \cal Z_{\f a}\,,
\end{equation*}
where $P$ ranges over all partitions of $V(\Delta)$, and $\cal P$ was introduced in Definition \ref{def: partition}. Note that, since $\f \mu$ are constrained to be distinct, if $P$ yields a nonzero contribution each of its blocks may contain at most one vertex in $V_e(\Delta)$. For the following we fix a partition $P$ and prove that
\begin{equation*}
\cal X_P \;\deq\; \sum_{\f a} \indb{\cal P(\f \mu, \f a) = P} w(\f a) \cal Z_{\f a}
\end{equation*}
is stochastically bounded by the right-hand side of \eqref{weak estimate for coinciding indices}. On the level of the graph $\Delta$, a nontrivial partition $P$ of $V(\Delta)$ results in a merging of vertices $V(\Delta)$. By merging vertices of $\Delta$ we therefore get a new graph which we denote by $P(\Delta)$. The vertex set of $P(\Delta)$ has the usual decomposition $V(P(\Delta)) = V_e(P(\Delta)) \sqcup V_s(P(\Delta))$, where $V_e(P(\Delta)) = V_e(\Delta)$ and $V_s(P(\Delta))$ is given by the set of blocks of $P$ that do not contain a vertex from $V_e(\Delta)$. A vertex $i \in V(P(\Delta))$ is \emph{unmerged} if the corresponding block has size one, and \emph{merged} otherwise. See Figure \ref{figure: merging} for an example of the merging $\Delta \mapsto P(\Delta)$.
\begin{figure}[ht!]
\begin{center}
\includegraphics{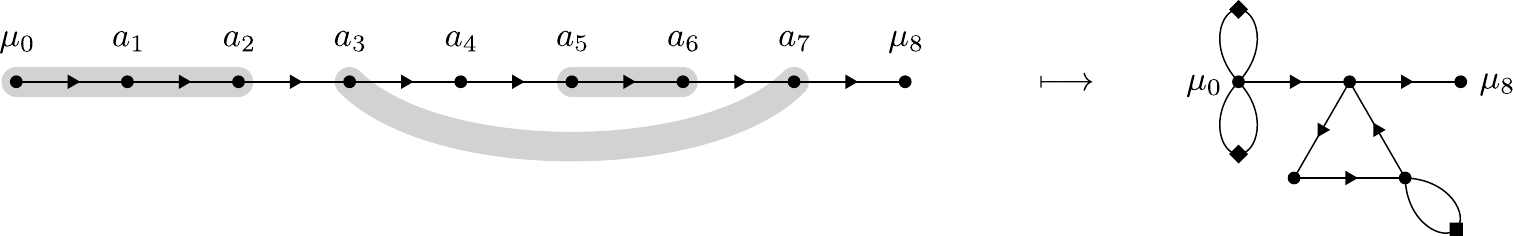}
\end{center}
\caption{The merging $\Delta \mapsto P(\Delta)$ of summation vertices of a chain. The vertices of $\Delta$ are $V_s(\Delta) = \{1, \dots, 7\}$ and $V_e(\Delta) = \{0,8\}$. We chose the partition $P = \h{\{0,1,2\}, \{3,7\}, \{4\}, \{5,6\}, \{8\}}$. \label{figure: merging}}
\end{figure}

For concreteness assume first that $\Delta$ is an open chain with $V(\Delta) = \{0, \dots, n\}$, where $V_e(\Delta) = \{0, n\}$. Thus, $\deg(\Delta) = n$. For any graph $\Delta'$ define the set of vertices
\begin{equation*}
V_g(\Delta') \;\deq\; \hb{i \in V_s(\Delta') \col \text{$i$ has degree two without counting loops}}\,.
\end{equation*}
Thus, the set $V_g(P(\Delta))$ includes not only the chain vertices of $P(\Delta)$ but also chain vertices to which one more loops are attached.
Let $r(\Delta')$ denote the number of edges of $\Delta'$ that are not loops, and set $c(\Delta') \deq \abs{V_g(\Delta')}$.
In particular, if $\Delta'$ is a chain then this definition agrees with that from Definition \ref{def: chains}.
(For example, in Figure~\ref{figure: merging} we have $c(P(\Delta))=2$ and $r(P(\Delta))=5$.)

Define $k \deq n - 1 - \abs{V_s(P(\Delta))}$. Informally, $k$ is the number of summation vertices of $V_s(\Delta)$ that have been merged into some other vertex. As we shall see, $k$ is the exponent of $M^{-1}$ which describes the reduction in the combinatorics of the summation.
(In Figure~\ref{figure: merging} we have $n=8$ and $ \abs{V_s(P(\Delta))}=3$, which gives $k=4$.)
We claim that
\begin{equation} \label{main estimate for coinciding indices}
n - k \;\leq\; r(P(\Delta)) \;\leq\; n\,, \qquad  r(P(\Delta))+ c(P(\Delta)) \;\geq\; 2n - 1 - 2k\,.
\end{equation}
The easiest way to prove \eqref{main estimate for coinciding indices} is by the following inductive argument.
We construct $P(\Delta)$ from $\Delta$ by successively merging one vertex at a
time, and follow the change of the functions $r(\cdot)$ and $r(\cdot)+c(\cdot)$ at each step. The formal procedure is the following.
We construct a sequence of graphs $\Delta_0 = \Delta, \Delta_1, \dots, \Delta_k = P(\Delta)$ as follows. We start from $\Delta_0 \deq \Delta$. Recall that the vertices of $\Delta$ are naturally ordered by $\leq$. Let $i_1 \in V_s(\Delta)$ be the smallest vertex of $\Delta$ that is in a nontrivial block (i.e.\ of size greater than one) of $P$. Set $\Delta_1$ to be the graph obtained from $\Delta_0$ by merging $i_1$ with the (unique) vertex $j \in V(\Delta_0)$ satisfying $j < i_1$. The vertices of $\Delta_1$ remain ordered after we assign the newly created merged vertex the index $j$. Similarly, $\Delta_{l+1}$ is obtained from $\Delta_l$ by choosing the smallest unmerged vertex $i_l \in V_s(\Delta_l)$ that is in a nontrivial block of $P$, and merging it with the unique $j \in V(\Delta_l)$ satisfying $j < i_l$. After $k$ steps of this procedure, we obtain $\Delta_k = P(\Delta)$. Moreover, it is easy to see for $0 \leq l \leq k - 1$ that
\begin{equation} \label{iteration for coinciding indices}
r(\Delta_l) - 1 \;\leq\; r(\Delta_{l + 1}) \;\leq\; r(\Delta_l)\,, \qquad r(\Delta_{l + 1}) + c(\Delta_{l + 1}) \;\geq\; r(\Delta_l) + c(\Delta_l)- 2\,.
\end{equation}
Indeed, either $i_l$ is merged with a vertex adjacent to itself, in which case we have $r(\Delta_{l+1}) = r(\Delta_l) - 1$ and $c(\Delta_{l+1}) \geq c(\Delta_l) - 1$, or $i_l$ is merged with a vertex not adjacent to itself, in which case we have $r(\Delta_{l+1}) = r(\Delta_l)$ and $c(\Delta_{l+1}) \geq c(\Delta_l) - 2$. Since $r(\Delta) + c(\Delta) = 2n - 1$, \eqref{main estimate for coinciding indices} follows from \eqref{iteration for coinciding indices}.

We may now sum over $(a_i)_{i \in V_s(P(\Delta))}$. To that end, if $i \in V_g(P(\Delta))$ and there is a loop (or several loops) at $i$, then we expand each corresponding diagonal term $G_{a_i a_i}$ as $G_{a_i a_i} = m + (G_{a_i a_i} - m)$. If we pick a factor $m$ from each loop, $i$ becomes a chain vertex. If we pick at least one factor $G_{a_i a_i} - m$, $i$ is not a chain vertex but carries a factor of order $\Psi$. Either way, summing over $a_i$ yields a factor $\Phi$ by Proposition \ref{lemma: weak Z lemma}. (Note that Proposition \ref{lemma: weak Z lemma} is applicable to the graph $P(\Delta)$ because all summation indices are constrained to be distinct.) Thus we get the bound
\begin{equation*}
\cal X_P \;\prec\; M^{-k} \Psi^{r(P(\Delta))} \Phi^{c(P(\Delta))}
\;\leq\; M^{-k} \Psi^{n - k} \Phi^{n - 1 - 2k}\,,
\end{equation*}
where in the last step we used \eqref{main estimate for coinciding indices}. Since
$M^{-1} \Psi^{-1} \Phi^{-2} \leq \Psi \leq 1$
we find $\cal X_P \prec \Psi^n \Phi^{n - 1}$,
which is \eqref{weak estimate for coinciding indices}.

The case of a closed chain $\Delta$ of degree $n$ is handled similarly. For definiteness assume that $\Delta$ has no external vertex. Now we have $k \deq n - \abs{V_s(P(\Delta))} \leq n - 1$ and we let $l$ range from $0$ to $k$. Then \eqref{iteration for coinciding indices} holds for $l = 0, \dots, n - 3$. If $l = n - 2$ then \eqref{iteration for coinciding indices} is in general false (as can be seen e.g.\ on the open chain of degree two with $V_s(\Delta) = \{1,2\}$ and $P = \{\{1,2\}\}$). In that case we replace it with the trivial bounds $r(\Delta_{n - 1}) \geq 0$ and $c(\Delta_{n - 1}) \geq 0$. Thus if $k \leq n - 2$ then we find \eqref{weak estimate for coinciding indices} exactly as above, and if $k = n - 1$ we get using $n \geq 2$
\begin{equation*}
\cal X_P \;\prec\; M^{-k} \Psi^{r(P(\Delta))} \Phi^{c(P(\Delta))} \;\leq\; M^{-n + 1} \;\leq\; \Psi^n \Phi^n\,,
\end{equation*}
which is \eqref{weak estimate for coinciding indices}.
\end{proof}

To conclude this section, we address an issue concerning coinciding indices that was repeatedly swept under the rug in Sections \ref{sec: chains}, \ref{section: Z5}, \ref{section: Z6}, and \ref{section: Z7}. Essentially, we do an inclusion-exclusion argument on the values of the summation indices of a union of chains so as to decouple the summations associated with different subchains. Recall the definition of $X_F^w(\Delta)$ from \eqref{general sum of Gs}.

\begin{lemma} \label{lemma: factor chains}
If $\Delta = \Delta_1 \cup \cdots \cup \Delta_k$ is a union\footnote{By union we mean that the chains $\Delta_1, \dots, \Delta_k$ may share external vertices but not summation vertices.} of chains then
\begin{equation} \label{factor chains}
X^w_\emptyset(\Delta) \;\prec\; \prod_{l = 1}^k \Psi^{\deg(\Delta_l)} \Phi^{c(\Delta_l)}\,.
\end{equation}
\end{lemma}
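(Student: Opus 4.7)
My plan is to adapt the inclusion-exclusion argument from the proof of Proposition \ref{corollary: weak Z lemma}. The first, elementary observation is that because the chains $\Delta_1, \dots, \Delta_k$ share no summation vertices, the monomial factorizes as $\cal Z_{\f a} = \prod_l \cal Z^{(l)}_{\f a_l}$, where $\f a_l$ collects the summation indices attached to $\Delta_l$. In particular $\deg(\Delta) = \sum_l \deg(\Delta_l)$ and $c(\Delta) = \sum_l c(\Delta_l)$, so the target bound is exactly $\Psi^{\deg(\Delta)} \Phi^{c(\Delta)}$.

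Next I would decompose
\begin{equation*}
X^w_\emptyset(\Delta) \;=\; \sum_P Y_P\,, \qquad Y_P \;\deq\; \sum_{\f a} \indb{\cal P(\f a, \f \mu) = P}\, w(\f a)\, \cal Z_{\f a}\,,
\end{equation*}
where $P$ ranges over partitions of $V_s(\Delta) \cup V_e(\Delta)$ in which every external vertex sits in its own singleton block (the starred sum already forces the $\f\mu$ indices to be distinct). For each $P$, let $P(\Delta)$ be the merged graph and $k_P \deq |V_s(\Delta)| - |V_s(P(\Delta))|$ the number of identifications. Definition \ref{definion: w} bounds the weight summation by $M^{-k_P}$, and after expanding the diagonal entries $G_{aa} = m + (G_{aa} - m)$ produced by any adjacent-vertex mergers, a decomposition of $P(\Delta)$ into maximal sub-chains (with merged branching points frozen as external vertices) combined with Proposition \ref{lemma: weak Z lemma} yields a structural factor of $\Psi^{r(P(\Delta))} \Phi^{c(P(\Delta))}$, in the notation of the proof of Proposition \ref{corollary: weak Z lemma}.

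The central power-counting step is the inequality
\begin{equation*}
r(P(\Delta)) + c(P(\Delta)) \;\geq\; \deg(\Delta) + c(\Delta) - 2 k_P\,,
\end{equation*}
which I would prove by the same one-merger-at-a-time induction that produced \eqref{iteration for coinciding indices}: each elementary merger either leaves $r$ unchanged and decreases $c$ by at most two, or decreases $r$ by one and $c$ by at most one. Combined with $M^{-k_P} \leq (\Psi^2 \Phi^2)^{k_P}$, this immediately gives $Y_P \prec \Psi^{\deg(\Delta)} \Phi^{c(\Delta)}$ uniformly in $P$. Summing over the $N$-independent family of admissible partitions then finishes the proof.

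The main obstacle is the careful bookkeeping when a merger identifies summation vertices from two \emph{distinct} subchains $\Delta_i$ and $\Delta_j$: such a cross-chain merger can concatenate subchains or create branching points, and the resulting $P(\Delta)$ is no longer a union of chains in the sense of Definition \ref{def: chains}. I would verify that the local estimate of \eqref{iteration for coinciding indices} still applies, using only that every summation vertex of $\Delta$ initially has degree two and that the effect of each merger on $r$ and $c$ is purely local to the two vertices being identified.
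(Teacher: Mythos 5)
There is a genuine gap, and it lies in the very first equation of your proposal. The definition \eqref{general sum of Gs} of $X^w_\emptyset(\Delta)$ carries a \emph{starred} sum, so the summation indices $\f a$ are already forced to be pairwise distinct and distinct from $\f \mu$; in the notation of Definition~\ref{def: partition}, this means $X^w_\emptyset(\Delta) = Y_{P_{\mathrm{atomic}}}$ and nothing else. Your claimed identity $X^w_\emptyset(\Delta) = \sum_P Y_P$ therefore does not hold; the right-hand side is the \emph{unconstrained} sum $\sum_{\f a} w(\f a)\cal Z_{\f a}$. You appear to have transplanted the partition splitting from the proof of Proposition~\ref{corollary: weak Z lemma}, but there the summation over $\f a$ was unconstrained, which is exactly why the splitting is nontrivial. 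Here it is already constrained, so that splitting is vacuous.

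Even setting this aside, the partition decomposition does not achieve the decoupling that Lemma~\ref{lemma: factor chains} requires. For any fixed $P$, the summand in $Y_P$ is restricted by $\indb{\cal P(\f a,\f\mu)=P}$, which still forces the \emph{unmerged} summation indices to take pairwise distinct values. These residual distinctness constraints run across different subchains $\Delta_i, \Delta_j$, so $Y_P$ does \emph{not} factor into a product of expressions associated with the maximal subchains of $P(\Delta)$, and Proposition~\ref{lemma: weak Z lemma} cannot be invoked subchain by subchain as your plan requires. This coupling between subchains is precisely the obstruction that the lemma is designed to resolve, and the power-counting inequality $r(P(\Delta)) + c(P(\Delta)) \geq \deg(\Delta)+c(\Delta)-2k_P$ does nothing to address it.

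The paper's mechanism is different: it writes the constraint as an explicit indicator $I = \prod(1 - \ind{\cdots})$, expands by inclusion-exclusion $I = \sum_\alpha I_\alpha$, and observes that for each fixed $\alpha$ the surviving constraints are \emph{only equalities} (the $\ind{\cdots}$ factors that were chosen), with all remaining index pairs unconstrained. Consequently the summand for each $\alpha$ genuinely factors over the subchains of the merged graph $\Delta^{(\alpha)}$, and Proposition~\ref{corollary: weak Z lemma} (which is stated precisely for unconstrained fresh-index sums) is then applied to each factor. This conversion of ``all distinct'' into a finite linear combination of ``some pairs merged, the rest free'' is the indispensable step missing from your proposal; the remaining bookkeeping in your write-up (the $M^{-k_P}\le (\Psi^2\Phi^2)^{k_P}$ trade-off and the local $r$- and $c$-counting) is essentially the same as the paper's, but it only applies once the factorization is in hand.
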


In words: if $\Delta$ is a union of chains, then in the summation over $\f a$ in $X^w_\emptyset(\Delta)$ we can decouple the summations associated with different subchains of $\Delta$. (In $X^w_\emptyset(\Delta)$ these summations are coupled by the constraint that indices associated with different subchains are distinct.)

\begin{proof}[Proof of Lemma \ref{lemma: factor chains}]
This is a simple decoupling of the summation indices. Let $\f a^l$ and $\f \mu^l$ denote the summation and external indices of $\Delta_l$. Abbreviate $\cal Z_{\f a^l}^{\f \mu^l}(\Delta_l) \equiv \cal Z^l_{\f a^l}$. Thus we have
\begin{equation*}
X_\emptyset(\Delta) \;=\; \sum_{\f a^1 \cdots \f a^k}^{(\f \mu^1 \cdots \f \mu^k)*} w(\f a^1, \dots, \f a^k) \cal Z^1_{\f a^1} \cdots \cal Z^k_{\f a^k}
\;=\; \sum_{\f a^1 \cdots \f a^k} I(\f a^1, \dots, \f a^k) \, w(\f a^1, \dots, \f a^k) \cal Z^1_{\f a^1} \cdots \cal Z^k_{\f a^k}\,,
\end{equation*}
where the indicator function $I$ explicitly enforces that all $a_i^l$'s are distinct from all $\mu_i^l$'s
and they are district among themselves. Explicitly,
\begin{multline*}
I(\f a^1, \dots, \f a^k) \;\deq\; \qBB{\prod_{l = 1}^k \prod_{i,j \in V_s(\Delta_l)}^* \pb{1 - \ind{a_i^l = a_j^l}}}
\qBB{\prod_{l,m \leq k} \prod_{i \in V_s(\Delta_l)}  \prod_{j \in V_e(\Delta_m)} \pb{1 - \ind{a_i^l = \mu_j^m}}}
\\
\times
\qBB{\prod_{l,m \leq k}^* \prod_{i \in V_s(\Delta_l)} \prod_{j \in V_s(\Delta_m)} \pb{1 - \ind{a_i^l = a_j^m}}}\,.
\end{multline*}
Multiplying out each parenthesis in the definition of $I$, we get a splitting of the form $I = \sum_\alpha I_\alpha$ (the sum ranges over a finite set which depends only on $\Delta$). For each $\alpha$, we may now estimate
\begin{equation} \label{splitting of coinciding for chains}
\sum_{\f a^1 \cdots \f a^k} I_\alpha(\f a^1, \dots, \f a^k) \, w(\f a^1, \dots, \f a^k) \cal Z^1_{\f a^1} \cdots \cal Z^k_{\f a^k} \;\prec\; \prod_{l = 1}^k \Psi^{\deg(\Delta_l)} \Phi^{c(\Delta_l)}\,.
\end{equation}
To see this, we note that picking the term $\ind{\cdots}$ from the parenthesis $(1 - \ind{\cdots})$ results in the merging of two vertices. Thus, the left-hand side of \eqref{splitting of coinciding for chains} is encoded by a graph $\Delta^{(\alpha)}$ obtained from $\Delta$ by merging vertices according to $I_\alpha$. Whenever two vertices are merged, we may lose two chain vertices, but gain a power $M^{-1}$ from the chain weight (since if indices $a$ and $a'$ coincide, then one
of the factors $s_{ab}$ and $s_{a'b'}$ in the chain weight (see \eqref{chain weight}) can be dropped from
the weight and estimated by $M^{-1}$). The associated loss of $\Phi^2$ is therefore compensated by $M^{-1} \leq \Phi^2$.
In order to gain from the chain vertices in the merged graph, we invoke Lemma \ref{corollary: weak Z lemma} to get
\begin{equation} \label{product estimte for subchain}
\sum_{\f a'} w'(\f a') \cal Z_{\f a'}(\Delta') \;\prec\; \Phi^{\deg(\Delta')} \Phi^{c(\Delta')}
\end{equation}
for each subchain $\Delta'$ of $\Delta^{(\alpha)}$.
Here \eqref{product estimte for subchain} is applicable because the left-hand side of \eqref{splitting of coinciding for chains} factors into a product of expressions encoded by the subchains of $\Delta^{(\alpha)}$ (i.e.\ there are no summation constraints that involve two different subchains of $\Delta^{(\alpha)}$). This completes the proof of \eqref{splitting of coinciding for chains}, and hence of \eqref{factor chains}.
\end{proof}

\subsection{Removing Simplification {\bf (S1)} and completion of the proof of Theorem \ref{theorem: Z lemma}} \label{section: Z9}
In this section we remove Simplification {\bf (S1)} and put the arguments from Sections \ref{section: Z3}, \ref{section: Z5}, \ref{section: Z6}, \ref{section: Z7}, and \ref{section: Z8} together to complete the proof of Theorem \ref{theorem: Z lemma} in full generality.

The following tensorization property of weights plays a crucial role in this section.
\begin{lemma} \label{lemma: product weight}
If $w'(\f a')$ and $w''(\f a'')$ are weights then so is $w(\f a', \f a'') \deq w'(\f a') w''(\f a'')$.
\end{lemma}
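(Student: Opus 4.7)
The plan is to verify Definition \ref{definion: w} directly for the product weight $w(\f a', \f a'') \deq w'(\f a') w''(\f a'')$ on $V_s(\Delta) \deq V_s(\Delta') \sqcup V_s(\Delta'')$ of cardinality $n \deq n' + n''$, where $n' \deq |V_s(\Delta')|$ and $n'' \deq |V_s(\Delta'')|$. Fix an arbitrary partition $V_s(\Delta) = I \sqcup J$ and an arbitrary partition $P$ of $J$, and split $I = I' \sqcup I''$, $J = J' \sqcup J''$ along $V_s(\Delta')$ and $V_s(\Delta'')$. Each block $B \in P$ falls into one of three classes: \emph{pure} $J'$-block ($B \subset J'$), \emph{pure} $J''$-block ($B \subset J''$), or \emph{mixed} (intersects both $J'$ and $J''$). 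Write $P'_{\text{free}}$, $P''_{\text{free}}$, $M_{\text{mix}}$ for the three collections. The induced partition $P'$ of $J'$ (obtained by intersecting blocks of $P$ with $J'$) then has blocks $P'_{\text{free}} \sqcup \{B \cap J' : B \in M_{\text{mix}}\}$, so $|P'| = |P'_{\text{free}}| + |M_{\text{mix}}|$, and similarly $|P''| = |P''_{\text{free}}| + |M_{\text{mix}}|$, while $|P| = |P'_{\text{free}}| + |P''_{\text{free}}| + |M_{\text{mix}}|$.

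Next, parametrize $\f a_J$ satisfying $\cal P(\f a_J) = P$ by the common block values $v_B$ for $B \in P$, decomposed as $(\f v^{J'}_{\text{free}}, \f v^{J''}_{\text{free}}, \f v^{\text{mix}})$. Observe that $\cal P(\f a_J) = P$ implies both $\cal P(\f a'_{J'}) = P'$ and $\cal P(\f a''_{J''_{\text{free}}}) = P''_{\text{free}}$, with $\f a''_{J''_{\text{mix}}}$ determined by $\f v^{\text{mix}}$. Dropping the remaining inter-group distinctness constraints (an upper bound) yields
\begin{equation*}
\sum_{\f a_J} \ind{\cal P(\f a_J) = P} w'(\f a'_{I'}, \f a'_{J'}) w''(\f a''_{I''}, \f a''_{J''}) \;\leq\; \sum_{\f v^{\text{mix}}, \f v^{J'}_{\text{free}}} \indb{\cal P(\f a'_{J'}) = P'} w'(\f a'_{I'}, \f a'_{J'}) \, T(\f v^{\text{mix}})\,,
\end{equation*}
where
\begin{equation*}
T(\f v^{\text{mix}}) \;\deq\; \sum_{\f v^{J''}_{\text{free}}} \indb{\cal P(\f v^{J''}_{\text{free}}) = P''_{\text{free}}} w''\pb{\f a''_{I''}, \f v^{J''}_{\text{free}}, \f a''_{J''_{\text{mix}}}(\f v^{\text{mix}})}\,.
\end{equation*}

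Now apply the weight property of $w''$ to the augmented partition $V_s(\Delta'') = \wt I'' \sqcup \wt J''$ with $\wt I'' \deq I'' \cup J''_{\text{mix}}$ and $\wt J'' \deq J''_{\text{free}}$, and the partition $P''_{\text{free}}$ of $\wt J''$; this gives the uniform bound $T(\f v^{\text{mix}}) \leq M^{|P''_{\text{free}}| - n''}$, independently of $\f v^{\text{mix}}$ and $\f a''_{I''}$. Plugging back and applying the weight property of $w'$ with the original partition $V_s(\Delta') = I' \sqcup J'$ and the partition $P'$ of $J'$ (whose left-hand side sum is exactly the joint sum over $\f v^{\text{mix}}$ and $\f v^{J'}_{\text{free}}$), we obtain the bound $M^{|P'| - n'} \cdot M^{|P''_{\text{free}}| - n''}$. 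Using $|P'| + |P''_{\text{free}}| = |P'_{\text{free}}| + |M_{\text{mix}}| + |P''_{\text{free}}| = |P|$ and $n = n' + n''$, the bound collapses to $M^{|P| - n}$, as required.

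The main obstacle is choosing the correct decoupling. A symmetric split that drops the linkage between $\f a'$ and $\f a''$ through the mixed blocks and freely sums $\f v^{\text{mix}} \in \{1, \dots, N\}^{|M_{\text{mix}}|}$ would cost a prohibitive factor $N^{|M_{\text{mix}}|}$. The point of the asymmetric bookkeeping above is that the mixed block values $\f v^{\text{mix}}$ are absorbed as \emph{additional frozen indices} for $w''$ (picking up nothing) but appear as \emph{free} blocks of $P'$ for $w'$, which then supplies exactly the $M^{-|M_{\text{mix}}|}$ decay needed to balance the combinatorics.
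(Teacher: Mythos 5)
Your proof is correct, and it is the careful verification that the paper leaves to the reader (the paper's ``proof'' is just the sentence ``The claim easily follows from Definition \ref{definion: w}''). The key point --- that a naive symmetric application of the weight property to $w'$ and $w''$ would overcount by $M^{|M_{\text{mix}}|}$, and that this is fixed by absorbing the mixed-block values into the frozen indices of one factor while letting them contribute free blocks of the induced partition for the other --- is exactly the observation needed, and your bookkeeping ($|P'| + |P''_{\text{free}}| = |P|$, $n = n' + n''$) is correct. One small notational slip: in the definition of $T$ the indicator should read $\indb{\cal P(\f a''_{J''_{\text{free}}}) = P''_{\text{free}}}$ rather than $\indb{\cal P(\f v^{J''}_{\text{free}}) = P''_{\text{free}}}$, since $\cal P$ acts on index tuples rather than on tuples of block values; the intended meaning (distinctness of the pure-$J''$ block values) is, however, clear.
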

\begin{proof}
The claim easily follows from Definition \ref{definion: w}.
\end{proof}

Recall that Simplification {\bf (S1)} states that no index coincidences occur among the indices $\f a$ when we compute the $p$-th power of $X_F(\Delta)$, i.e.\ in going from $\Delta$ to $\gamma^p(\Delta)$. In order to relax Simplification {\bf (S1)}, we go back to Section \ref{section: Z3}. In this section we add a tilde to the original summation indices in \eqref{E X^p}: $\tilde {\f a} = (\tilde a_i)_{i \in V_s(\gamma^p(\Delta))}$ (we shall use $\f a$ to denote the merged summation indices; see below). Let $\tilde w(\tilde{\f a})$ denote the product weight (see Lemma \ref{lemma: product weight}) in the $p$-fold copy of $X_F(\Delta)$. In general, if we do not assume Simplification {\bf (S1)} then in \eqref{E X^p} the original summation vertices $\tilde{\f a}$ associated with different copies of $\Delta$ may coincide. As in the proof of Proposition \ref{corollary: weak Z lemma}, we split the summation using partitions by introducing the factor
\begin{equation*}
1 \;=\; \sum_P \ind{\cal P(\tilde{\f a}) = P}
\end{equation*}
into the right-hand side of \eqref{E X^p}. Here the summation ranges over partitions of $V_s(\gamma^p(\Delta))$. Thus we get a finite collection of terms indexed by partitions $P$, which we estimate individually (The combinatorics stemming from the number of partitions is independent of $N$ 
and will be included in the irrelevant constant prefactors in the final estimate).

Thus, for the sequel we choose and fix a partition $P$ of $V_s(\gamma^p(\Delta))$. If two vertices of $V_s(\gamma^p(\Delta))$ are in the same block of $P$, we merge them and get a single vertex. Thus we get a new graph which we denote by $\gamma^p_P(\Delta)$. As before we have the splitting $V(\gamma^p_P(\Delta)) = V_e(\gamma^p_P(\Delta)) \sqcup V_s(\gamma^p_P(\Delta))$, where $V_e(\gamma^p_P(\Delta)) = V_e(\gamma^p(\Delta)) = V_e(\Delta)$ and $V_s(\gamma^p_P(\Delta))$ is given by the blocks of $P$. We use $\f a = (a_i)_{i \in V_s(\gamma^p_P(\Delta))}$ to denote the summation indices of the graph $\gamma^p_P(\Delta)$. Each summation vertex of $\gamma^p_P(\Delta)$ is either \emph{unmerged} or \emph{merged}, depending on whether the associated block of $P$ is of size one or greater than one. We have the trivial lift $\tilde{\f a} = L_P(\f a)$ defined by $\tilde a_l = a_i$ if $l$ belongs to the block $i$ of $P$. In merging two vertices $i$ and $j$ in $V_s(\gamma^p(\Delta))$, we lose in general all mechanisms that extract smallness (ingredients (b) and (c) in the list of the guiding principle of Section \ref{section: Z1})
from them, including the linking associated with the possible factors $Q_{a_i}$ or $Q_{a_j}$. On the other hand, we gain a factor $M^{-1}$ from the reduction of the combinatorics of the summation. Generally, the reduced summation yields a factor $M^{\abs{V_s(\gamma^p_P(\Delta))} - \abs{V_s(\gamma^p(\Delta))}}$. More precisely,
\begin{equation} \label{merged weight}
\sum_{\f a} w_P(\f a) \;\leq\; 1 \,, \qquad w_P(\f a) \;\deq\; \tilde w(L_P(\f a)) M^{\abs{V_s(\gamma^p(\Delta))} - \abs{V_s(\gamma^p_P(\Delta))}}\,.
\end{equation}
This follows from \eqref{definition of weight} and the fact that $\tilde w$ is a weight by Lemma \ref{lemma: product weight}. We stress that this is the only point where the assumption \eqref{definition of weight} is needed in our proof.

Having fixed the merging of the vertices, we may now construct all graphs $\Gamma \in \fra G^p_{F,P}(\Delta)$; note that this set now depends on $P$. Here $\fra G^p_{F,P}(\Delta)$ is constructed using the same algorithm as $\fra G^p_F(\Delta)$ in Section \ref{section: Z3}. In this case, however, each graph $\Gamma \in \fra G^p_{F,P}(\Delta)$ has the property that \emph{unmerged} summation vertices of $\gamma^p_P(\Delta)$ which come with a $Q$ have have been linked with an edge of $\Gamma$. There is no similar constraint for merged vertices. (The proof is the same as that for $\fra G^p_F(\Delta)$ in Section \ref{section: Z3}.)

Now we may repeat the arguments of Sections \ref{section: Z5}, \ref{section: Z6}, \ref{section: Z7}, and \ref{section: Z8} almost verbatim. The only difference is that we only gain from the \emph{unmerged} vertices of $\Gamma$. For example, if $i \in V_s(\Gamma)$ is unmerged and satisfies $i \in \pi^{-1}(F)$, then it must have been linked with an edge. Similarly, if $i \in V_s(\Gamma)$ is unmerged and marked, it will give rise to a chain vertex after vertex resolution, and hence a factor $\Phi$.

In order to account for the gain from the merged original summation vertices of $\Gamma$, we interpret the estimate \eqref{merged weight} as stating that
each summation vertex $i$ of $\gamma^p(\Delta)$  carries a factor $M^{-1/2}$. 
This means if $i$ is merged then we gain a factor $M^{-1/2}$ 
over the unmerged scenario. (It is easy to see
that this counting corresponds to the worst-case scenario where vertices 
of $\gamma^p(\Delta)$ were paired to get $\gamma^p_P(\Delta)$.  For example,
if we have a weight $w(a,b,c,d)$ with $\sum_{abcd} w(a,b,c,d)=1$
and we merge $a$ with $b$ and $c$ with $d$,
then the new weight $w_{P}(a,c)= w(a,a,c,c)$ will sum up to
\begin{equation*}
\sum_{a,c} w_{P}(a,c) \;=\; \sum_{a,c} w(a,a,c,c) \;\leq\; M^{-2}
\end{equation*}
by \eqref{definition of weight}.  The gain of order $M^{-2}$ can then be distributed among the
four vertices involved in the merging, each receiving a factor $M^{-1/2}$.)
This gain of $M^{-1/2}$ compensates any possible gain associated with $i$, which is at best $\Psi \Phi$ (in the case where $\pi(i)$ is marked and belongs to $F$). See the guiding principle in Section \ref{section: Z1}.

The proof is then completed by the simple observation that $M^{-1/2} \leq \Psi \Phi$.

\section{Proof of Theorem \ref{theorem: Z lemma variant}} \label{sec: proof of variant}

In this section we prove Theorem \ref{theorem: Z lemma variant}. The proof relies on some ideas from the proof of Theorem \ref{theorem: Z lemma}, but is considerably easier. The strategy is to resolve (using the Family B identities) the summation vertices (associated with indices $\f a$) using the partial expectation $\prod_{a \in \f a} P_a$, and to 
estimate the resulting averaging using Theorem \ref{theorem: Z lemma}. Thus, unlike in the proof of Theorem \ref{theorem: Z lemma}, there is no need to estimate high moments.

Before giving the general proof, let us consider the simple example
\begin{align*}
P_a G_{\mu a} G_{a \mu} &\;=\; P_a \frac{m^2}{G_{aa}^2} G_{\mu a} G_{a \mu} + P_a \pbb{1 - \frac{m^2}{G_{aa}^2}} G_{\mu a} G_{a \mu}
\\
&\;=\; m^2 P_a \sum_{x,y}^{(a)} G_{\mu x}^{(a)} h_{x a} h_{a y} G_{y \mu}^{(a)} + O_\prec(\Psi^3)
\\
&\;=\; m^2 \sum_{x}^{(a)} s_{ax} G_{\mu x}^{(a)} G_{x \mu}^{(a)} + O_\prec(\Psi^3)
\\
&\;=\; m^2 \sum_{x}^{(a)} s_{ax} G_{\mu x} G_{x \mu} + O_\prec(\Psi^3)
\\
&\;=\; O_\prec(\Psi^2 \Phi)\,,
\end{align*}
where in the second step we used \eqref{res exp 2b} and the bound $\Lambda \prec \Psi$, in the third step \eqref{variance of h}, in the fourth step \eqref{resolvent expansion type 1}, and in the last step Theorem \ref{theorem: Z lemma} (or Proposition \ref{lemma: weak Z lemma}).

The argument for a general graph $\Delta$ is similar. We have to gain a factor $\Phi$ from each vertex $i \in V_c(\Delta)$ (in addition to the trivial $\deg(\Delta)$ factors $\Psi$). We use the terminology of Sections \ref{section: Z3} -- \ref{section: simplifications} without further comment. The proof consists of the following steps, which we merely sketch as they are almost identical to those of Sections \ref{section: Z5} and \ref{section: simplifications}.
\begin{enumerate}
\item
Make all entries of $\cal Z_{\f a}(\Delta)$ maximally expanded in $\f a$ using the algorithm from the proof of Lemma \ref{lemma: weak moment estimate}. The resulting linking yields a set of graphs $\fra G(\Delta)$ satisfying (recall the notation \eqref{Max})
\begin{equation*}
\cal Z_{\f a}(\Delta) \;=\; \sum_{\Gamma \in \fra G(\Delta)} \cal A_{\f a}(\Gamma) + O_\prec(\Psi^{\deg(\Delta) + \abs{V_s(\Delta)}})\,,
\end{equation*}
where all resolvent entries of $\cal A_{\f a}(\Gamma)$ are maximally expanded in $\f a$. Each graph $\Gamma \in \fra G(\Delta)$ resulted from $\Delta$ by a finite number (possibly zero) of linking operations. In particular, $V_s(\Gamma) = V_s(\Delta)$.
\item
Let $V_m(\Gamma) \subset V_c(\Delta)$ denote those vertices of $V_c(\Delta)$ that were not linked to in the process $\Delta \mapsto \Gamma$. (In other words, $i \in V_m(\Gamma)$ if and only if $\deg_\Delta(i) = \deg_\Gamma(i)$.) We have to gain a factor $\Phi$ from each vertex $i \in V_m(\Gamma)$; note that each $i \in V_c(\Delta) \setminus V_m(\Gamma)$ yields a factor $\Psi$ due to the additional edge incident to $i$ produced by the linking to $i$.

Now we follow the vertex resolution of Sections \ref{section: Z5}, \ref{section: Z6}, and \ref{section: Z7} to the letter. The only difference is that the $\cal A_{\f a}(\Gamma)$ is not contained within a full expectation $\E$ but a partial expectation $\prod_{a \in \f a}P_a$ instead. 
We resolve all vertices in $V_s(\Gamma)$, which yields the splitting
\begin{equation*}
\cal Z_{\f a}(\Delta) \;=\; \sum_{\Gamma \in \fra G(\Delta)} \sum_{\Upsilon \in \fra L(\fra R(\Gamma))} \sum_{\f x}^{(\f a)} \cal A_{\f a, \f x}(\Upsilon) + O_\prec(\Psi^{\deg(\Delta) + \abs{V_s(\Delta)}})\,,
\end{equation*}
where $\f x \in \{1, \dots N\}^{V_f(\Upsilon)}$ denotes the fresh summation indices of $\Upsilon$.
\item
Exactly as in Sections \ref{sec: completion of proof under all S} and \ref{section: Z6}, each vertex $i \in V_m(\Gamma)$ either carries an extra factor $\Phi$ (if an error term of subleading order was chosen in the resolution of $i$) or gives rise to a fresh summation vertex $j \in p^{-1}(i)$ that is a chain vertex of $\Upsilon$. Hence we may invoke Theorem \ref{theorem: Z lemma}, for each fixed $\Upsilon \in \fra L(\fra R(\Gamma))$, to get
\begin{equation*}
\sum_{\f x}^{(\f a)} \cal A_{\f a, \f x}(\Upsilon) \;\prec\; \Psi^{\deg(\Delta)} \Phi^{\abs{V_c(\Delta)}}\,.
\end{equation*}
This concludes the proof of Theorem \ref{theorem: Z lemma variant}.
\end{enumerate}

\appendix

\section{Basic resolvent bounds} \label{section: proof of res id}

In this appendix we collect some useful tools about resolvents, and in particular prove Lemmas \ref{lemma: Lambda T}, \ref{lemma: rough bounds on G}, and \ref{lemma: diagonal estimates}.

\begin{proof}[Proof of Lemma \ref{lemma: Lambda T}]
Let $\epsilon > 0$ and $D > 0$ be arbitrary. From \eqref{lower bound on W} and \eqref{admissible Psi} we find that there exists $c_0, c_1 \in (0,\epsilon/2)$ and an event $\Xi$ such that
\begin{equation*}
\Lambda(z) \ind{\Xi} \;\leq\; N^{c_0} \Psi(z) \;\leq\; N^{-c_1}
\end{equation*}
for all $z \in \f S$ and large enough $N$, and $\P(\Xi^c) \;\leq\; N^{- D}$.
Thus we conclude using \eqref{m is bounded} that
\begin{equation*}
\sup_{z \in \f S} \max_i \pB{\absB{1/G_{ii}(z)} \ind{\Xi}} \;\leq\; C
\end{equation*}
for large enough $N$. Using the first identity of \eqref{resolvent expansion type 1} and \eqref{m is bounded} again, we find
\begin{equation} \label{induction for minors}
\max_{\abs{T} = \ell} \max_{i,j \notin T} \pB{\absb{G_{ij}^{(T)}(z) - \delta_{ij} m(z)} \ind{\Xi}} \;\leq\; C N^{c_0} \Psi(z) \,, \qquad \sup_{z \in \f S} \max_{\abs{T} = \ell} \max_{i \notin T} \pB{\absB{1/G^{(T)}_{ii}(z)} \ind{\Xi}} \;\leq\; C\,.
\end{equation}
for $\ell = 1$. Using the first identity of \eqref{resolvent expansion type 1} and \eqref{m is bounded}, we may now proceed inductively on $\ell = 1,2,\dots$, at each step proving \eqref{induction for minors} for $\ell$ assuming it holds for $\ell - 1$. The result is
\begin{equation} \label{minor estimate}
\sup_{\abs{T} \leq \ell} \max_{i,j \notin T} \pB{\absb{G_{ij}^{(T)}(z) - \delta_{ij} m(z)} \ind{\Xi}} \;\leq\; C_\ell N^{c_0} \Psi(z) \;\leq\; N^\epsilon \Psi(z)
\end{equation}
for all $z \in \f S$. This concludes the proof.
\end{proof}

\begin{proof}[Proof of Lemma \ref{lemma: rough bounds on G}]
The estimate \eqref{rough bound 1} follows immediately from $\absb{G_{ij}^{(T)}(E + \ii \eta)} \leq 
\eta^{-1}$ and the definition of $\f S$.

In order to prove \eqref{rough bound 3}, we choose $D \deq 10 p$ and let $\Xi$ denote the event from the proof of Lemma \ref{lemma: Lambda T} above. First we deal with the high-probability event $\Xi$. From \eqref{minor estimate} and \eqref{m is bounded} we immediately get
\begin{equation} \label{rough estimate on good event}
\sup_{z \in \f S} \sup_{\abs{T} \leq \ell} \max_{i \notin T} \pB{\absB{1/G_{ii}^{(T)}(z)} \ind{\Xi}} \;\leq\; C\,.
\end{equation}
In order to handle the exceptional event $\Xi^c$, we use Schur's formula \eqref{schur}.
Then by Cauchy-Schwarz, \eqref{rough bound 1}, and \eqref{finite moments}, we find
\begin{equation} \label{rough estimate on bad event}
\E \pbb{\absB{1/G_{ii}^{(T)}(z)}^p \ind{\Xi^c}} \;\leq\; \qbb{\E \pbb{\absB{1/G_{ii}^{(T)}(z)}^{2p} \ind{\Xi^c}}}^{1/2} \P(\Xi^c)^{1/2}
\;\leq\; (C + N^3)^{p} N^{-5p}\,.
\end{equation}
Combining \eqref{rough estimate on good event} and \eqref{rough estimate on bad event} yields \eqref{rough bound 3}.
\end{proof}

\begin{proof}[Proof of Lemma \ref{lemma: diagonal estimates}]
To simplify notation, we set $T = \emptyset$ (the proof for nonempty $T$ is the same).
A simple large deviation estimate (see e.g.\ Lemmas B.1 and B.2 in \cite{EYY1}) applied to
\begin{equation*}
Z_i \;=\; \sum_{k}^{(i)} \pb{\abs{h_{ik}}^2 - s_{ik}} G^{(i)}_{kk} + \sum_{k \neq l}^{(i)} h_{ik} G^{(i)}_{kl} h_{li}
\end{equation*}
implies $Z_i \prec \Psi$.

As above, for the estimate of $U_i^{(S)}$ we set $S = \emptyset$ to simplify notation.
Using \eqref{S is stochastic} we write
\begin{equation*}
U_i \;=\; \sum_k s_{ik} (G_{kk} - m) \;=\; \sum_k s_{ik} P_k (G_{kk} - m) + \sum_k s_{ik} Q_k (G_{kk} - m) \;=\; \sum_k s_{ik} P_k (G_{kk} - m) + O_\prec(\Psi^2)\,,
\end{equation*}
where the last step follows from Proposition \ref{prop: warm-up}. Now we expand the inverse of
\eqref{res exp 2c} using \eqref{identity for msc} to get
\begin{equation*}
G_{kk} - m \;=\; m^2 \pb{-h_{kk} + Z_k + U_k^{(k)}} + O_\prec(\Psi^2)\,,
\end{equation*}
where we estimated the higher-order terms using \eqref{m is bounded} and the trivial bounds $h_{ii} \prec \Psi$, $U_i^{(i)} \prec \Psi$, and $Z_i \prec \Psi$ (as proved in the previous paragraph). Using $P_k h_{kk} = 0$ and $P_k Z_k = 0$ we therefore get
\begin{align*}
U_i &\;=\; m^2 \sum_k s_{ik} P_k U_k^{(k)} + O_\prec(\Psi^2)
\\
&\;=\; m^2 \sum_{k} s_{ik} \pbb{\sum_{l}^{(k)} s_{kl} P_k G_{ll}^{(k)} - m} + O_\prec(\Psi^2)
\\
&\;=\; m^2 \sum_{k,l} s_{ik} s_{kl} (G_{ll} - m) + O_\prec(\Psi^2)
\\
&\;=\; m^2 \sum_{k} s_{ik} U_k + O_\prec(\Psi^2)\,,
\end{align*}
where in the third step we used \eqref{s leq W}, \eqref{S is stochastic}, \eqref{admissible Psi}, and \eqref{resolvent expansion type 1}. Inverting the operator $1 - m^2 S$ therefore yields $U_i \prec \varrho \Psi^2$. On the other hand, the estimate $U_i \prec \Psi$ is trivial. This concludes the proof.
\end{proof}

\section{The coefficient $\varrho$ for band matrices} \label{appendix: rho for band}

In this section we prove an explicit bound for the coefficient $\varrho$ defined in \eqref{def of rho}, 
in the case that $S$ is the variance matrix of
 a band matrix $H$, as defined in Example \ref{example: band matrix}.
In fact, we need only that the spectrum $\sigma(S)$ of $S$ is separated away from $-1$; that is this always true for band matrices is the content of the following lemma.

\begin{lemma} \label{lemma: lower gap}
Suppose that $H$ is a $d$-dimensional band matrix from Example \ref{example: band matrix}.
Then there is a constant $\delta_- > 0$, depending only on  the profile function
$f$, such that $\sigma(S) \subset [-1 + \delta_-, 1]$.
\end{lemma}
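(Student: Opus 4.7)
The plan is to diagonalize $S$ by Fourier transform on $\bb T^d_L$, relate the discrete eigenvalues to the continuous Fourier transform $\hat f$ of the profile via Poisson summation, and then show that $\hat f$ is uniformly bounded below by $-1 + 2\delta_-$ using the fact that $f$ is a genuine density (and not a singular measure supported on a union of hyperplanes).

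First I would diagonalize $S$. Since $s_{ij}$ depends only on $[i-j]_L$, the matrix $S$ is a convolution operator on $\bb T^d_L$ and is therefore diagonalized by the discrete Fourier characters $\chi_p(k) = e^{2\pi \ii p \cdot k/L}$, $p \in \bb T^d_L$, with eigenvalues
\begin{equation*}
\lambda_p \;=\; \frac{1}{Z_L} \sum_{v \in [-L/2,L/2)^d \cap \Z^d} f(v/W) \, e^{2\pi \ii p \cdot v/L}\,.
\end{equation*}
These are real by the symmetry $f(-x)=f(x)$, and stochasticity immediately gives $|\lambda_p| \leq 1$, so $\sigma(S) \subset [-1,1]$. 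To control $\lambda_p$ from below, I would set $q \deq p/L \in [-1/2,1/2)^d$ and apply Poisson summation (after rescaling by $W$) to obtain
\begin{equation*}
Z_L \lambda_p \;=\; W^d \sum_{m \in \Z^d} \hat f\pb{W(m - q)}\,, \qquad Z_L \;=\; W^d \sum_{m \in \Z^d} \hat f(Wm)\,,
\end{equation*}
where $\hat f(\xi) \deq \int_{\R^d} f(x) e^{-2\pi \ii x \cdot \xi} \dd x$. The $m = 0$ contributions are $\hat f(Wq)$ in the numerator and $\hat f(0) = 1$ in the denominator; the remaining terms satisfy $|W(m - q)|_\infty \geq W/2$ and are therefore controlled by $\sup_{|\xi| \geq W/2}|\hat f(\xi)|$, which tends to $0$ as $W \to \infty$ by the Riemann--Lebesgue lemma (using $f \in L^1$). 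Since $W \geq L^{\delta'} \to \infty$, this will give $\lambda_p = \hat f(Wp/L) + o(1)$ uniformly in $p$.

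It then remains to show $\inf_{\xi \in \R^d} \hat f(\xi) > -1$. I would use three properties of $\hat f$: (i) $\hat f(0) = 1$ and $\hat f$ is continuous, so $\hat f \geq 1/2$ in a neighborhood of $0$; (ii) $|\hat f(\xi)| < 1$ strictly for every $\xi \neq 0$---indeed, equality in $|\hat f(\xi)| \leq \int f = 1$ would force $e^{-2\pi \ii x \cdot \xi}$ to be constant on $\{f>0\}$ up to null sets, confining the support of $f$ to a countable union of parallel hyperplanes and contradicting the hypothesis that $f$ is a bounded function of total mass $1$; (iii) $\hat f(\xi) \to 0$ as $|\xi| \to \infty$ (Riemann--Lebesgue). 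A compactness argument on $\{|\xi| \geq \varepsilon\}$ combining (ii) and (iii) then produces the uniform gap $\hat f \geq -1 + 2\delta_-(f)$ on all of $\R^d$. Substituting into the previous step yields $\lambda_p \geq -1 + \delta_-$ for $L$ sufficiently large, and the finitely many remaining small $L$ can be absorbed by decreasing $\delta_-$. The main obstacle will be making the Poisson summation rigorous with uniform error bounds: for a general bounded $L^1$ profile, absolute convergence of $\sum_{v \in \Z^d} f(v/W)$ can fail and the truncation error between the sum over $[-L/2,L/2)^d \cap \Z^d$ and over $\Z^d$ must be estimated uniformly in $W/L$. This is routine under mild additional regularity (e.g.\ compact support or sufficient decay of $f$), and the conclusion is stable under $L^1$-approximation of $f$.
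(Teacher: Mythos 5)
Your approach is essentially the same as the one in \cite[Lemma A.1]{EYY1}, to which the paper defers for this lemma. Diagonalizing $S$ by the discrete Fourier transform on $\bb T^d_L$, passing to the continuum symbol $\hat f$, and establishing a uniform gap for $\hat f$ via the three observations you list (continuity with $\hat f(0)=1$; the strict inequality $\abs{\hat f(\xi)}<1$ for $\xi\neq 0$ because a bounded density cannot concentrate on a Lebesgue-null union of hyperplanes; Riemann--Lebesgue decay at infinity; combined with a compactness argument on an intermediate shell) is exactly how the cited reference argues. All of these steps are correct.

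The one place the proposal is not tight is the closing sentence. You suggest first establishing the discrete-to-continuum transition (Poisson summation, or equivalently the Riemann-sum approximation $W^{-d}\sum_v f(v/W) e^{-2\pi\ii v\cdot\xi/W}\to \hat f(\xi)$) under extra regularity of $f$, and then recovering the general bounded $L^1$ case by approximation. The second half of that plan does not work: the eigenvalues $\lambda_p$ are finite lattice sums of pointwise values $f(v/W)$, and pointwise evaluation is not continuous in the $L^1$ topology, so $\norm{f_n-f}_1\to 0$ does not control $\abs{\lambda_p(f_n)-\lambda_p(f)}$. The $L^1$-stability you observe does hold for the symbol itself, via $\norm{\hat f_n-\hat f}_\infty\leq \norm{f_n-f}_1$, but it fails for the lattice sums defining the eigenvalues and for the truncation from $\Z^d$ to $\bb T^d_L$. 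The correct reading is that the regularity you invoke (piecewise continuity together with compact support or sufficient decay) is part of the standing hypotheses on the band profile, as in \cite{EYY1}, rather than something recoverable by density; once that is granted, your argument closes.
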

\begin{proof}
See \cite[Lemma A.1]{EYY1}.
\end{proof}

\begin{proposition} \label{prop: rho for band} Let $S$ be a doubly stochastic matrix satisfying $\sigma(S) \subset [-1 + \delta_-, 1]$ for some $\delta_->0$.
Then there is a universal constant $C$ such that
\begin{equation}\label{rhob}
\varrho \;\leq\; \frac{C \log N}{\min \h{\delta_-, (\im m)^2}}\,.
\end{equation}
In particular, using Lemma~\ref{lemma: lower gap} we find that \eqref{rhob} holds for a $d$-dimensional band matrix from Example \ref{example: band matrix}, with a constant $C$ depending only on the profile function $f$.
\end{proposition}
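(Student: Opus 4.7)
The plan is to combine a spectral lower bound on $\re\sigma(1-m^2S)$ with a heat-kernel estimate exploiting the doubly stochastic structure of $S$, and then interpolate via a semigroup integral. A key preliminary is that $A \deq 1-m^2 S$ is \emph{normal}: since $S$ is real symmetric, a direct calculation gives $AA^* = A^*A = 1 - 2(\re m^2) S + |m|^4 S^2$. Thus $A$ is unitarily diagonalizable with spectrum $\{1-m^2\mu \col \mu \in \sigma(S)\}$, and in particular $\|e^{-tA}\|_{\ell^2\to\ell^2} = \max_{\mu \in \sigma(S)} e^{-t\re(1-m^2\mu)}$.

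First, I will establish
\begin{equation*}
\re(1 - m^2 \mu) \;=\; 1 - \mu\re m^2 \;\geq\; c \min\{\delta_-, (\im m)^2\} \;\eqd\; \alpha
\end{equation*}
for all $\mu \in [-1+\delta_-, 1]$ and some universal $c > 0$, by a short case analysis on the signs of $\mu$ and $\re m^2$. The inequality $|m|^2 = (\re m)^2 + (\im m)^2 \leq 1$ from Lemma~\ref{lemma: msc} yields $1-\re m^2 = 1-(\re m)^2+(\im m)^2 \geq 2(\im m)^2$, which handles the case $\re m^2 \geq 0$, $\mu \in [0,1]$. The assumption $\mu \geq -1+\delta_-$ together with $|\re m^2| \leq |m|^2 \leq 1$ handles $\re m^2 < 0$, $\mu < 0$, giving $1-\mu\re m^2 \geq 1-(1-\delta_-) = \delta_-$. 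The two remaining sign-mixed cases are trivial, yielding a real part at least $1$. Combined with normality, this gives $\|e^{-tA}\|_{\ell^2\to\ell^2} \leq e^{-t\alpha}$.

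Second, I bound $e^{-tA}$ in $\ell^\infty$ using the doubly stochastic structure. Since $S$ is doubly stochastic, $\|S^k\|_{\ell^\infty \to \ell^\infty} \leq 1$ for every $k$, and expanding the exponential series gives $\|e^{tm^2 S}\|_{\ell^\infty \to \ell^\infty} \leq e^{t|m|^2}$. Combined with $|m| \leq 1$ from Lemma \ref{lemma: msc}, this yields
\begin{equation*}
\|e^{-tA}\|_{\ell^\infty \to \ell^\infty} \;=\; e^{-t}\,\|e^{tm^2 S}\|_{\ell^\infty \to \ell^\infty} \;\leq\; e^{-t(1-|m|^2)} \;\leq\; 1\,.
\end{equation*}

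Finally, since $\re \sigma(A) \geq \alpha > 0$, normality gives the convergent integral representation $A^{-1} = \int_0^\infty e^{-tA}\,dt$. Setting $T^* \deq (\log N)/(2\alpha)$, I split this integral at $T^*$, using the trivial bound $\|e^{-tA}\|_{\ell^\infty \to \ell^\infty} \leq 1$ on $[0,T^*]$ and the interpolation $\|e^{-tA}\|_{\ell^\infty \to \ell^\infty} \leq \sqrt N \|e^{-tA}\|_{\ell^2 \to \ell^2} \leq \sqrt N\, e^{-t\alpha}$ on $[T^*,\infty)$. Since $\sqrt N\, e^{-T^*\alpha} = 1$ by choice of $T^*$, this yields
\begin{equation*}
\varrho \;\leq\; T^* + \sqrt N \int_{T^*}^\infty e^{-t\alpha}\,dt \;=\; \frac{\log N}{2\alpha} + \frac{1}{\alpha} \;\leq\; \frac{C\log N}{\min\{\delta_-,(\im m)^2\}}\,.
\end{equation*}
The band-matrix case then follows by combining the general bound with Lemma \ref{lemma: lower gap}. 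The only delicate step will be the spectral lower bound on $\re(1-m^2\mu)$, which must simultaneously pick up the edge-regime smallness $(\im m)^2$ (coming from $\mu, \re m^2$ both near $+1$) and the gap smallness $\delta_-$ (coming from $\mu, \re m^2$ both near $-1$); the remaining ingredients are routine.
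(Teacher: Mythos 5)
Your proof is correct, and it rests on the same three ingredients as the paper's: a quantitative spectral lower bound $\re(1-m^2\mu)\geq c\min\{\delta_-,(\im m)^2\}$ on $\sigma(S)$, the trivial $\ell^\infty\to\ell^\infty$ contractivity coming from the doubly stochastic structure of $S$, and the Schwarz interpolation $\norm{\cdot}_{\ell^\infty\to\ell^\infty}\leq\sqrt N\norm{\cdot}_{\ell^2\to\ell^2}$, with a cutoff at scale $\log N/\min\{\delta_-,(\im m)^2\}$. Where you differ is the decomposition of $(1-m^2S)^{-1}$: the paper writes it as $\tfrac{1}{2}\sum_{n\geq 0}\pb{(1+m^2S)/2}^n$, a Neumann series for a contraction, and splits the sum at $n_0$; you instead use the Laplace representation $A^{-1}=\int_0^\infty e^{-tA}\,\dd t$ and split the integral at $T^*$. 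These are the discrete- and continuous-time versions of the same idea. The Neumann series requires only elementary operator-norm estimates for the contraction $(1+m^2S)/2$ (the paper establishes the $\ell^2$ contraction factor $1-c\min\{\delta_-,(\im m)^2\}$ via the elementary bound $|1+m^2|\leq 2-c(\im m)^2$). Your route requires the additional (easy, and correctly verified) observation that $A$ is normal, so that $\norm{e^{-tA}}_{\ell^2\to\ell^2}$ is governed by $\re\sigma(A)$; in exchange, the case analysis for the lower bound on $\re(1-m^2\mu)$ is arguably slightly cleaner than the paper's bound on $|1+m^2\mu|$. Net, the approaches buy the same thing with the same effort; yours is a faithful continuous-time reformulation of the paper's argument.
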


The rest of this appendix is devoted to the proof of Proposition \ref{prop: rho for band}. A similar argument was given in the proof of \cite[Lemma 3.5]{EYY1}. The main difference is that here we do not assume the existence of a spectral gap near $+1$ in the spectrum of $S$.

\begin{proof}[Proof of Proposition \ref{prop: rho for band}]
Abbreviate $\zeta \deq m^2$ and write
\begin{equation*}
\frac{1}{1 - \zeta S} \;=\; \frac{1/2}{1 - (1 + \zeta S)/2}\,.
\end{equation*}
We have the bound
\begin{equation*}
\normbb{\frac{1 + \zeta S}{2}}_{\ell^\infty \to \ell^\infty} \;\leq\; \max_i \sum_j \absbb{\pbb{\frac{1 + \zeta S}{2}}_{ij}} \;\leq\; 1\,,
\end{equation*}
where we used that $\abs{\zeta} \leq 1$ as follows from \eqref{m is bounded}. 
By the condition on the spectrum, we have
\begin{equation*}
\normbb{\frac{1 + \zeta S}{2}}_{\ell^2 \to \ell^2} \;\leq\; \max_{x \in [-1 + \delta_-,1]} \frac{\abs{1 + \zeta x}}{2} \;\leq\; \max \hbb{1 - \frac{\delta_-}{2}, \frac{\abs{1 + \zeta}}{2}}\,.
\end{equation*}
An elementary calculation yields $1 - \abs{1 + \zeta}/2 \geq c (\im m)^2$ for some constant $c > 0$, from which we conclude
\begin{equation} \label{l2 operator bound}
\normbb{\frac{1 + \zeta S}{2}}_{\ell^2 \to \ell^2} \;\leq\; 1 - c \min \h{\delta_-, (\im m)^2}
\end{equation}
for some small universal constant $c > 0$.
 For $n_0 \in \N$ we therefore have
\begin{align*}
\normbb{\frac{1}{1 - \zeta S}}_{\ell^\infty \to \ell^\infty} &\;\leq\; \sum_{n = 0}^{n_0 - 1} \normbb{\frac{1 + \zeta S}{2}}_{\ell^\infty \to \ell^\infty}^n + \sqrt{N} \sum_{n  = n_0}^\infty \normbb{\frac{1 + \zeta S}{2}}_{\ell^2 \to \ell^2}^n
\\
&\;\leq\; n_0 + \sqrt{N} \pb{1 - c \min \h{\delta_-, (\im m)^2}}^{n_0} \, \frac{C}{\min \h{\delta_-, (\im m)^2}}
\\
&\;\leq\; \frac{C \log N}{\min \h{\delta_-, (\im m)^2}}\,,
\end{align*}
where the last step follows by taking $n_0 = C_0 \log N / \min \h{\delta_-, (\im m)^2}$ for large enough $C_0 > 0$.
\end{proof}

\providecommand{\bysame}{\leavevmode\hbox to3em{\hrulefill}\thinspace}
\providecommand{\MR}{\relax\ifhmode\unskip\space\fi MR }
\providecommand{\MRhref}[2]{%
  \href{http://www.ams.org/mathscinet-getitem?mr=#1}{#2}
}
\providecommand{\href}[2]{#2}

\end{document}